\newtheorem{thm}{Theorem}[subsection]
\newtheorem{rem}{Remark}[subsection]
 \newtheorem{prop}{Proposition}[subsection]
\newtheorem{lemma}{Lemma}[subsection]
\newtheorem{cor}{Corollary}[subsection]
\title{Semiclassical estimates for eigenvalue means of Laplacians on spheres}
\author{$^\dagger$Davide Buoso, $^\star$Paolo Luzzini, $^\ddag$Luigi Provenzano, and $^{\star\star}$Joachim Stubbe}%
\date{\today}
\begin{document}

\maketitle

\noindent
{\bf Abstract:}
We compute three-term semiclassical asymptotic expansions of counting functions and Riesz-means of the eigenvalues of the Laplacian on spheres and hemispheres, for both Dirichlet and Neumann boundary conditions. Specifically for Riesz-means we prove upper and lower bounds involving asymptotically sharp shift terms, and we extend them to domains of $\mathbb S^d$. We also prove a Berezin-Li-Yau inequality for domains contained in the hemisphere $\mathbb S^2_+$. Moreover, we consider polyharmonic operators for which we prove analogous results that highlight the role of dimension for P\'olya-type inequalities. Finally, we provide sum rules for Laplacian eigenvalues on spheres and compact two-point homogeneous spaces.

\vspace{11pt}

\noindent
{\bf Keywords:} eigenvalues; P\'olya's conjecture; spheres and hemispheres; compact homogeneous spaces; polyharmonic operators; Riesz-means; Berezin-Li-Yau inequality; Kr\"oger inequality; averaged variational principle; semiclassical expansions; asymptotically sharp estimates.

\vspace{6pt}
\noindent
{\bf 2020 Mathematics Subject Classification:} 58C40, 35P15, 35J25, 35J40, 53A05.


\section{Introduction}
In 1954, in the first edition of its monograph \cite{Po},  P\'olya stated his celebrated conjecture that the leading term in Weyl's law separates the spectrum of the Dirichlet Laplacian from that of the Neumann Laplacian. More precisely, given $\Omega$ an open bounded set in $\mathbb R^2$, and given $\lambda_k(\Omega), \mu_k(\Omega)$ the $k$-th eigenvalue of the Laplace operator $-\Delta$ with respectively Dirichlet and Neumann boundary conditions, Pólya conjectured that
\begin{equation}
\label{polconj}
\mu_{k+1}(\Omega)\le \frac{4\pi k}{|\Omega|}\le \lambda_k(\Omega)
\end{equation}
for any $k\in\mathbb N$ (the inequality for $\lambda_k(\Omega)$ is understood for $k\geq 1$). The same conjecture has been then formulated also in higher dimensions, and for $\Omega\subset\mathbb R^d$, $d\geq 2$ reads
\begin{equation}
\label{polconj_d}
\mu_{k+1}(\Omega)\le \frac{4\pi^2}{\omega_d^{2/d}}\left(\frac{ k}{|\Omega|}\right)^{2/d}\le \lambda_k(\Omega)
\end{equation}
for any $k\in\mathbb N$, where $\omega_d$ denotes the volume of the unit ball in $\mathbb R^d$. The quantity in the middle of \eqref{polconj_d} is the semiclassical approximation of the eigenvalues $\lambda_k(\Omega),\mu_k(\Omega)$ since $\lambda_k(\Omega),\mu_k(\Omega)\sim\frac{4\pi^2}{\omega_d^{2/d}}\left(\frac{k}{|\Omega|}\right)^{2/d}$ for $k\to\infty$,  as already proved by Weyl \cite{W}. For this reason we say that these inequalities are asymptotically sharp (in leading order).


Pólya himself proved inequalities \eqref{polconj} when $\Omega$ is  a tiling domain \cite{Po2} and, while there have been some developments in two and in higher dimensions (see e.g. \cite{ filopolya,frei_d, FrSa22,Lap97, LePoSh22}), the general case remains  at the moment an open problem.

On the other hand, inequalities \eqref{polconj_d} in an averaged (weaker) version,
\begin{equation}
\label{blyk}
\frac 1 k \sum_{j=1}^k\mu_j(\Omega)\le \frac{d}{d+2}\frac{4\pi^2}{\omega_d^{2/d}}\left(\frac{ k}{|\Omega|}\right)^{2/d}\le \frac 1 k \sum_{j=1}^k \lambda_j(\Omega),
\end{equation}
were actually proven for any $\Omega\subseteq \mathbb R^d$ by Berezin \cite{berezin} and Li and Yau \cite{LY} for 
the Dirichlet eigenvalues and by Kr\"oger \cite{Kr} for Neumann eigenvalues. For this reason, the first inequality in \eqref{blyk} is  now known as  Kr\"oger inequality while the second one as Berezin-Li-Yau inequality. Note that inequalities \eqref{blyk} are asymptotically sharp, therefore P\'olya's conjecture holds in an averaged sense.
 We remark that inequalities \eqref{polconj_d} on each eigenvalue can be rephrased as (reversed) bounds on the counting functions, i.e.,
 \[
 N^D(z)=\#\{\lambda_k(\Omega)\leq z\}, \quad N^N(z)=\#\{\mu_k(\Omega)\leq z\} \qquad \forall z\,\geq 0,
 \]
  while inequalities \eqref{blyk} on eigenvalue averages as (reversed) bounds on the first Riesz-means, i.e.
  \[
  R_1^D(z)=\sum_j(z-\lambda_j(\Omega))_+, \quad R_1^N(z)=\sum_j(z-\mu_j(\Omega))_+ \qquad \forall z\,\geq 0.
  \]
 Here $\displaystyle{\sum_j}$ denotes the summation over all $j\in\mathbb N$. P\'olya's conjecture \eqref{polconj_d} is in some sense justified also by the semiclassical asymptotic expansions of the eigenvalues, which is equivalent to the expansion of the counting functions as $z\to+\infty$:
\begin{equation}\label{two-terms generic}
\begin{split}
  &N^D(z)\sim L_{0,d}^{class}|\Omega|\,z^{\frac{d}{2}}-\frac{1}{4}\,L_{0,d-1}^{class}|\partial\Omega|\,z^{\frac{d-1}{2}}\\
  &N^N(z)\sim L_{0,d}^{class}|\Omega|\,z^{\frac{d}{2}}+\frac{1}{4}\,L_{0,d-1}^{class}|\partial\Omega|\,z^{\frac{d-1}{2}}.
  \end{split}
\end{equation}
Here $L_{0,d}^{class},L_{0,d-1}^{class}$ are the semiclassical constants, which depend only on $d$ (see \eqref{semiclassical-constants-Laplacian} for the precise definition). The first term in the expansions \eqref{two-terms generic} was proved by Weyl \cite{W}, who later conjectured this two-term expansion (see \cite{W2}) that was proved only much later \cite{Iv,melrose} under suitable geometric conditions. More precisely, the set of periodic points of the geodesic billiard needs to have Lebesgue measure zero. We refer to \cite{SV} for a more exhaustive discussion on the history of semiclassical expansions, as well as two-term  asymptotics for more general elliptic operators. 
Let us mention that the analogous expansions for the (more regular) first Riesz-mean hold under much weaker assumptions on the domain (see e.g., \cite{frgei,FrLa20}).

 From this, the natural question arises whether the semiclassical expansions with the leading order term, or more terms as in \eqref{two-terms generic},  yield upper or lower bounds for all finite $z$.  For the counting functions $N^D(z), N^N(z)$ this amounts to P\'olya's conjecture; for the Riesz-means $R_1^D(z), R_1^N(z)$ this corresponds to the well-known bounds by Li-Yau \cite{LY} and Kr\"oger \cite{Kr}.

In order to further understand the asymptotic behavior of the eigenvalues, several authors  investigated Weyl-sharp inequalities for Riesz-means (or equivalently for  eigenvalue averages)   improving \eqref{blyk} with lower order  terms and also  reversed inequalities. In this regard, we mention the works \cite{fr_aver,Ge11,HaPrSt19,HaSt18,Me03,We08}. 

While the above discussion concerns inequalities and expansions in the Euclidean setting, a natural extension is to investigate similar questions for the Laplacian on different manifolds, for example when $\Omega$ is a domain of the sphere $\mathbb S^d=\{x\in\mathbb{R}^{d+1}:|x|=1\}$. There has been a growing interest in the study of the properties of domains in manifolds for which Pólya's conjecture does not hold, the emblematic cases being the sphere $\mathbb S^d$ and the hemisphere $\mathbb S^d_+=\mathbb S^d \cap \{x_{d+1} > 0\}$, $d\geq 2$. The hemisphere $\mathbb S^2_+$ is an exception: Bérard and Besson \cite{BeBe80} showed that Pólya's conjecture is true for the hemisphere, the quarter-sphere, and the eighth-sphere in dimension $2$. Also, Freitas, Mao, and Salvessa \cite{FrMaSa22} carry out a careful analysis of Pólya's conjecture on $d$-dimensional spheres and hemispheres, identifying precise subsequences of eigenvalues for which Pólya's conjecture fails, and others for which it is valid. Moreover, they prove a Pólya-type inequality with an asymptotically sharp correction term measuring how far the eigenvalues are from the leading term in Weyl's law. They also deduce Pólya-type inequalities for the eigenvalues on the whole sphere in the same spirit. These bounds suggest that a second term for the counting function $N(z)$ for $\mathbb S^d$ should be oscillating, but unbounded (which is expected due to the order of the remainder, which is known, see \cite{canzani,SV}). Notice that, as the sphere violates the necessary geometrical conditions, an expansion like \eqref{two-terms generic} cannot hold and is actually known to be false (cf. \cite[Example 1.2.5]{SV}). Because of this, these bounds are fundamental to provide a better understanding of the remainder. The same conclusion can be deduced also for hemispheres, where again the expansion  \eqref{two-terms generic} cannot be inferred from classical arguments (cf.     \cite[Section 1.3.1]{SV}).


Considering sharp estimates of Riesz-means on domains of compact homogeneous manifolds (in particular spheres), Strichartz \cite{stri} proved a series of asymptotically sharp inequalities. We remark that the starting point is an observation due to Colin de Verdière and Gallot \cite{Ga} relating the Riesz-mean in a domain with that in the manifold containing the domain. Improvements in the case of the sphere have been proved by Ilyin and Laptev \cite{ilyin_laptev}. We also mention El Soufi, Harrell, Ilias, Stubbe \cite{EHIS} where the authors present the so-called {\it averaged variational principle}, which is an efficient way to recover the result of Colin de Verdière and Gallot, and apply it to bound Riesz-means on general Riemannian manifolds, also for other types of operators \cite{bpls_buckling, bps}. Related bounds for eigenvalue averages on domains of Riemannian manifolds can be found in \cite{chengyang}. However we remark that for domains in a general Riemannian manifold sharp upper or lower bounds for Riesz means are not available.

In this paper we consider specifically the Laplacian on the sphere and on the hemisphere and derive asymptotics and Weyl-sharp upper and lower bound for Riesz-means and counting functions that complement and improve those already present in the literature.

Our first aim  is to investigate further terms in the asymptotic expansions for $N(z)$ in the case $\mathbb S^d$ and $N^D(z),N^N(z)$ in the case of $\mathbb S^d_+$ in order to clarify the behavior highlighted in \cite{FrMaSa22}.
We will also consider subsequent terms in the expansion of the more regular Riesz-mean $R_1(z)$ for $\mathbb S^d$ and $R_1^D(z),R_1^N(z)$ for $\mathbb S^d_+$. For example, for $R_1(z)$, the second term has a sign, but contains an oscillating part (see Theorem \ref{R1-d-sphere}). We highlight the relation with the results in \cite{stri}, where the lim-inf and lim-sup of the remainder term for $R_1(z)$ are computed.  On the other hand, on $\mathbb S^d_+$, we derive three-term expansions both for the counting functions and the Riesz-means (see Theorems \ref{ND}, \ref{R1-d-hemi-N}, \ref{NN},  \ref{R1-d-hemi-NNeu}).  Note that $R_1^D(z),R_1^N(z)$ have a second  term which is coherent with the expansion \eqref{two-terms generic} even though $N^D(z),N^N(z)$ do not. This behavior is the expected one since Riesz-means present a higher regularity than counting functions. Our results on asymptotic expansions for counting functions in some sense complete the study of \cite[\S 1.7]{SV} where the authors consider non-classical two-terms expansions for the eigenvalues of the degree operator on spheres and hemispheres, namely the operator $-\Delta+\frac{(d-1)^2}{4}$. Note that the analysis for this operator is somehow easier since the energy levels are given by $\left(l+\frac{d-1}{2}\right)^2$. With some effort, it is possible to recover two-terms expansions for the eigenvalues of the Laplacian from the expansions in \cite[\S 1.7]{SV}. However, up to our knowledge, the three-terms expansions esablished in our paper were not known.


Once precise spectral asymptotics are established, one naturally asks whether it is possible to obtain bounds, at least for the more regular Riesz-means. For example,  in $\mathbb{S}^2$ we are able to improve the lower bounds for $R_1(z)$  present in \cite{ilyin_laptev, stri} and derive sharp bounds with lower order terms also for $\mathbb{S}^2_+$.  As for the higher dimensional case, we  prove  upper and lower bounds for $R_1(z)$, containing an asymptotically sharp shift (see Theorems \ref{lo-shift} and \ref{thm:ubR1dshift}). Note that the upper bound with a shift in the form of Theorem \ref{thm:ubR1dshift} implies a Berezin-Li-Yau inequality for the shifted eigenvalues. The case of $\mathbb S^1$ is intrinsically different, since the leading term in Weyl's law is neither an upper bound nor a lower bound. Nevertheless, we provide an upper bound containing an asymptotically sharp shift (see Theorem \ref{R1-S1}).


The second aim of the present paper is to consider Berezin-Li-Yau bounds for Dirichlet eigenvalues on domains. It is well-known that it is not possible to bound from above with the leading term in Weyl's law the Riesz-mean $R_1^D(z)$. The natural counterexample is a domain which is invading the whole sphere: its Dirichlet spectrum is converging to the spectrum on the whole sphere, for which Berezin-Li-Yau inequality does not hold. However, bounds of Berezin-Li-Yau-type with a correction term can be obtained in the spirit of  \cite{Ga}, as done in  \cite{ilyin_laptev,stri}. In this paper we observe that if we restrict to domains on the hemisphere $\mathbb S^2_+$, then Berezin-Li-Yau bounds do hold (see Theorems \ref{thm:blyS2+} and \ref{thm:blyS2+imp}). Moreover, they hold for $\mathbb S^d_+$ when $d=3,4,5$ (see Theorem \ref{BLY-345}). For domains in $\mathbb S^d$ we complete the picture of \cite{ilyin_laptev,stri} by establishing Berezin-Li-Yau bounds with a shift term, which is asymptotically sharp when the domain is $\mathbb S^d$ (see Theorem \ref{bounds_domains_Sd}).


In order to provide a complete picture, we also consider the case of polyharmonic operators $(-\Delta)^p$ on $\mathbb S^d$. We prove asymptotically sharp upper and lower bounds for $R_1(z)$, and we highlight that the leading term in Weyl's law is in general neither an upper nor a lower bound for $R_1(z)$. We believe that it should be a lower bound exclusively for  $p<d$, while it is not for $p\geq d$, and we prove this statement for $p=2$ (see Remark \ref{rem_pd} and Theorem \ref{lower-R-1-bil-sphere}). This is an extension of the analogous results for the case of the Laplacian $p=1$. We then focus on the biharmonic operator ($p=2$),  for which we provide Berezin-Li-Yau inequalities for the Dirichlet problem on domains of $\mathbb S^2_+$ (Theorem \ref{b-l-y-poly-hemi}) and for the buckling problem on domains of $\mathbb S^2$ (Theorem \ref{BLY-buckling}), and Kr\"oger bounds for the Neumann eigenvalues on domains of $\mathbb S^d$ for all $d>3$ (Theorem \ref{kr-gen-poly}). 

Finally, we provide some sum rules for Laplacian eigenvalues on the sphere $\mathbb{S}^d$, which allow to get upper and lower bounds on the second Riesz-mean $R_2$ bypassing any variational approach, and which can be readily generalized to compact two-point homogeneous spaces. To this regard, we point out that having the explicit forms for the eigenvalues and their multiplicities for all the compact two-point homogeneous spaces (Proposition \ref{prop:EigHS}) would allow to extend the previously discussed results in all these cases (see also \cite{stri} where the real projective space case is discussed). 


 For what concerns the techniques used, to obtain the results in $\mathbb{S}^d$ and   $\mathbb{S}^d_+$ we mainly exploit the fact that in both cases the eigenvalues and their multiplicities are completely known and easily described, and this in turn allows for a somewhat explicit but rather complicated representation of any related quantity. Careful manipulations then permit to recover in a new manner  known bounds and  to derive new ones. The results for domains instead take advantage of what we proved for $\mathbb{S}^d$ together with the {\it averaged variational principle} of Harrell and Stubbe (see Theorem \ref{thm:AVP}, see also \cite{ha_st_1}).
  

The paper is organized as follows. In Section \ref{sec:pre} we introduce the notation and some preliminaries: we state the eigenvalue problems, the functional setting and the tools needed in our analysis. Section \ref{sec:2sphere} contains our results in dimension 2, that is for the sphere $\mathbb{S}^2$, the hemisphere $\mathbb{S}^2_+$, and domains of the hemisphere. Then in Section \ref{sec:dsphere} we consider the $d$-dimensional case of the sphere $\mathbb{S}^d$, its domains, and the 
hemisphere $\mathbb{S}^d_+$. In Section \ref{S1} we deal with the case of the circle $\mathbb{S}^1$. 
The case of higher order problems, such as polyharmonic operators and the buckling problem, is treated in Section 
\ref{higher_order}. Finally in Section \ref{compact-two-points} we present the sum rules for the Laplacian on spheres and other compact homogeneous spaces. For the sake of clarity, we have postponed some technical results to Appendix \ref{app_A}. In Appendix \ref{app_C} we discuss a duality principle for Riesz-means.

\section{Preliminaries and notation}\label{sec:pre}

Let $M^d$ be a $d$-dimensional, compact, Riemannian manifold, and let $\Omega$ be a domain in $M^d$ (possibly $\Omega=M^d$). We recall that $\Omega$ is called a domain if it is an open, bounded, connected set. By $L^2(\Omega)$ we denote the classical Lebesgue space of square integrable functions. By $H^m(\Omega)$ we denote the standard Sobolev space of functions in $L^2(\Omega)$ with all weak partial derivatives up to the order $m$ in $L^2(\Omega)$. By $H^m_0(\Omega)$ we denote the closure of $C^{\infty}_c(\Omega)$ in $H^m(\Omega)$ with respect to its standard norm. Throughout the paper, by $\mathbb N$ we denote the set of natural numbers including zero.

On $M^d$ we consider the (closed) eigenvalue problem for the Laplacian
\begin{equation}\label{lapM}
-\Delta u=\lambda u,
\end{equation}
and on domains $\Omega\subset M^d$ we consider the Dirichlet problem
\begin{equation}\label{dir_domain}
\begin{cases}
-\Delta u=\lambda u\,, & {\rm in\ }\Omega,\\
u=0\,, & {\rm on\ }\partial\Omega,
\end{cases}
\end{equation}
and the Neumann problem
\begin{equation}\label{neu_domain}
\begin{cases}
-\Delta u=\lambda u\,, & {\rm in\ }\Omega,\\
\partial_{\nu}u=0\,, & {\rm on\ }\partial\Omega.
\end{cases}
\end{equation}


We will understand problems \eqref{dir_domain} and \eqref{neu_domain} in their weak formulations. For problem \eqref{dir_domain} it amounts to finding a function $u\in H^1_0(\Omega)$ and a real number $\lambda\in\mathbb R$ such that
\begin{equation}\label{dir_domain_w}
\int_{\Omega}\nabla u\cdot\nabla\phi=\lambda\int_{\Omega} u\phi\,,\ \ \ \forall \phi\in H^1_0(\Omega).
\end{equation}
For problem \eqref{neu_domain}, the variational formulation is the same as \eqref{dir_domain_w} but with the energy space $H^1_0(\Omega)$ replaced by $H^1(\Omega)$. 


We denote the eigenvalues of \eqref{lapM} as
$$
0=\lambda_1<\lambda_2\leq\cdots\leq\lambda_j\leq\cdots\nearrow+\infty.
$$
As for the Dirichlet and Neumann problems \eqref{dir_domain}-\eqref{neu_domain} on domains of $ M^d$, we shall denote the eigenvalues by
$$
0<\lambda_1(\Omega)<\lambda_2(\Omega)\leq\cdots\leq\lambda_j(\Omega)\leq\cdots\nearrow+\infty
$$
and
$$
0=\mu_1(\Omega)<\mu_2(\Omega)\leq\cdots\leq\mu_j(\Omega)\leq\cdots\nearrow+\infty,
$$
respectively. In order to ease the notation, we will omit the explicit dependence on $\Omega$  when there is no possibility of confusion.


In many situations (e.g., $M^d=\mathbb S^d$, the round sphere), the eigenvalues of the Laplacian appear as {\it energy levels}, namely, the values assumed by the eigenvalues (without multiplicities) form a sequence which we shall denote by
$$
0=\lambda_{(0)}<\lambda_{(1)}<\lambda_{(2)}<\cdots<\lambda_{(l)}<\cdots\nearrow+\infty\,.
$$
Each eigenvalue corresponding to an energy level $\lambda_{(l)}$, $l\in\mathbb N$, has a certain multiplicity, which depends on $d$ and $l$, and which we shall denote by $m_{l,d}$. To clarify the situation, let us just consider the eigenvalues of the Laplacian on $\mathbb S^1$, which are given by the sequence
$$
0,1,1,4,4,9,9,\cdots,l^2,l^2,\cdots
$$
therefore $\lambda_{(l)}=l^2$, $l\in\mathbb N$, and $m_{0,1}=1$, $m_{l,1}=2$ for all $l\geq 1$. In general $m_{0,d}=1$ for all $d$. If we want to enumerate the eigenvalues of $\mathbb S^1$ in increasing order, counting multiplicities, we will denote them as $\lambda_1=0,\lambda_2=1,\lambda_3=1,\lambda_4=4,\lambda_5=4,...$.


In this paper we shall consider mainly the case of the $d$-dimensional round sphere $\mathbb S^d$, and its domains. The round sphere is a compact, two-point homogeneous space. In Section \ref{compact-two-points} we shall discuss the case of the others spaces in this family, for which we establish sum rules.


Concerning $\mathbb S^d$ and its domains, we will consider semiclassical estimates for {\it Riesz-means} of eigenvalues, namely for
\begin{equation*}
R_{\gamma}(z)=\sum_{j}(z-\lambda_j)_+^{\gamma} \qquad \forall z \geq 0
\end{equation*}
where $\gamma\geq 0$, $a_+$ denotes the positive part of a real number $a$, and the sum is taken over $j\in\mathbb N$.  As a convention, when the summation is over all $j \in \mathbb{N}$, we will just write the index $j$ at the bottom of the summation symbol. If the sum is over some subset $J\subset\mathbb N$ we will write $\sum_{j\in J}$; if the sum starts from some $k_0\in\mathbb N$ we will write $\sum_{j\geq k_0}$. When $\gamma=0$, then $R_0(z)$ is just the counting function $N(z)$ which counts the number of eigenvalues $\lambda_j$ below $z$. 


We will denote by $R_1(z)$ and $N(z)$ the Riesz-mean and the counting function for the whole manifold $M^d$, namely, for problem \eqref{lapM}.  Moreover, we shall denote by $R_1^D(z),N^D(z)$ and by $R_1^N(z),N^N(z)$ the Riesz-means and the counting functions for the Dirichlet \eqref{dir_domain} and Neumann \eqref{neu_domain} problems on domains of $M^d$, respectively.


In this paper we will be mainly interested in $\gamma=1$, i.e., the first Riesz-mean, since semiclassical estimates can be deduced in a very efficient way by means of the {\it averaged variational principle}, introduced by Harrell and  Stubbe in \cite{EHIS,ha_st_1}, generalizing a work of Kr\"oger \cite{Kr} by averaging over test functions which form a complete frame of the underlying Hilbert space. We shall state it here for sake of completeness.
\begin{thm}\label{thm:AVP}
Let $H$ be a self-adjoint operator in a Hilbert space $(\mathcal{H}, \langle\cdot,\cdot,\rangle_\mathcal{H})$, the spectrum of which is discrete at least in its lower portion, and we denote it by
\[
\omega_1 \leq \omega_2 \leq \cdots \leq \omega_j \leq \cdots
\]
 with corresponding orthonormalized eigenvectors $\{g_j\}_{j \in \mathbb{N}\setminus\{0\}}$.
The closed quadratic form corresponding to $H$ is denoted $Q(\varphi,\varphi)$ for any $\varphi$ in
the quadratic form domain $\mathcal{Q}(H) \subset \mathcal{H}$. Let $f_p \in \mathcal{Q}(H)$ be a family
of vectors indexed by a variable $p$ ranging over a measure space $(\mathfrak{M}, \Sigma, \sigma)$. Suppose  that $\mathfrak{M}_0$ is a subset of $\mathfrak{M}$.
Then for any $z \in \mathbb{R}$,
\begin{equation*}
\sum_j(z-\omega_j)_+ \int_{\mathfrak{M}}\left| \langle g_j, f_p \rangle_{\mathcal{H}}
\right|^2 \, d\sigma_p \geq \int_{\mathfrak{M}_0} \left( z\|f_p \|_{\mathcal{H}}^2-Q(f_p,f_p) \right)\,d\sigma_p,
\end{equation*}
provided that the integrals converge.
\end{thm}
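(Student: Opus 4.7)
The plan is to expand each $f_p$ in the orthonormal eigenbasis $\{g_j\}$ (in the pointwise sense, for each fixed $p\in\mathfrak{M}$) and then compare term by term. Since $f_p\in\mathcal{Q}(H)$, Parseval gives
\begin{equation*}
\|f_p\|_\mathcal{H}^2=\sum_j|\langle g_j,f_p\rangle_\mathcal{H}|^2,\qquad Q(f_p,f_p)=\sum_j\omega_j|\langle g_j,f_p\rangle_\mathcal{H}|^2,
\end{equation*}
where the second identity uses that $Q$ is the closed quadratic form associated with $H$, so it is diagonalized by the eigenbasis on $\mathcal{Q}(H)$. Subtracting,
\begin{equation*}
z\|f_p\|_\mathcal{H}^2-Q(f_p,f_p)=\sum_j(z-\omega_j)|\langle g_j,f_p\rangle_\mathcal{H}|^2.
\end{equation*}

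Next I would integrate this identity over $\mathfrak{M}_0$ against $d\sigma_p$, using Fubini/Tonelli (justified by the convergence hypothesis in the statement) to interchange the sum and the integral. This yields
\begin{equation*}
\int_{\mathfrak{M}_0}\bigl(z\|f_p\|_\mathcal{H}^2-Q(f_p,f_p)\bigr)d\sigma_p=\sum_j(z-\omega_j)\int_{\mathfrak{M}_0}|\langle g_j,f_p\rangle_\mathcal{H}|^2 d\sigma_p.
\end{equation*}

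The final step is a termwise majorization. Fix $j$ and distinguish two cases. If $z-\omega_j\geq 0$, then enlarging the domain of integration from $\mathfrak{M}_0$ to $\mathfrak{M}$ can only increase the non-negative quantity $\int|\langle g_j,f_p\rangle_\mathcal{H}|^2 d\sigma_p$, so
\begin{equation*}
(z-\omega_j)\int_{\mathfrak{M}_0}|\langle g_j,f_p\rangle_\mathcal{H}|^2 d\sigma_p\leq(z-\omega_j)_+\int_{\mathfrak{M}}|\langle g_j,f_p\rangle_\mathcal{H}|^2 d\sigma_p.
\end{equation*}
If instead $z-\omega_j<0$, then the left-hand side is non-positive while the right-hand side equals zero, so the same inequality holds trivially. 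Summing over $j$ and combining with the previous display gives exactly the claimed bound.

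The only real subtlety is the interchange of the summation over $j$ with the integral over $\mathfrak{M}_0$ in the middle step; this is handled by the hypothesis that the integrals converge, together with Tonelli applied to the non-negative pieces and absolute convergence for the remainder. The rest is a clean bookkeeping argument once the spectral decomposition is in place; no compactness or regularity of $H$ beyond self-adjointness and discreteness of the lower spectrum enters, which is exactly what makes the principle so flexible in the applications used later in the paper.
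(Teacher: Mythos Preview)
The paper does not include a proof of this theorem; it is stated for completeness and attributed to \cite{EHIS,ha_st_1}. Your argument is the standard one and is essentially what appears in those references: expand $f_p$ spectrally, integrate, and majorize each term by replacing $(z-\omega_j)$ with $(z-\omega_j)_+$ while enlarging the integration domain from $\mathfrak{M}_0$ to $\mathfrak{M}$.

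One small caveat: the hypothesis says the spectrum is discrete ``at least in its lower portion'', so the eigenvectors $\{g_j\}$ need not form a \emph{complete} orthonormal basis of $\mathcal{H}$. Your use of Parseval as an equality for $\|f_p\|^2$ and $Q(f_p,f_p)$ therefore overshoots slightly in general. The fix is easy: write $f_p=P_{\mathrm{disc}}f_p+P_{\mathrm{ess}}f_p$ using the spectral projections, note that on the essential-spectrum part one has $Q(P_{\mathrm{ess}}f_p,P_{\mathrm{ess}}f_p)\geq \omega_\infty\|P_{\mathrm{ess}}f_p\|^2$ with $\omega_\infty\geq \omega_j$ for all $j$, and observe that this contribution to $z\|f_p\|^2-Q(f_p,f_p)$ is therefore dominated by zero whenever the left-hand side of the claimed inequality is finite (only finitely many $\omega_j<z$ contribute). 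The rest of your argument then goes through verbatim on the discrete part. In the applications made in the paper the spectrum is fully discrete, so this issue never arises.
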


In particular, in the situation of $\mathbb S^d$ the {\it averaged variational principle} turns out to be equivalent to generalizations of the
Berezin-Li-Yau method, which was first observed by Colin de Verdière and Gallot \cite{Ga}, and employed in various form in Ilyin and Laptev \cite{ilyin_laptev} and Strichartz \cite{stri}. This application of the averaged variational principle to recover in an efficient way the results of Strichartz \cite{stri} is contained in \cite{EHIS}, and it is employed not only for homogeneous spaces but for more general Riemannian manifolds.

We shall denote by $L_{\gamma,d}^{class}$ the semiclassical constant for Laplacian eigenvalues in dimension $d$, which is given by
\begin{equation}\label{semiclassical-constants-Laplacian}
  L_{\gamma,d}^{class}=(4\pi)^{-d/2}\frac{\Gamma(\gamma+1)}{\Gamma(\gamma+1+d/2)}.
\end{equation}
It is also convenient to recall that
$$
L_{0,d}^{class}|\mathbb S^d|=\frac{2}{\Gamma(d+1)}=\frac{2}{d!}\,,\ \ \ L_{1,d}^{class}|\mathbb S^d|=\frac{4}{(d+2)\Gamma(d+1)}=\frac{4}{(d+2)d!}.
$$

Finally, we introduce the {\it fluctuation function} $\psi$ defined by

\begin{equation}\label{fluct}
\psi(\eta) = \eta - \lfloor\eta\rfloor -\frac{1}{2} \qquad 
\forall \eta \geq 0,
\end{equation}
where $\lfloor\eta\rfloor$ denotes the integer part of $\eta$.

\section{The two-dimensional sphere $\mathbb S^2$ and the hemisphere $\mathbb S^2_+$}\label{sec:2sphere}

In this section we will consider semiclassical estimates for Laplacian eigenvalues in the exceptional case of the two-dimensional sphere. In particular, we shall consider the closed problem on $\mathbb S^2$, the Dirichlet and Neumann problems for the hemisphere $\mathbb S^2_+$, and on domains of $\mathbb S^2_+$. 

\subsection{The sphere $\mathbb S^2$}\label{2sphere}
As is well-known, the energy levels of the Laplacian on $\mathbb{S}^2$ are given by 
$\lambda_{(l)}= l(l+1)$ with corresponding multiplicities
$m_{l,2}=2l+1$, $l \in \mathbb{N}$, see e.g., \cite{berger}. It is well-known \cite{SV,W} that Weyl's law for the counting function of the Laplacian eigenvalues on $\mathbb{S}^2$ reads
\begin{equation*}
N(z) = L^{class}_{0,2}|\mathbb{S}^2|z + o(z) = z+ o(z) \quad \mbox{ as } z\to +\infty,
\end{equation*}
and, accordingly, the semiclassical limit for the first Riesz-mean $R_1$ is
\begin{equation*}
R_1(z) = L^{class}_{1,2}|\mathbb{S}^2|z^2 + o(z^2) = \frac{1}{2}z^2+ o(z^2)  \quad \mbox{ as } z\to +\infty.
\end{equation*}

Strichartz \cite[(3.11)-(3.13) p. 166]{stri} proves a Weyl sharp lower bound with a correction term and a Weyl sharp upper bound   for the eigenvalue means of the Laplacian eigenvalues on $\mathbb S^2$. These bounds are equivalent to a  Weyl sharp shifted upper bound and a Weyl sharp lower bound the first Riesz-mean $R_1$ which we show in the following proposition. The upper bound was also shown by Ilyin and Laptev \cite{ilyin_laptev}. Our technique will allow a more careful analysis, improving these bounds in  Theorem \ref{bound-R-1-2-improved}.  For a discussion of  $d>2$ and our significant improvements based on the techniques introduced here, see Section \ref{dsphere}, in particular Theorems \ref{lo-shift} and \ref{thm:ubR1dshift} and the subsequent remarks.


\begin{prop}\label{prop:R1S2bounds}
For all $z\geq 0$, the following bounds hold for the first Riesz-mean $R_1$ of the Laplacian eigenvalues on $\mathbb{S}^2$:
\[
\frac{1}{2}z^2 \leq R_1(z) \leq \frac{1}{2}\left( z+\frac{1}{2}\right)^2.
\]
Equality in the lower bound holds if and only if $z = \lambda_{(l)}$
for some $l \in \mathbb{N}$. For the upper bound,
equality holds if and only if $z=(l+1)^2-\frac{1}{2} \in [\lambda_{(l)},\lambda_{(l+1)}]$ for some $l \in \mathbb{N}$.
\end{prop}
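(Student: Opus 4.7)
The plan is to exploit the explicit knowledge of the spectrum and reduce both inequalities to elementary quadratic identities on each spectral gap.

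First, I would obtain a closed-form expression for $R_1(z)$ on the interval $[L(L+1),(L+1)(L+2))$, where $L\in\mathbb{N}$. Since $\lambda_{(l)}=l(l+1)$ with multiplicity $2l+1$, only the levels $l=0,\dots,L$ contribute, giving
\[
R_1(z)=z\sum_{l=0}^{L}(2l+1)-\sum_{l=0}^{L}(2l+1)\,l(l+1).
\]
Using the standard identities $\sum_{l=0}^{L}(2l+1)=(L+1)^2$ and (via expansion of $2l^3+3l^2+l$) $\sum_{l=0}^{L}(2l+1)l(l+1)=\tfrac{1}{2}L(L+1)^2(L+2)$, one obtains on each gap the simple affine formula
\[
R_1(z)=(L+1)^2 z-\tfrac{1}{2}L(L+1)^2(L+2).
\]

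Next, I would verify the lower bound by recognizing a clean factorization. A direct check (using $L(L+2)=(L+1)^2-1$) shows that on the interval in question
\[
R_1(z)-\tfrac{1}{2}z^2=-\tfrac{1}{2}\bigl(z-L(L+1)\bigr)\bigl(z-(L+1)(L+2)\bigr),
\]
which is nonnegative precisely on $[L(L+1),(L+1)(L+2)]$, vanishing exactly at the endpoints. This yields the lower bound together with its equality cases $z=\lambda_{(l)}$.

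For the upper bound, I would show that the corresponding difference is a perfect square. Indeed, using again $(L+1)^4-(L+1)^2=L(L+1)^2(L+2)$, one computes
\[
\tfrac{1}{2}\bigl(z+\tfrac{1}{2}\bigr)^2-R_1(z)=\tfrac{1}{2}\bigl(z-(L+1)^2+\tfrac{1}{2}\bigr)^2\geq 0,
\]
with equality iff $z=(L+1)^2-\tfrac{1}{2}$, which sits inside $[\lambda_{(L)},\lambda_{(L+1)}]$ since $(L+1)^2-\tfrac{1}{2}=L(L+1)+L+\tfrac{1}{2}$. This gives the upper bound and its equality set.

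The only substantive work is assembling the two sum identities and recognizing the two factorizations; there is no real obstacle, since the quadratic $\tfrac12(z+\tfrac12)^2-R_1(z)$ is forced to have a double root (its discriminant is $4(L+1)^2[(L+1)^2-1-L(L+2)]=0$), which is precisely what encodes the asymptotic sharpness of the shift $\tfrac12$. The mild subtlety is to check that the upper-bound equality point always lies in the correct gap, but this is immediate from the identity above.
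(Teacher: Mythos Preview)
Your proof is correct and follows essentially the same approach as the paper: both compute $R_1(z)$ explicitly on each spectral gap and then factor the lower-bound difference as a product vanishing at the two adjacent energy levels and the upper-bound difference as a perfect square. The only cosmetic difference is that the paper parametrizes $z=w(w+1)$ and carries out the algebra in the $w$ variable (anticipating the fluctuation-function analysis of the subsequent theorem), whereas you work directly in $z$; the underlying identities are the same.
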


\begin{proof}
For the sake of simplicity, we prove the bounds for $z=w(w+1)$ where $w\geq 0$.   We note that
\begin{multline}\label{proof_prop_2}
     R_1(w(w+1))
       = \sum_{l=0}^{\lfloor w\rfloor}(2l+1)(w(w+1)-l(l+1))\\
       =\frac{1}{2}\, (w+\lfloor w\rfloor+1)(w+\lfloor w\rfloor+2)(w-\lfloor w\rfloor)(\lfloor w\rfloor+1-w)+\frac{1}{2}\,w^2(w+1)^2.
\end{multline}
Since $\lfloor w\rfloor\leq w<\lfloor w\rfloor+1$ the first term in the right hand side of the above equation is non-negative. Moreover, it equals zero if and only if $w \in \mathbb{N}$,
that is when $w(w+1)$ equals  an energy level $\lambda_{(l)}$ for some $l \in \mathbb{N}$. For the upper bound we write $R_1$ as follows
\begin{equation}\label{R1S2bounds1}
   R_1(w(w+1))
         =-\,\frac{1}{8}\, \bigg(-2(w+\lfloor w\rfloor+1)(w-\lfloor w\rfloor)+2\lfloor w\rfloor+1\bigg)^2+\frac{1}{2}\,\left(w(w+1)+\frac{1}{2}\right)^2.
 \end{equation}
Since
\begin{equation*}
     -2(w+\lfloor w\rfloor+1)(w-\lfloor w\rfloor)+2\lfloor w\rfloor+1
        =2\left(\lfloor w\rfloor+\frac{1}{2}\right)\left(\lfloor w\rfloor+\frac{3}{2}\right)-2\left(w+\frac{1}{2}\right)^2,
\end{equation*}
the first term in the right hand side of equation 
\eqref{R1S2bounds1} is non-positive and equals zero if 
\[
w=-\frac{1}{2}+\sqrt{\left(\lfloor w\rfloor+\frac{1}{2}\right)\left(\lfloor w\rfloor+\frac{3}{2}\right)}
\]
which has a solution 
$w = -\frac{1}{2}+\sqrt{\left(l+\frac{1}{2}\right)\left(l+\frac{3}{2}\right)}$ in each interval $[l,l+1]$.
Hence, recalling  the substitution $z=w(w+1)$ we have that
the equality in the upper bound holds if and only if
$z=(l+1)^2-\frac{1}{2}$ which is in the interval $[\lambda_{(l)},\lambda_{(l+1)}]$.
\end{proof}

As anticipated, a careful inspection of the proof of Proposition \ref{prop:R1S2bounds} allows to establish an improved two-sided bound for the first Riesz-mean with a sharp first term and 
a second term  of order $z$ with an oscillating, but positive, coefficient, thus improving the results of \cite{ilyin_laptev,stri}.

\begin{thm}\label{bound-R-1-2-improved}
  For all $z\geq0$, the following bounds hold for the first Riesz-mean $R_1$ of the Laplacian eigenvalues on $\mathbb{S}^2$:
 \begin{equation*}
  \frac{1}{2}\,z^2+2\left(\frac{1}{4}-\psi(w)^2\right)\left(z-\frac{\sqrt{z}}{2}\right)
  \leq R_1(z)\leq
  \frac{1}{2}\,z^2+2\left(\frac{1}{4}-\psi(w)^2\right)\left(z+\frac{\sqrt{z}}{2}+\frac{1}{2}\right),
\end{equation*}
where $\psi$ is the fluctuation function \eqref{fluct} and $w$ is defined by the relation $w(w+1)=z$.
Consequently, for any $\epsilon>0$:
\begin{equation*}
 \underset{z\to+\infty}{\lim}z^{-\epsilon-1/2}\left( R_1(z)- \frac{1}{2}\,z^2-2\left(\frac{1}{4}-\psi(w)^2\right)z\right)=0.
\end{equation*}
\end{thm}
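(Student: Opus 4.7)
The plan is to start from the exact identity already established in the proof of Proposition \ref{prop:R1S2bounds}, namely
\[
R_1(w(w+1)) = \tfrac{1}{2}w^2(w+1)^2 + \tfrac{1}{2}(w+\lfloor w\rfloor+1)(w+\lfloor w\rfloor+2)(w-\lfloor w\rfloor)(\lfloor w\rfloor+1-w).
\]
Under the substitution $z=w(w+1)$ the first summand equals $\tfrac{1}{2}z^2$, so everything reduces to analyzing the second summand. The main observation is that the last two factors are exactly $\{w\}(1-\{w\})$, and a one-line algebraic check gives $\{w\}(1-\{w\}) = \tfrac{1}{4}-\bigl(\{w\}-\tfrac{1}{2}\bigr)^2 = \tfrac{1}{4}-\psi(w)^2$, which is non-negative. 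This identifies the exact prefactor appearing in the theorem and reduces the problem to two-sided control of the factor $(w+\lfloor w\rfloor+1)(w+\lfloor w\rfloor+2)$ in terms of $z$.

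Next I would use the elementary bracketing $w-1<\lfloor w\rfloor\leq w$ to obtain
\[
(2w)(2w+1)\;\leq\;(w+\lfloor w\rfloor+1)(w+\lfloor w\rfloor+2)\;\leq\;(2w+1)(2w+2),
\]
i.e.\ $4w^2+2w$ from below and $4w^2+6w+2$ from above. Rewriting these using $w(w+1)=z$ gives $4z-2w$ and $4z+2w+2$, respectively. Since $w\geq 0$ satisfies $w^2\leq w^2+w=z$, we have $w\leq\sqrt{z}$, and hence the cleaner bounds $4z-2\sqrt{z}$ and $4z+2\sqrt{z}+2$. Multiplying by the non-negative factor $\tfrac{1}{2}\bigl(\tfrac{1}{4}-\psi(w)^2\bigr)$ and adding $\tfrac{1}{2}z^2$ yields exactly the two inequalities in the statement.

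For the asymptotic corollary, the same pair of bounds shows
\[
\left| R_1(z)-\tfrac{1}{2}z^2-2\bigl(\tfrac{1}{4}-\psi(w)^2\bigr)z\right| \;\leq\; \bigl(\tfrac{1}{4}-\psi(w)^2\bigr)\bigl(2\sqrt{z}+2\bigr) \;\leq\; \tfrac{1}{2}\sqrt{z}+\tfrac{1}{2},
\]
which is $O(z^{1/2})$. Dividing by $z^{\epsilon+1/2}$ gives $O(z^{-\epsilon})\to 0$ as $z\to+\infty$, establishing the limit.

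There is essentially no conceptual obstacle: all the ``hard'' work was done in Proposition \ref{prop:R1S2bounds} when the closed form for $R_1(w(w+1))$ was derived. The only real content here is the recognition that the fractional-part factor is precisely $\tfrac{1}{4}-\psi(w)^2$ (which turns an oscillating \emph{cancelation} into an oscillating but \emph{positive} correction of order $z$), together with the routine algebraic exercise of bounding $(w+\lfloor w\rfloor+1)(w+\lfloor w\rfloor+2)$ against $z$. The mildest care is needed to ensure that the correct $\sqrt{z}$-level error is obtained rather than the crude $O(z)$ bound that $\lfloor w\rfloor\leq w$ would naively suggest.
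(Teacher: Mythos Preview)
Your proof is correct and follows essentially the same approach as the paper: start from the closed form \eqref{proof_prop_2}, recognize the factor $(w-\lfloor w\rfloor)(\lfloor w\rfloor+1-w)=\tfrac{1}{4}-\psi(w)^2$, and bound the remaining factor using $\lfloor w\rfloor\in(w-1,w]$ together with $w\le\sqrt{z}$. The paper's proof sketch is even terser (substitute $\lfloor w\rfloor=w-\psi(w)-\tfrac{1}{2}$ and use $w\le\sqrt{z}\le w+1$, $|\psi|\le\tfrac{1}{2}$), but the content is identical.
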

\begin{proof}
It is sufficient to consider the third line of \eqref{proof_prop_2},  substitute $\lfloor w\rfloor$ with $w-\psi(w)-\frac{1}{2}$,  and use the bounds $w\leq \sqrt{z}\leq w+1$ and $|\psi|\leq\frac{1}{2}$.
\end{proof}

We remark that the lower bound is given by the first two terms of the asymptotic expansion of $R_1(z)$ which we prove in general for $d\geq 2$ (see Theorem \ref{R1-d-sphere}), plus a term of negative sign of (lower) order $\sqrt{z}$, and the upper bound is given by the same two terms, plus a term of   positive sign of (lower) order $\sqrt{z}$, as it is expected. The results of Proposition \ref{prop:R1S2bounds} and Theorem \ref{bound-R-1-2-improved} are illustrated in Figure \ref{f1}.

\begin{figure}[ht]
\centering
\includegraphics[width=0.7\textwidth]{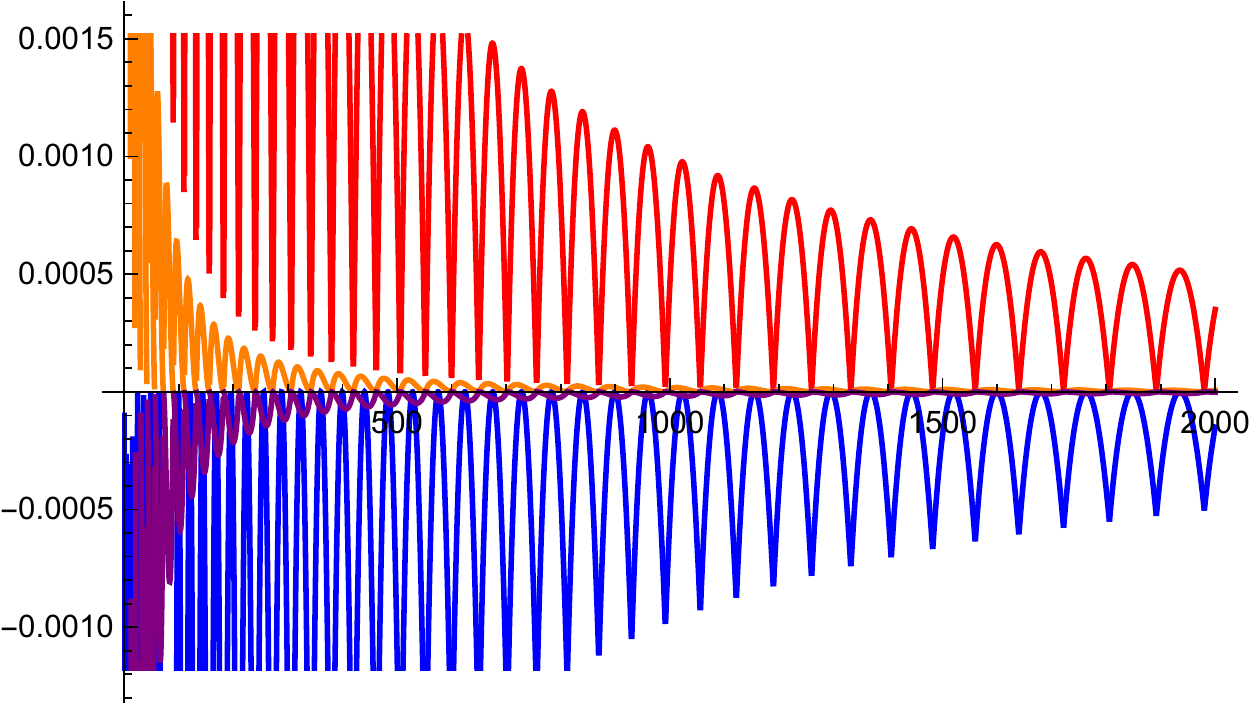}
\caption{In blue and red, the ratio (minus $1$) of $R_1$ and the upper and the lower bounds of Proposition \ref{prop:R1S2bounds}, respectively; in purple and orange the same quantity, but with the improved upper and lower bounds of Theorem \ref{bound-R-1-2-improved}}\label{f1}
\end{figure}

\subsection{The hemisphere $\mathbb S^2_+$}\label{2hemi}
Now we pass to consider the case of the two dimensional hemisphere $\mathbb S^2_+$. 
Since the hemisphere $\mathbb{S}^2_+$ has a non-empty boundary, to consider problems on $\mathbb{S}^2_+$ it is necessary to impose boundary conditions. We will consider both the cases of Dirichlet and Neumann  boundary conditions imposed on the equator, that is problems \eqref{dir_domain} and \eqref{neu_domain} with $M^d = \mathbb{S}^2$ and $\Omega =\mathbb{S}^2_+$. 

\subsubsection{Dirichlet Laplacian} 
We start with the case of Dirichlet boundary condition imposed on the equator. As is well-known, the energy levels of the Dirichlet Laplacian on $\mathbb{S}^2_+$  are the same
of the Laplacian on $\mathbb{S}^2$, that is  $\lambda_{(l)}=l(l+1)$, but with corresponding multiplicities $l$, where  $l\in\mathbb{N}\setminus\{0\}$.

Since the work by B\'erard and Besson \cite{BeBe80} it is known that the eigenvalues of the Dirichlet Laplacian on 
$\mathbb{S}^2_+$ satisfy  P\'olya's conjecture. The same result, together with many more on P\'olya's-type inequalities on spheres and hemisphere, is proved by Freitas, Mao and Salvessa \cite{FrMaSa22}. First, we provide another elementary proof of P\'olya's conjecture for $\mathbb{S}^2_+$.

\begin{prop}\label{prop:PoS+}
  For all $z\geq 0$, the counting function $N^D(z)$ for the Dirichlet Laplacian on $\mathbb{S}^2_+$ satisfies the following inequality:
  \begin{equation}\label{HS-2-N-bound}
    N^D(z)\leq \frac{1}{2}\,z.
  \end{equation}
\end{prop}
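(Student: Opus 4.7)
The plan is to exploit the explicit spectral data recalled just above the statement: the Dirichlet eigenvalues of $\mathbb{S}^2_+$ are the energy levels $\lambda_{(l)}=l(l+1)$ with multiplicity $l$ for $l\in\mathbb{N}\setminus\{0\}$. Since $N^D$ is a piecewise-constant, right-continuous step function that jumps only at the $\lambda_{(l)}$, it suffices to check the inequality at the critical abscissas and to argue that the bound propagates to the whole interval on which $N^D$ is constant.

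Concretely, I would fix $z\geq 0$ and let $L=L(z)$ be the largest nonnegative integer with $L(L+1)\leq z$ (with the convention $L=0$ if $z<2$, which handles the trivial case since then $N^D(z)=0$). By the explicit multiplicities,
\[
N^D(z)=\sum_{l=1}^{L}l=\frac{L(L+1)}{2}.
\]
Now the defining property of $L$ gives $L(L+1)\leq z$, so
\[
N^D(z)=\frac{L(L+1)}{2}\leq\frac{z}{2},
\]
which is exactly \eqref{HS-2-N-bound}. Equality occurs precisely at the energy levels $z=L(L+1)$.

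This is essentially a one-line computation once the multiplicities are in hand, so there is no real obstacle. The only thing to be careful about is the bookkeeping: the multiplicities of the $l$-th energy level being $l$ (rather than $2l+1$ as on $\mathbb{S}^2$) is exactly what makes the partial sum telescope into $L(L+1)/2$, matching the level $\lambda_{(L)}=L(L+1)$ divided by $2$; this is the structural reason P\'olya's conjecture holds on $\mathbb{S}^2_+$, in contrast to the situation on $\mathbb{S}^2$ treated in Section \ref{2sphere}. If desired, one can rephrase the argument through the substitution $z=w(w+1)$, $L=\lfloor w\rfloor$, matching the notation of Theorem \ref{bound-R-1-2-improved}, but no additional content is needed beyond the elementary inequality $\lfloor w\rfloor(\lfloor w\rfloor+1)\leq w(w+1)$.
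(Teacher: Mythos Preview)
Your proof is correct and is essentially the same as the paper's: the paper writes $z=w(w+1)$, sets $L=\lfloor w\rfloor$, and computes $N^D(z)-z/2=\frac{\lfloor w\rfloor(\lfloor w\rfloor+1)}{2}-\frac{w(w+1)}{2}=-\frac{(w-\lfloor w\rfloor)(w+1+\lfloor w\rfloor)}{2}\leq 0$, which is exactly your inequality $L(L+1)\leq z$ phrased through the substitution you mention at the end.
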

\begin{proof}
 As already done in previous proofs, we set $z=w(w+1)$ with $w\geq 0$. Then we have
  \begin{equation*}
    N^D(w(w+1))-\frac{w(w+1)}{2}=\sum_{l=1}^{\lfloor w\rfloor}l\, - \frac{w(w+1)}{2}=-\,\frac{(w-\lfloor w\rfloor)(w+1+\lfloor w\rfloor)}{2}\leq 0,
  \end{equation*}
  that clearly proves the bound.
\end{proof}

Actually, we are able to prove a two-sided bound for the counting function, where the upper bound improves the result of Proposition \ref{prop:PoS+}.
\begin{thm}\label{prop:PoS+imp}
  For all $z\geq 0$, the counting function $N^D(z)$ for the Dirichlet Laplacian on $\mathbb{S}^2_+$ satisfies the following inequality:
  \begin{equation*}
 \frac{z}{2}\left(1-\left(\psi(w)+\frac{1}{2}\right)z^{-\frac{1}{2}}\right)^2-\frac{1}{8}\left(\psi(w)+\frac{1}{2}\right)z^{-\frac{1}{2}} \leq   N^D(z)\leq \frac{z}{2}\left(1-\left(\psi(w)+\frac{1}{2}\right)z^{-\frac{1}{2}}\right)^2 ,
  \end{equation*}
  where $w$ is defined by the relation $w(w+1)=z$.
\end{thm}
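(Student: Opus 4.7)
The plan is to compute $N^D(z)$ exactly as a closed-form function of $\lfloor w\rfloor$ (and hence of $\psi(w)$), write the result purely in terms of $z$ and the shift $\alpha := \psi(w)+\tfrac12$, and then derive both bounds by comparing $\sqrt{z}$ with $\sqrt{z+\tfrac14}=w+\tfrac12$.

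First I would use the substitution $z=w(w+1)$, $w\geq 0$, exactly as in Propositions \ref{prop:R1S2bounds} and \ref{prop:PoS+}. Since the Dirichlet eigenvalue $\lambda_{(l)}=l(l+1)$ carries multiplicity $l$, one has
\begin{equation*}
N^D(z)=\sum_{l=1}^{\lfloor w\rfloor} l=\frac{\lfloor w\rfloor(\lfloor w\rfloor+1)}{2}=\frac{1}{2}\Bigl(\lfloor w\rfloor+\tfrac12\Bigr)^{2}-\frac{1}{8}.
\end{equation*}
Setting $\alpha:=\psi(w)+\tfrac12\in[0,1]$ and using $\lfloor w\rfloor+\tfrac12 = w-\psi(w)=w+\tfrac12-\alpha$ together with the identity $(w+\tfrac12)^{2}=w^{2}+w+\tfrac14=z+\tfrac14$, I would get the closed form
\begin{equation*}
N^D(z)=\frac{z}{2}-\alpha\sqrt{z+\tfrac14}+\frac{\alpha^{2}}{2}.
\end{equation*}

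The claimed bounds can now be read off by comparing this with the right-hand side of the theorem, which (after expanding the square) equals $\frac{z}{2}-\alpha\sqrt{z}+\frac{\alpha^{2}}{2}$. Indeed, the upper bound reduces to $\sqrt{z+\tfrac14}\geq\sqrt{z}$, which is immediate. The lower bound is equivalent to
\begin{equation*}
\sqrt{z+\tfrac14}-\sqrt{z}\leq \frac{1}{8\sqrt z},
\end{equation*}
and this follows from rationalizing:
\begin{equation*}
\sqrt{z+\tfrac14}-\sqrt{z}=\frac{1/4}{\sqrt{z+\tfrac14}+\sqrt{z}}\leq \frac{1/4}{2\sqrt z}=\frac{1}{8\sqrt z}.
\end{equation*}
Multiplying through by $\alpha\geq 0$ yields the desired inequality.

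There is essentially no obstacle: the only idea is the elementary algebraic rewriting $\lfloor w\rfloor(\lfloor w\rfloor+1)=(\lfloor w\rfloor+\tfrac12)^{2}-\tfrac14$, which makes the fluctuation $\psi(w)$ appear naturally through $\lfloor w\rfloor+\tfrac12=w-\psi(w)$. Everything else is a two-line manipulation using $w+\tfrac12=\sqrt{z+\tfrac14}$. One mild issue is that $z=0$ would have to be handled separately (both bounds become vacuous or singular there), so the statement should be understood for $z>0$; at isolated points $z=\lambda_{(l)}$ one has $\alpha=0$ and the two bounds collapse to the exact value $N^D(z)=\tfrac{z}{2}$.
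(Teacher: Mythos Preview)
Your proof is correct and essentially the same as the paper's. The paper expands $\lfloor w\rfloor(\lfloor w\rfloor+1)=w(w+1)-\alpha(2w+1)+\alpha^{2}$ and then uses the bounds $2\sqrt{z}\le 2w+1\le 2\sqrt{z}+\tfrac{1}{4\sqrt{z}}$, while you complete the square to get $N^D(z)=\tfrac{z}{2}-\alpha\sqrt{z+\tfrac14}+\tfrac{\alpha^{2}}{2}$ and compare $\sqrt{z+\tfrac14}$ with $\sqrt z$; since $2w+1=2\sqrt{z+\tfrac14}$, these are the same inequality written in different variables.
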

\begin{proof}
We prove the inequalities for $z=w(w+1)$, $w \geq 0$. We have
\begin{multline*}
N(w(w+1)) = \frac{\lfloor w \rfloor(\lfloor w \rfloor+1)}{2} 
= \frac{(w+\lfloor w \rfloor-w)(w+1+\lfloor w \rfloor-w)}{2} 
\\=\frac{w(w+1)-\left(\psi(w)+\frac{1}{2}\right)(2w+1)+\left(\psi(w)+\frac{1}{2}\right)^2}{2}.
\end{multline*}
For the upper bound it suffices to note that $2w+1 \geq 2\sqrt{w(w+1)}$ and recall the substitution $z=w(w+1)$. In fact,
\begin{multline*}
\frac{w(w+1)-\left(\psi(w)+\frac{1}{2}\right)(2w+1)+\left(\psi(w)+\frac{1}{2}\right)^2}{2}\\
\leq \frac{w(w+1)}{2} - \left(\psi(w)+\frac{1}{2}\right)\sqrt{w(w+1)}+\frac{1}{2}\left(\psi(w)+\frac{1}{2}\right)^2\\
=\frac{w(w+1)}{2}\left(1-\left(\psi(w)+\frac{1}{2}\right)(w(w+1))^{-\frac{1}{2}}\right)^2.
\end{multline*}
The lower bound can be proved in the same way by noting that $2w+1 \leq 2\sqrt{w(w+1)}+\frac{1}{4\sqrt{w(w+1)}}$.
\end{proof}
The results of  Theorem \ref{prop:PoS+imp} are illustrated in Figure \ref{f2}.
\begin{rem}
We remark that the upper bound coincides with the expression given by the leading term in Weyl's law plus the second and the third terms found in the expansion \eqref{hemisphere-d-N-of-z-three-term-asymptotics} for $N^D(z)$ in Theorem \ref{ND} for all $d\geq 2$. The lower bound coincides with the same three terms, and the further term which one can find going further in the asymptotic expansion of $N^D(z)$ (which is not difficult in the case $d=2$).
\end{rem}

\begin{figure}[ht]
     \centering
     \begin{subfigure}[b]{0.4\textwidth}
         \centering
         \includegraphics[width=\textwidth]{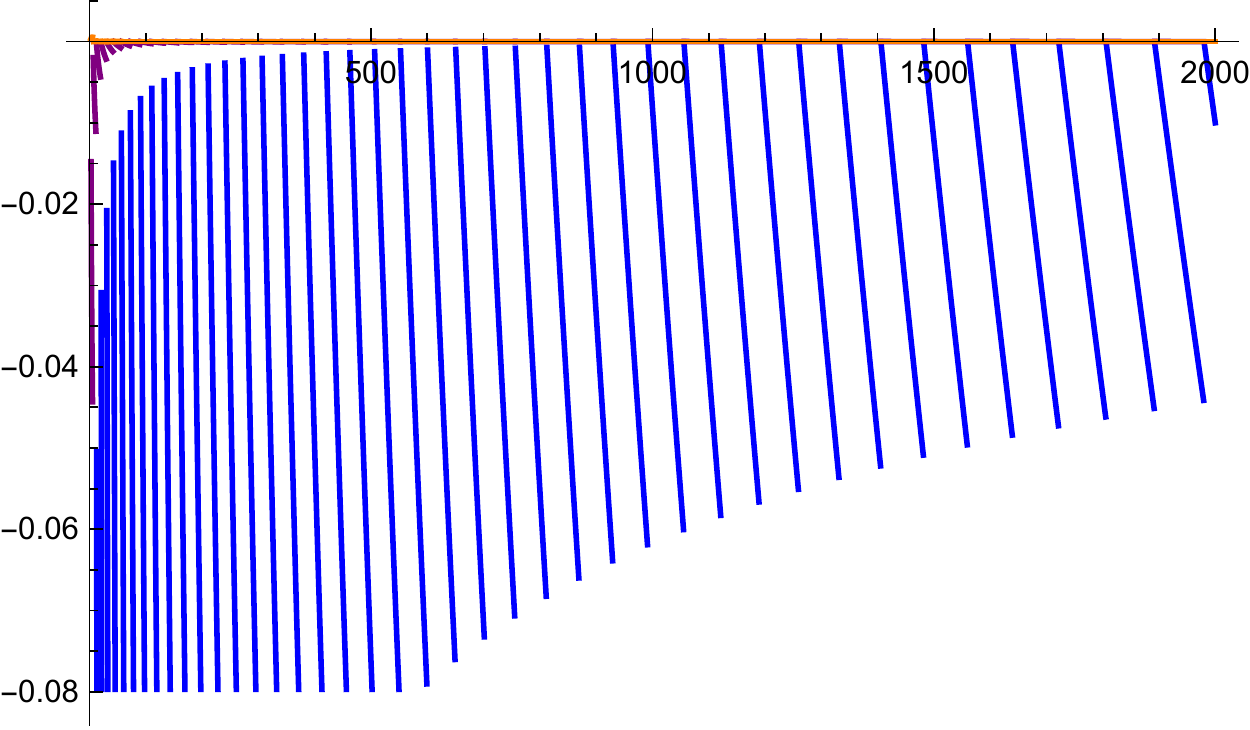}
     \end{subfigure}
     \hfill
     \begin{subfigure}[b]{0.4\textwidth}
         \centering
         \includegraphics[width=\textwidth]{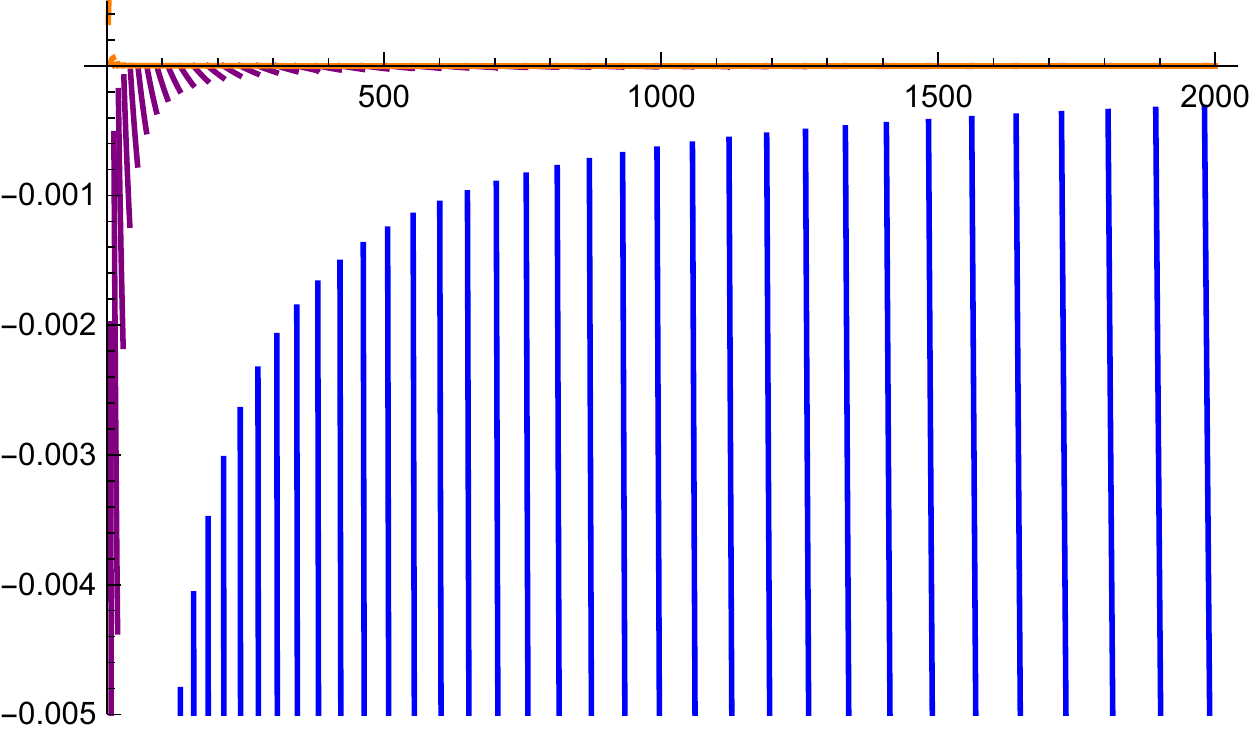}
     \end{subfigure}
     \hfill
     \begin{subfigure}[b]{0.4\textwidth}
         \centering
         \includegraphics[width=\textwidth]{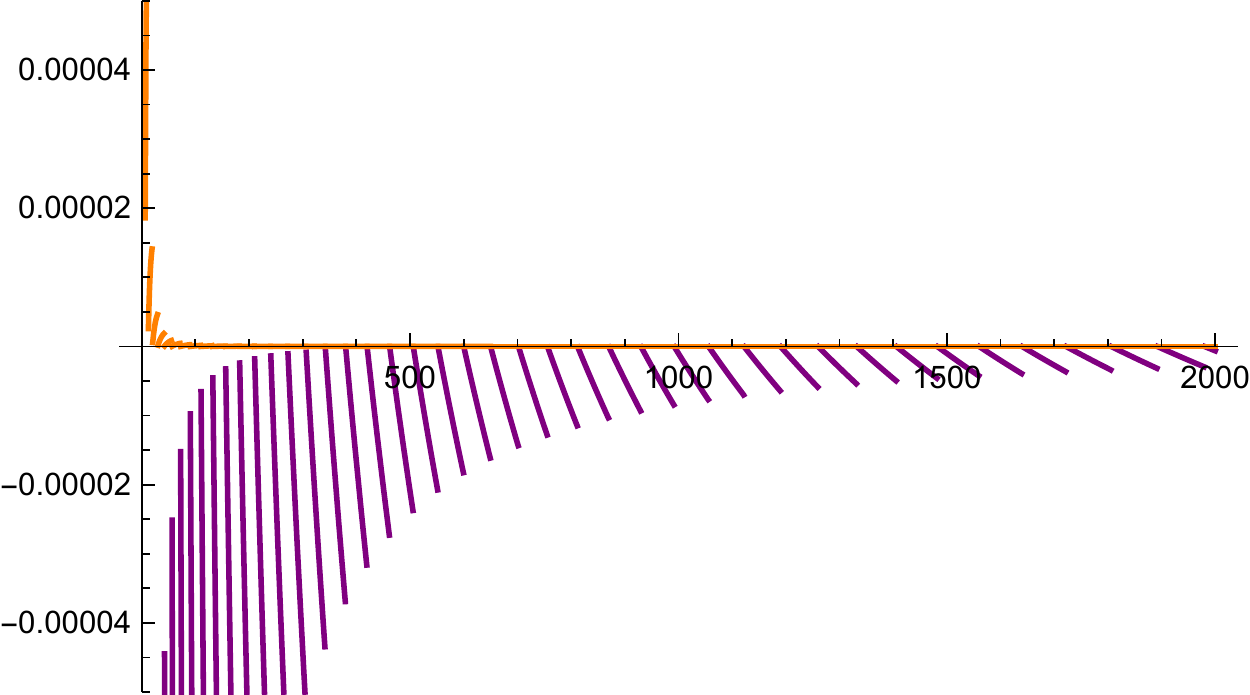}
     \end{subfigure}
     \hfill
     \begin{subfigure}[b]{0.4\textwidth}
         \centering
         \includegraphics[width=\textwidth]{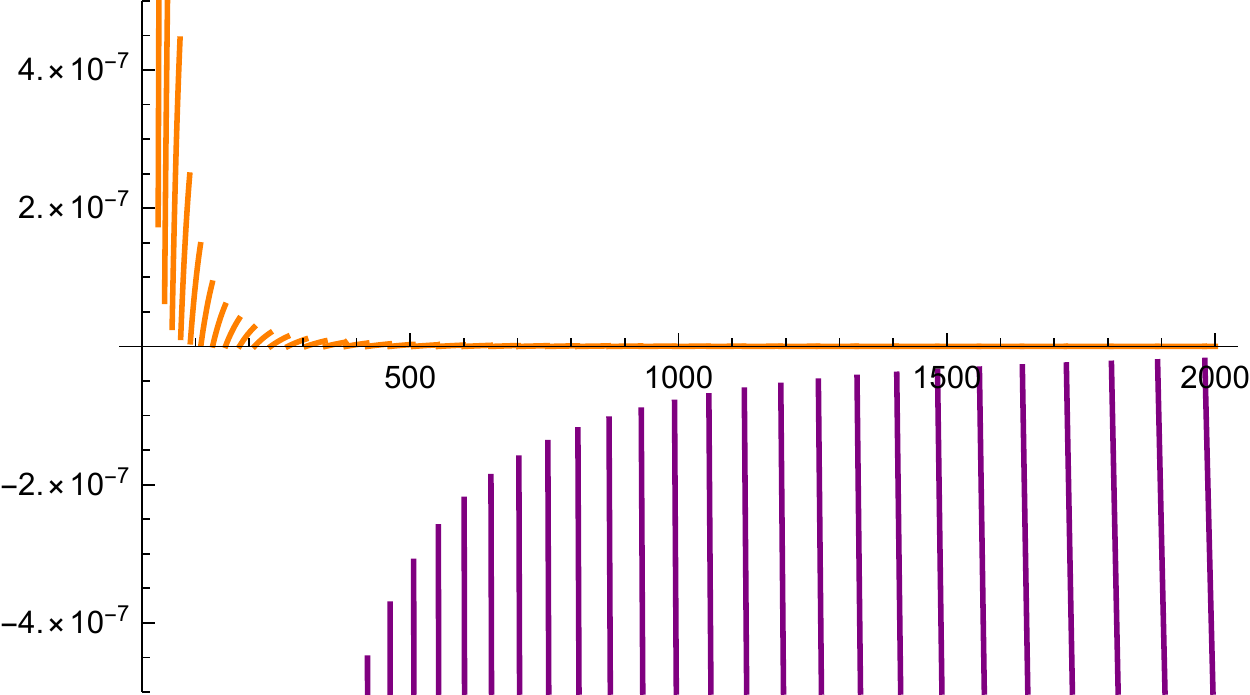}
     \end{subfigure}
        \caption{In blue, the ratio (minus $1$) of the function $N^D$ and the leading term in Weyl's law $z/2$, while in purple and orange the ratio (minus $1$) of $N^D$ and the improved upper and lower bound of Theorem \ref{prop:PoS+imp}, respectively. The four pictures represent the same ratios but at different scales.}\label{f2}
       
\end{figure}

We pass now to consider Weyl sharp upper and lower bounds for the first Riesz-mean $R_1^D$. The semiclassical expansion of $R_1^D$ reads
\begin{equation*}
  R_1^D(z) = L_{1,2}^{class}|\mathbb{S}_{+}^2|z^2-\frac{1}{4}L_{1,1}^{class}|\partial\mathbb{S}_{+}^2|z^{3/2}+O(z)
  =\frac{1}{4}\,z^2- \frac{1}{3}\,z^{3/2} +O(z) \quad \mbox{ as } z\to +\infty
\end{equation*}
where the $O(z)$ term is oscillatory and non-negative (see Theorem \ref{R1-d-hemi-N}; see \cite{FR_GR} for Euclidean domains). Note that $R_1^D$ admits a two-term ``standard'' expansion as in \eqref{two-terms generic} (the second term is a power-like function), contrarily to $N^D$  (see \eqref{hemisphere-d-N-of-z-three-term-asymptotics}).

 Note also that the leading term in Weyl's law is an upper bound for $R_1^D$ and this follows immediately from the validity of P\'olya's conjecture (Proposition \ref{prop:PoS+}).
 
 
 In the following theorem we derive upper and lower bounds for $R_1^D$ with also lower order terms (see Figure \ref{f34} for an illustration of the result).

\begin{thm}\label{prop:R1S+}
 For all $z\geq 0$ the following bounds hold for the first Riesz-mean $R_1^D$ of the Dirichlet Laplacian eigenvalues on $\mathbb{S}^2_+$:
 \begin{equation*}
  \frac{1}{4}\,z^2-\frac{1}{3}\,z\sqrt{z+\frac{1}{4}}\leq R_1^D(z)\leq   \frac{1}{4}\,z^2- \frac{1}{3}\,z\sqrt{z+\frac{1}{4}}+\frac{1}{4}\,z.
\end{equation*}
Moreover, equality in the lower   bound  occurs if and only if $z=\lambda_{(l)}$ for some $l \in \mathbb{N}\setminus\{0\}$.
\end{thm}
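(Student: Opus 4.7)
Following the substitution technique used for $\mathbb{S}^2$ in Proposition \ref{prop:R1S2bounds}, I would set $z = w(w+1)$ for $w \geq 0$, write $n = \lfloor w\rfloor$ and $t = w - n \in [0,1)$, and exploit the fact that the Dirichlet eigenvalue $\lambda_{(l)} = l(l+1)$ has multiplicity $l$ on $\mathbb{S}^2_+$. Using the classical power-sum formulas, this yields the explicit representation
\[
R_1^D(w(w+1)) \;=\; \sum_{l=1}^{n} l\bigl(w(w+1)-l(l+1)\bigr) \;=\; \frac{n(n+1)}{2}\,w(w+1) \,-\, \frac{n^{2}(n+1)^{2}}{4} \,-\, \frac{n(n+1)(2n+1)}{6}.
\]
The key structural observation is that $z+\frac{1}{4} = (w+\frac{1}{2})^{2}$, so $\sqrt{z+1/4} = w+\tfrac12$ and the candidate lower bound simplifies to
\[
\frac{z^{2}}{4} - \frac{z\sqrt{z+1/4}}{3} \;=\; \frac{w^{2}(w+1)^{2}}{4} - \frac{w(w+1)(2w+1)}{6},
\]
which is exactly the \emph{continuous analogue} of $R_{1}^{D}(w(w+1))$, obtained formally by replacing $n$ with $w$.

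For the lower bound, using the identity $w(w+1)-n(n+1) = t(2n+t+1)$, a direct computation and the factorization $3t^{3}+2t^{2}-3t-2 = (t-1)(t+1)(3t+2)$ yield the clean expression
\[
R_{1}^{D}(w(w+1)) - \left(\frac{w^{2}(w+1)^{2}}{4} - \frac{w(w+1)(2w+1)}{6}\right) \;=\; \frac{t(1-t)\bigl[\,12\,n(n+t+1) + (t+1)(3t+2)\,\bigr]}{12}.
\]
Since $t \in [0,1)$, every factor on the right-hand side is nonnegative, and the whole expression vanishes exactly when $t = 0$, i.e., when $w = l \in \mathbb{N}$, equivalently when $z = \lambda_{(l)}$. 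This proves the lower bound together with the equality characterization.

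For the upper bound, the task reduces to showing that the difference above is at most $\frac{z}{4} = \frac{w(w+1)}{4}$. Multiplying through by $12$, this amounts to verifying the polynomial inequality
\[
3(n+t)(n+t+1) - t(1-t)\bigl[12\,n(n+t+1)+(t+1)(3t+2)\bigr] \;\geq\; 0.
\]
Expanding and collecting powers of $n$, one can regroup the left-hand side as
\[
3n^{2}(1-2t)^{2} + 3n\bigl(1-2t+4t^{3}\bigr) + t(1+2t^{2}) + 3t^{4},
\]
a sum of manifestly nonnegative terms; the only non-obvious point is the nonnegativity of $1-2t+4t^{3}$ on $[0,1)$, which follows from elementary calculus (the minimum is attained at $t^{*}=1/\sqrt{6}$ and equals $1-\tfrac{4}{3\sqrt{6}}>0$). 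The main obstacle in the argument is the algebraic bookkeeping required to obtain the two factorizations cleanly: the result is conceptually simple, but the intermediate polynomials are somewhat unwieldy, and guessing the correct regrouping in the upper-bound step (so that it becomes a sum of squares plus positive monomials) is the step most likely to go wrong without the guide provided by the asymptotic expansion of $R_{1}^{D}$ from Theorem \ref{R1-d-hemi-N}.
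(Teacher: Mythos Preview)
Your proof is correct and follows essentially the same approach as the paper: the substitution $z=w(w+1)$, the explicit power-sum formula for $R_1^D$, and the factorization of the lower-bound remainder all coincide with the paper's argument (the paper writes the bracket as $3(w+\lfloor w\rfloor+1)^2-(1+w-\lfloor w\rfloor)$, which expands to your $12n(n+t+1)+(t+1)(3t+2)$). For the upper bound the paper also reduces to a sum-of-nonnegatives decomposition, though with a slightly different regrouping---it combines the $n^2$ and part of the $n$ contribution into $n(n+1)(t-\tfrac12)^2$ and is left with $\frac{t}{2}(1-2t(1-t))n$, whose positivity is immediate, whereas your splitting requires the short calculus check of $1-2t+4t^3\ge 0$; both decompositions are valid and amount to the same identity.
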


\begin{proof}
 As in the previous proofs, we derive the bounds for $z=w(w+1)$,  for all $w>0$. We first note that 
\begin{multline*}
     R_1^D(w(w+1))
      = \sum_{l=1}^{\lfloor w\rfloor}l(w(w+1)-l(l+1))\\
        =-\frac{1}{4}\, (w-\lfloor w\rfloor)^2(w+\lfloor w\rfloor+1)^2
       -\frac{1}{6}\,\lfloor w\rfloor(\lfloor w\rfloor+1)(2\lfloor w\rfloor+1)+\frac{1}{4}\,w^2(w+1)^2.
 \end{multline*}
For the lower bound we write $R_1^D$ as follows

\begin{multline*}
     R_1^D(w(w+1))-\frac{1}{4}\,w^2(w+1)^2+\frac{1}{6}\,w(w+1)(2w+1)\\
        =-\frac{1}{4}\, (w-\lfloor w\rfloor)^2(w+\lfloor w\rfloor+1)^2+\frac{1}{6}\bigg(w(w+1)(2w+1)- \lfloor w\rfloor(\lfloor w\rfloor+1)(2\lfloor w\rfloor+1)\bigg)\,.
\end{multline*}
We add and subtract
 $\displaystyle \frac{1}{4}\, (w-\lfloor w\rfloor)(w+\lfloor w\rfloor+1)^2$ to the right hand side of the previous equality.
First we note that
\begin{equation*}
  \begin{split}
     -\frac{1}{4}\, (w-\lfloor w\rfloor)^2(w+\lfloor w\rfloor+1)^2+\frac{1}{4}\, (w-\lfloor w\rfloor)(w+\lfloor w\rfloor+1)^2
        =\frac{1}{4}(w-\lfloor w\rfloor)(1+\lfloor w\rfloor -w)(w+\lfloor w\rfloor+1)^2.
  \end{split}
\end{equation*}
Moreover,
\begin{multline*}
     \frac{1}{6}\left(w(w+1)(2w+1)- \lfloor w\rfloor(\lfloor w\rfloor+1)(2\lfloor w\rfloor+1)\right)\\
        =\frac{1}{6}(w-\lfloor w\rfloor)\left(2\lfloor w\rfloor^2+2w^2+2w\lfloor w\rfloor+3w+3\lfloor w\rfloor+1\right)\\
       =\frac{1}{4}(w-\lfloor w\rfloor)\left(\frac{1}{3}\,(w-\lfloor w\rfloor)^2+(w+\lfloor w\rfloor)^2+2w+2\lfloor w\rfloor+\frac{2}{3}\right),
\end{multline*}
and therefore
\begin{multline*}
    -\frac{1}{4}\, (w-\lfloor w\rfloor)(w+\lfloor w\rfloor+1)^2+\frac{1}{6}\left(w(w+1)(2w+1)- \lfloor w\rfloor(\lfloor w\rfloor+1)(2\lfloor w\rfloor+1)\right)\\
        =\frac{1}{12}(w-\lfloor w\rfloor)\left((w-\lfloor w\rfloor)^2-1\right).
\end{multline*}
Combining both we get
\begin{multline*}
     R_1^D(w(w+1))-\frac{1}{4}\,w^2(w+1)^2+\frac{1}{6}\,w(w+1)(2w+1)\\
        =\frac{1}{12}\, (w-\lfloor w\rfloor)\bigg(3(1+\lfloor w\rfloor -w)(w+\lfloor w\rfloor+1)^2+(w-\lfloor w\rfloor)^2-1\bigg)\\
       =\frac{1}{12}\, (w-\lfloor w\rfloor) (1+\lfloor w\rfloor -w)\left(3(w+\lfloor w\rfloor+1)^2-(1+w-\lfloor w\rfloor)\right),
\end{multline*}
which is obviously non-negative. In particular, the right hand side of the previous equality vanishes if and only if  $w$ is a non-negative integer. Recalling the substitution $z= w(w+1)$ the statement for the lower bound is proved.

Next we pass to consider the upper bound. For the sake of simplicity from now up to the end of the proof we write 
$w= \lfloor w\rfloor + x$ where $x\in[0,1[$ denotes the fractional part of $w$. We rewrite $R_1^D$ in the following way 
\begin{multline*}
     R_1(w(w+1))-\frac{1}{4}\,w^2(w+1)^2+\frac{1}{6}\,w(w+1)(2w+1)\\
  =\frac{1}{12}\, x(1-x)\left(12\lfloor w\rfloor^2+12\lfloor w\rfloor x+12\lfloor w\rfloor+3x^2+5x+2\right)\\
     =\frac{1}{4}\,(\lfloor w\rfloor+x)(\lfloor w\rfloor+1+x)- \lfloor w\rfloor(\lfloor w\rfloor+1)\left(x-\frac{1}{2}\right)^2\\
       -\frac{x}{2}\left(1-2x(1-x)\right)\lfloor w\rfloor-\frac{x}{12}\left(3x^3+2x^2+1)\right)
     \leq \frac{1}{4}\,(\lfloor w\rfloor+x)(\lfloor w\rfloor+1+x)
\end{multline*}
which is the claimed upper bound.
\end{proof}
\begin{rem}
  In Theorem \ref{prop:R1S+} the lower bound is negative for $0\leq z\leq 2$ and since $R_1^D(z)=0$ for $0\leq z\leq 2$ one can clearly replace it by the trivial bound $0$. Moreover, the upper bound in Theorem \ref{prop:R1S+} also implies the Weyl-sharp upper bound $R_1^D(z)\leq \frac{z^2}{2}$ since  $-\frac{1}{3}z\sqrt{z+\frac{1}{4}}+\frac{1}{4}z \leq 0$ for all $z\geq 5/16$.
\end{rem}


\subsubsection{Neumann Laplacian}
Next we pass to consider the case of Neumann boundary conditions. The energy levels of the Neumann Laplacian on $\mathbb{S}^2_+$  are again the same
of the Laplacian on $\mathbb{S}^2$, that is  $\lambda_{(l)}=l(l+1)$, but with corresponding multiplicities $l+1$, where  $l\in\mathbb{N}$. 

As we have done for the Dirichlet Laplacian on $\mathbb{S}^2_+$, we show Weyl sharp upper and lower bound for the first Riesz-mean $R_1^N$ of the Neumann eigenvalues. The semiclassical expansion of $R_1^N$ is given by 
\begin{equation*}
  R_1^N(z)= L_{1,2}^{class}|\mathbb{S}_{+}^2|z^2+\frac{1}{4}L_{1,1}^{class}|\partial\mathbb{S}_{+}^2|+O(z)
  =\frac{1}{4}\,z^2+ \frac{1}{3}\,z^{3/2} +O(z) \qquad \mbox{ as } z \to+\infty,
\end{equation*}
where the $O(z)$ term is oscillatory and non-negative (see Theorem \ref{R1-d-hemi-NNeu}). We have the following two-sided bound with two sharp terms (see Figure \ref{f34} for an illustration of the result).
\begin{thm}\label{imp-S2-N-R1}
 For all $z \geq 0$ the following bounds hold for the first Riesz-mean $R_1^N$ of the Neumann Laplacian eigenvalues on $\mathbb{S}^2_+$:
 \begin{equation*}
  \frac{1}{4}\,z^2+\frac{1}{3}\,z\sqrt{z+\frac{1}{4}}\leq R_1^N(z)\leq  \frac{1}{4}\,z^2+\frac{1}{3}\,z\sqrt{z+\frac{1}{4}}+z.
\end{equation*}
Moreover, equality in the lower   bound  occurs if and only if $z=\lambda_{(l)}$ for some $l \in \mathbb{N}$.
\end{thm}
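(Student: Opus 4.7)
The strategy mirrors the Dirichlet proof of Theorem \ref{prop:R1S+}. Setting $z=w(w+1)$ with $w\geq 0$, the identity $\sqrt{z+1/4}=w+1/2$ rewrites the target lower bound as $T(w):=\tfrac14 w^2(w+1)^2+\tfrac16 w(w+1)(2w+1)$ and the target upper bound as $T(w)+w(w+1)$. Writing $n:=\lfloor w\rfloor$ and $x:=w-n\in[0,1)$, I would evaluate
\[
R_1^N(w(w+1))=\sum_{l=0}^{n}(l+1)\bigl(w(w+1)-l(l+1)\bigr)
\]
and split it as $R_1^D(w(w+1))+\Delta(w)$, where $\Delta(w)=\sum_{l=0}^{n}\bigl(w(w+1)-l(l+1)\bigr)$ absorbs the shift from Dirichlet multiplicity $l$ to Neumann multiplicity $l+1$.

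For the lower bound, I would invoke the factored form established in Theorem \ref{prop:R1S+} for $R_1^D(w(w+1))-\tfrac14 w^2(w+1)^2+\tfrac16 w(w+1)(2w+1)$, and combine it with $\Delta(w)-\tfrac13 w(w+1)(2w+1)$. A direct check using $\sum_{l=0}^n l(l+1)=\tfrac13 n(n+1)(n+2)$ shows this latter combination vanishes both at $w=n$ and at $w=n+1$, hence factors as $x(1-x)$ times a polynomial in $n,x$. Adding the two pieces, the difference $D(w):=R_1^N(w(w+1))-T(w)$ is expected to admit a representation of the form
\[
D(w)=\frac{x(1-x)}{12}\Bigl[12n+7+7x+3(2n+1+x)^2\Bigr],
\]
which is manifestly non-negative and vanishes precisely when $x\in\{0,1\}$, i.e., $w\in\mathbb{N}$. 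This yields both the lower bound and the equality characterization $z=\lambda_{(l)}$.

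For the upper bound $D(w)\leq w(w+1)=(n+x)(n+x+1)$, I would use $x(1-x)\leq 1/4$: the leading contribution $\tfrac{x(1-x)}{4}(2n+1+x)^2$ is of order $n^2$, matching the order of $w(w+1)$, and the inequality reduces after clearing denominators to checking the non-negativity of a polynomial in $n$ and $x\in[0,1]$ with only non-negative coefficients, which is then immediate. The inequality is strict away from $n=x=0$, so no equality case arises on the upper side.

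The principal obstacle is the algebraic bookkeeping needed to arrive at the clean factored form of $D(w)$: the Dirichlet remainder and the contribution from $\Delta$ come with different-looking expressions and must be combined carefully so that the common factor $x(1-x)$ can be pulled out and the bracketed remainder is recognised as a sum of manifestly non-negative quantities. Once this form is in hand, both bounds and the equality case follow by inspection without further analytic input.
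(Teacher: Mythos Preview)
Your lower-bound argument is correct and matches the paper's approach in spirit: both arrive at a factorization $D(w)=\tfrac{x(1-x)}{12}\cdot P(n,x)$ with $P$ manifestly positive, from which the lower bound and the equality characterization follow immediately. The paper computes $D(w)$ directly in the fluctuation variable $\psi(w)=x-\tfrac12$ rather than passing through the Dirichlet split $R_1^N=R_1^D+\Delta$, but your route is a valid (and arguably more transparent) way to reach the same identity, and your factored form $\tfrac{x(1-x)}{12}\bigl[12n+7+7x+3(2n+1+x)^2\bigr]$ is correct.

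The upper-bound sketch, however, has a gap. Replacing $x(1-x)$ by $\tfrac14$ and then clearing denominators does \emph{not} yield a polynomial with only non-negative coefficients: one obtains
\[
48(n+x)(n+x+1)-\bigl[12n+7+7x+3(2n+1+x)^2\bigr]=36n^2+84nx+24n+45x^2+35x-10,
\]
which is negative near $n=x=0$. The crude bound $x(1-x)\le \tfrac14$ is simply too lossy for $n=0$; e.g.\ at $w=0.1$ it already overshoots $w(w+1)$. The paper handles this by separating off the trivial range $w\le 1$ (where $R_1^N(z)=z$ and the upper bound is obvious) and carrying out the comparison only for $w\ge 1$, where the $-10$ is absorbed by $36n^2+24n\ge 60$. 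Alternatively, you can skip the $\tfrac14$ bound and compare directly: one checks that
\[
12(n+x)(n+x+1)-x(1-x)\bigl[3(2n+1+x)^2+12n+7+7x\bigr]=12n^2(1-x+x^2)+12n(1+x^2+x^3)+x(2+9x+10x^2+3x^3),
\]
which is non-negative for all $n\ge 0$ and $x\in[0,1)$. Either fix closes the gap.
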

\begin{proof}
  As in the previous proofs, we derive the bounds for $z=w(w+1)$,  for all $w \geq 0$.
  By a direct computation one can verify that 
  \begin{equation}\label{HS-2-R-1-Neumann-eq-1}
    \begin{split}
     R_1^N(w(w+1))&-\frac{1}{4}\,w^2(w+1)^2-\frac{1}{6}\,w(w+1)(2w+1)\\
       & =(1-4\psi^2(w))\, \left(w^2+\frac{3-2\psi(w)}{8}\,w+\frac{(7-6\psi(w))(3-2\psi(w))}{192}\right).
  \end{split}
  \end{equation}
The right hand side of equation \eqref{HS-2-R-1-Neumann-eq-1} is clearly non-negative, and thus the lower bound holds. Moreover, the right hand side vanishes for those $w>0$ such that $\psi(w)=\pm 1/2$, that is when $w$ is a natural number and hence $w(w+1)$ is an energy level.

 For the upper bound we may assume $w\geq 1$ since for $w\leq 1$
  we have $R_1(w(w+1))=w(w+1)$ and the upper bound is trivially verified. Now we note that $\frac{3-2\psi(w)}{8}\leq \frac{1}{2}$ and  
$\frac{(7-6\psi)(3-2\psi)}{192}\leq \frac{5}{14}\leq \frac{w}{2}$. Hence 
\[
(1-4\psi^2(w))\, \left(w^2+\frac{3-2\psi(w)}{8}\,w+\frac{(7-6\psi(w))(3-2\psi(w))}{192}\right) \leq w^2+\frac{w}{2}+\frac{w}{2}=w(w+1).
\]
That is the right hand side of equation \eqref{HS-2-R-1-Neumann-eq-1} is bounded above by
$w(w+1)$, which concludes the proof.
\end{proof}


\begin{figure}[ht]
     \centering
     \begin{subfigure}[b]{0.45\textwidth}
         \centering
         \includegraphics[width=\textwidth]{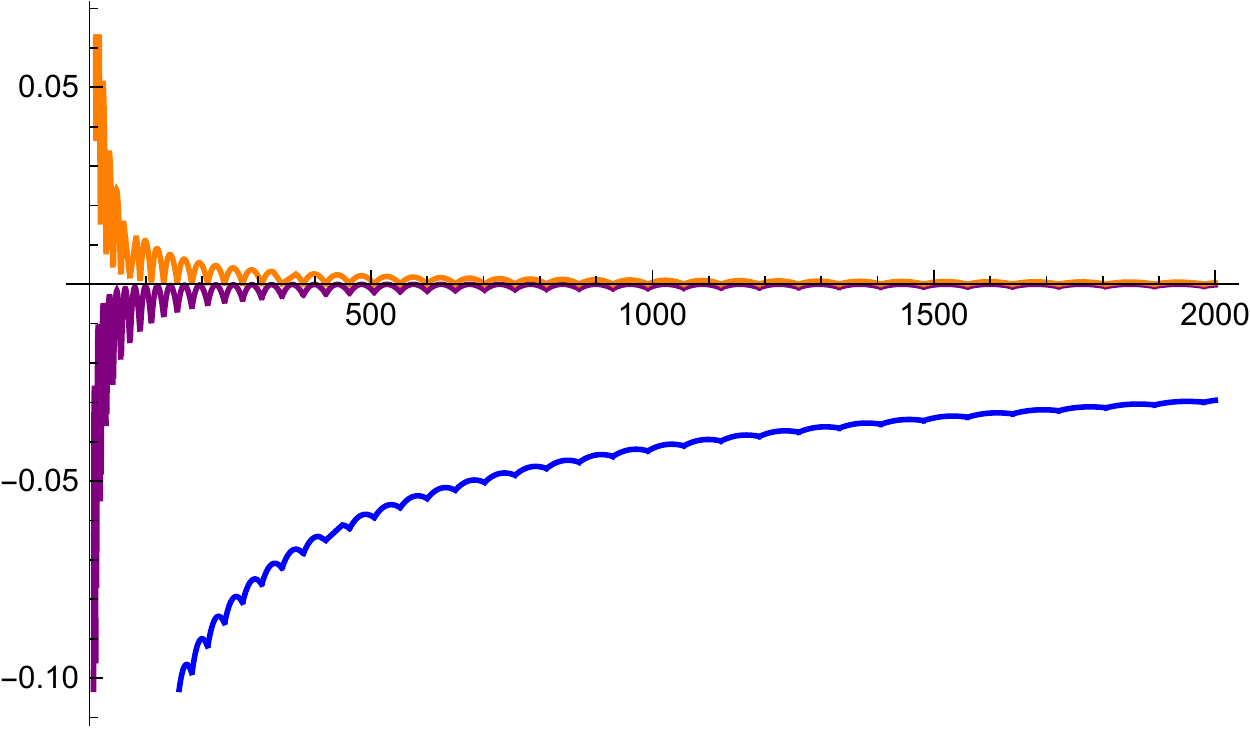}
     \end{subfigure}
     \hfill
     \begin{subfigure}[b]{0.45\textwidth}
         \centering
         \includegraphics[width=\textwidth]{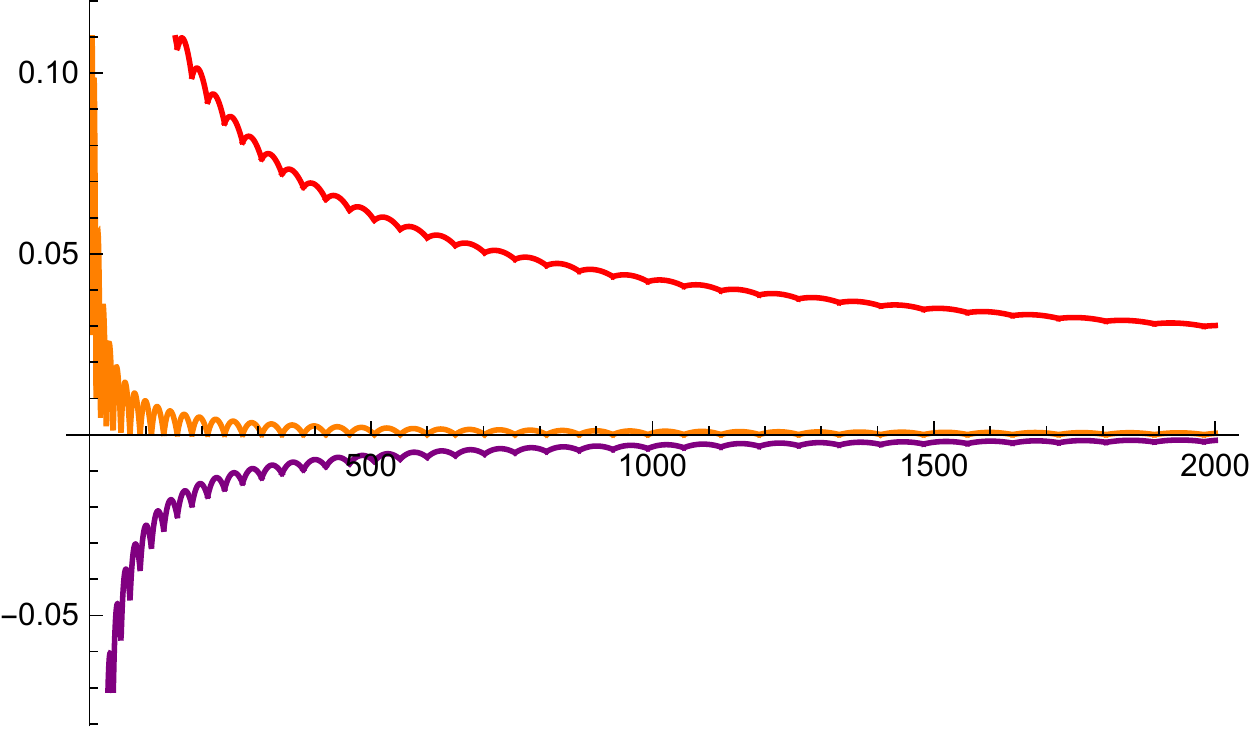}
     \end{subfigure}
    
        \caption{Left: in blue, the ratio (minus $1$) of $R_1^D$ with the leading term in Weyl's law $z^2/4$, in purple and orange the ratio (minus $1$) of $R_1^D$ and the improved upper and lower bounds of Theorem \ref{prop:R1S+}, respectively. Right: in red, the ratio (minus $1$) of $R_1^N$ with the leading term in Weyl's law $z^2/4$, in purple and orange the ratio (minus $1$) of $R_1^N$ and the improved upper and lower bounds of Theorem \ref{imp-S2-N-R1}, respectively.}\label{f34}
       \end{figure}

\subsubsection{Domains in $\mathbb{S}_{+}^2$}

Here we derive upper bounds in the spirit of Berezin-Li-Yau \cite{berezin,LY} for the first Riesz-mean of the eigenvalues of the Dirichlet Laplacian on domains $\Omega$ contained in  the hemisphere $\mathbb{S}_{+}^2$. We recall that, in terms of Riesz-means, Berezin-Li-Yau bounds amount to saying that the leading term in Weyl's law is an upper bound for $R_1^D$.


We recall that we denote by  
\[
0<\lambda_1(\Omega) <\lambda_2(\Omega)\leq  \ldots \leq 
\lambda_j(\Omega) \leq \ldots \nearrow +\infty
\]
 the eigenvalues of the Dirichlet Laplacian on $\Omega$, each repeated in accordance with its multiplicity, and by $\{u_j\}_{j\geq 1}$ the corresponding $L^2(\Omega)$-orthonormal sequence of eigenfunctions.
 
 
We note that Strichartz \cite{stri}  considered Berezin-Li-Yau-type inequalities for the eigenvalues of the Laplacian on domains of the sphere. For general domains of the sphere, an analogue of the Berezin-Li-Yau inequality cannot hold since  
the first eigenvalue on the whole sphere is zero, and actually, the opposite bound holds, see Proposition \ref{prop:R1S2bounds}. In \cite{stri} the author proves a Berezin-Li-Yau-type inequality with a first sharp term and with a lower order correction. The basic estimate relies on an observation of Colin de Verdière and Gallot, already contained in \cite{Ga}. We also refer to \cite{EHIS,ilyin_laptev} for equivalent approaches leading to analogous results.


The principal idea of this section in order to recover an analogue of the Berezin-Li-Yau inequality and improve the result of \cite{ilyin_laptev,stri}   is to get rid of the eigenvalue $0$ of the Laplacian on $\mathbb{S}^2$
by considering only domains $\Omega\subset \mathbb{S}_{+}^2$. Note that the Berezin-Li-Yau inequality (without correction) cannot hold in general as long as the domain is not contained in a hemisphere, even if it is close to it. In fact, for a spherical cap of radius $\pi/2+\epsilon$ in $\mathbb S^2$, P\'olya's conjecture already fails for $\lambda_1$.

We are now ready state our first  result.
Its proof is based on  the {\it averaged variational principle} (i.e., Theorem \ref{thm:AVP}) with the use of the eigenfunction $u_j$ extended to zero outside $\Omega$
as test functions for the Dirichlet Laplacian eigenvalues  on $\mathbb{S}_{+}^2$. For the sake of clarity we have postponed two technical lemmas used in the proof to the end of this subsection.

\begin{thm}\label{thm:blyS2+}
  Let $\Omega$ be a domain in  $\mathbb{S}_{+}^2$. Then for all $z \geq 0$ the following inequality for the first Riesz-mean $R_1^D$ of the eigenvalues of the Dirichlet Laplacian on $\Omega$ holds:
  \begin{equation*}
    R_1^D(z)= \sum_{j\geq 1} \left(z-\lambda_j(\Omega)\right)_{+}\leq \frac{1}{8\pi}|\Omega|z^2.
\end{equation*}

\end{thm}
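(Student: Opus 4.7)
The plan is to apply the averaged variational principle (Theorem \ref{thm:AVP}) with the Dirichlet Laplacian on the full hemisphere as the reference operator. Set $\mathcal H=L^2(\mathbb S_+^2)$, $H=-\Delta^D_{\mathbb S_+^2}$, whose spectrum consists of the energy levels $l(l+1)$ with multiplicity $l$ for $l\geq 1$ and orthonormal eigenbasis $\{Y_{l,i}^+\}$. As indexed family of test functions choose $f_p=\tilde u_j$, $p=j\in\mathbb N\setminus\{0\}$ with counting measure, where $\tilde u_j$ denotes the extension by zero to $\mathbb S_+^2$ of the $L^2(\Omega)$-orthonormal Dirichlet eigenfunction $u_j$. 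The hypothesis $\Omega\subset\mathbb S_+^2$ (so $\partial\Omega$ does not meet the equator) and $u_j|_{\partial\Omega}=0$ ensure $\tilde u_j\in H^1_0(\mathbb S_+^2)$, the form domain of $H$, with $\|\tilde u_j\|^2_{L^2(\mathbb S_+^2)}=1$ and $Q(\tilde u_j,\tilde u_j)=\lambda_j(\Omega)$.

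With $\mathfrak M_0=\{j:\lambda_j(\Omega)\leq z\}$ the integrand $z\|f_p\|^2-Q(f_p,f_p)=z-\lambda_j(\Omega)$ is nonnegative, so the right-hand side of Theorem \ref{thm:AVP} is precisely $R_1^D(z)$. Parseval's identity in $L^2(\Omega)$ against the orthonormal basis $\{u_j\}$ gives
\[
\sum_{j=1}^\infty\big|\langle Y_{l,i}^+,\tilde u_j\rangle_{L^2(\mathbb S_+^2)}\big|^2=\int_\Omega |Y_{l,i}^+(x)|^2\,dx,
\]
so AVP reads
\[
R_1^D(z)\;\leq\;\int_\Omega K(z,x)\,dx,\qquad K(z,x):=\sum_{l\geq 1,\,i}\bigl(z-l(l+1)\bigr)_+|Y_{l,i}^+(x)|^2,
\]
the on-diagonal spectral kernel of $(z-H)_+$. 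The two postponed technical lemmas then close the argument: the first provides an explicit addition-type formula for $\sum_i|Y_{l,i}^+(x)|^2$ in terms of Legendre polynomials $P_l(\cos 2\theta_x)$, so that $K(z,x)$ reduces to a workable sum depending on the latitude $\theta_x$; the second establishes the integral inequality $\int_\Omega K(z,x)\,dx\leq\frac{|\Omega|}{8\pi}z^2$, which combined with the AVP bound yields the theorem.

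The decisive step is the integral estimate. A pointwise bound $K(z,x)\leq z^2/(8\pi)$ is not available: near the north pole, only the odd-$l$ spherical harmonics contribute to $K$ and one checks $K(z,0)\sim z^2/(4\pi)$, twice the Euclidean Weyl density. The hypothesis $\Omega\subset\mathbb S_+^2$ is therefore essential—it forces $\theta_x\in[0,\pi/2)$, constraining the signed Legendre contributions in such a way that the oscillatory excess above $z^2/(8\pi)$ near the pole is compensated after integration by the vanishing of $K$ on the equator. This oscillation-cancellation, made quantitative by the second lemma, is the main analytical obstacle; the identification of the AVP upper bound with the integrated hemisphere kernel is then a routine application of Parseval.
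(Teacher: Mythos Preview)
Your setup of the averaged variational principle is the same as the paper's, and you correctly arrive at $R_1^D(z)\leq\int_\Omega K(z,x)\,dx$ with $K$ the on--diagonal hemisphere Dirichlet kernel. The gap is in what you propose next. You claim a lemma asserting
\[
\int_\Omega K(z,x)\,dx\;\leq\;\frac{|\Omega|}{8\pi}\,z^2\qquad\text{for every domain }\Omega\subset\mathbb S^2_+,
\]
while simultaneously (and correctly) observing that the pointwise bound $K(z,x)\leq z^2/(8\pi)$ fails near the pole. These two statements are incompatible: an inequality $\int_\Omega K\leq c\,|\Omega|$ valid for \emph{all} measurable $\Omega$ is equivalent to $K\leq c$ almost everywhere (let $\Omega$ shrink to any Lebesgue point). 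Concretely, for a small geodesic disk around the north pole and, say, $z=6$, one checks $K(6,\text{pole})>36/(8\pi)$, so $\int_\Omega K>\frac{|\Omega|}{8\pi}z^2$. Your proposed ``oscillation--cancellation'' between the excess at the pole and the vanishing of $K$ on the equator can only help for domains that see both regions; for a polar cap it does nothing. The second lemma, as stated, is false.

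The paper avoids this entirely by not trying to control the hemisphere kernel $K$ directly. Its key step is the crude but pointwise estimate
\[
\sum_{h=1}^{l}|Y_l^{-l-1+2h}(x)|^2\;\leq\;\sum_{m=-l}^{l}|Y_l^m(x)|^2\;=\;\frac{2l+1}{4\pi},
\]
obtained by \emph{adding back the Neumann harmonics} so that the full--sphere addition formula applies and the right--hand side is constant in $x$. This gives a genuine pointwise bound by a number depending only on $z$, and integration over $\Omega$ produces the factor $|\Omega|$. The remaining scalar inequality $\sum_{l\geq1}(2l+1)(z-l(l+1))_+\leq z^2/2$ is Lemma~\ref{lem:blys1}. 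The role of the hypothesis $\Omega\subset\mathbb S^2_+$ is not an oscillation argument on $K$; it is simply that on the hemisphere the Dirichlet spectrum omits $l=0$, so the sum starts at $l=1$ and the otherwise fatal constant--mode contribution $z$ is absent.
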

\begin{proof}
The eigenfunctions of the Dirichlet Laplacian on $\mathbb{S}_{+}^2$  associated with the energy level
$\lambda_{(l)}$ are the spherical harmonics $Y_l^{-l-1+2h}$, where $h=1,\ldots, l$. We note one more time here that the index $l$ is
not the numbering of the eigenvalues counting multiplicities, as it is $j$ for $\lambda_j(\Omega)$, but it is the numbering of the energy levels.  

Let $z\geq 0$. We apply Theorem
\ref{thm:AVP} with $\mathcal{H}=L^2(\mathbb S^2_+)$, $H = -\Delta$, $\mathcal Q=H^1_0(\mathbb S^2_+)$, $Q(u,u) = \int_{\mathbb S^2_+} |\nabla u|^2$, $\mathfrak{M}=\mathbb{N}\setminus\{0\}$, $\mathfrak{M}_0=\{j\in\mathbb{N}\setminus\{0\}: z-\lambda_j(\Omega)\geq 0\}$, $f_p = u_j$.
We get
\begin{equation*}
  \sum_{l\geq 1}\sum_{h=1}^{l}\left(z-l(l+1)\right)_{+}
  \sum_{j \geq 1}\bigg|\int_{\Omega}Y_l^{-l-1+2h}u_j\, dS\bigg|^2\geq \sum_{j\geq 1} \left(z-\lambda_j(\Omega)\right)_{+},
\end{equation*}
which, since $\{u_j\}_k$ form a complete set in $L^2(\Omega)$, implies that 
\begin{equation*}
  \sum_{l\geq 1}\sum_{h=1}^{l}\big(z-l(l+1)\big)_{+}\int_{\Omega}|Y_l^{-l-1+2h}|^2\, dS\geq \sum_{j\geq 1} \left(z-\lambda_j(\Omega)\right)_{+}.
\end{equation*}
Now we note that 
\begin{equation*}
  \sum_{h=1}^{l}|Y_l^{-l-1+2h}|^2\leq \sum_{m=-l}^{l}|Y_l^{m}|^2=\frac{2l+1}{4\pi}
\end{equation*}
by the addition formula for spherical harmonics (see \cite[Chapter 2,\S H]{folland}, see also \cite{gine}), and, accordingly we get
\begin{equation}\label{AVP-Dirichlet-S-2-hemisphere-3}
 \frac{|\Omega|}{4\pi} \sum_{l\geq 1}(2l+1)\big(z-l(l+1)\big)_{+}\geq \sum_{j\geq 1} \left(z-\lambda_j(\Omega)\right)_{+}.
\end{equation}
Then the statement follows by Lemma \ref{lem:blys1}  below.
\end{proof}

\begin{rem}
  Alternatively, in order to recover the above Berezin-Li-Yau bound, we could have followed  a more physical idea.  Let $\Omega\subset \mathbb{S}_{+}^2$. Let $\widetilde{\Omega}$ be the set
  obtained by reflecting $\Omega$ at the equator.  Then we consider the Dirichlet eigenvalues of $\Omega\cup \widetilde{\Omega}$
  but restricted to functions antisymmetric with respect to the equator (and the same for the entire sphere).
  In this space the eigenvalues on $\Omega\cup \widetilde{\Omega}$ are of course $\lambda_j(\Omega)$ with the same multiplicities.  Finally we apply the averaged variational principle of Theorem \ref{thm:AVP} as above.
\end{rem}
\begin{rem}
  As already pointed out,  Theorem \ref{thm:blyS2+} cannot hold for large domains $\Omega$ approaching the entire sphere for which the reversed inequality of the theorem holds (see  Proposition \ref{prop:R1S2bounds}).
\end{rem}
We  also prove  another upper bound for $R_1^D$ containing lower order terms which improves Theorem \ref{thm:blyS2+} when $z>1$.

\begin{thm}\label{thm:blyS2+imp}
  Let $\Omega$ be a domain in  $\mathbb{S}_{+}^2$. Then for all $z\geq 0$ the following inequality for the first Riesz-mean $R_1^D$ of the eigenvalues of the Dirichlet Laplacian on $\Omega$ holds:
  \begin{equation*}
    R_1^D(z)= \sum_{j\geq 1} \left(z-\lambda_j(\Omega)\right)_{+}\leq \frac{1}{8\pi}|\Omega|\left(z-\frac{1}{2}\right)^2.
\end{equation*}

\end{thm}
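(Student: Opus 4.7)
The plan is to observe that the improved bound follows by combining the estimate already derived in the proof of Theorem \ref{thm:blyS2+} with the sharp upper bound for $R_1$ on the full sphere established in Proposition \ref{prop:R1S2bounds}, rather than with the cruder Lemma \ref{lem:blys1}. Specifically, the key intermediate step of the proof of Theorem \ref{thm:blyS2+} is inequality \eqref{AVP-Dirichlet-S-2-hemisphere-3}, namely
\[
\sum_{j\geq 1}(z-\lambda_j(\Omega))_+ \leq \frac{|\Omega|}{4\pi}\sum_{l\geq 1}(2l+1)\bigl(z-l(l+1)\bigr)_+,
\]
which is obtained from the averaged variational principle together with the addition formula for spherical harmonics. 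This step is valid as it stands and need not be reproved.

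Next I would recognize that the sum on the right-hand side is exactly $R_1(z) - z$, where $R_1$ is the first Riesz-mean of the Laplacian on the whole sphere $\mathbb S^2$. Indeed, the $l=0$ term of $R_1(z) = \sum_{l\geq 0}(2l+1)(z-l(l+1))_+$ contributes $z$ for $z\geq 0$, and all other terms match. Applying the upper bound $R_1(z)\leq \tfrac{1}{2}\bigl(z+\tfrac{1}{2}\bigr)^2$ from Proposition \ref{prop:R1S2bounds} then gives
\[
\sum_{l\geq 1}(2l+1)\bigl(z-l(l+1)\bigr)_+ \;\leq\; \tfrac{1}{2}\bigl(z+\tfrac{1}{2}\bigr)^2 - z \;=\; \tfrac{1}{2}\bigl(z-\tfrac{1}{2}\bigr)^2.
\]
Plugging this back into \eqref{AVP-Dirichlet-S-2-hemisphere-3} yields the desired inequality $R_1^D(z)\leq \tfrac{|\Omega|}{8\pi}(z-\tfrac{1}{2})^2$.

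There is essentially no obstacle here: the substantial work (the application of the averaged variational principle, the control of $\sum_h |Y_l^{-l-1+2h}|^2$ via the addition formula, and the sharp bound for $R_1$ on $\mathbb S^2$) has already been done. The only point to check is the trivial one that the resulting bound remains non-negative and compatible with small $z$, which is immediate since $(z-\tfrac{1}{2})^2 \geq 0$ for all $z$. It is also worth noting, as a sanity check, that the improvement over Theorem \ref{thm:blyS2+} is genuine precisely when $z > 1/4$, i.e., in the regime where the semiclassical behaviour of $R_1^D(z)$ becomes meaningful.
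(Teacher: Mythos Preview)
Your proof is correct and follows essentially the same route as the paper: both start from inequality \eqref{AVP-Dirichlet-S-2-hemisphere-3} and then bound the right-hand side by $\tfrac{1}{2}(z-\tfrac{1}{2})^2$. The paper invokes Lemma \ref{lem:blys2} for this last step, whereas you obtain the same bound by writing $\sum_{l\geq 1}(2l+1)(z-l(l+1))_+ = R_1(z)-z$ and applying the upper bound of Proposition \ref{prop:R1S2bounds}; these are equivalent, since Lemma \ref{lem:blys2} is precisely that upper bound with the $l=0$ term removed.
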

\begin{proof}
 The proof can be performed following the same lines of that of Theorem \ref{thm:blyS2+} together with the use of Lemma \ref{lem:blys2} instead of Lemma \ref{lem:blys1}.
\end{proof}

We conclude with the two technical lemmas we used to prove the previous results.
\begin{lemma}\label{lem:blys1}
  For all $z\geq 0$ the following inequality holds: 
\begin{equation*}
 \sum_{l\geq 1}(2l+1)\big(z-l(l+1)\big)_{+}\leq \frac{z^2}{2}.
\end{equation*}
\end{lemma}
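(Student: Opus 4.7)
The plan is to reduce the sum to a closed form by recognising that only finitely many terms survive, and then verify the inequality as a quadratic in $z$ with negative discriminant.

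First, substitute $z = w(w+1)$ with $w \geq 0$ and set $n = \lfloor w \rfloor$. The condition $l(l+1) \leq z$ with $l \in \mathbb{N} \setminus \{0\}$ is equivalent to $1 \leq l \leq n$ (using monotonicity of $l \mapsto l(l+1)$). So the sum becomes
\begin{equation*}
S(z) \;=\; \sum_{l=1}^{n}(2l+1)\bigl(z - l(l+1)\bigr).
\end{equation*}
For $0 \leq z < 2$ we have $n = 0$, the sum is empty, and the inequality is trivial.

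Next, for $n \geq 1$, I would use the elementary identities
\begin{equation*}
\sum_{l=1}^{n}(2l+1) \;=\; n(n+2), \qquad \sum_{l=1}^{n}(2l+1)\,l(l+1) \;=\; \frac{n(n+1)^2(n+2)}{2},
\end{equation*}
the second being easily checked by induction (or by noting that $(2l+1)l(l+1) = \tfrac{1}{2}\bigl[(l+1)^2(l+2)l - l^2(l+1)(l-1)\bigr]$ telescopes). These yield the closed form
\begin{equation*}
S(z) \;=\; n(n+2)\left(z - \frac{(n+1)^2}{2}\right).
\end{equation*}

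The claim $S(z) \leq z^2/2$ is then equivalent, after clearing denominators, to
\begin{equation*}
z^2 - 2n(n+2)\,z + n(n+2)(n+1)^2 \;\geq\; 0.
\end{equation*}
Viewed as a quadratic in $z$, its discriminant is
\begin{equation*}
4n^2(n+2)^2 - 4n(n+2)(n+1)^2 \;=\; 4n(n+2)\bigl[n(n+2) - (n+1)^2\bigr] \;=\; -4n(n+2) \;<\; 0
\end{equation*}
for $n \geq 1$. Hence the quadratic is strictly positive for every real $z$, which establishes the inequality. There is no real obstacle here; the only subtlety is recognising the telescoping identity for $\sum (2l+1)l(l+1)$ to get the closed form, after which the discriminant computation delivers the bound for free.
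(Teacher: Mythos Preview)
Your proof is correct. The paper's own proof is just the sentence ``The proof can be performed by direct computations,'' so your argument supplies exactly those computations; the substitution $z=w(w+1)$ and truncation at $n=\lfloor w\rfloor$ match the paper's standard setup used throughout (cf.\ the proofs of Proposition~\ref{prop:R1S2bounds} and Lemma~\ref{lem:blys2}), and your discriminant check is a clean way to finish where the paper would likely have written an explicit non-positive square as in those nearby proofs.
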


\begin{proof}
The proof can be performed by direct computations.
\end{proof}
It is possible to improve the previous lemma adding lower order terms (see \cite[Lemma 3.2]{ilyin_laptev}).
\begin{lemma}\label{lem:blys2}
  For all $z\geq0$ the following inequality holds:
\begin{equation*}
 \sum_{l\geq 1}(2l+1)\big(z-l(l+1)\big)_{+}\leq \frac{1}{2}\left(z-\frac{1}{2}\right)^2.
\end{equation*}
\end{lemma}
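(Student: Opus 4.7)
The plan is to observe that the left-hand side is precisely the first Riesz-mean of the Laplacian eigenvalues on all of $\mathbb{S}^2$ with the trivial $l=0$ contribution peeled off, and then deduce the inequality directly from the sharp upper bound already proved in Proposition \ref{prop:R1S2bounds}.

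First I would recall from the beginning of Section \ref{2sphere} that the energy levels of $-\Delta$ on $\mathbb{S}^2$ are $\lambda_{(l)}=l(l+1)$ with multiplicities $m_{l,2}=2l+1$ for $l\in\mathbb N$. Consequently, for every $z\geq 0$,
\begin{equation*}
R_1(z)=\sum_{l\geq 0}(2l+1)\bigl(z-l(l+1)\bigr)_{+}=z+\sum_{l\geq 1}(2l+1)\bigl(z-l(l+1)\bigr)_{+},
\end{equation*}
since the $l=0$ summand contributes exactly $(0+1)(z-0)_{+}=z$. Thus the quantity we need to control is nothing but $R_1(z)-z$.

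Next I would apply the upper bound $R_1(z)\leq \tfrac{1}{2}(z+1/2)^2$ from Proposition \ref{prop:R1S2bounds}. Subtracting $z$ from both sides yields
\begin{equation*}
\sum_{l\geq 1}(2l+1)\bigl(z-l(l+1)\bigr)_{+}\leq \frac{1}{2}\left(z+\frac{1}{2}\right)^{2}-z=\frac{1}{2}\left(z-\frac{1}{2}\right)^{2},
\end{equation*}
where the last equality is the elementary identity $\tfrac{1}{2}(z+1/2)^{2}-z=\tfrac{1}{2}(z-1/2)^{2}$. This is exactly the claim of Lemma \ref{lem:blys2}.

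Since Proposition \ref{prop:R1S2bounds} has already been established, no serious obstacle remains: the lemma is merely a reformulation of that sharp bound after removing the zero energy level. The only verifications needed are the trivial algebraic identity above and the observation that, because $z\geq 0$, the $l=0$ term genuinely contributes $z$ rather than $z_{+}$.
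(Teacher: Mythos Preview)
Your proof is correct and takes a genuinely different route from the paper. The paper proves the lemma by setting $z=w(w+1)$, writing $w=\lfloor w\rfloor+x$, and computing the difference between the two sides explicitly as $-\tfrac{1}{8}\bigl(\lfloor w\rfloor(4x-2)+2x(x+1)-1\bigr)^2$. This is, in fact, exactly the same perfect-square identity as \eqref{R1S2bounds1} in the proof of Proposition \ref{prop:R1S2bounds}, just shifted by $z$---so the paper is redoing that computation from scratch. You instead recognise that the left-hand side is $R_1(z)-z$, invoke the already-established bound $R_1(z)\le\tfrac{1}{2}(z+1/2)^2$, and finish with the one-line identity $\tfrac{1}{2}(z+1/2)^2-z=\tfrac{1}{2}(z-1/2)^2$. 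Your argument is shorter, avoids repeating an explicit calculation, and makes transparent that Lemma \ref{lem:blys2} is nothing more than Proposition \ref{prop:R1S2bounds} with the zero eigenvalue stripped off; it also handles the range $0\le z<2$ without needing a separate case.
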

\begin{proof}
  We prove the inequality for $z=w(w+1)$, $w\geq 1$.  For the sake of simplicity we write $w=\lfloor w \rfloor +x$ where $x\in[0,1[$ denotes the fractional part of $w$. Then
 \begin{multline*}
\sum_{l=1}^{\lfloor w\rfloor}(2l+1)((\lfloor w\rfloor+x)(\lfloor w\rfloor+x+1)-l(l+1))_+-\frac{1}{2}\left((\lfloor w\rfloor+x)(\lfloor w\rfloor+x+1)-\frac{1}{2}\right)^2\\=-\frac{1}{8}\left(\lfloor w\rfloor(4x-2)+2x(x+1)-1\right)^2\leq 0.
\end{multline*}

\end{proof}

\section{The $d$-dimensional sphere $\mathbb S^d$ and the hemisphere $\mathbb S^d_+$}\label{sec:dsphere}

The general case $d\geq 3$ presents a few peculiar features: for example, P\'olya's conjecture does not hold for $\mathbb S^d_+$ as shown in \cite{FrMaSa22}. Actually, it should be remarked that the two-dimensional case is the special case. In what follows we shall treat $d\geq 2$.

\subsection{The sphere $\mathbb S^d$}\label{dsphere}

We recall that the eigenvalues of the Laplacian on $\mathbb S^d$  are given as energy levels by $\lambda_{(l)}=l(l+d-1)$ with multiplicities $m_{l, d} = H_{l, d}-H_{l-2,d}$ where
\begin{equation*}
  H_{l,d}=\binom{d+l}{l},
\end{equation*}
see e.g., \cite{berger}.

The first result of this subsection is a two-term expansion for $R_1(z)$. Note that the second term has a sign, though it contains an oscillating part. We stress the fact that, since the sphere has no boundary, the classical second term in $z^{\frac d 2}$ is not present in the expansion, and the term we obtain may be regarded as a ``third term'' in the semiclassical expansion. This asymptotic expansion improves the result in Strichartz \cite[Theorem  3.3 p. 168]{stri} on eigenvalue means, where the $\liminf$ and the $\limsup$ of the second term was given (see Figure \ref{f4} for an illustration of the result). Moreover, in Theorems \ref{lo-shift} and \ref{thm:ubR1dshift} below we shall prove lower and and upper bounds on $R_1$ corresponding to the lower and upper envelope of the asymptotic expansion (via the estimates $0\leq \frac{1}{4}-\psi^2\leq \frac{1}{4}$ for the fluctuation function, see also Remark \ref{rem_stri} below).

\begin{thm}\label{R1-d-sphere}
   As $z$ tends to infinity we have
   the following asymptotic expansion for the first Riesz  mean $R_1$   on $\mathbb{S}^d$:
  \begin{equation}\label{S-d-R-1-two-term-asymptotics}
   \frac{R_1(z)}{L_{1,d}^{class}|\mathbb{S}^d|\,z^{\frac{d}{2}+1}} = 1+ \frac{d(d+2)}{12}\left(d-2 + 6\left(\frac{1}{4}-\psi^2(w)\right)\right) z^{-1} +o(z^{-1}),
  \end{equation}
  where $w$ is defined by the relation $w(w+d-1)=z$.
\end{thm}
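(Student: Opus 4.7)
The idea is to start from the exact representation
\[
R_1(z) \;=\; \sum_{l=0}^{N} m_{l,d}\bigl(z-\lambda_{(l)}\bigr) \;=\; z\sum_{l=0}^{N} m_{l,d} \;-\; \sum_{l=0}^{N} \lambda_{(l)}\, m_{l,d},
\]
where $N = \lfloor w\rfloor$ and $w(w+d-1)=z$, and to evaluate both sums in closed form. The first sum telescopes directly via $m_{l,d} = H_{l,d}-H_{l-2,d}$ (with the convention $H_{k,d}=0$ for $k<0$), yielding $\sum_{l=0}^{N} m_{l,d} = H_{N,d}+H_{N-1,d}$. For the weighted sum I would first establish the identity
\[
l(l+d-1)\,m_{l,d} \;=\; d(d+1)\,m_{l-1,d+2},
\]
which is a short computation using $m_{l,d} = \frac{(2l+d-1)(l+d-2)!}{l!(d-1)!}$. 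This reduces $\sum_{l=0}^{N} \lambda_{(l)} m_{l,d}$ to a sum of multiplicities in dimension $d+2$, which telescopes in the same way and equals $d(d+1)\bigl(H_{N-1,d+2}+H_{N-2,d+2}\bigr)$. Combining the two pieces and using the elementary simplification $H_{N,d}+H_{N-1,d} = \frac{(2N+d)(N+d-1)!}{d!\,N!}$, one arrives at the compact exact formula
\[
R_1(z) \;=\; \frac{(2N+d)}{d!}\,\frac{(N+d-1)!}{N!}\left[z \;-\; \frac{d\,N(N+d)}{d+2}\right],
\]
valid for every $z\geq 0$.

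From this closed form the asymptotics becomes a bookkeeping exercise. Setting $\alpha = \psi(w)+\tfrac12$ so that $N=w-\alpha$, a direct algebraic manipulation exploiting $z=w^{2}+(d-1)w$ gives
\[
z-\frac{d\,N(N+d)}{d+2} \;=\; \frac{1}{d+2}\Bigl[2z + 2dw\psi + d(d-1)\psi - d\psi^{2} + \tfrac{d(2d-1)}{4}\Bigr],
\]
whose leading piece in $w$ is $\frac{2z}{d+2}\sim\frac{2w^{2}}{d+2}$. For the prefactor I would write $P(N):=\frac{(N+d-1)!}{N!}=\prod_{k=1}^{d-1}(N+k)$, expand this polynomial together with $(2N+d)$ as a polynomial in $N$, substitute $N=w-\alpha$, and expand in $1/w$, obtaining
\[
\frac{(2N+d)P(N)}{d!}\;=\;\frac{2w^{d}}{d!}\Bigl(1+\frac{c_{1}(\psi)}{w}+\frac{c_{2}(\psi)}{w^{2}}+O(w^{-3})\Bigr),
\]
with $c_{1},c_{2}$ explicit and at most quadratic in $\psi$. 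Multiplying the two expansions yields a leading term $\frac{4}{(d+2)d!}\,w^{d+2} = L_{1,d}^{class}|\mathbb S^{d}|\,w^{d+2}$; converting back to $z$ via $z^{d/2+1}=w^{d+2}(1+(d-1)/w)^{d/2+1}$ recovers the Weyl term, and the next-order piece produces the stated correction.

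The main difficulty is tracking the second-order coefficients. Several $\psi$-independent contributions, arising from the subleading polynomial coefficients in $(2N+d)P(N)/d!$, from the shift $N=w-\alpha$ at linear order in $\alpha$, and from expanding $(1+(d-1)/w)^{d/2+1}$ in the conversion $w\to z$, must combine to yield precisely the constant $d-2$; meanwhile the $\psi$-linear contributions must cancel, while the quadratic-in-$\psi$ contributions, coming from $\alpha^{2}=(\psi+\tfrac12)^{2}$ in the prefactor and from the $-d\psi^{2}$ term in the second factor, must combine to give $6\bigl(\tfrac14-\psi^{2}(w)\bigr)$. As a sanity check, the specialization to $d=2$ gives $\frac{d(d+2)}{12}\bigl(d-2+6(\tfrac14-\psi^{2})\bigr)\,L_{1,d}^{class}|\mathbb S^{d}|=2(\tfrac14-\psi^{2})$, matching the coefficient appearing in Theorem~\ref{bound-R-1-2-improved}.
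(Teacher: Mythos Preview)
Your plan is sound and mirrors the paper's proof: first derive the exact closed form
\[
R_1(z)=\frac{(2N+d)\,\Gamma(N+d)}{(d+2)\,\Gamma(N+1)\,\Gamma(d+1)}\bigl((d+2)z-dN(N+d)\bigr),\qquad N=\lfloor w\rfloor,
\]
then expand asymptotically. Your derivation of the closed form via the identity $l(l+d-1)m_{l,d}=d(d+1)m_{l-1,d+2}$ is a pleasant alternative to the paper's binomial manipulations, and your proposal to treat $(N+d-1)!/N!=\prod_{k=1}^{d-1}(N+k)$ as a polynomial in $N$ is more elementary than the paper's route, which applies Stirling's expansion to both $\Gamma(L+d)$ and $\Gamma(L+1)$ separately and tracks the resulting $P_{a,b}$ polynomials --- unnecessary machinery given that the ratio is already polynomial.

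The gap is that you have not actually performed the second-order computation. You correctly identify which pieces must be collected (the quadratic coefficients of the polynomial prefactor, the shift $N=w-\alpha$ to second order, the $-d\psi^2$ from the bracket, and the expansion of $(1+(d-1)/w)^{d/2+1}$), and you state what they \emph{must} combine to, but you do not verify it. The $d=2$ sanity check is reassuring but does not substitute for the general calculation; in particular, the cancellation of the $\psi$-linear terms and the emergence of the constant $d-2$ are precisely the content of the theorem and cannot be left as assertions. The paper carries this out in full (albeit via the heavier Stirling route), explicitly computing the coefficients $A=\tfrac{d^2}{2}$ and $B+C=\tfrac{d(d-1)(3d^2-d+2)}{24}$ and then combining with the expansion of $(L^2/z)^{1+d/2}$. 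To complete your proof you would need to do the analogous bookkeeping: expand $(2N+d)\prod_{k=1}^{d-1}(N+k)$ to order $N^{d-2}$, substitute $N=w-\psi-\tfrac12$, multiply by your exact expression for the bracket, and divide by $z^{d/2+1}=w^{d+2}(1+(d-1)/w)^{d/2+1}$, keeping all terms through order $w^{-2}$.
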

\begin{proof}

We first prove that
\begin{equation}\label{S-d-R-1}
  R_1(z)=\sum_{l=0}^{L}m_{l,d}\big(z-l(l+d-1)\big)
  =\frac{(2L+d)\Gamma(L+d)}{(d+2)\Gamma(L+1)\Gamma(d+1)}\,(-dL(L+d)+(d+2)z),
\end{equation}
where $L=\lfloor w\rfloor$. Note that \eqref{S-d-R-1} can be deduced by \cite[Appendix A]{ilyin_laptev} (see also \cite[Theorem 3.2]{stri}). We prove it here for the reader's convenience. We start by recalling the following well-known formula (see e.g. \cite{GR}, 0.15, p.3)
\begin{equation}\label{appendix-sum-binomial-coefficients}
  \sum_{k=0}^{m}\binom{n+k}{n}=\binom{n+m+1}{n+1}
\end{equation}
and we write
\begin{equation}\label{B1}
\sum_{l=0}^Lm_{l,d}z=z\left(\sum_{l=0}^L\binom{d+l}{l}-\sum_{l=0}^L\binom{d+l-2}{l-2}\right).
\end{equation}
Thus
\begin{multline}\label{B2}
\sum_{l=0}^Lm_{l,d}l(l+d-1)
=\sum_{l=0}^L\binom{d+l}{l}(l(l-1)+l d)-\sum_{l=0}^L\binom{d+l-2}{l-2}((l+d)(l+d-1)-d(l+d-1))\\
=\sum_{l=0}^Ld (d+1)\binom{d+l}{l-1}+\sum_{l=0}^L(d+1)(d+2)\binom{d+l}{l-2}
-\sum_{l=0}^L(d+1)(d+2)\binom{d+l}{l-2}+\sum_{l=0}^Ld(d+1)\binom{d+l-1}{l-2}.
\end{multline}
Using \eqref{appendix-sum-binomial-coefficients} to compute \eqref{B1} and \eqref{B2}, we get \eqref{S-d-R-1}.

Since
\begin{equation*}
  L_{1,d}^{class}|\mathbb{S}^d|=\frac{4}{(d+2)\Gamma(d+1)},
\end{equation*}
we get
\begin{equation}\label{S-d-R-1-ratio-eq-1}
  \begin{split}
     \frac{R_1(z)}{L_{1,d}^{class}|\mathbb{S}^d|z^{1+d/2}} &= \frac{(2L+d)\Gamma(L+d)}{4\Gamma(L+1)z^{1+d/2}}\,(-dL(L+d)+(d+2)z).\\
  \end{split}
\end{equation}
This proves \eqref{S-d-R-1}.
We apply the asymptotic expansions given in Appendix \ref{app_A} for the Gamma function (Lemma \ref{lem_3}), and the Taylor expansions of
the quadratic polynomials $P_{a,b}(x)=1+ax+bx^2$ (Lemma \ref{lem_2}) with $x=1/L$.
First we note that
\begin{multline*}
  \frac{(2L+d)\Gamma(L+d)}{4\Gamma(L+1)}
  =L^d\,\left(\frac{(2+dx)e^{-d}(1+dx)^{1/x}(1+dx)^{d-1/2}
  P_{\frac{1}{12},\frac{1}{288}}(\frac{x}{1+dx})}{4\,P_{\frac{1}{12},\frac{1}{288}}(x)}+O(x^3)\right)\\
  =L^d\,\left(\frac{1}{2}(1+\frac{dx}{2})P_{-\frac{d^2}{2},\frac{8d^3+3d^4}{24}}(x)P_{\frac{(2d-1)d}{2},\frac{d^2(2d-1)(2d-3)}{8}}(x)
   \cdot \frac{P_{\frac{1}{12},\frac{1}{288}}(\frac{x}{1+dx})}{P_{\frac{1}{12},\frac{1}{288}}(x)}+O(x^3)\right).
\end{multline*}
We rewrite the last term in \eqref{S-d-R-1-ratio-eq-1} as  $L^2(-d(d+x)+(d+2)\,\frac{z}{L^2})$. Since $z=w(w+d-1)$ and $L$ is the integer part of $w$,
we write $z$ using the fluctuation function, which is then given by
$$
\psi(w)=w-L-\frac{1}{2},
$$
as $z=(L+\psi(w)+\frac{1}{2})(L+d+\psi(w)-\frac{1}{2})$, and therefore
\begin{equation*}
  \frac{z}{L^2}=\left(1+\left(\psi(w)+\frac{1}{2}\right)x\right)\left(1+\left(d+\psi(w)-\frac{1}{2}\right)x\right).
\end{equation*}
According to \eqref{1-over-P-a-b-of-x} and \eqref{P-a-b-of-x-combined} of Lemma \ref{lem_2} we have
\begin{equation*}
  P_{\frac{1}{12},\frac{1}{288}}\left(\frac{x}{1+dx}\right)=P_{\frac{1}{12},\frac{1}{288}-\frac{d}{12}}(x)+O(x^3)
\end{equation*}
and
\begin{equation*}
  \frac{1}{P_{\frac{1}{12},\frac{1}{288}}(x)}=P_{-\frac{1}{12},\frac{1}{288}}(x)+O(x^3).
\end{equation*}
We compute the coefficients $A,B,C$ of the product in \eqref{S-d-R-1-ratio-eq-1} according to \eqref{products-of-P-a-b-of-x} of Lemma \ref{lem_2} as follows:
\begin{equation*}
  A=\frac{d}{2}-\frac{d^2}{2}+\frac{(2d-1)d}{2}+\frac{1}{12}-\frac{1}{12}=\frac{d^2}{2},
\end{equation*}
\begin{equation*}
  B=\frac{8d^3+3d^4}{24}+\frac{d^2(2d-1)(2d-3)}{8}+\frac{1}{288}-\frac{d}{12}+\frac{1}{288}=\frac{d(5d-2)(3d^2-2d+1)}{24}+\frac{1}{144},
\end{equation*}
and
\begin{equation*}
  C=\frac{d^4}{8}-\frac{d^2}{8}-\frac{d^4}{8}-\frac{(2d-1)^2d^2}{8}-\frac{1}{144}=-\,\frac{d^2(2d^2-2d+1)}{4}-\frac{1}{144}.
\end{equation*}
Hence the coefficient of $x^2$ is given by
\begin{equation*}
  B+C=\frac{d(d-1)(3d^2-d+2)}{24}.
\end{equation*}
Therefore we have
\begin{multline*}
     \frac{R_1(z)}{L_{1,d}^{class}|\mathbb{S}^d|z^{1+d/2}}= \frac{1}{2}\left(P_{A,B+C}(x)+O(x^3)\right)\\
     \cdot\left((d+2)\left(1+\left(\psi(w)+\frac{1}{2}\right)x\right)\left(1+\left(d+\psi(w)-\frac{1}{2}\right)x\right)-d(1+dx)\right)\cdot \left(\frac{L^2}{z}\right)^{1+d/2}\\
      =\left(P_{A,B+C}(x)+O(x^3)\right)\\
 \cdot\left(1+((d+2)\psi(w)+d)x+\left(\psi(w)+\frac{1}{2}\right)\left(1+\frac{d}{2}\right)\left(\psi(w)-\frac{1}{2}+d\right)x^2\right)\cdot \left(\frac{L^2}{z}\right)^{1+d/2}.
\end{multline*}
Next we expand
\begin{equation*}
  \left(\frac{L^2}{z}\right)^{1+d/2}=\left(1+\left(\psi(w)+\frac{1}{2}\right)x\right)\left(1+\left(d+\psi(w)-\frac{1}{2}\right)x\right)^{-1-d/2}.
\end{equation*}
Combining all terms as above we finally get
\begin{equation*}
  \frac{R_1(z)}{L_{1,d}^{class}|\mathbb{S}^d|z^{1+d/2}}=1+\frac{d(d+2)}{12}\bigg(d-2 + 6\left(\frac{1}{4}-\psi^2(w)\right)\bigg)x^2 +O(x^3).
\end{equation*}
Since $x^2=z^{-1} +O(z^{-3/2})$, the theorem is proven.
\end{proof}

\begin{figure}[ht]
\centering
\includegraphics[width=0.7\textwidth]{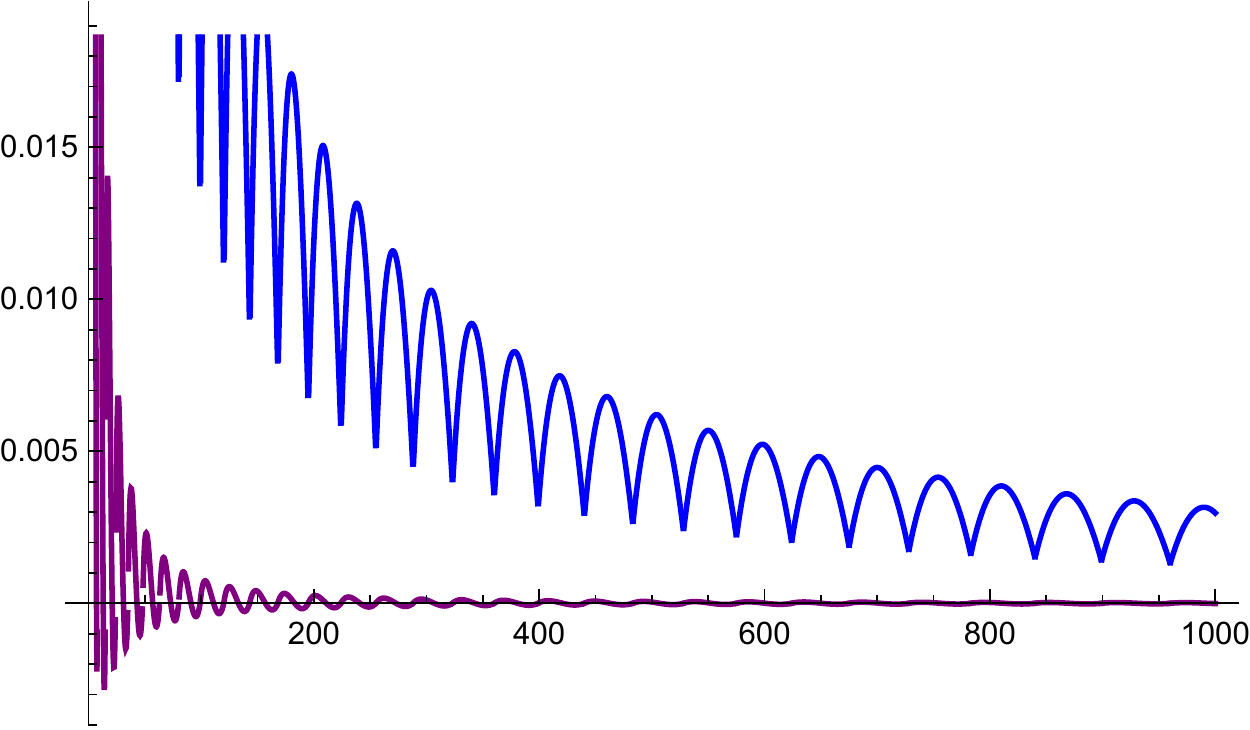}
\caption{In blue, ratio (minus $1$) of $R_1$ and the leading term in Weyl's law; in purple the ratio (minus $1$) of $R_1$ and the two-term expression of Theorem \ref{R1-d-sphere}. Here $d=3$.}\label{f4}
\end{figure}

In view of \eqref{S-d-R-1-two-term-asymptotics}, we now derive a Weyl sharp lower bound for $R_1(z)$. To do so, we first prove the following

\begin{lemma}\label{prop-R1-l-b}
The ratio
$$
\frac{R_1(z)}{L_{1,d}^{class}|\mathbb S^d|z^{1+\frac{d}{2}}}
$$
has a unique critical point which is a strict maximum in each interval $[\lambda_{(l)},\lambda_{(l+1)}]$.
\end{lemma}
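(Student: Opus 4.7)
The plan is to exploit the closed-form expression \eqref{S-d-R-1} for $R_1$ that was derived in the proof of Theorem \ref{R1-d-sphere}. On the interval $[\lambda_{(l)},\lambda_{(l+1)}]$, with $l=\lfloor w\rfloor$, the Riesz-mean $R_1(z)$ is an affine function of $z$, namely
$$
R_1(z) = \frac{(2l+d)\Gamma(l+d)}{(d+2)\Gamma(l+1)\Gamma(d+1)}\bigl((d+2)z-dl(l+d)\bigr),
$$
so, using $L_{1,d}^{class}|\mathbb S^d|=\frac{4}{(d+2)d!}$, the quantity to be studied reduces to $K_l\cdot g(z)$ with $K_l>0$ depending only on $l$ and $d$, and
$$
g(z)=(d+2)z^{-d/2}-dl(l+d)\,z^{-1-d/2}.
$$
Hence, on each fixed interval, the ratio in the statement is a simple two-term combination of negative powers of $z$, to which elementary calculus applies.

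Next, I would differentiate to find
$$
g'(z)=\frac{d(d+2)}{2}\,z^{-d/2-2}\bigl(l(l+d)-z\bigr),
$$
which vanishes at the unique point $z^\ast=l(l+d)$. For $l\geq 1$, the elementary identities $z^\ast-\lambda_{(l)}=l>0$ and $\lambda_{(l+1)}-z^\ast=l+d>0$ (using $\lambda_{(l)}=l(l+d-1)$ and $\lambda_{(l+1)}=(l+1)(l+d)$) show that $z^\ast$ lies strictly inside $(\lambda_{(l)},\lambda_{(l+1)})$. Since $g'(z)>0$ for $z<z^\ast$ and $g'(z)<0$ for $z>z^\ast$, this critical point is necessarily a strict local maximum, and it is the only critical point on the interval.

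The argument is purely algebraic, and there is no substantive obstacle: the piecewise-linear structure of $R_1$ between consecutive energy levels collapses the problem to analyzing a single two-term expression $g(z)$. The only mild caveat is the degenerate case $l=0$, for which $R_1(z)=z$ on $[0,d]$ and the ratio is proportional to $z^{-d/2}$, hence strictly decreasing with no interior critical point; the lemma is therefore meaningful only for $l\geq 1$, which is exactly the range needed in the subsequent application to Theorem \ref{lo-shift}.
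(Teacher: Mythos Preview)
Your argument is correct and follows the same underlying idea as the paper: on each interval $[\lambda_{(l)},\lambda_{(l+1)}]$ one differentiates the ratio, which is an affine function of $z$ divided by $z^{1+d/2}$, and checks that the unique critical point is a strict maximum lying strictly inside the interval. Your treatment of the degenerate case $l=0$ also matches the paper's observation that the ratio is strictly decreasing there.

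The difference is purely in parameterization. The paper passes to the variable $w$ with $z=w(w+d-1)$ and then to $x=w-\lfloor w\rfloor\in[0,1)$, defines $Q(x)=\log(\text{ratio})$, and verifies $Q'(0)>0$, $Q'(1)<0$, together with $Q''(x_0)<0$ at any critical point $x_0$; only implicitly does this yield the location of the maximum. By contrast, you work directly in $z$, which collapses everything to the two-term function $g(z)=(d+2)z^{-d/2}-dl(l+d)z^{-1-d/2}$ with explicit critical point $z^\ast=l(l+d)$. This is more transparent and in fact recovers exactly the same critical point: the paper's condition $A(x_0)=(L+x_0)(L+d-1+x_0)$ is equivalent to $z=L(L+d)$. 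Amusingly, the paper itself adopts essentially your streamlined viewpoint later, in the proof of Theorem \ref{thm:ubR1dshift}, where the maximum is located directly at $\rho_b=L(L+d)+\tfrac{d+2}{d}b$ (your $z^\ast$ is the case $b=0$).
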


\begin{proof}
As before we write $z=w(w+d-1)$ and put $L:=\lfloor w\rfloor$.
Since
\begin{equation*}
\begin{split}
  R_1(w(w+d-1))&=\sum_{l=0}^{L}m_{l,d}\big(w(w+d-1)-l(l+d-1)\big)\\
  &=\frac{(2L+d)\Gamma(L+d)}{(d+2)\Gamma(L+1)\Gamma(d+1)}\,(-dL^2-d^2L+(d+2)w(w+d-1))
\end{split}
\end{equation*}
we get
\begin{equation*}
     \frac{R_1(w(w+d-1))}{L_{1,d}^{class}|\mathbb{S}^d|w^{1+d/2}(w+d-1)^{1+d/2}}  
       = \frac{(2L+d)\Gamma(L+d)}{4\Gamma(L+1)w^{1+d/2}(w+d-1)^{1+d/2}}\,(-dL^2-d^2L+(d+2)w(w+d-1)).
\end{equation*}
The aim is then to show that in each interval $[L,L+1]$, $L\geq 1$, the ratio of $R_1$ and the leading term in Weyl's law has a unique maximum $w$.
When $L=0$ the ratio is a strictly decreasing function and singular at $w=0$.  For this we fix $L$ and put $w=w(x)=L+x$ with $x\in[0,1[$ the fractional part of $w$. Note that $\lambda_{(L)}=w(0)(w(0)+d-1)$, $\lambda_{(L+1)}=w(1)(w(1)+d-1)$. Therefore
\begin{equation}\label{S-d-R-1-ratio-eq-2}
\frac{R_1(w(w+d-1))}{L_{1,d}^{class}|\mathbb{S}^d|w^{1+d/2}(w+d-1)^{1+d/2}} 
       = \frac{(L+d/2)\Gamma(L+d)}{\Gamma(L+1)(L+x)^{1+d/2}(L+x+d-1)^{1+d/2}}\,A(x),\\
\end{equation}
with
\begin{equation}\label{A-def-eq}
  A(x)=\left(\frac{d}{2}+1\right)x^2+(d+2)\left(\frac{d-1}{2}+L\right)x+\left(L-1+\frac{d}{2}\right)L.
\end{equation}
We consider the logarithm of the quantities in equation \eqref{S-d-R-1-ratio-eq-2} that is
\begin{equation*}
  Q(x):=\log \left(\frac{(L+d/2)\Gamma(L+d)}{\Gamma(L+1)}\right)-\left(\frac{d}{2}+1\right)\log\big((L+x)(L+x+d-1)\big) +\log A(x)\,.
\end{equation*}
An easy computation shows that
\begin{equation*}
  Q'(x)=\frac{A'(x)}{A(x)}-\left(\frac{d}{2}+1\right)\frac{2L+d-1+2x}{(L+x)(L+d-1+x)}
\end{equation*}
and
\begin{equation*}
  Q''(x)=\frac{A''(x)}{A(x)}-\frac{A'(x)^2}{A(x)^2}
  -\frac{d+2}{(L+x)(L+d-1+x)}+\left(\frac{d}{2}+1\right)\frac{(2L+d-1+2x)^2}{(L+x)^2(L+d-1+x)^2}\,.
\end{equation*}
We compute the right derivatives of $A$ and $Q$ at $x=0$ and the left derivatives at $x=1$. By \eqref{A-def-eq} we have $A(0)=(L-1+\frac{d}{2})L>0$,
$A'(0)=(\frac{d}{2}+1)(2L+d-1)>0$
and therefore $ \displaystyle Q'(0)=\frac{d(d+2)(2L+d-1)}{2L(2L+d-2)(L+d-1)}>0$. Similarly, $A(1)=(L+1+\frac{d}{2})(L+d)>0$,
$A'(1)=(\frac{d}{2}+1)(2L+d+1)>0$ and therefore
$ \displaystyle Q'(1)=-\,\frac{d(d+2)(2L+d+1)}{2(L+1)(2L+d+2)(L+d+2)}<0$.
Therefore $Q(x)$ has (at least) one critical point in $]L,L+1[$. We show that it is unique. Suppose $Q'(x_0)=0$.
Since $A'(x)=(\frac{d}{2}+1)(2L+d-1+2x)>0$ the condition $Q'(x_0)=0$  is also equivalent to
\begin{equation*}
  A(x_0)=(L+x_0)(L+d-1+x_0)\,.
\end{equation*}
Then, since $A''(x)=d+2$ and $Q'(x_0)=0$ we get
\begin{equation*}
  Q''(x_0)=\frac{d+2}{A(x_0)}-\frac{d}{d+2}\frac{A'(x_0)^2}{A(x_0)^2}-\frac{d+2}{A(x_0)}
  =-\frac{d}{d+2}\frac{A'(x_0)^2}{A(x_0)^2}<0\,.
\end{equation*}
Hence any critical point is a strict local maximum and therefore $Q(x)$ has exactly one critical point in each interval $]L,L+1[$. This concludes the proof.
\end{proof}

Now we are ready to prove a Weyl-sharp lower bound for $R_1(z)$. This result can be found in Ilyin and Laptev \cite{ilyin_laptev}, however Lemma \ref{prop-R1-l-b} gives a new insight on the typical behavior of Riesz-means $R_1$ and opens the door to the improvement which we present in Theorem \ref{lo-shift}  below, confirming thereby the study of the asymptotics for eigenvalue sums done by Strichartz \cite{stri} (see Remark \ref{rem_stri} below).

\begin{thm}\label{R-1-lo-thm-sphere}
  For all $z\geq 0$ the following lower bound for the first Riesz-mean $R_1$ on $\mathbb{S}^d$ holds:
  \begin{equation}\label{S-d-R-1-lower-bound}
    R_1(z)\geq L_{1,d}^{class}|\mathbb{S}^d|\,z^{\frac{d}{2}+1}
  \end{equation}
\end{thm}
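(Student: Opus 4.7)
The plan is to combine Lemma \ref{prop-R1-l-b} with a direct verification at the endpoints $z=\lambda_{(l)}$. By that lemma, the function
\[
g(z):=\frac{R_1(z)}{L_{1,d}^{class}|\mathbb{S}^d|\,z^{1+d/2}}
\]
admits, on each interval $[\lambda_{(l)},\lambda_{(l+1)}]$ with $l\geq 1$, a unique critical point which is a strict local maximum. Hence the minimum of $g$ on each such interval is attained at one of the two endpoints. On the first interval $[0,\lambda_{(1)}]=[0,d]$ only the zero eigenvalue contributes, so $R_1(z)=z$ and $g(z)=1/(L_{1,d}^{class}|\mathbb{S}^d|\,z^{d/2})$ is strictly decreasing, with minimum at $z=\lambda_{(1)}$. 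Thus it suffices to establish
\[
R_1(\lambda_{(l)})\geq L_{1,d}^{class}|\mathbb{S}^d|\,\lambda_{(l)}^{1+d/2}\qquad \text{for every }l\geq 1.
\]

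To verify this, I would substitute $z=\lambda_{(l)}=l(l+d-1)$ and $L=l$ into the closed formula \eqref{S-d-R-1}. Using the identity $-dl(l+d)+(d+2)l(l+d-1)=l(2l+d-2)$ together with $\Gamma(l+1)=l\,\Gamma(l)$, one obtains
\[
R_1(\lambda_{(l)})=\frac{(2l+d)(2l+d-2)\,\Gamma(l+d)}{(d+2)\,\Gamma(l)\,\Gamma(d+1)}.
\]
Inserting $L_{1,d}^{class}|\mathbb{S}^d|=\tfrac{4}{(d+2)\Gamma(d+1)}$ and writing $\Gamma(l+d)/\Gamma(l)=l(l+d-1)\prod_{j=1}^{d-2}(l+j)$ (with the empty product equal to $1$ when $d=2$), the required inequality reduces to the purely elementary statement
\[
(2l+d)(2l+d-2)\prod_{j=1}^{d-2}(l+j)\;\geq\;4\,\bigl(l(l+d-1)\bigr)^{d/2}.
\]

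The last step is to prove this inequality by two applications of AM-GM. For the product, I would pair $(l+j)$ with $(l+d-1-j)$ for $j=1,\dots,d-2$ and exploit
\[
(l+j)(l+d-1-j)-l(l+d-1)=j(d-1-j)\geq 0.
\]
This handles all factors when $d$ is even; when $d$ is odd, the unpaired middle term $l+\tfrac{d-1}{2}$ satisfies $(l+\tfrac{d-1}{2})^2\geq l(l+d-1)$ by AM-GM applied to $l$ and $l+d-1$. In either parity this gives $\prod_{j=1}^{d-2}(l+j)\geq (l(l+d-1))^{(d-2)/2}$. For the quadratic prefactor, completing the square yields $(2l+d)(2l+d-2)=(2l+d-1)^2-1\geq (2l+d-1)^2-(d-1)^2=4l(l+d-1)$, which holds precisely because $d\geq 2$. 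Multiplying the two estimates gives exactly $4(l(l+d-1))^{d/2}$, completing the proof. The main conceptual obstacle is the reduction of a continuous bound to the discrete set of energy levels via Lemma \ref{prop-R1-l-b}; after that, the verification at $z=\lambda_{(l)}$ is a tidy exercise in Gamma-function bookkeeping and AM-GM, with only the parity split for $d$ requiring a small amount of care. Note that the argument also pinpoints equality: for $d=2$ both AM-GM steps are equalities, matching the equality case $z=\lambda_{(l)}$ of Proposition \ref{prop:R1S2bounds}.
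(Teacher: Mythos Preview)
Your proof is correct and follows essentially the same approach as the paper: both reduce to the endpoints via Lemma \ref{prop-R1-l-b} and then verify the inequality at the energy levels by a pairing argument on the factors of a Gamma ratio. The only cosmetic differences are that the paper evaluates at $\lambda_{(l+1)}$ rather than $\lambda_{(l)}$, and it avoids your parity split by writing $\prod_{j}(l+j)=\bigl(\prod_{j}(l+j)(l+d+1-j)\bigr)^{1/2}$, which handles even and odd $d$ uniformly; your estimate $(2l+d)(2l+d-2)\geq 4l(l+d-1)$ is exactly the paper's step $(l+d/2)(l+1+d/2)\geq (l+1)(l+d)$ after reindexing.
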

\begin{proof}
  From Lemma \ref{prop-R1-l-b}  we deduce that it is sufficient to prove the bound for each $z=\lambda_{(l)}$, $l\in\mathbb N$. Since the bound trivially holds for $\lambda_{(0)}=0$
  we consider $R_1(\lambda_{(l+1)})$. According to  \eqref{S-d-R-1-ratio-eq-1} we have
  \begin{equation}\label{S-d-R-1-Lambda}
    \frac{R_1(\lambda_{(l+1)})}{L_{1,d}^{class}|\mathbb{S}^d|\lambda_{(l+1)}^{1+d/2}}
    =\frac{(l+d/2)(l+1+d/2)\Gamma(l+d+1)}{\Gamma(l+1)(l+1)^{1+d/2}(l+d)^{1+d/2}}.
  \end{equation}
  We rewrite $(l+d/2)(l+1+d/2)=(l+1)(l+d)+\frac{d(d-2)}{4}$. Since $\frac{d(d-2)}{4}\geq 0$, we therefore have the lower bound
  \begin{equation*}
    \frac{R_1(\lambda_{(l+1)})}{L_{1,d}^{class}|\mathbb{S}^d|\lambda_{(l+1)}^{1+d/2}}
   \geq\frac{\Gamma(l+d+1)}{\Gamma(l+1)(l+1)^{d/2}(l+d)^{d/2}}.
  \end{equation*}
  Since
  \begin{equation*}
  \frac{\Gamma(l+d+1)}{\Gamma(l+1)}=\prod_{j=1}^{d}(l+j)
  =\left(\prod_{j=1}^{d}(l+j)(l+d+1-j)\right)^{1/2}
  =\left(\prod_{j=1}^{d}\left((l+1)(l+d)+(j-1)(d-j)\right)\right)^{1/2},
  \end{equation*}
  we finally obtain
  \begin{equation*}
    \frac{R_1(\lambda_{(l+1)})}{L_{1,d}^{class}|\mathbb{S}^d|\lambda_{(l+1)}^{1+d/2}}
   \geq\left(\prod_{j=1}^{d}\left(1+\frac{(j-1)(d-j)}{(l+1)(l+d)}\right)\right)^{1/2}\geq 1.
  \end{equation*}
\end{proof}
We note that, taking into account the term $d(d-2)/4$ in the proof of the above theorem (which we have dropped at the beginning of the estimate) we get the following estimates for $R_1(z)$ when $z=\lambda_{(l+1)}$, improving the result of \cite{ilyin_laptev}:
\begin{cor}\label{cor_lower}
  For all $l\geq 0$ and $d\geq 2$:
  \begin{equation*}
    R_1(\lambda_{(l+1)})\geq L_{1,d}^{class}|\mathbb{S}^d|\,\lambda_{(l+1)}^{\frac{d}{2}+1}\left(1+\frac{d(d-2)(d+2)}{12\lambda_{(l+1)}}\right).
  \end{equation*}
\end{cor}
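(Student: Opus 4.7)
The plan is to rework the proof of Theorem \ref{R-1-lo-thm-sphere} while retaining the nonnegative summand $d(d-2)/4$ that was dropped there, and then to reduce the claim to a polynomial inequality in $\mu := \lambda_{(l+1)} = (l+1)(l+d)$ which can be handled coefficient by coefficient. The starting point is identity \eqref{S-d-R-1-Lambda}; combining the observations $(l+d/2)(l+1+d/2) = \mu + d(d-2)/4$, $(l+1)^{1+d/2}(l+d)^{1+d/2} = \mu^{1+d/2}$, and the pairing $(l+k)(l+d+1-k) = \mu + (k-1)(d-k)$ (which gives $[\Gamma(l+d+1)/\Gamma(l+1)]^2 = \prod_{k=1}^d[\mu+(k-1)(d-k)]$), I obtain
\[
\frac{R_1(\lambda_{(l+1)})}{L_{1,d}^{class}|\mathbb{S}^d|\,\lambda_{(l+1)}^{1+d/2}} = \frac{\mu + d(d-2)/4}{\mu^{1+d/2}}\,\sqrt{\prod_{k=1}^d\bigl[\mu + (k-1)(d-k)\bigr]}\,.
\]

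Squaring, the claimed bound becomes equivalent to the polynomial inequality $\Phi(\mu) \geq 0$, where
\[
\Phi(\mu) := \Bigl(\mu+\tfrac{d(d-2)}{4}\Bigr)^{2} \prod_{k=1}^d\bigl[\mu + (k-1)(d-k)\bigr] - (\mu+C)^2\mu^d,\qquad C := \frac{d(d-2)(d+2)}{12}.
\]
This is a polynomial of degree $d+2$ in $\mu$, and I will prove the stronger statement that every coefficient of $\Phi$ is nonnegative, which yields $\Phi(\mu) \geq 0$ for all $\mu \geq 0$. A direct expansion shows that the coefficients of $\mu^{d+2}$ and $\mu^{d+1}$ vanish identically (the second vanishing being equivalent to the identity $C = d(d-2)/4 + e_1/2$, with $e_j$ the $j$-th elementary symmetric polynomial of the numbers $b_k := (k-1)(d-k)$); the coefficient of $\mu^d$ simplifies to $e_2 - d^2(d-2)^2(d-1)(d-7)/144$; and the coefficients of $\mu^{d-j}$ for $j\geq 1$ come only from the first summand of $\Phi$, which is a product of polynomials with nonnegative coefficients, so they are automatically nonnegative.

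The main obstacle is to show that the coefficient of $\mu^d$ is nonnegative in the regime $d\geq 8$ (for $d\leq 7$ the claim is immediate since $(d-1)(d-7)\leq 0$). For this I would compute $e_2$ in closed form via $2e_2 = (\sum_k b_k)^2 - \sum_k b_k^2$: the first sum equals $d(d-1)(d-2)/6$ (already used in the proof of Theorem \ref{R-1-lo-thm-sphere}), while the substitution $m = k-1$ and the standard power-sum formulae yield $\sum_k b_k^2 = d(d-1)(d-2)\bigl[(d-1)^2+1\bigr]/30$, whence $e_2 = d(d-1)(d-2)(d-3)(5d^2-6d+4)/360$. The required inequality $144\,e_2 \geq d^2(d-2)^2(d-1)(d-7)$ then reduces, after dividing by $d(d-1)(d-2)$, to $2(d-3)(5d^2-6d+4) \geq 5d(d-2)(d-7)$, i.e., $5d^3+3d^2-26d-24 \geq 0$, which is immediate for $d\geq 8$. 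I expect this explicit use of $e_2$ to be the genuinely new ingredient compared with the proof of Theorem \ref{R-1-lo-thm-sphere}: replacing the product by only its first-order bound $1+e_1/\mu$ is insufficient for $d\geq 8$ and $\mu$ large, so exploiting the full polynomial structure in $\mu$ is essential.
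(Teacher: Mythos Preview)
Your proof is correct and is essentially the same argument as the paper's, up to the change of variable $\mu \leftrightarrow x = 1/\mu$: the paper defines $f(x)=(1+Bx)^2\prod_j(1+a_jx)-(1+Ax)^2$ and your $\Phi(\mu)$ satisfies $\Phi(\mu)=\mu^{d+2}f(1/\mu)$, so ``all coefficients of $\Phi$ nonnegative'' is literally the statement $c_0=c_1=0$, $c_2\ge 0$, and $c_j\ge 0$ for $j\ge 3$, which is exactly the paper's $f(0)=f'(0)=0$, $f''(0)\ge 0$, $f'''(x)\ge 0$. The only substantive difference is in the verification of $c_2\ge 0$: the paper computes $f''(0)=\tfrac{d(d-1)(d-2)(d+1)(d+2)(5d-12)}{360}$ in closed factored form, which is visibly nonnegative for integers $d\ge 2$, whereas you split into the cases $d\le 7$ and $d\ge 8$ and reduce the latter to the cubic inequality $5d^3+3d^2-26d-24\ge 0$. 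Both routes are valid; the factored form is tidier and avoids the case split, while your computation of $e_2$ is a useful explicit byproduct.
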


\begin{proof}
 Since for $d=2$ the inequality has already been shown, we assume $d\geq 3$. We start from \eqref{S-d-R-1-Lambda}
and we rewrite $(l+d/2)(l+1+d/2)=(l+1)(l+d)+\frac{d(d-2)}{4}$. Then
  \begin{equation*}
     \frac{R_1(\lambda_{(l+1)})}{L_{1,d}^{class}|\mathbb{S}^d|\lambda_{(l+1)}^{1+d/2}}
    =\frac{\Gamma(l+d+1)}{\Gamma(l+1)(l+1)^{d/2}(l+d)^{d/2}}\left(1+\frac{d(d-2)}{4\lambda_{(l+1)}}\right).
  \end{equation*}
  Writing as before 
  \begin{equation*}
    \begin{split}
  \frac{\Gamma(l+d+1)}{\Gamma(l+1)} 
  &=\left(\prod_{j=1}^{d}\left((l+1)(l+d)+(j-1)(d-j)\right)\right)^{1/2},
  \end{split}
  \end{equation*}
  we finally obtain
  \begin{equation*}
     \frac{R_1(\lambda_{(l+1)})}{L_{1,d}^{class}|\mathbb{S}^d|\lambda_{(l+1)}^{1+d/2}}
    =\left(\prod_{j=1}^{d}\left(1+\frac{(j-1)(d-j)}{\lambda_{(l+1)}}\right)\right)^{1/2}\left(1+\frac{d(d-2)}{4\lambda_{(l+1)}}\right).
  \end{equation*}
  We consider the function $f(x)$ defined for $x\geq 0$ by
  \begin{equation}\label{f-auxiliary}
    f(x)= \left(1+\frac{d(d-2)}{4}\,x\right)^2\left(\prod_{j=1}^{d}\left(1+(j-1)(d-j)x\right)\right)-\left(1+\frac{d(d-2)(d+2)}{12}\,x\right)^2\,.
  \end{equation}
  We have $f(0)=0$. We will show $f'(0)=0$, $f''(0)>0$. Since obviously $f'''(x)\geq 0$ this implies $f(x)\geq 0$ for $x\geq 0$ and hence the claim.
  We note $f(x)=(1+Bx)^2P(x)-(1+Ax)^2$ where $\displaystyle P(x)=\prod_{j=1}^{d}(1+a_jx)$ denotes the polynomial given by the product. The coefficients $a_j,A,B$ are easily identified by \eqref{f-auxiliary}. We have
  \begin{equation*}
    P'(x)=P(x)\sum_{j=1}^{d}\frac{a_j}{1+a_jx},\quad P''(x)=P(x)\left(\sum_{j=1}^{d}\frac{a_j}{1+a_jx}\right)^2-P(x)\sum_{j=1}^{d}\frac{a_j^2}{(1+a_jx)^2}.
  \end{equation*}
  Hence
  \begin{equation*}
    f'(x)=2B(1+Bx)P(x)+(1+Bx)^2P'(x)-2A(1+Ax),
  \end{equation*}
  \begin{equation*}
    f''(x)=2B^2P(x)+4B(1+Bx)P'(x)+ (1+Bx)^2P''(x)-2A^2\,.
  \end{equation*}
  First of all, we see that
  \begin{equation*}
    f'(0)=2B+\sum_{j=1}^{d}a_j-2A=0.
  \end{equation*}
  The coefficient $A=\frac{d(d-2)(d+2)}{12}$ is indeed determined by this condition. Finally,
  \begin{multline*}
       f''(0)  =2B^2+4B \sum_{j=1}^{d}a_j+\left(\sum_{j=1}^{d}a_j\right)^2-\sum_{j=1}^{d}a_j^2-2A^2\\
          =2B^2+8B(A-B)+4(A-B)^2-2A^2-\sum_{j=1}^{d}a_j^2
          =2A^2-2B^2-\sum_{j=1}^{d}a_j^2.
  \end{multline*}
  Together with
  \begin{equation*}
    \sum_{j=1}^{d}a_j^2=\frac{d(d-1)(d-2)(d^2-2d+2)}{30}
  \end{equation*}
  we get
  \begin{equation*}
    f''(0)=\frac{d(d-1)(d-2)(d+1)(d+2)(5d-12)}{360}
  \end{equation*}
  which is non-negative for positive integers $d$ proving the assertion.
\end{proof}
From Corollary \ref{cor_lower} and a careful inspection of the proof of Lemma \ref{prop-R1-l-b} and Theorem \ref{R-1-lo-thm-sphere} we deduce the following improvement of \eqref{S-d-R-1-lower-bound}, which is optimal in a suitable sense, as we will explain in Remark \ref{optimal_lo} below (see also Figure \ref{f5}).
\begin{thm}\label{lo-shift}
For all $z\geq 0$ the following lower bound for the first Riesz-mean $R_1$ on $\mathbb{S}^d$ holds:
\begin{equation}\label{S-d-refined-Ries-mean-lower-bound}
    R_1(z)\geq L_{1,d}^{class}|\mathbb{S}^d|\,z^{\frac{d}{2}+1}\left(1+\frac{d(d-2)(d+2)}{12z}\right).
  \end{equation}
  \end{thm}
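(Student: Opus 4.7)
The plan is to combine the endpoint estimate of Corollary \ref{cor_lower} with a concavity argument on each spectral gap. Set
\[
h(z):=z^{\frac{d}{2}+1}+\frac{d(d-2)(d+2)}{12}\,z^{\frac{d}{2}},\qquad g(z):=R_1(z)-L_{1,d}^{class}|\mathbb{S}^d|\,h(z),
\]
so that \eqref{S-d-refined-Ries-mean-lower-bound} reads $g(z)\ge 0$ on $[0,\infty)$. The key structural observation is that $R_1(z)=\sum_j(z-\lambda_j)_+$ is affine in $z$ on each spectral interval $[\lambda_{(l)},\lambda_{(l+1)}]$, since between two consecutive energy levels no new term activates and the counting function is constant.

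Next I would verify that $h$ is convex on $[0,\infty)$ for every $d\ge 2$. A direct computation gives
\[
  h''(z)=\frac{d(d+2)}{4}\,z^{\frac{d}{2}-1}+\frac{d^2(d-2)^2(d+2)}{48}\,z^{\frac{d}{2}-2},
\]
and both coefficients are non-negative, with the second vanishing in the limit case $d=2$ where the bound reduces to Theorem \ref{R-1-lo-thm-sphere}. Consequently, on each interval $[\lambda_{(l)},\lambda_{(l+1)}]$ the function $g$ is an affine function minus a convex one, hence concave, and therefore attains its minimum at one of the endpoints. It is thus enough to verify $g(\lambda_{(l)})\ge 0$ for every $l\in\mathbb{N}$.

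The endpoint check is then immediate: when $l=0$ we have $\lambda_{(0)}=0$, $R_1(0)=0$ and $h(0)=0$, so $g(0)=0$; when $l\ge 1$, the inequality $g(\lambda_{(l)})\ge 0$ is exactly what Corollary \ref{cor_lower} provides. The main analytical work has therefore already been absorbed into Corollary \ref{cor_lower} via the polynomial inequality $f(x)\ge 0$ and the explicit evaluation of $f(0)$, $f'(0)$, $f''(0)$; by comparison, the convexity of $h$ and the concavity-based endpoint reduction are straightforward, so I do not expect any serious obstacle beyond ensuring that the shift term does not destroy convexity, which is precisely what the sign analysis above guarantees.
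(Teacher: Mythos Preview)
Your argument is correct and in fact cleaner than the paper's. Both proofs reduce to the endpoint estimate of Corollary \ref{cor_lower}, but the mechanism for reducing to endpoints differs. The paper follows the template of Lemma \ref{prop-R1-l-b}: it studies the \emph{ratio} $R_1(z)\big/\big(L_{1,d}^{class}|\mathbb{S}^d|\,h(z)\big)$ on each spectral gap and shows, via a logarithmic-derivative computation, that this ratio has a unique interior critical point which is a strict maximum, hence the minimum is at an endpoint. You instead work with the \emph{difference} $g=R_1-L_{1,d}^{class}|\mathbb{S}^d|\,h$ and observe directly that $R_1$ is affine on each gap while $h$ is convex on $[0,\infty)$, so $g$ is concave there and again minimized at an endpoint. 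Your route avoids the derivative analysis of the ratio entirely; the only thing to check is the convexity of $h$, which is immediate since $z^{d/2+1}$ and $z^{d/2}$ are both convex for $d\ge 2$ (the borderline case $d=2$ making the second summand linear). The paper's ratio analysis, on the other hand, yields slightly more information---a unique interior maximum of the ratio---which is consonant with the asymptotic picture in Theorem \ref{R1-d-sphere}, but is not needed for the bare inequality.
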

  \begin{proof}
   The proof follows the same lines as the proof of Lemma \ref{prop-R1-l-b},  showing that the ratio of the right-hand side and left-hand side of \eqref{S-d-refined-Ries-mean-lower-bound} as a function of $z$ has exactly one local maximum in each interval $[L(L+d-1),(L+1)(L+d)]$. Then we conclude by Corollary \ref{cor_lower}.

\end{proof}

We turn our attention to upper bounds for $R_1(z)$. The upper bound contains a shift term, which is again optimal in a suitable sense (see Remark \ref{optimal_up} below, see also Figure \ref{f5}). 

\begin{thm}\label{thm:ubR1dshift}
  For all $z\geq 0$ the following upper bound for the first Riesz-mean $R_1$ on $\mathbb{S}^d$ holds:
  \begin{equation}\label{S-d-R-1-bounds}
    R_1(z)\leq L_{1,d}^{class}|\mathbb{S}^d|\,(z+z_d)^{\frac{d}{2}+1}
  \end{equation}
  with
 \begin{equation}\label{S-d-R-1-upperbound shift}
  z_d=\frac{(2d-1)d}{12}.
  \end{equation}
\end{thm}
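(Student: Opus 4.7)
The plan is to adapt the convexity/critical-point strategy used for the lower bound in Lemma \ref{prop-R1-l-b} and Theorem \ref{lo-shift} to the present shifted upper bound. Fix $L \in \mathbb{N}$ and work on the interval $I_L := [\lambda_{(L)}, \lambda_{(L+1)}]$. From the explicit formula derived in the proof of Lemma \ref{prop-R1-l-b}, $R_1$ is linear on $I_L$ with
\begin{equation*}
R_1(z) = S_L\,z - \frac{d\,L(L+d)}{d+2}\,S_L, \qquad S_L := \frac{(2L+d)\,\Gamma(L+d)}{\Gamma(L+1)\,\Gamma(d+1)},
\end{equation*}
so
\begin{equation*}
\phi(z) := L_{1,d}^{class}|\mathbb{S}^d|\,(z+z_d)^{d/2+1} - R_1(z)
\end{equation*}
is strictly convex on $I_L$. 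Its minimum on $I_L$ is therefore attained either at an endpoint or at the unique interior critical point $z_L^{\ast}$ characterized by $L_{0,d}^{class}|\mathbb{S}^d|\,(z_L^{\ast}+z_d)^{d/2} = S_L$.

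To evaluate $\phi(z_L^{\ast})$ I would use the factorization $(z_L^{\ast}+z_d)^{d/2+1} = (z_L^{\ast}+z_d)(z_L^{\ast}+z_d)^{d/2}$ together with $L_{1,d}^{class}/L_{0,d}^{class} = 2/(d+2)$, which after a short computation yields
\begin{equation*}
\phi(z_L^{\ast}) = \frac{S_L}{d+2}\,\bigl(2 z_d + d\,L(L+d) - d\,z_L^{\ast}\bigr).
\end{equation*}
Hence $\phi(z_L^{\ast}) \geq 0$ is equivalent to $z_L^{\ast} \leq L(L+d) + (2d-1)/6$, and by the critical-point relation this becomes the purely algebraic inequality
\begin{equation*}
\left(\frac{(2L+d)\,\Gamma(L+d)}{2\,\Gamma(L+1)}\right)^{2/d} \leq L(L+d) + \frac{(2d-1)(d+2)}{12}, \qquad (\star)
\end{equation*}
which is the heart of the matter.

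Setting $v := L + d/2$ and noting that $L(L+d) + \tfrac{(2d-1)(d+2)}{12} = v^2 - \tfrac{(d-1)(d-2)}{12}$, one checks that $v$ and $(d-1)(d-2)/12$ are precisely the arithmetic mean and the variance of the $d$ numbers $\{L+d/2,\,L+1,\dots,L+d-1\}$ whose product (up to a factor $1/2$) is the base raised to the $d$-th power on the left-hand side of $(\star)$. Thus $(\star)$ is the statement $\mathrm{GM}^2 \leq \mathrm{AM}^2 - \mathrm{Var}$ for this particular near-arithmetic sequence: a refinement of AM–GM tailored to its structure. To prove it, I would expand both sides as polynomials in $v^2$ using the pairing $(L+j)(L+d-j) = v^2 - (d/2-j)^2$; by construction the coefficients of $v^d$ and $v^{d-2}$ agree on the two sides (this is exactly what forces $z_d = d(2d-1)/12$, in agreement with the asymptotic expansion of Theorem \ref{R1-d-sphere}), so the inequality reduces to a polynomial relation in $v^2$ whose first nontrivial term is of order $v^{d-4}$; its sign can be controlled by a finite algebraic expansion or by induction on $d$. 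For the small values of $L$ for which $z_L^{\ast}$ falls outside $I_L$, the convex function $\phi$ is monotonic on $I_L$ and $\phi \geq 0$ at the relevant endpoint is checked using product expansions of the type appearing in Corollary \ref{cor_lower}.

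The convex reduction to $(\star)$ should be routine and is a direct dual to the lower-bound argument of Theorem \ref{lo-shift}. The delicate step is the proof of $(\star)$ itself: the matching of mean and variance pins down $z_d$ and guarantees asymptotic sharpness, but does not by itself fix the sign of the remaining corrections, and $(\star)$ is emphatically not a general GM–AM-type inequality—it fails for arbitrary positive $d$-tuples. The structural property that rescues it is the near-arithmetic-progression form of $\{L+d/2,\,L+1,\dots,L+d-1\}$, and exploiting this uniformly in $d$ and $L$ is what I expect to be the main technical hurdle.
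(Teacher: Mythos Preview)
Your convexity reduction to the algebraic inequality $(\star)$ is exactly what the paper does: the paper's maximization of the ratio $R_1(z)/(z+b)^{1+d/2}$ over each interval leads (see the passage from \eqref{S-d-R-1-ratio-shift-ineq-1} to \eqref{S-d-R-1-ratio-shift-ineq-2}) to the requirement that
\[
\frac{(L+d/2)\,\Gamma(L+d)}{\Gamma(L+1)}\;\le\;\Bigl(L(L+d)+\tfrac{d+2}{d}b\Bigr)^{d/2},
\]
which for $b=z_d$ is literally your $(\star)$. Your $\mathrm{GM}^2\le \mathrm{AM}^2-\mathrm{Var}$ reformulation with $v=L+d/2$ is correct and illuminating.

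Where your proposal has a genuine gap is the proof of $(\star)$ itself. You sketch a polynomial expansion in $v^2$ and induction on $d$, but you stop at ``the first nontrivial term is of order $v^{d-4}$; its sign can be controlled\ldots''---this is not a proof, and you explicitly call it the main hurdle. The paper actually closes this step, and its argument is short: after the same pairing $(L+j)(L+d-j)=v^2-(j-d/2)^2$ that you write down, it applies the \emph{ordinary} AM--GM inequality to the $d-1$ factors $1-(j-d/2)^2/v^2$, obtaining
\[
\Bigl(\prod_{j=1}^{d-1}\bigl(1-\tfrac{(j-d/2)^2}{v^2}\bigr)\Bigr)^{1/2}\le \Bigl(1-\tfrac{d(d-2)}{12v^2}\Bigr)^{(d-1)/2}.
\]
With $t=v^{-2}$, the remaining claim is $(1-\tfrac{d(d-2)}{12}t)^{(d-1)/2}\le (1-\tfrac{(d-1)(d-2)}{12}t)^{d/2}$, which the paper proves by showing the logarithm of the quotient has derivative of a fixed sign; the derivative vanishes at $t=0$ precisely because the linear terms match (this is exactly your observation that the choice of $z_d$ forces the $v^{d-2}$ coefficients to agree), and the next-order term has the right sign for all $t$. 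So the ``refined AM--GM'' you anticipate is obtained by plain AM--GM followed by a one-variable monotonicity argument, not by controlling a polynomial remainder term by term.

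One minor point: your worry about ``small values of $L$ for which $z_L^{\ast}$ falls outside $I_L$'' is unnecessary. Since $z_d\le d^2/2$, one checks directly that $z_L^{\ast}\in[\lambda_{(L)},\lambda_{(L+1)}]$ for every $L\ge 0$, so no separate endpoint analysis is needed.
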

\begin{proof}
Again, let us set $z=w(w+d-1)$ and $L=\lfloor w\rfloor$. Let $b\geq 0$. We analyze the quantity
\begin{multline}\label{S-d-R-1-ratio-shift-eq-1}
     \frac{R_1(w(w+d-1))}{L_{1,d}^{class}|\mathbb{S}^d|\left(w(w+d-1)+b\right)^{1+d/2}} \\ 
       = \frac{(2L+d)\Gamma(L+d)}{4\Gamma(L+1)\left(w(w+d-1)+b\right)^{1+d/2}}\,\left(-dL(L+d)+(d+2)w(w+d-1)\right).
\end{multline}
We show that in each interval $[L,L+1]$, $L\geq 1$, the ratio in \eqref{S-d-R-1-ratio-shift-eq-1} has a unique maximum.
When $L=0$ the ratio is a strictly decreasing function and singular at $w=0$ if $b=0$.  For this we fix $L$ and put $w=L+x$ with $x\in[0,1[$ the fractional part of $w$. Note that $\lambda_{(L)}=w(0)(w(0)+d-1)$, $\lambda_{(L+1)}=w(1)(w(1)+d-1)$. Therefore
\begin{equation}\label{S-d-R-1-ratio-shift-eq-2}
     \frac{R_1(w(w+d-1))}{L_{1,d}^{class}|\mathbb{S}^d|\big(w(w+d-1)+b\big)^{1+d/2}}  
       = \frac{(L+d/2)\Gamma(L+d)}{\Gamma(L+1)\big((L+x)(L+x+d-1)+b\big)^{1+d/2}}\,A(x)
\end{equation}
with
\begin{equation*}
  A(x)=\left(\frac{d}{2}+1\right)(x+L)(x+L+d-1)-\frac{Ld(L+d)}{2}.
\end{equation*}
We also define
\begin{equation*}
  \rho(x):=(L+x)(L+x+d-1)+b.
\end{equation*}
Then $A(x)=\left(\frac{d}{2}+1\right)\rho(x)-\frac{Ld(L+d)}{2}-\left(\frac{d+2}{2}\right)b$, and \eqref{S-d-R-1-ratio-shift-eq-2} reads as follows
\begin{multline}\label{S-d-R-1-ratio-shift-eq-3}
     \frac{R_1(w(w+d-1))}{L_{1,d}^{class}|\mathbb{S}^d|\big(w(w+d-1)+b\big)^{1+d/2}}  \\
      = \frac{(L+d/2)\Gamma(L+d)}{\Gamma(L+1)}\left(\frac{d+2}{2}\,\rho(x)^{-d/2}-\left(\frac{Ld(L+d)}{2}+\frac{d+2}{2}\,b\right)\rho^{-1-d/2}\right).
\end{multline}
The right-hand side of \eqref{S-d-R-1-ratio-shift-eq-3} has a unique maximum at $\rho_b=L(L+d)+\frac{d+2}{d}\,b$.
It is easy to check that $\lambda_{(L)}+b\leq \rho_b\leq \lambda_{(L+1)}+b$ when $b\leq d^2/2$. Therefore we get the inequality
\begin{equation}\label{S-d-R-1-ratio-shift-ineq-1}
     \frac{R_1(w(w+d-1))}{L_{1,d}^{class}|\mathbb{S}^d|\big(w(w+d-1)+b\big)^{1+d/2}}  
      \leq \frac{(L+d/2)\Gamma(L+d)}{\Gamma(L+1)}\bigg(L(L+d)+\frac{d+2}{d}\,b\bigg)^{-d/2},
\end{equation}
which holds for all $w\in[L,L+1]$. Now, we note that
  \begin{equation*}
  \frac{\Gamma(L+d)}{\Gamma(L+1)}=\prod_{j=1}^{d-1}(L+j)
  =\left(\prod_{j=1}^{d-1}(L+j)(L+d-j)\right)^{1/2}
  =\left(\prod_{j=1}^{d-1}\left((L+d/2)^2-(j-d/2)^2\right)\right)^{1/2}.
  \end{equation*}
Therefore we may rewrite \eqref{S-d-R-1-ratio-shift-ineq-1} as follows:
\begin{equation}\label{S-d-R-1-ratio-shift-ineq-2}
     \frac{R_1(w(w+d-1))}{L_{1,d}^{class}|\mathbb{S}^d|\left(w(w+d-1)+b\right)^{1+d/2}}  
      \leq \left(\prod_{j=1}^{d-1} 1-\frac{(j-d/2)^2}{(L+d/2)^2}\right)^{1/2}
      \left(1+\frac{\frac{d+2}{d}\,b-\frac{d^2}{4}}{(L+d/2)^2}\right)^{-d/2}.
\end{equation}
We see that the right-hand side of \eqref{S-d-R-1-ratio-shift-ineq-2} is bounded above by $1$ if $b\geq \frac{d^3}{4(d+2)}$.
However, here we want to show a that a choice $b \leq \frac{d^3}{4(d+2)}$ also yields the upper bound $1$ in  \eqref{S-d-R-1-ratio-shift-ineq-2} .
For this we apply the arithmetic-geometric mean inequality to the product:
\begin{equation*}
    \left(\prod_{j=1}^{d-1} 1-\frac{(j-d/2)^2}{(L+d/2)^2}\right)^{1/2}  \\
      \leq \left(1- \frac{1}{d-1}\sum_{j=1}^{d-1} \frac{(j-d/2)^2}{(L+d/2)^2}\right)^{(d-1)/2}\\
      =\left(1-\frac{d(d-2)}{12(L+d/2)^2}\right)^{(d-1)/2}\,.
\end{equation*}
It is now sufficient to show that the function $f(t)$ defined by
\begin{equation*}
  f(t)=\frac{d-1}{2}\,\log\left(1-\frac{d(d-2)}{12}\,t\right)-\frac{d}{2}\,\log\left(1+\left(\frac{d+2}{d}\,b-\frac{d^2}{4}\right)\,t\right)
\end{equation*}
is decreasing for $t>0$ for $b$ suitably chosen (we will use this fact with $t=(L+d/2)^{-2}$). In particular, we want to show that this is the case for $b=z_d=\frac{(2d-1)d}{12}$ which will be the optimal choice. We easily compute
\begin{equation*}
  f'(t)=-\,\frac{\frac{d}{2}\left(\frac{d+2}{d}\,b- \frac{(d+2)(2d-1)}{12}-\frac{d-2}{12}\,(\frac{d+2}{d}\,b-\frac{d^2}{4})\,t\right)}
  {(1-\frac{d(d-2)}{12}\,t)(1+(\frac{d+2}{d}\,b-\frac{d^2}{4})\,t)}.
\end{equation*}
The best choice is obviously $b=z_d=\frac{(2d-1)d}{12}$ eliminating the constant term. With this choice
\begin{equation*}
  f'(t)=-\,\frac{d(d-1)(d-2)^2t}{24(1-\frac{d(d-2)}{12}\,t)(1+(\frac{d+2}{d}\,b-\frac{d^2}{4})\,t)}\leq 0.
\end{equation*}
The proof is now completed.
\end{proof}

\begin{rem}\label{optimal_up}
We remark that the shift $z_d$ in the upper bound \eqref{S-d-R-1-bounds} is, in a sense, optimal. We observe that for $d=2$ the upper bound coincides with the one found in Proposition \ref{prop:R1S2bounds} for $\mathbb S^2$, which we have already shown to be sharp. For $d\geq 3$, in general we cannot find $z\in[\lambda_{(l)},\lambda_{(l+1)}]$ such that the equality is attained in \eqref{S-d-R-1-bounds}. When $z\in[\lambda_{(l)},\lambda_{(l+1)}]$ one uses the explicit form of $R_1(z)$ (as in \eqref{S-d-R-1}) and considers the function $f(z)=R_1(z)-L^{class}_{1,d}|\mathbb S^d|(z+b)^{\frac{d}{2}+1}$. Computing $f'(z)$,  finding $z_0$ such that $f'(z_0)=0$, and substituting $z_0$ in $f(z)$, we find the minimum distance from $R_1(z)$ to $L^{class}_{1,d}|\mathbb S^d|(z+b)^{\frac{d}{2}+1}$, namely, $|f(z_0)|$. If we want this distance to be zero, then we must chose $b=b(l)$. If $d=2$, then $b=b(l)=1/2$ for all $l$ and this  corresponds to the optimal upper bound of Proposition \ref{prop:R1S2bounds} (see also \cite{ilyin_laptev,stri}). If $d\geq 3$, one has that in each interval $[\lambda_{(l)},\lambda_{(l+1)}]$ the optimal shift would be given by 
$$
b(l)=\frac{d}{d+2}(4^{-1/d}((d+2l)(d+l-1)!/l!)^{2/d}-l(l+d)).
$$
We highlight that $b(l)\to z_d$ as $l\to+\infty$, so in this sense the shift $z_d$ becomes sharp as $z\to\infty$.
\end{rem}

\begin{rem}\label{rem_stri}
In \cite{stri} the author estimates the liminf and limsup of the remainder of  Weyl's law for $R_1$ on $\mathbb S^d$ (Theorem 3.3). These expressions agree  with the two-term Weyl's law we have proved in Theorem \ref{R1-d-sphere} and with the corresponding upper and lower bounds. In fact, the bounds of Theorems \ref{lo-shift} and \ref{thm:ubR1dshift}  are optimal since they provide the   precise envelopes for the second term of the asymptotic expansion \eqref{S-d-R-1-two-term-asymptotics}. In particular, the upper bound is obtained when $\psi(w)=0$ (meaning $w=\lfloor w\rfloor$), and the lower bound  when $\psi^2(w)=\frac{1}{4}$ (meaning $w=\lfloor w\rfloor \pm \frac{1}{2}$). Here $w$ is defined by $w(w+d-1)=z$.
\end{rem}

\begin{rem}\label{optimal_lo}
A consequence of the upper bound \eqref{S-d-R-1-bounds} is that the average of $\lambda_j+z_d$ satisfies a Berezin-Li-Yau lower bound. One may wonder whether the lower bound \eqref{S-d-R-1-lower-bound} holds with a shift, namely, with $z$ replaced by $z+b_d$ where $b_d=\frac{d(d-2)}{6}$ which is the optimal choice (this is the liminf of Strichartz, like $z_d$ is for the upper bound). Clearly this is true for $d=2$ but it is already false for $d=3$. It is enough to observe that the corresponding inequality $R_1(z)\geq L^{class}_{1,d}|\mathbb S^d|(z+b_d)^{\frac{d}{2}+1}$ fails for $z\leq d$.
\end{rem}

\begin{figure}[ht]
    \centering
    \includegraphics[width=0.7\textwidth]{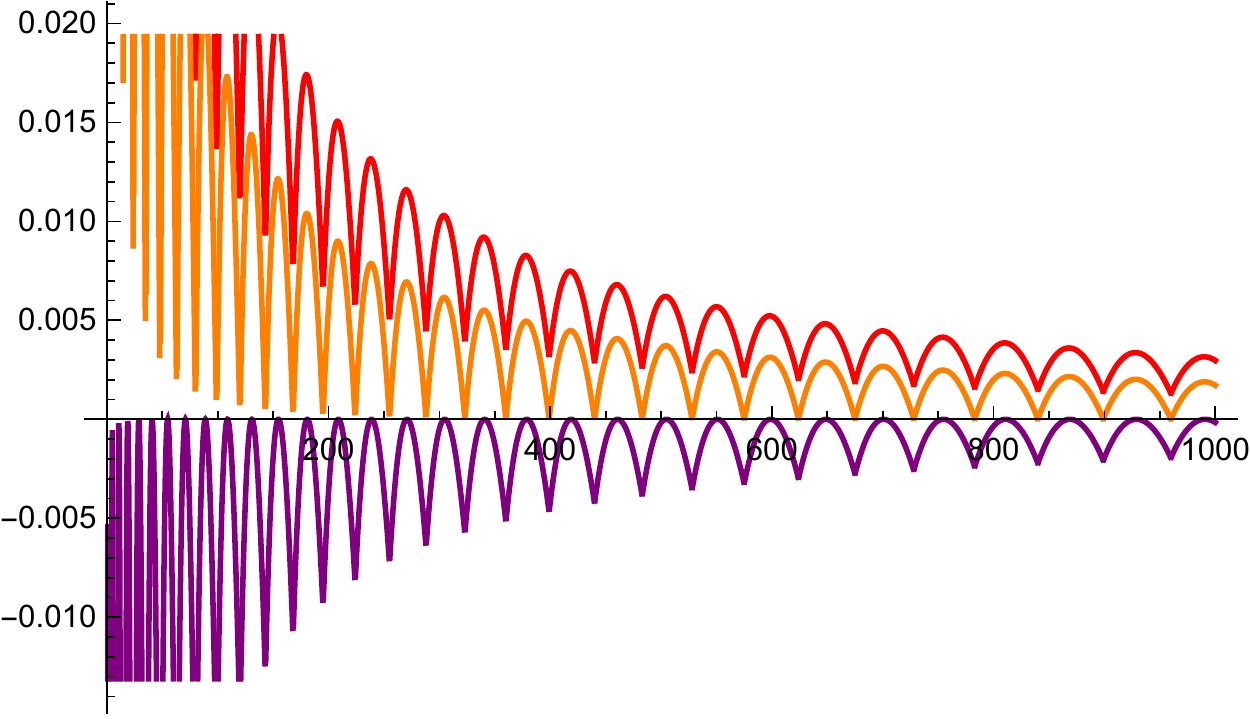}
   \caption{In red, the ratio (minus $1$) of $R_1$ and the leading term in Weyl's law; in purple and orange the ratio (minus $1$) of $R_1$ and the upper and lower bounds of Theorems \ref{thm:ubR1dshift} and  \ref{lo-shift}, respectively. Here $d=3$.}\label{f5}
\end{figure}

For the reader's convenience, we restate the results of Theorems \ref{R-1-lo-thm-sphere} and \ref{thm:ubR1dshift} in terms of inequalities for eigenvalues averages, i.e., in the form of \eqref{blyk}. For the equivalence between inequalities on Riesz means and averages we refer e.g., to \cite{HaHe2011}.

\begin{cor}\label{cor_new}
For all $k\geq 1$ the following inequalities hold:
$$
\frac{d}{d+2}\frac{4\pi^2}{\omega_d^{2/d}}\left(\frac{k}{|\mathbb S^d|}\right)^{2/d}-\frac{(2d-1)d}{12}\leq\frac{1}{k}\sum_{j=1}^k\lambda_j\leq\frac{d}{d+2}\frac{4\pi^2}{\omega_d^{2/d}}\left(\frac{k}{|\mathbb S^d|}\right)^{2/d}
$$
where $\lambda_j$ are the eigenvalues of the Laplacian on $\mathbb S^d$.
\end{cor}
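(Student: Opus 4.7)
The plan is to convert the two Riesz-mean estimates of Theorems \ref{R-1-lo-thm-sphere} and \ref{thm:ubR1dshift} into the stated bounds on the partial sums $\sum_{j=1}^{k}\lambda_j$ through the Legendre-transform duality referenced just before the corollary via \cite{HaHe2011}. The central identity is that for any non-decreasing sequence of eigenvalues with $\lambda_1\geq 0$, the convex conjugate
\[
R_1^{*}(k) \;=\; \sup_{z\geq 0}\bigl(kz-R_1(z)\bigr)
\]
evaluated at an integer $k\geq 1$ equals $\sum_{j=1}^{k}\lambda_j$, which is immediate from the fact that $R_1$ is piecewise linear and convex with slope $k$ on $[\lambda_k,\lambda_{k+1}]$. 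From this duality, any pointwise lower bound $R_1(z)\geq f(z)$ transforms into $\sum_{j=1}^{k}\lambda_j\leq f^{*}(k)$, and any pointwise upper bound $R_1(z)\leq g(z)$ transforms into $\sum_{j=1}^{k}\lambda_j\geq g^{*}(k)$.

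For the upper estimate in the corollary I take $f(z)=L_{1,d}^{class}|\mathbb{S}^d|\,z^{1+d/2}$, which is the content of Theorem \ref{R-1-lo-thm-sphere}. A single-variable optimization of $kz-f(z)$ locates the maximum at $z_{*}=\bigl(k/(L_{0,d}^{class}|\mathbb{S}^d|)\bigr)^{2/d}$ (using the relation $(d+2)L_{1,d}^{class}=2L_{0,d}^{class}$ that comes directly from \eqref{semiclassical-constants-Laplacian}), and evaluates to
\[
f^{*}(k)=\frac{d\,k}{d+2}\left(\frac{k}{L_{0,d}^{class}|\mathbb{S}^d|}\right)^{\!2/d}.
\]
I then rewrite the semiclassical constant in terms of the unit ball volume via the elementary identity $(L_{0,d}^{class})^{-2/d}=4\pi^{2}/\omega_d^{2/d}$, which follows at once from the formulas $L_{0,d}^{class}=(4\pi)^{-d/2}/\Gamma(1+d/2)$ and $\omega_d=\pi^{d/2}/\Gamma(1+d/2)$. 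Dividing by $k$ produces exactly the upper inequality in the corollary.

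For the lower estimate I repeat the procedure with the upper bound $g(z)=L_{1,d}^{class}|\mathbb{S}^d|(z+z_d)^{1+d/2}$ of Theorem \ref{thm:ubR1dshift}. The translation $y=z+z_d$ reduces the new conjugate to the previous one shifted by $-kz_d$, so that $g^{*}(k)=f^{*}(k)-kz_d$, and dividing by $k$ contributes precisely the extra term $-(2d-1)d/12$ appearing in the lower bound. The whole argument is mechanical; there is no genuine obstacle, only the need to keep careful track of the constants and to verify that the supremum defining the conjugate is attained in the interior $z>0$ for $k\geq 1$, which is immediate since $kz-f(z)$ has positive slope at the origin.
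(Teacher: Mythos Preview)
Your approach is exactly the paper's: Legendre-transform the Riesz-mean bounds of Theorems~\ref{R-1-lo-thm-sphere} and~\ref{thm:ubR1dshift}, as indicated by the reference to \cite{HaHe2011}. The computations of $f^{*}(k)$ and the rewriting of $(L_{0,d}^{class})^{-2/d}=4\pi^{2}/\omega_d^{2/d}$ are correct.

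One small point needs fixing. Your final remark that the supremum is attained in the interior ``since $kz-f(z)$ has positive slope at the origin'' only settles the conjugate of $f$. For $g(z)=L_{1,d}^{class}|\mathbb{S}^d|(z+z_d)^{1+d/2}$ the slope of $kz-g(z)$ at $z=0$ equals $k-L_{0,d}^{class}|\mathbb{S}^d|\,z_d^{d/2}$, which can be negative for small $k$ in high dimension; then the constrained supremum over $z\geq 0$ is strictly smaller than $f^{*}(k)-kz_d$, and your identity $g^{*}(k)=f^{*}(k)-kz_d$ fails. This does not damage the corollary, however: whenever the unconstrained maximizer $y_{*}=(k/(L_{0,d}^{class}|\mathbb{S}^d|))^{2/d}$ satisfies $y_{*}<z_d$, one has $f^{*}(k)-kz_d=k\bigl(\tfrac{d}{d+2}y_{*}-z_d\bigr)<0\leq\sum_{j=1}^{k}\lambda_j$, so the lower inequality holds trivially. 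Adding this one-line observation completes the argument.
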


We conclude this section with  a three-term asymptotic
expansion for the counting function $N(z)$. We illustrate the result in Figure \ref{f6}.

\begin{thm}\label{N-d-sphere}
As $z\to\infty$  we have the following asymptotic expansion for the counting function $N$ on $\mathbb{S}^d$:
\begin{equation}\label{N-d-sphere-eq}
\frac{N(z)}{L^{class}_{0,d}|\mathbb S^d|z^{\frac{d}{2}}}=1-\frac{L^{class}_{0,d-1}}{L^{class}_{0,d}}\frac{|\partial\mathbb S^d_+|}{|\mathbb S^d|}\psi(w)z^{-\frac{1}{2}}+\frac{d(d-1)(12\psi^2(w)+2d-1)}{24}z^{-1}+O(z^{-\frac{3}{2}}).
\end{equation}
Here $w$ is defined by $w(w+d-1)=z$ and $|\partial\mathbb S^d_+|$ denotes the measure of the boundary of the hemisphere. 
\end{thm}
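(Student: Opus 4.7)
The starting point is to obtain a closed form for $N(z)$. Since $m_{l,d}=H_{l,d}-H_{l-2,d}$, the sum defining $N(z)$ telescopes: setting $L:=\lfloor w\rfloor$,
\[
N(z)=\sum_{l=0}^{L}m_{l,d}=H_{L,d}+H_{L-1,d}=\frac{(2L+d)\,\Gamma(L+d)}{\Gamma(L+1)\,\Gamma(d+1)},
\]
the last equality being a direct computation that can also be read off from \eqref{appendix-sum-binomial-coefficients} (it parallels the closed form for $R_1(z)$ derived in the proof of Theorem \ref{R1-d-sphere}). Dividing by $L_{0,d}^{class}|\mathbb{S}^{d}|\,z^{d/2}=\tfrac{2}{d!}z^{d/2}$ gives the ratio to be expanded,
\[
\frac{N(z)}{L_{0,d}^{class}|\mathbb{S}^{d}|\,z^{d/2}}=\frac{(2L+d)\prod_{j=1}^{d-1}(L+j)}{2\,z^{d/2}}.
\]

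The key algebraic move is to \emph{square} this ratio before expanding. Setting $M:=L+d/2$ we have $2L+d=2M$ and the pairing identity $(L+j)(L+d-j)=M^{2}-(d/2-j)^{2}$; grouping factors $j\leftrightarrow d-j$ produces
\[
\left(\frac{N(z)}{L_{0,d}^{class}|\mathbb{S}^{d}|\,z^{d/2}}\right)^{2}=\left(\frac{M^{2}}{z}\right)^{d}\prod_{j=1}^{d-1}\left(1-\frac{(d/2-j)^{2}}{M^{2}}\right).
\]
This factorization is convenient because both factors depend on $L$ only through $M^{2}$, sidestepping any need to treat the parity of $d$ separately. From $z=w(w+d-1)$ we have $w+(d-1)/2=\sqrt{z+(d-1)^{2}/4}$, and since $L=w-\psi(w)-\tfrac{1}{2}$,
\[
M=\sqrt{z+(d-1)^{2}/4}-\psi(w),\qquad \frac{M^{2}}{z}=1-2\psi(w)z^{-1/2}+\frac{(d-1)^{2}/4+\psi^{2}(w)}{z}+O(z^{-3/2}).
\]
Raising to the $d$-th power by the binomial theorem, expanding the product via the elementary identity $\sum_{j=1}^{d-1}(d/2-j)^{2}=d(d-1)(d-2)/12$, and multiplying, I obtain an expansion of the squared ratio up to order $z^{-3/2}$.

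The final step is to take the square root using $\sqrt{1+u}=1+u/2-u^{2}/8+O(u^{3})$: the linear part of $u$ produces the $z^{-1/2}$ term, while the $z^{-1}$ coefficient receives contributions from both the linear and quadratic pieces of $u$. The $z^{-1/2}$ coefficient comes out to $-d\,\psi(w)$, which matches the stated one once one checks, via \eqref{semiclassical-constants-Laplacian} and the explicit volume formulas for $\mathbb{S}^{d}$ and $\mathbb{S}^{d-1}=\partial\mathbb{S}^{d}_{+}$, the identity $\frac{L_{0,d-1}^{class}}{L_{0,d}^{class}}\frac{|\partial\mathbb{S}^{d}_{+}|}{|\mathbb{S}^{d}|}=d$. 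I expect the main obstacle to be the bookkeeping of the $z^{-1}$ coefficient, which combines four contributions: the linear and quadratic pieces of $M^{2}/z-1$ entering $(M^{2}/z)^{d}$, the $M^{-2}$ term from the product, and the $-u^{2}/8$ correction from the square root. After careful algebra these collapse into $\frac{d(d-1)(12\psi^{2}(w)+2d-1)}{24}$, as claimed; the cancellation is not evident a priori but is forced by the form of the identity.
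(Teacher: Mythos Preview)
Your argument is correct, and it takes a genuinely different route from the paper. The paper's proof is indirect: it writes $N(z)=N^D(z)+N^N(z)$ and defers everything to the three-term expansions for the Dirichlet and Neumann hemispheres (Theorems \ref{ND} and \ref{NN}), which in turn are established through Stirling-type expansions of $\Gamma(L+d)/\Gamma(L)$ in the variable $1/L$ using the lemmas of Appendix \ref{app_A}. You instead work directly from the closed form for $N(z)$, and your squaring trick---pairing $(L+j)(L+d-j)=M^{2}-(d/2-j)^{2}$ with $M=L+d/2$---is an elegant way to make the product depend only on $M^{2}$, so that a single expansion in $z^{-1/2}$ followed by a square root suffices, bypassing the appendix machinery and any parity discussion. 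Each approach has something to recommend it: yours is self-contained and arguably the cleanest path to this particular statement; the paper's route is more economical in context, since the hemisphere expansions are needed anyway and summing them costs nothing extra, while also making the structural identity $N=N^D+N^N$ do visible work.
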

\begin{proof}
The proof follows from the identity $N(z)=N^D(z)+N^N(z)$, where $N^D(z)$ and $N^N(z)$ are the counting functions for the Dirichlet and Neumann Laplacian on the hemisphere $\mathbb S^d_+$. We prove the corresponding three-term expansions in Theorem \ref{ND} and \ref{NN} in the next section.
\end{proof}

It is interesting to see that the second term is oscillatory, but it is not bounded: along suitable subsequences it behaves like $\pm z^{\frac{d}{2}-\frac{1}{2}}$. This is natural as this is the correct order of the remainder after the first term, see \cite{canzani,SV}. This also provides an interpretation of the results of \cite[Theorem F]{FrMaSa22}  for the eigenvalues on the whole sphere. Note that in \cite{SV}, the authors present a quasi-Weyl formula in the case of manifolds or domains not satisfying the geometric conditions ensuring the existence of a second term of the form $c_1z^{\frac{d-1}{2}}$. They present the explicit example of $-\Delta+\frac{(d-1)^2}{4}$ (\cite[Examples 1.2.5, 1.7.1 and 1.7.11]{SV}). The eigenvalues are given as energy levels $\left(l+\frac{d-1}{2}\right)^2$ (they are $\lambda_{(l)}+\frac{(d-1)^2}{4}$, with multiplicities $m_{l,d}$). For such eigenvalues, a two-term quasi-Weyl formula in the sense of \cite[Formula (1.7.5)]{SV} does hold, and the function $Q$ which describes the behavior of the second term in \cite[Formulas (1.7.4)-(1.7.5)]{SV} agrees with the second term of \eqref{N-d-sphere-eq}.

\begin{figure}[ht]
    \centering
    \includegraphics[width=0.7\textwidth]{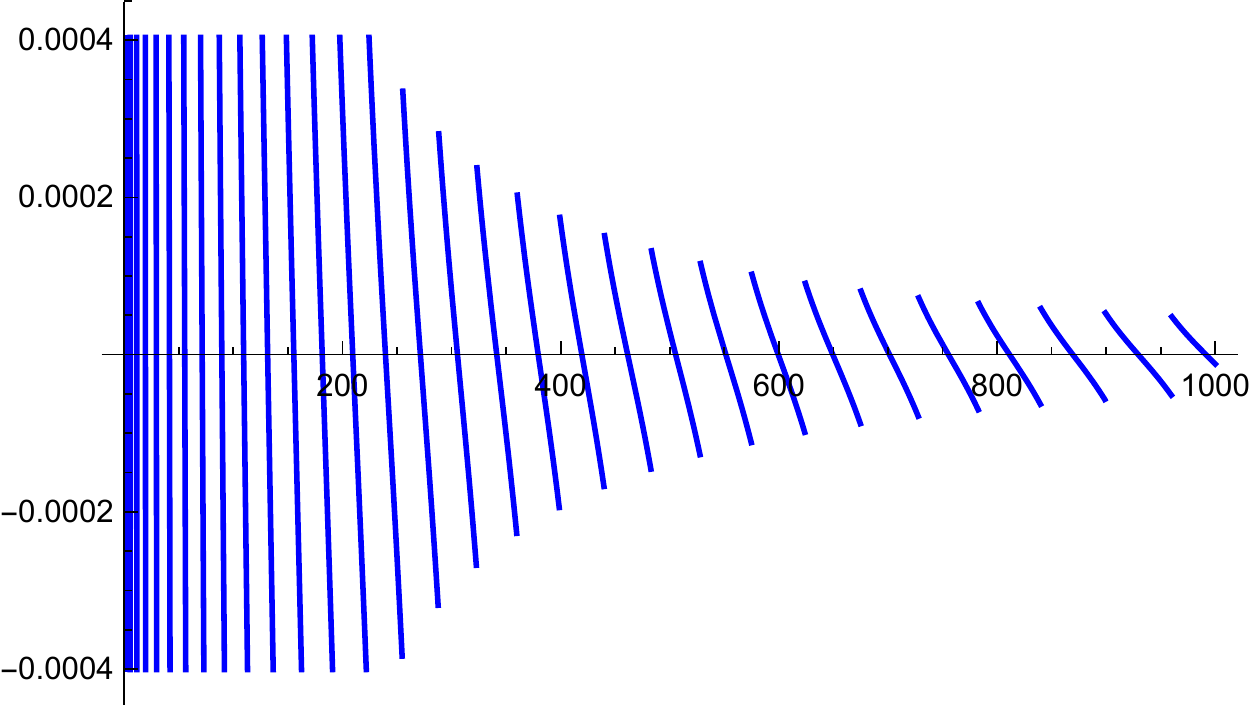}
    \caption{Ratio (minus $1$) of $N(z)$ and the three-term expansion of Theorem \ref{N-d-sphere}. Here $d=3$.}\label{f6}
\end{figure}

\subsubsection{Domains in $\mathbb S^d$}

Here we derive upper bounds in the spirit of Berezin-Li-Yau \cite{berezin,LY} for the first Riesz-mean of the eigenvalues of the Dirichlet Laplacian on domains $\Omega$ of $\mathbb S^d$. We denote by  
\[
0<\lambda_1(\Omega) <\lambda_2(\Omega)\leq  \ldots \leq 
\lambda_j(\Omega) \leq \ldots \nearrow +\infty
\]
 the eigenvalues of the Dirichlet Laplacian on $\Omega$, each repeated in accordance with its multiplicity, and by $\{u_j\}_{j\geq 1}$ the corresponding $L^2(\Omega)$-orthonormal sequence of eigenfunctions. In \cite{stri} the author establishes Berezin-Li-Yau-type inequalities for domains of $\mathbb S^2$ (see also \cite{ilyin_laptev}), and provides an expansion for $R_1^D$ in the higher dimensional case, highlighting the sharp behavior of the remainder. Here we establish a Berezin-Li-Yau inequality with a shift term in any dimension, which coincides with that proved in \cite{ilyin_laptev,stri} when $d=2$, and which contains a shift term which is asymptotically sharp when $\Omega=\mathbb S^d$, see Remark \ref{optimal_up}. In particular, the ``generalized conjecture of P\'olya'' stated in \cite[Formula (1.11)]{chengyang} holds for the sphere in a stronger form: the correct shift constant is $z_d$ and not $d^2/4$ as conjectured in \cite{chengyang} (this was clear for $d=2$ by \cite{stri}).

  The proof is based on  the {\it averaged variational principle} and is in the spirit of that of Theorem \ref{thm:blyS2+}.

\begin{thm}\label{bounds_domains_Sd}
Let $\Omega$ be a domain in $\mathbb S^d$. Then for all $z\geq 0$ the following inequality for the first Riesz-mean $R_1^D$ of the eigenvalues of the Dirichlet  Laplacian on $\Omega$ holds:
\begin{equation*}
R_1^D(z)=\sum_{j\geq 1}(z-\lambda_j(\Omega))_+\leq L_{1,d}^{class}|\Omega|(z+z_d)^{\frac{d}{2}+1}
\end{equation*}
with $z_d=\frac{(2d-1)d}{12}$. Equivalently, the following inequality holds for all $k\geq 1$:
$$
\frac{1}{k}\sum_{j=1}^k\lambda_j(\Omega)\geq\frac{d}{d+2}\frac{4\pi^2}{\omega_d^{2/d}}\left(\frac{k}{|\Omega|}\right)^{2/d}-z_d.
$$
\end{thm}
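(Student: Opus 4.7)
The plan is to apply the averaged variational principle (Theorem \ref{thm:AVP}) exactly in the spirit of Theorem \ref{thm:blyS2+}, but now comparing Dirichlet data on $\Omega$ to the full spectrum on $\mathbb{S}^d$, and then invoking the sharp shifted upper bound for $R_1$ on $\mathbb{S}^d$ proved in Theorem \ref{thm:ubR1dshift}. Specifically, I take $\mathcal{H}=L^2(\mathbb{S}^d)$, $H=-\Delta$ on $\mathbb{S}^d$, with orthonormal eigenbasis given by the (real) spherical harmonics $\{Y_l^m\}$ at energy levels $l(l+d-1)$ with multiplicity $m_{l,d}$. For the parameter space I take $p=j\in\mathbb{N}\setminus\{0\}$ with counting measure, $f_j=u_j$ where $u_j$ is the $j$-th Dirichlet eigenfunction on $\Omega$ extended by zero to $\mathbb{S}^d$ (which lies in $H^1(\mathbb{S}^d)$ since $u_j\in H^1_0(\Omega)$), and $\mathfrak{M}_0=\{j:\lambda_j(\Omega)\leq z\}$. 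Note $\|u_j\|_{L^2(\mathbb{S}^d)}^2=1$ and $Q(u_j,u_j)=\int_\Omega|\nabla u_j|^2=\lambda_j(\Omega)$, so the right-hand side of Theorem \ref{thm:AVP} is exactly $R_1^D(z)$.

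Theorem \ref{thm:AVP} then gives
\begin{equation*}
\sum_{l\geq 0}\sum_m (z-l(l+d-1))_+ \sum_{j\geq 1}\left|\int_\Omega Y_l^m u_j\, dS\right|^2 \ \geq\ R_1^D(z).
\end{equation*}
Since $\{u_j\}$ is a complete orthonormal family in $L^2(\Omega)$, Parseval gives $\sum_{j\geq 1}|\int_\Omega Y_l^m u_j|^2=\int_\Omega |Y_l^m|^2\,dS$. The addition formula for spherical harmonics on $\mathbb{S}^d$ yields $\sum_m |Y_l^m(x)|^2=m_{l,d}/|\mathbb{S}^d|$ pointwise, so summing in $m$ and integrating over $\Omega$ produces the constant $|\Omega|\, m_{l,d}/|\mathbb{S}^d|$. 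The left-hand side thus collapses to $(|\Omega|/|\mathbb{S}^d|)\sum_{l\geq 0} m_{l,d}(z-l(l+d-1))_+ = (|\Omega|/|\mathbb{S}^d|)\,R_1^{\mathbb{S}^d}(z)$.

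At this point Theorem \ref{thm:ubR1dshift} closes the argument: $R_1^{\mathbb{S}^d}(z)\leq L_{1,d}^{class}|\mathbb{S}^d|(z+z_d)^{d/2+1}$, so
\begin{equation*}
R_1^D(z)\ \leq\ \frac{|\Omega|}{|\mathbb{S}^d|}\,L_{1,d}^{class}|\mathbb{S}^d|\,(z+z_d)^{d/2+1}\ =\ L_{1,d}^{class}|\Omega|(z+z_d)^{d/2+1}.
\end{equation*}
All the genuine analytic content (the sharp value of the shift $z_d$) is inherited from Theorem \ref{thm:ubR1dshift}; the remainder is essentially bookkeeping. There is no real obstacle here beyond verifying that the AVP applies with these choices, which it does thanks to the extension-by-zero being admissible in the form domain of the Laplacian on $\mathbb{S}^d$.

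For the equivalent statement on averages, I use the standard Legendre duality: for $z\in[\lambda_k(\Omega),\lambda_{k+1}(\Omega))$ one has $R_1^D(z)=kz-\sum_{j=1}^k\lambda_j(\Omega)$, so the above inequality gives $\sum_{j=1}^k\lambda_j(\Omega)\geq kz-L_{1,d}^{class}|\Omega|(z+z_d)^{d/2+1}$ for all $z\geq 0$. Setting $y=z+z_d$ and optimizing in $y$ (critical point $y^{d/2}=k/((1+d/2)L_{1,d}^{class}|\Omega|)$, which lies in the admissible range $y\geq z_d$ for all $k\geq 1$ up to possibly a few small $k$, which can be handled separately), substitution yields the classical Berezin-Li-Yau expression in $k$ shifted by $-z_d$, i.e.\ exactly the stated inequality. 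The only mildly delicate point is checking admissibility of the maximizer, which is routine.
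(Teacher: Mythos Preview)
Your proposal is correct and follows essentially the same approach as the paper: apply the averaged variational principle with Dirichlet eigenfunctions extended by zero as test functions against the full Laplacian on $\mathbb{S}^d$, use the addition formula to obtain $R_1^D(z)\leq\frac{|\Omega|}{|\mathbb{S}^d|}R_1(z)$, and then invoke Theorem~\ref{thm:ubR1dshift}; the average statement follows by Legendre duality. Your worry about admissibility of the maximizer for small $k$ is in fact not an issue: when the critical $y_*<z_d$ one has $\frac{d}{d+2}y_*-z_d<0$, so the claimed lower bound on the average is negative and hence trivially true.
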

\begin{proof}
The proof follows the same lines as that of Theorem \ref{thm:blyS2+}. The eigenfunctions of the Laplacian on $\mathbb S^d$  associated with the energy level $\lambda_{(l)}=l(l+d-1)$ are the spherical harmonics $Y_l^m$, where $m=1,\ldots, m_{l,d}$. Let $z\geq 0$. We apply Theorem
\ref{thm:AVP} with $\mathcal{H}=L^2(\mathbb S^d)$, $H = -\Delta$, $\mathcal Q=H^1(\mathbb S^d)$, $Q(u,u) = \int_{\mathbb S^d} |\nabla u|^2$, $\mathfrak{M}=\mathbb{N}\setminus\{0\}$, $\mathfrak{M}_0=\{j\in\mathbb{N}\setminus\{0\}: z-\lambda_j(\Omega)\geq 0\}$, $f_p = u_j$. Following the proof of Theorem \ref{thm:blyS2+} we obtain
$$
\frac{|\Omega|}{|\mathbb S^d|}R_1(z)\geq R_1^D(z),
$$
where $R_1(z)=\sum_l m_{l,d}(z-l(l+d-1))_+$ is the first Riesz mean for the whole $\mathbb S^d$. The upper bound for $R_1^D$ follows then from Theorem \ref{thm:ubR1dshift}. By Legendre transforming the inequality for $R_1^D$ we get the inequality on the average (see Corollary \ref{cor_new}, see also \cite{HaHe2011}).
\end{proof}
\begin{rem}
Concerning the Neumann eigenvalues, it has been shown by Ilyin and Laptev \cite{ilyin_laptev} that any domain of $\mathbb S^d$ satisfies a Kr\"oger-type bound, namely, the leading term in Weyl's law is a lower bound for $R_1^N=\sum_{j\geq 1}(z-\mu_j(\Omega))_+$. This is a consequence of the fact that
$$
\sum_{j\geq 1}(z-\mu_j(\Omega))_+\geq\frac{|\Omega|}{|\mathbb S^d|}\sum_{l\geq 0} m_{l,d}(z-l(l+d-1))_+
$$
which is proved in \cite{ilyin_laptev} (or can be easily deduced as an application of the averaged variational principle as for \eqref{AVP-Dirichlet-S-2-hemisphere-3}), and from the inequality \eqref{S-d-R-1-lower-bound}. However, from our improved inequality \eqref{S-d-refined-Ries-mean-lower-bound}, we can improve the result for domains in $\mathbb S^d$. Namely, for any domain $\Omega$ in $\mathbb S^d$ we have
\begin{equation*}
R_1^N(z)=\sum_{j\geq 1}(z-\mu_j(\Omega))_+\geq L^{class}_{1,d}|\Omega|z^{\frac{d}{2}+1}\left(1+\frac{d(d-2)(d+2)}{12z}\right).
\end{equation*}

\end{rem}

\subsection{The hemisphere $\mathbb S^d_+$}\label{dhemi}

In this subsection we shall consider the eigenvalues of the Dirichlet and Neumann Laplacian on the hemisphere $\mathbb S^{d}_+$. In particular, we will compute three-term expansions for $N^D,R_1^D,N^N,R_1^N$.

\subsubsection{The Dirichlet Laplacian.} 

The eigenvalues are of the form $\lambda_{(l)}=l(l+d-1)$, $l\in\mathbb N\setminus\{0\}$, with multiplicities $m^D_{l,d}$ given by
\begin{equation}\label{hemisphere-d-multiplicity-coeff}
 m^D_{l,d}=\binom{d+l-2}{d-1}.
\end{equation}
The counting function $N^D(z)$ is easily computed. Again let $w$ be defined by the relation $z=w(w+d-1)$ and $L=\lfloor w\rfloor$ be the integer part of $w$. Then
\begin{equation}\label{hemisphere-d-N-of-z}
  N^D(z)=\sum_{l=1}^{L} m^D_{l,d}=\frac{\Gamma(d+L)}{\Gamma(L)\Gamma(d+1)}.
\end{equation}
Since the hemisphere does not satisfy the billiard condition, the counting function $N^D$ does not admit an expansion with just a power-like surface term of order $z^{\frac{d-1}{2}}$ after the leading term in Weyl's law as in \eqref{two-terms generic}. This is explained in \cite{SV}, and a major consequence is the failure of P\'olya's conjecture, as pointed out in \cite{FrMaSa22}.


We prove here a three-term asymptotic expansion for $N^D$ and we show that the second term contains oscillations, but, at any rate, it has a sign. In fact, it is non-positive. Moreover, the third term is oscillating but again, it has a sign and it is non-negative. Moreover, it is strictly positive along the sequences where the second term vanishes. This explains the failure of P\'olya's conjecture along certain sequences of eigenvalues, as pointed out in \cite[Theorem A]{FrMaSa22}. The second and third terms should be instead compared with the sharp corrections to the P\'olya's inequality proved in \cite[Theorems B, C, D]{FrMaSa22}.


\begin{thm}\label{ND}  As $z \to \infty$   we have the following asymptotic expansion for the counting function $N^D$ of the Dirichlet Laplacian eigenvalues on $\mathbb{S}^d_+$:
 \begin{equation}\label{hemisphere-d-N-of-z-three-term-asymptotics}
  \begin{split}
   \frac{N^D(z)}{L_{0,d}^{class}|\mathbb{S}_{+}^d|\,z^{\frac{d}{2}}} &= 1-
    \frac{1}{4} \frac{L_{0,d-1}^{class}}{L_{0,d}^{class}}\frac{|\partial\mathbb{S}_{+}^d|}{|\mathbb{S}_{+}^d|}(1+2\psi(w))\, z^{-1/2}\\
    &\quad\quad +\frac{d(d-1)}{2}\left(\left(\frac{1}{2}+\psi(w)\right)^2+\frac{d-2}{6}\right)z^{-1}+O(z^{-3/2})
    \end{split}
  \end{equation}
 
  or equivalently
  \begin{equation*}
  \begin{split}
   \frac{N^D(z)}{L_{0,d}^{class}|\mathbb{S}_+^d|\,z^{\frac{d}{2}}} &= 1-
    \frac{d(1+2\psi(w))}{2}\, z^{-1/2}\\
    &\quad\quad +\frac{d(d-1)}{2}\left(\left(\frac{1}{2}+\psi(w)\right)^2+\frac{d-2}{6}\right)z^{-1}+O(z^{-3/2}).
    \end{split}
  \end{equation*}
  
  Here $w$ is defined by the relation $w(w+d-1)=z$.
  
\end{thm}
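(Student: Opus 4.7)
The starting point is the explicit formula \eqref{hemisphere-d-N-of-z}, namely $N^D(z) = \Gamma(L+d)/(\Gamma(L)\Gamma(d+1))$ with $L = \lfloor w\rfloor$ and $w(w+d-1) = z$. Since $L_{0,d}^{class}|\mathbb{S}^d_+| = 1/d!$, the problem reduces to producing a three-term expansion in powers of $z^{-1/2}$ of
$$\frac{N^D(z)}{L_{0,d}^{class}|\mathbb{S}^d_+|\,z^{d/2}} = \frac{\prod_{j=0}^{d-1}(L+j)}{z^{d/2}}.$$
The plan is to expand the numerator in closed form via a symmetric substitution, then re-express the resulting quantity in terms of $\sqrt{z}$ and the fluctuation function $\psi(w)$, and finally collect terms by applying the binomial series.

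For the first step, I would introduce the symmetrized variable $v := L + (d-1)/2$ so that, with $k := j - (d-1)/2$ ranging over the symmetric set $\{-(d-1)/2,\ldots,(d-1)/2\}$, the product becomes $\prod_k(v+k)$. Taking logarithms, using that all odd power sums of $k$ vanish by symmetry, and computing $\sum_k k^2 = d(d^2-1)/12$ explicitly, one obtains the closed expansion
$$\prod_{j=0}^{d-1}(L+j) = v^d - \frac{d(d^2-1)}{24}\,v^{d-2} + O(v^{d-4}).$$
For the second step, substituting $L = w - 1/2 - \psi(w)$ and setting $\tilde w := w + (d-1)/2$ gives $v = \tilde w - 1/2 - \psi(w)$, with the key identity $\tilde w^2 = z + (d-1)^2/4$. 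This yields
$$\frac{v^2}{z} = 1 - \frac{1+2\psi(w)}{\sqrt{z}} + \frac{(1/2+\psi(w))^2 + ((d-1)/2)^2}{z} + O(z^{-3/2}).$$
Raising to the power $d/2$ via the binomial series, keeping track of all contributions up to order $1/z$, and adding the term $-d(d^2-1)/(24z) + O(z^{-3/2})$ coming from the subleading piece of the product, a direct algebraic simplification of the coefficient of $1/z$ produces the second (purely analytic) form of the expansion stated in the theorem.

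The passage to the first (geometric) form is a routine identification: using the explicit semiclassical constants \eqref{semiclassical-constants-Laplacian} together with $|\partial\mathbb{S}^d_+| = |\mathbb{S}^{d-1}|$ and the standard expressions for these measures, one checks that $\tfrac{1}{4}\tfrac{L_{0,d-1}^{class}}{L_{0,d}^{class}}\tfrac{|\partial\mathbb{S}^d_+|}{|\mathbb{S}^d_+|} = d/2$, matching the coefficient of $z^{-1/2}$. The main technical obstacle is error bookkeeping: the crucial observation is that symmetry of the $k$-indices kills all odd-order corrections in Step 1, so no $v^{d-1}$ or $v^{d-3}$ term appears and the remainder is $O(v^{d-4}) = O(z^{d/2-2})$, while the binomial expansion of $\tilde w$ around $\sqrt z$ contributes the $O(z^{-3/2})$ error that is the sharp remainder stated in the theorem. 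An alternative route, closer in spirit to the proof of Theorem \ref{R1-d-sphere}, would apply the Gamma function asymptotics of Lemma \ref{lem_3} and the polynomial calculus of Lemma \ref{lem_2} directly to $\Gamma(L+d)/\Gamma(L)$; the symmetric substitution simply packages these same manipulations more compactly.
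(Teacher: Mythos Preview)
Your proposal is correct and takes a genuinely different route from the paper's own proof. The paper expands $\Gamma(L+d)/\Gamma(L)$ via Stirling's asymptotic formula (Lemma~\ref{lem_3}) and then manipulates the resulting $P_{a,b}(x)$-polynomials in $x=1/L$ using the algebra of Lemma~\ref{lem_2}, before multiplying by the expansion of $(z/L^2)^{-d/2}$ and reconverting $1/L$ into powers of $z^{-1/2}$. You instead exploit that $\Gamma(L+d)/\Gamma(L)=\prod_{j=0}^{d-1}(L+j)$ is a finite product, so Stirling is unnecessary; the symmetric substitution $v=L+(d-1)/2$ kills all odd-degree terms at once and gives the clean two-term formula $v^d-\tfrac{d(d^2-1)}{24}v^{d-2}+O(v^{d-4})$, after which the identity $\tilde w^2=z+(d-1)^2/4$ organizes the conversion to $z$ directly. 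Your approach is more elementary and arguably more transparent for this particular ratio of Gamma functions; the paper's approach has the virtue of being uniform with the proof of Theorem~\ref{R1-d-sphere} (where no such finite product is available), which is why the same polynomial machinery is reused there. Either way the final algebra for the $z^{-1}$ coefficient is the same combination $\tfrac{d(d-1)}{2}\bigl[(d-1)/4-(d+1)/12\bigr]=\tfrac{d(d-1)(d-2)}{12}$, and you correctly identify the symmetry of the $k$-indices as the reason the remainder after the two leading terms is $O(v^{d-4})$ rather than $O(v^{d-3})$.
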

\begin{proof}
As in the proof of Theorem \ref{R1-d-sphere} for $\mathbb S^d$, we set $L= \lfloor w \rfloor$ and we expand in $x=1/L$.
 From \eqref{hemisphere-d-N-of-z} we have
\begin{equation}\label{hemisphere-d-N-of-z-explicit-1}
   \frac{N^D(z)}{L_{0,d}^{class}|\mathbb{S}_+^d|\,z^{\frac{d}{2}}}
   =\frac{\Gamma(L+d)}{\Gamma(L)} \,z^{-\frac{d}{2}}.
  \end{equation}
  Moreover,
\begin{equation*}
  \frac{\Gamma(L+d)}{\Gamma(L)}=L^d\bigg(P_{-\frac{d^2}{2},\frac{8d^3+3d^4}{24}}(x)P_{\frac{(2d-1)d}{2},\frac{d^2(2d-1)(2d-3)}{8}}(x)
   \cdot \frac{P_{\frac{1}{12},\frac{1}{288}}(\frac{x}{1+dx})}{P_{\frac{1}{12},\frac{1}{288}}(x)}+O(x^3)\bigg)\,.
\end{equation*}
According to  \eqref{1-over-P-a-b-of-x} and \eqref{P-a-b-of-x-combined}  of Lemma \ref{lem_1}, we have
\begin{equation*}
  P_{\frac{1}{12},\frac{1}{288}}\left(\frac{x}{1+dx}\right)=P_{\frac{1}{12},\frac{1}{288}-\frac{d}{12}}(x)+O(x^3)
\end{equation*}
and
\begin{equation*}
  \frac{1}{P_{\frac{1}{12},\frac{1}{288}}(x)}=P_{-\frac{1}{12},\frac{1}{288}}(x)+O(x^3).
\end{equation*}
We compute the coefficients $A,B,C$ of the product in \eqref{hemisphere-d-N-of-z-explicit-1} according to \eqref{products-of-P-a-b-of-x} of Lemma \ref{lem_2} as follows:
\begin{equation*}
  A=-\frac{d^2}{2}+\frac{(2d-1)d}{2}+\frac{1}{12}-\frac{1}{12}=\frac{d(d-1)}{2},
\end{equation*}
\begin{equation*}
  B=\frac{8d^3+3d^4}{24}+\frac{d^2(2d-1)(2d-3)}{8}+\frac{1}{288}-\frac{d}{12}+\frac{1}{288}=\frac{d(5d-2)(3d^2-2d+1)}{24}+\frac{1}{144}
\end{equation*}
and
\begin{equation*}
  C=\frac{d^2(d-1)^2}{8}-\frac{d^4}{8}-\frac{(2d-1)^2d^2}{8}-\frac{1}{144}=-\,\frac{d^3(2d-1)}{4}-\frac{1}{144}.
\end{equation*}
Hence the coefficient of $x^2$ is given by
\begin{equation*}
  B+C=\frac{d(d-1)(d-2)(3d-1)}{24}.
\end{equation*}
Therefore
\begin{equation*}
  \frac{N^D(z)}{L_{0,d}^{class}|\mathbb{S}_{+}^d|\,z^{\frac{d}{2}}}
   =\left(P_{A,B+C}(x)+O(x^3)\right)\left(\frac{z}{L^2}\right)^{-d/2}.
\end{equation*}
Since $z=w(w+d-1)=(\psi(w)+\frac{1}{2}+L)(\psi(w)-\frac{1}{2}+d+L)$ we have
\begin{multline*}
\left(\frac{z}{L^2}\right)^{-d/2}=\left(1+(d+2\psi(w))x+\left(\psi(w)+\frac{1}{2}\right)\left(d+\frac{1}{2}-\psi(w)\right)x^2\right)^{-d/2}\\
=1-\frac{d}{2}(d+2\psi(w))x+\frac{d}{8}((d+1)(2\psi(w)+d)^2+(d-1)^2)x^2+O(x^3)
=:1+\alpha x+\beta x^2+O(x^3).
\end{multline*}
Now
\begin{equation*}
  P_{A,B+C}(x)P_{\alpha,\beta}(x)=P_{A',B'+C'}(x)
\end{equation*}
with
\begin{equation*}
  A'=A+\alpha,\quad B'=B+C+\beta, \quad C'=A\alpha\,.
\end{equation*}
We compute
\begin{equation*}
  A'=-\frac{d(1+2\psi(w))}{2},\quad C'=-\frac{d^2(d-1)(d+2\psi(w))}{4}
\end{equation*}
and
\begin{equation*}
  B'+C'=\frac{d}{12}\left(6(d+1)(\psi(w)+1/2)^2+(d-1)(d+6\psi(w)+1)\right)\,.
\end{equation*}
Finally, in order to reconvert $x=1/L$ into the variable $z$ we use
\begin{equation*}
  L=\sqrt{z+\left(\frac{d-1}{2}\right)^2}-\frac{d-1}{2}-\left(\psi(w)+\frac{1}{2}\right)
\end{equation*}
and therefore
\begin{equation*}
  \frac{1}{L}=z^{-\frac{1}{2}}+\frac{d+2\psi(w)}{2}\,z^{-1}+O(z^{-\frac{1}{2}})\,.
\end{equation*}
Inserting the first two terms $P_{A',B'+C'}(x)$ we obtain that the coefficient of $z^{-1}$ is given by
\begin{equation*}
  B'+C'+A'\frac{d+2\psi(w)}{2}=\frac{d(d-1)}{2}\left((\frac{1}{2}+\psi(w))^2+\frac{d-2}{6}\right),
\end{equation*}
proving the theorem.
\end{proof}

On the other hand one may expect that, similarly to the Euclidean setting, the more regular Riesz-mean $R_1^D(z)$ admits an expansion with a surface term after the leading term in Weyl's law as in \eqref{two-terms generic}, that is,
\begin{equation*}
  R_1^D(z)\sim L_{1,d}^{class}|\mathbb{S}_{+}^d|\,z^{\frac{d}{2}+1}-\frac{1}{4}\,L_{1,d-1}^{class}|\partial\mathbb{S}_{+}^d|\,z^{\frac{d}{2}+\frac{1}{2}}
\end{equation*}
as $z$ goes to infinity. Note that 
$$
L_{1,d}^{class}|\mathbb{S}_{+}^d|= \frac{2}{(d+2)\Gamma(d+1)}$$ and
\begin{equation*}
  L_{1,d-1}^{class}|\partial\mathbb{S}_{+}^d|=L_{1,d-1}^{class}|\mathbb{S}^{d-1}|=\frac{4}{(d+1)\Gamma(d)}\,.
\end{equation*}
We prove the following theorem stating that $R_1^D(z)$ has a second term of order $z^{\frac{d}{2}+\frac{1}{2}}$, and a third term, of negative sign, which includes an oscillatory part.
\begin{thm}\label{R1-d-hemi-N}
   As $z\to \infty$ we have  the following asymptotic expansion for the first Riesz-mean $R^D_1$ of the Dirichlet Laplacian eigenvalues on $\mathbb{S}^d_+$:
  \begin{multline*}
   \frac{R_1^D(z)}{L_{1,d}^{class}|\mathbb{S}_{+}^d|\,z^{\frac{d}{2}+1}} = 1-
    \frac{1}{4} \frac{L_{1,d-1}^{class}}{L_{1,d}^{class}}\frac{|\partial\mathbb{S}_{+}^d|}{|\mathbb{S}_{+}^d|}\, z^{-1/2}
    -\frac{d(d+2)}{2}\left(\frac{1}{4}-\psi^2(w)+\frac{d-2}{6}\right)z^{-1}+O(z^{-3/2})
  \end{multline*}
  or, equivalently,
  \begin{equation*}
   \frac{R_1^D(z)}{L_{1,d}^{class}|\mathbb{S}^d_+|\,z^{\frac{d}{2}+1}} = 1-
    \frac{d(d+2)}{2(d+1)}\, z^{-1/2}
     -\frac{d(d+2)}{2}\left(\frac{1}{4}-\psi^2(w)+\frac{d-2}{6}\right)z^{-1}+O(z^{-3/2})\,.
  \end{equation*}
  Here $w$ is defined by the relation $w(w+d-1)=z$.
\end{thm}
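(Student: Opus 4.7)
The plan is to follow the blueprint used in the proofs of Theorem \ref{R1-d-sphere} (for $R_1$ on $\mathbb{S}^d$) and Theorem \ref{ND} (for $N^D$ on $\mathbb{S}^d_+$), adapted to the Dirichlet Riesz-mean on the hemisphere. First, I would derive a closed-form expression for
$$R_1^D\bigl(w(w+d-1)\bigr) = \sum_{l=1}^{L} \binom{d+l-2}{d-1}\bigl(w(w+d-1) - l(l+d-1)\bigr), \qquad L = \lfloor w\rfloor,$$
by splitting into $w(w+d-1)N^D(z) - \sum_{l=1}^L \binom{d+l-2}{d-1} l(l+d-1)$. The first summand is known from \eqref{hemisphere-d-N-of-z}. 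For the second, I would use the identity $l\binom{d+l-2}{d-1} = d\binom{d+l-1}{d} - (d-1)\binom{d+l-2}{d-1}$ (applied twice, accounting for both factors in $l(l+d-1)$) together with the hockey-stick formula \eqref{appendix-sum-binomial-coefficients} to reduce the sum to a product of a Gamma-function ratio $\Gamma(L+d)/(\Gamma(L+1)\Gamma(d+1))$ and a polynomial in $L$ and $w(w+d-1)$.

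Next, I would set $x = 1/L$ and apply the Gamma asymptotic of Lemma \ref{lem_3} and the polynomial manipulations of Lemma \ref{lem_2} to expand the ratio $R_1^D(z)/(L_{1,d}^{class}|\mathbb{S}_+^d|z^{d/2+1})$ as a power series in $x$ up to $O(x^3)$, exactly as in the proof of Theorem \ref{ND}, but with the extra polynomial factor coming from the $w$-dependence of the closed form. Finally, using $z = (L+\psi(w)+1/2)(L+d-1+\psi(w)+1/2)$ I obtain
$$\frac{1}{L} = z^{-1/2} + \frac{d + 2\psi(w)}{2}\, z^{-1} + O(z^{-3/2}),$$
and substitute to convert the $x$-expansion into a $z$-expansion. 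One then reads off: the $z^{-1/2}$ coefficient as $-d(d+2)/(2(d+1))$, which coincides with $-\tfrac{1}{4}(L_{1,d-1}^{class}/L_{1,d}^{class})(|\partial\mathbb{S}_+^d|/|\mathbb{S}_+^d|)$ by a direct Gamma-function computation, and the $z^{-1}$ coefficient as $-\frac{d(d+2)}{2}\bigl(\frac{1}{4} - \psi^2(w) + \frac{d-2}{6}\bigr)$.

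The main obstacle is the careful bookkeeping to ensure the $\psi(w)$-dependent contributions combine as stated. Two cancellations must be verified: first, that $\psi(w)$ disappears from the second-order coefficient, leaving the purely classical Weyl surface term (this reflects the fact that $R_1^D$ admits a smoother two-term expansion of the form \eqref{two-terms generic}, in contrast to $N^D$, which retains an oscillating second term); second, that the $\psi$-dependence in the third-order coefficient reduces to the even quadratic expression $\frac{1}{4} - \psi^2(w)$, consistent with the two-term expansion of $R_1$ on $\mathbb{S}^d$ in Theorem \ref{R1-d-sphere} via the identity $R_1 = R_1^D + R_1^N$. Both cancellations are expected on general grounds but require careful algebraic verification, parallel to the one carried out in the proof of Theorem \ref{R1-d-sphere}.
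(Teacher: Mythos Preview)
Your proposal is correct and follows essentially the same approach as the paper: derive a closed-form expression for $R_1^D(z)$ as a product of $\Gamma(L+d)/\Gamma(L)$ and a polynomial in $L$ and $z$ (the paper states this directly as formula \eqref{hemisphere-d-R-1-explicit-1} with the remark ``one easily computes the Riesz-mean as in the case of $\mathbb{S}^d$''), then expand in $x=1/L$ via Lemmas \ref{lem_2} and \ref{lem_3}, and finally convert to $z^{-1/2}$. Your outline is in fact slightly more explicit than the paper's on two points: how to obtain the closed form via binomial identities, and the two $\psi(w)$-cancellations that distinguish $R_1^D$ from $N^D$.
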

\begin{proof}
As before, we set $L= \lfloor w \rfloor$ and $x=1/L$. One easily computes the Riesz-mean as in the case of $\mathbb S^d$ (see Theorem \ref{R1-d-sphere})
\begin{equation}\label{hemisphere-d-R-1-explicit-1}
   \frac{R_1^D(z)}{L_{1,d}^{class}|\mathbb{S}_{+}^d|\,z^{\frac{d}{2}+1}}
   =\frac{d+2}{2}\,\frac{\Gamma(L+d)}{\Gamma(L)}\left(z-\frac{d(L+d)(L(d+1)+1)}{(d+1)(d+2)}\right)z^{-1-\frac{d}{2}}.
  \end{equation}
As in the proof of Theorem \ref{R1-d-sphere}, we expand
\begin{equation*}
  \frac{\Gamma(L+d)}{\Gamma(L)}=L^d\bigg(P_{-\frac{d^2}{2},\frac{8d^3+3d^4}{24}}(x)P_{\frac{(2d-1)d}{2},\frac{d^2(2d-1)(2d-3)}{8}}(x)
   \cdot \frac{P_{\frac{1}{12},\frac{1}{288}}(\frac{x}{1+dx})}{P_{\frac{1}{12},\frac{1}{288}}(x)}+O(x^3)\bigg)
\end{equation*}
as well as
\begin{equation*}
  z-\frac{d(L+d)(L(d+1)+1)}{(d+1)(d+2)}=L^2\bigg(\frac{z}{L^2}-\frac{d(1+dx)(1+\frac{x}{d+1})}{(d+2)}\bigg).
\end{equation*}
According to \eqref{1-over-P-a-b-of-x} and \eqref{P-a-b-of-x-combined} of lemma \ref{lem_1} we have
\begin{equation*}
  P_{\frac{1}{12},\frac{1}{288}}\left(\frac{x}{1+dx}\right)=P_{\frac{1}{12},\frac{1}{288}-\frac{d}{12}}(x)+O(x^3)
\end{equation*}
and
\begin{equation*}
  \frac{1}{P_{\frac{1}{12},\frac{1}{288}}(x)}=P_{-\frac{1}{12},\frac{1}{288}}(x)+O(x^3).
\end{equation*}
We compute the coefficients $A,B,C$ of the product in \eqref{hemisphere-d-R-1-explicit-1} according to \eqref{products-of-P-a-b-of-x} of Lemma \ref{lem_2} as follows:
\begin{equation*}
  A=-\frac{d^2}{2}+\frac{(2d-1)d}{2}+\frac{1}{12}-\frac{1}{12}=\frac{d(d-1)}{2},
\end{equation*}
\begin{equation*}
  B=\frac{8d^3+3d^4}{24}+\frac{d^2(2d-1)(2d-3)}{8}+\frac{1}{288}-\frac{d}{12}+\frac{1}{288}=\frac{d(5d-2)(3d^2-2d+1)}{24}+\frac{1}{144},
\end{equation*}
and
\begin{equation*}
  C=\frac{d^2(d-1)^2}{8}-\frac{d^4}{8}-\frac{(2d-1)^2d^2}{8}-\frac{1}{144}=-\,\frac{d^3(2d-1)}{4}-\frac{1}{144}.
\end{equation*}
Hence the coefficient of $x^2$ is given by
\begin{equation*}
  B+C=\frac{d(d-1)(d-2)(3d-1)}{24}.
\end{equation*}
Therefore we have
\begin{multline*}
     \frac{R_1(z)}{L_{1,d}^{class}|\mathbb{S}_{+}^d|z^{1+d/2}} = \frac{d+2}{2}\left(P_{A,B+C}(x)+O(x^3)\right)
     \cdot\left(\frac{z}{L^2}-\frac{d(1+dx)(1+\frac{x}{d+1})}{(d+2)}\right)\cdot \left(\frac{L^2}{z}\right)^{1+d/2}\\
      =\left(P_{A,B+C}(x)+O(x^3)\right)\left(\frac{L^2}{z}\right)^{1+d/2}\,.
\end{multline*}
\end{proof}

The results of Theorems \ref{ND} and \ref{R1-d-hemi-N} are illustrated in Figure \ref{f7}.

\begin{rem}
We remark that this result suggests that the leading term in Weyl's law could be an upper bound for $R_1^D(z)$ for all $d\geq 2$. In Subsection \ref{rem-li-yau} below we show that it is false for $d\geq 6$, and prove the Weyl upper bound for $d=3,4,5$ in Theorem \ref{BLY-345}.
\end{rem}

\begin{figure}[ht]
     \centering
     \begin{subfigure}[b]{0.45\textwidth}
         \centering
         \includegraphics[width=\textwidth]{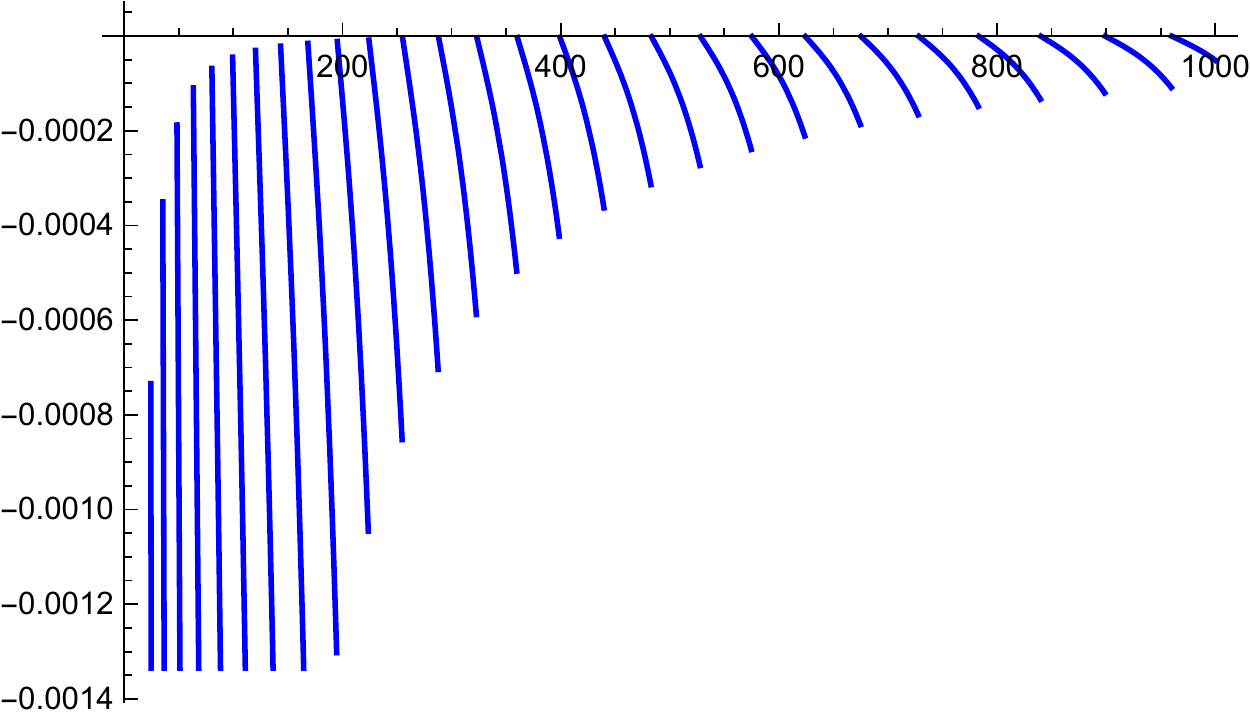}
     \end{subfigure}
     \hfill
     \begin{subfigure}[b]{0.45\textwidth}
         \centering
         \includegraphics[width=\textwidth]{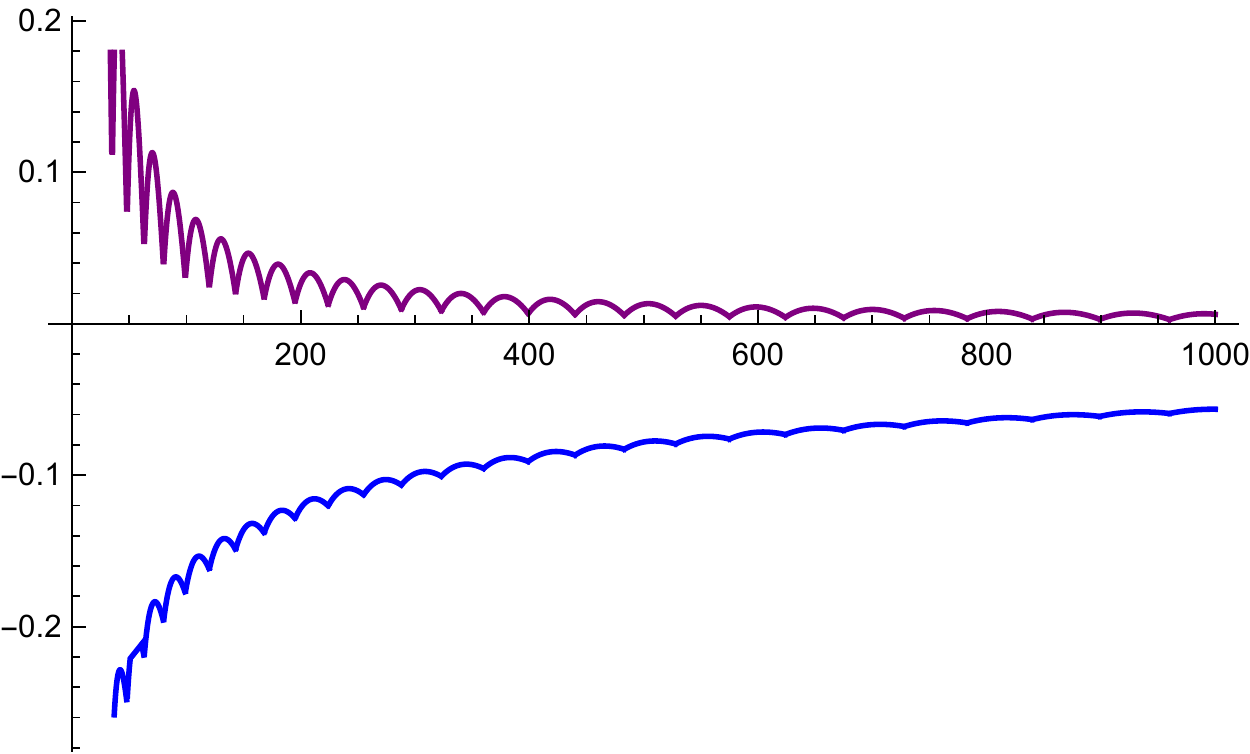}
     \end{subfigure}
     
        \caption{On the left, the ratio (minus $1$) of $N^D$ and the three-term expansion of Theorem \ref{ND}. On the right, in blue the ratio (minus $1$) of $R_1^D$ and the leading term in Weyl's law, and in purple the ratio (minus $1$) of $R_1^D$ and the three-term expansion of Theorem \ref{R1-d-hemi-N}. Here $d=3$.}\label{f7}
       
\end{figure}

\subsubsection{The Neumann Laplacian} 

When we consider the Laplacian on $\mathbb S^d_+$ with Neumann boundary conditions, the eigenvalues are of the form $\lambda_{(l)}=l(l+d-1)$, $l\in\mathbb N$, with multiplicities $m^N_{l,d}$ given by
\begin{equation}\label{hemisphere-d-multiplicity-coeff-Neumann}
 m^N_{l,d}=\binom{d+l-1}{d-1}.
\end{equation}
The counting function $N^N(z)$ is easily computed. Again let $w$ be defined by the relation $z=w(w+d-1)$ and $L$ be the integer part of $w$. Then
\begin{equation}\label{hemisphere-d-N-of-z-Neumann}
  N^N(z)=\sum_{l=1}^{L} m^N_{l, d}=\frac{\Gamma(d+L+1)}{\Gamma(L+1)\Gamma(d+1)}=\left(1+\frac{d}{L}\right)N^D(z),
\end{equation}
where $N^D(z)$ denotes the counting function for the Dirichlet Laplacian on the hemisphere.
In view of this relation the asymptotic expansion of $N^N(z)$ is easily determined form the expansion for $N^D(z)$. we have the following result.

\begin{thm}\label{NN}
   As $z\to \infty$  we have  the following asymptotic expansion for the counting function $N^N$ of the Neumann Laplacian eigenvalues on $\mathbb{S}^d_+$:
  \begin{equation*}
  \begin{split}
   \frac{N^N(z)}{L_{0,d}^{class}|\mathbb{S}_{+}^d|\,z^{\frac{d}{2}}} &= 1+
    \frac{1}{4} \frac{L_{0,d-1}^{class}}{L_{0,d}^{class}}\frac{|\partial\mathbb{S}_{+}^d|}{|\mathbb{S}_{+}^d|}(1-2\psi(w))\, z^{-1/2}\\
    &\quad\quad +\frac{d(d-1)}{2}\left(\left(\frac{1}{2}-\psi(w)\right)^2+\frac{d-2}{6}\right)z^{-1}+O(z^{-3/2})
    \end{split}
  \end{equation*}
  or, equivalently,
  \begin{equation*}
  \begin{split}
   \frac{N^N(z)}{L_{0,d}^{class}|\mathbb{S}^d|\,z^{\frac{d}{2}}} &= 1+
    \frac{d(1-2\psi(w))}{2}\, z^{-1/2}\\
    &\quad\quad +\frac{d(d-1)}{2}\left(\left(\frac{1}{2}-\psi(w)\right)^2+\frac{d-2}{6}\right)z^{-1}+O(z^{-3/2}).
    \end{split}
  \end{equation*}
 Here $w$ is defined by the relation $w(w+d-1)=z$.
\end{thm}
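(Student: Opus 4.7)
The plan is to derive this expansion directly from Theorem \ref{ND} by exploiting the elementary identity $N^N(z) = (1 + d/L) N^D(z)$ stated in \eqref{hemisphere-d-N-of-z-Neumann}, which follows immediately from $m^N_{l,d}/m^D_{l,d} = (l+d-1)/(l-1)$ and the telescoping sum that produces $N^D(z) = \Gamma(L+d)/(\Gamma(L)\Gamma(d+1))$. This reduces the asymptotic analysis to an expansion of the prefactor $1 + d/L$ multiplied against the already-established expansion of $N^D(z)$.

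First I would recall, from the end of the proof of Theorem \ref{ND}, the inversion
$$\frac{1}{L} = z^{-1/2} + \frac{d+2\psi(w)}{2}\, z^{-1} + O(z^{-3/2}),$$
which follows from $L = \sqrt{z + ((d-1)/2)^2} - (d-1)/2 - (\psi(w) + 1/2)$ upon Taylor expanding. Multiplying by $d$ gives the required expansion of $d/L$ to order $z^{-1}$. Next I would multiply this expansion by the full expansion of $N^D(z)/(L_{0,d}^{class}|\mathbb{S}_+^d| z^{d/2})$ from Theorem \ref{ND}, and collect powers of $z^{-1/2}$ and $z^{-1}$.

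The coefficient of $z^{-1/2}$ becomes $d - d(1+2\psi(w))/2 = d(1-2\psi(w))/2$, matching the claim. For the $z^{-1}$ coefficient one finds
$$\frac{d(d+2\psi(w))}{2} - \frac{d^2(1+2\psi(w))}{2} + \frac{d(d-1)}{2}\left(\left(\tfrac{1}{2}+\psi(w)\right)^2 + \tfrac{d-2}{6}\right),$$
and after simplification the first two terms collapse to $-d(d-1)\psi(w)$. Combining with the third term via the identity $(1/2+\psi(w))^2 - 2\psi(w) = (1/2-\psi(w))^2$ yields precisely $\frac{d(d-1)}{2}\big((1/2-\psi(w))^2 + (d-2)/6\big)$, which is the stated coefficient. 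The equivalent form with $L_{0,d-1}^{class}|\partial\mathbb{S}_+^d|/(L_{0,d}^{class}|\mathbb{S}_+^d|) = 2d$ then follows from the volume identities recalled just before \eqref{fluct}.

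There is no real obstacle here, only the need to be careful with the bookkeeping; the main conceptual point worth highlighting is how the sign change in $\psi(w)$ between the Dirichlet and Neumann expansions emerges from the algebraic identity $(1/2+\psi)^2 - 2\psi = (1/2-\psi)^2$, which mirrors the shift in multiplicities from $\binom{d+l-2}{d-1}$ to $\binom{d+l-1}{d-1}$ and produces the expected $+$ sign in the surface term, consistent with \eqref{two-terms generic}.
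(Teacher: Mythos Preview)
Your proof is correct and follows essentially the same route as the paper: both use the identity $N^N(z)=(1+d/L)N^D(z)$ from \eqref{hemisphere-d-N-of-z-Neumann}, expand $1/L$ in powers of $z^{-1/2}$, and multiply against the Dirichlet expansion of Theorem \ref{ND}. Your write-up is in fact slightly more careful than the paper's, which appears to drop a factor of $d$ in the $z^{-1}$ term of the displayed expansion of $1+d/L$; your computation of the $z^{-1}$ coefficient and the observation $(1/2+\psi)^2-2\psi=(1/2-\psi)^2$ make the sign flip in $\psi$ between the Dirichlet and Neumann cases completely transparent.
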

\begin{proof}
As usual, let $w$ be defined by $w(w+d-1)=z$ and let $L=\lfloor w\rfloor$. From \eqref{hemisphere-d-N-of-z-Neumann}, expanding $1/L$ in terms of $z$ we have
\begin{equation*}
  N^N(z)=\left(1+dz^{-1/2}+\frac{d+2\psi(w)}{2}\,z^{-1}+O(z^{-3/2})\right)N^D(z).
\end{equation*}
Hence 
\begin{multline*}
  \frac{N^N(z)}{L_{0,d}^{class}|\mathbb{S}^d|\,z^{\frac{d}{2}}}=(1+dz^{-1/2}+\frac{d+2\psi(w)}{2}\,z^{-1})\\
  \cdot\left(1-\frac{d(1+2\psi(w))}{2}\,z^{-1/2}+\frac{d(d-1)}{2}\left(\left(\frac{1}{2}-\psi(w)\right)^2+\frac{d-2}{6}\right)z^{-1})\right)+O(z^{-3/2})
\end{multline*}
as $z\to\infty$,
from which we easily compute the coefficients of $z^{-1/2}$ and $z^{-1}$, respectively.
\end{proof}

For the more regular Riesz-mean $R_1^N(z)$ we prove the following three-term expansion
\begin{thm}\label{R1-d-hemi-NNeu}
   As $z \to \infty$ we have  the following asymptotic expansion for the first Riesz-mean $R_1^N$ of the Neumann Laplacian eigenvalues on $\mathbb{S}^d_+$:
  \begin{equation*}
   \frac{R^N_1(z)}{L_{1,d}^{class}|\mathbb{S}_{+}^d|\,z^{\frac{d}{2}+1}} = 1+
    \frac{1}{4} \frac{L_{1,d-1}^{class}}{L_{1,d}^{class}}\frac{|\partial\mathbb{S}_{+}^d|}{|\mathbb{S}_{+}^d|}\, z^{-1/2}
    -\frac{d(d+2)}{2}\bigg(\frac{1}{4}-\psi^2(w)+\frac{d-2}{6}\bigg)z^{-1}+O(z^{-3/2})
  \end{equation*}
  or, equivalently,
  \begin{equation*}
   \frac{R^N_1(z)}{L_{1,d}^{class}|\mathbb{S}_{+}^d|\,z^{\frac{d}{2}+1}} = 1+
    \frac{d(d+2)}{2(d+1)}\, z^{-1/2} -\frac{d(d+2)}{2}\bigg(\frac{1}{4}-\psi^2(w)+\frac{d-2}{6}\bigg)z^{-1}+O(z^{-3/2})\,.
  \end{equation*}
  Here $w$ is defined by the relation $w(w+d-1)=z$.
\end{thm}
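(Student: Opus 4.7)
The plan is to use the decomposition
\begin{equation*}
R_1^N(z) = R_1(z) - R_1^D(z) \qquad \text{for all } z \geq 0,
\end{equation*}
which holds because the Laplacian eigenfunctions on $\mathbb{S}^d$ split under reflection across the equator into even and odd parts: the even ones restrict to Neumann eigenfunctions on $\mathbb{S}^d_+$, while the odd ones restrict to Dirichlet eigenfunctions. Comparing multiplicities one gets $m_{l,d} = m^D_{l,d} + m^N_{l,d}$ for $l \geq 1$ (with $m_{0,d} = m^N_{0,d} = 1$), which, summed against $(z-\lambda_{(l)})_+$, gives $R_1 = R_1^D + R_1^N$ pointwise.

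With this identity in hand, I would substitute the asymptotic expansions already established: Theorem \ref{R1-d-sphere} for $R_1$ on $\mathbb{S}^d$ and Theorem \ref{R1-d-hemi-N} for $R_1^D$ on $\mathbb{S}^d_+$. Rescaling via $|\mathbb{S}^d| = 2|\mathbb{S}^d_+|$ to the common reference $L_{1,d}^{class}|\mathbb{S}^d_+|\,z^{d/2+1}$, the leading constant is $2-1=1$; the $z^{-1/2}$ term is absent on the closed sphere, so it inherits the opposite sign of the $R_1^D$ contribution, producing $+\frac{d(d+2)}{2(d+1)}z^{-1/2}$; and the $z^{-1}$ coefficient is obtained by combining twice the $R_1$ coefficient $\frac{d(d+2)}{12}\bigl(d-2 + 6(\tfrac{1}{4}-\psi^2(w))\bigr)$ with the negative of the $R_1^D$ coefficient, and simplifying to the stated compact form.

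One technical point worth flagging is that Theorem \ref{R1-d-sphere} is written with remainder $o(z^{-1})$, whereas here I need $O(z^{-3/2})$ to match the error of Theorem \ref{R1-d-hemi-N}; the sharper bound is in fact what the proof of Theorem \ref{R1-d-sphere} gives, via the $O(x^3)$ term with $x = 1/L = z^{-1/2} + O(z^{-1})$. An alternative route, parallel to the proof of Theorem \ref{R1-d-hemi-N}, would be to derive first the closed form $R_1^N(z) = N^N(z)\bigl(z - c^N(L)\bigr)$ with $c^N(L) = \frac{dL\bigl((d+1)L + d^2 + d - 1\bigr)}{(d+1)(d+2)}$---obtained by directly summing $l(l+d-1)\binom{l+d-1}{d-1}$ via hockey-stick identities, or equivalently from the relation $c^D(L) - c^N(L) = \frac{d(2L+d)}{(d+1)(d+2)}$---and then expand in $x = 1/L$ using the polynomial and Gamma-function expansions of Lemmas \ref{lem_2} and \ref{lem_3} from Appendix \ref{app_A}. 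The combinatorics is essentially identical to the Dirichlet case, with $\Gamma(L+d)/\Gamma(L)$ replaced by $\Gamma(L+d+1)/\Gamma(L+1)$. In either approach, the main bookkeeping challenge is to collect all the cross-term contributions from the $P_{a,b}(x)$ products into the compact form of the stated $z^{-1}$ coefficient.
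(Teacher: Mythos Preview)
Your proposal is correct and takes a genuinely different route from the paper. You use the splitting $R_1^N = R_1 - R_1^D$ (full sphere minus Dirichlet hemisphere) and then subtract the expansions of Theorems~\ref{R1-d-sphere} and~\ref{R1-d-hemi-N}. The paper instead writes $R_1^N = R_1^D + (R_1^N - R_1^D)$ and computes the difference directly: from the closed-form sum
\[
\sum_{l=0}^{L}\bigl(m^D_{l,d}-m^N_{l,d}\bigr)\,l(l+d-1)
= -\,\frac{d-1}{d+1}\cdot\frac{\Gamma(L+1+d)}{\Gamma(L)\Gamma(d)}
\]
and the relation $N^N(z)=(1+d/L)\,N^D(z)$, it derives
\[
\frac{R_1^N(z)-R_1^D(z)}{L_{1,d}^{class}|\mathbb{S}^d_+|\,z^{d/2+1}}
=\frac{d(d+2)}{d+1}\,z^{-1/2}+O(z^{-3/2}),
\]
with no $z^{-1}$ contribution. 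Adding this to the Dirichlet expansion from Theorem~\ref{R1-d-hemi-N} gives the result; in particular it makes immediately visible that $R_1^D$ and $R_1^N$ share the same $z^{-1}$ coefficient.

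Your route is shorter and uses only already-stated theorems, at the price of invoking the sphere's $R_1$ expansion (and, as you rightly flag, needing its remainder in the form $O(z^{-3/2})$, which the proof of Theorem~\ref{R1-d-sphere} does in fact deliver via $x^2=z^{-1}+O(z^{-3/2})$ and the $O(x^3)$ control). The paper's route stays entirely inside the hemisphere setting and isolates cleanly where the boundary term $z^{-1/2}$ comes from while showing the $z^{-1}$ level is unaffected by the Dirichlet--Neumann switch. Your alternative direct computation via the closed form $R_1^N(z)=N^N(z)\bigl(z-c^N(L)\bigr)$ and the Gamma-function expansions of Appendix~\ref{app_A} would also go through, paralleling the Dirichlet argument with $\Gamma(L+d)/\Gamma(L)$ replaced by $\Gamma(L+d+1)/\Gamma(L+1)$.
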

\begin{proof}
From explicit but long computations one can get
\begin{equation}\label{Hemisphere-R-1-Dirichlet-Neumann-expansion}
  R^N_1(z)=\left(1+\frac{d(d+2)}{d+1}\,z^{-1/2}+\frac{1}{2}\,\frac{d^2(d+2)^2}{(d+1)^2}\,z^{-1}+O(z^{-3/2})\right)R^D_1(z)
\end{equation}
from which the result easily follows. A simpler way of proving \eqref{Hemisphere-R-1-Dirichlet-Neumann-expansion} is to directly link the Riesz-mean for the Neumann Laplacian to the Riesz-mean for the Dirichlet Laplacian
via counting function $N^D(z)$ and to use of the explicit sum
\begin{equation*}
  \sum_{l=0}^{L}\bigg(\binom{d+l-2}{d-1}-\binom{d+l-1}{d-1}\bigg)l(l+d-1)=-\,\frac{d-1}{d+1}\cdot \frac{\Gamma(L+1+d)}{\Gamma(L)\Gamma(d)}.
\end{equation*}
This sum equals to the sum of the difference of Dirichlet and Neumann energy levels weighed by their multiplicities.
This quantity is negative since there are more Neumann eigenvalues for each energy level. Therefore we obtain the following expression
for the difference of the Riesz-means divided by the leading term in Weyl's law.
\begin{equation*}
  \frac{R^N_1(z)-R^D_1(z)}{L_{1,d}^{class}|\mathbb{S}_{+}^d|\,z^{\frac{d}{2}+1}}=\frac{N^N(z)-N^D(z)}{L_{1,d}^{class}|\mathbb{S}_{+}^d|\,z^{\frac{d}{2}}}
  -\,\frac{d-1}{d+1}\cdot \frac{\Gamma(L+1+d)}{\Gamma(L)\Gamma(d)}\cdot \frac{1}{L_{1,d}^{class}|\mathbb{S}_{+}^d|\,z^{\frac{d}{2}+1}}
\end{equation*}
where $L$ is the integer part of $w$ and $z=w(w+d-1)$.
We have already shown that $\displaystyle N^N(z)=\left(1+\frac{d}{L}\right)N^D(z)$ and
$$N^D(z)=\frac{\Gamma(L+d)}{\Gamma(L)\Gamma(d+1)}.$$
Since $\displaystyle L_{1,d}^{class}=\frac{2}{d+2}\,L_{0,d}^{class}$ we therefore have the relation

\begin{equation*}
  \frac{R^N_1(z)-R^D_1(z)}{L_{1,d}^{class}|\mathbb{S}_{+}^d|\,z^{\frac{d}{2}+1}}=\frac{d+2}{2}
  \left(\frac{d}{L}-\frac{d(d-1)(L+d)}{(d+1)z}\right)\frac{N^D(z)}{L_{0,d}^{class}|\mathbb{S}_{+}^d|\,z^{\frac{d}{2}}}.
\end{equation*}
We expand the term in parentheses using $\displaystyle L=\sqrt{z+\frac{(d-1)^2}{4}}-\frac{d}{2}-\psi(w)$ and therefore
\begin{equation*}
  L=z^{1/2}-\left(\frac{d}{2}+\psi(w)\right)+O(z^{-1/2}),\quad \frac{1}{L}=z^{-1/2}+\left(\frac{d}{2}+\psi(w)\right)z^{-1}O(z^{-3/2}).
\end{equation*}
For counting function $N^D(z)$  we have by the previous result
\begin{equation*}
  \frac{N^D(z)}{L_{0,d}^{class}|\mathbb{S}_{+}^d|\,z^{\frac{d}{2}}}=1-\frac{d}{2}\left(1+2\psi(w)\right)z^{-1/2}.
\end{equation*}
Therefore we finally obtain
\begin{equation*}
  \frac{R^N_1(z)-R^D_1(z)}{L_{1,d}^{class}|\mathbb{S}_{+}^d|\,z^{\frac{d}{2}+1}}=\frac{d(d+2)}{d+1}\,z^{-1/2}+O(z^{-3/2})
\end{equation*}
which in particular implies \eqref{Hemisphere-R-1-Dirichlet-Neumann-expansion}, concluding the proof.
\end{proof}

The results of Theorems \ref{NN} and \ref{R1-d-hemi-NNeu} are illustrated in Figure \ref{f8}.

\begin{rem}
We recall the following identities which, in fact, we have used in the proof of Theorems \ref{NN} and \ref{R1-d-hemi-NNeu}
 
\begin{equation}\label{hemisphere-counting-fct-relation-1}
N^N(w(w+d-1))=\frac{\lfloor w\rfloor+d}{\lfloor w\rfloor}N^D(w(w+d-1))
\end{equation}
or
\begin{equation}\label{hemisphere-counting-fct-relation-2}
N^N(w(w+d-1))=N^D((w+1)(w+d)).
\end{equation}

 Identity \eqref{hemisphere-counting-fct-relation-1} corresponds to \eqref{hemisphere-d-N-of-z-Neumann}.  Identity \eqref{hemisphere-counting-fct-relation-2} says that the two counting functions $N^D$, $N^N$, are equal when the $w$ variable is shifted by $1$. This fact is equivalent to a statement about the multiplicities (and clearly seen from these) defined in  \eqref{hemisphere-d-multiplicity-coeff} and \eqref{hemisphere-d-multiplicity-coeff-Neumann}.

\end{rem}

\begin{figure}[ht]
     \centering
     \begin{subfigure}[b]{0.45\textwidth}
         \centering
         \includegraphics[width=\textwidth]{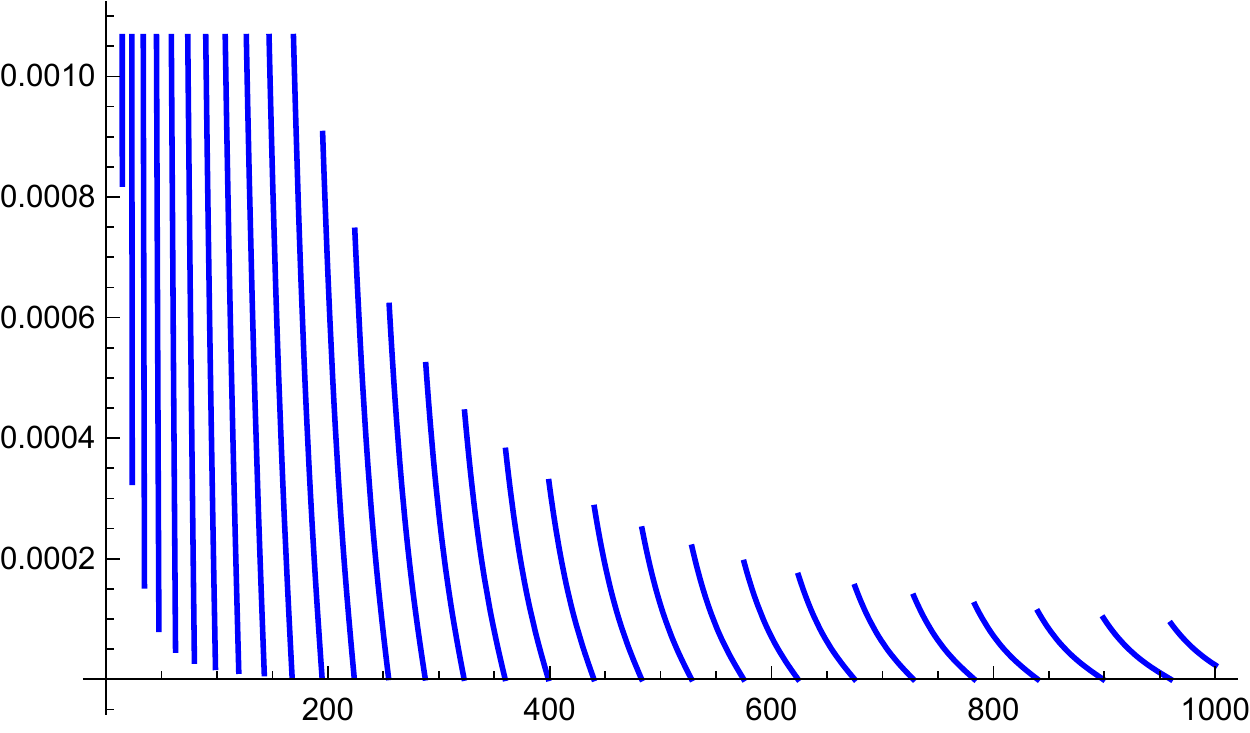}
     \end{subfigure}
     \hfill
     \begin{subfigure}[b]{0.45\textwidth}
         \centering
         \includegraphics[width=\textwidth]{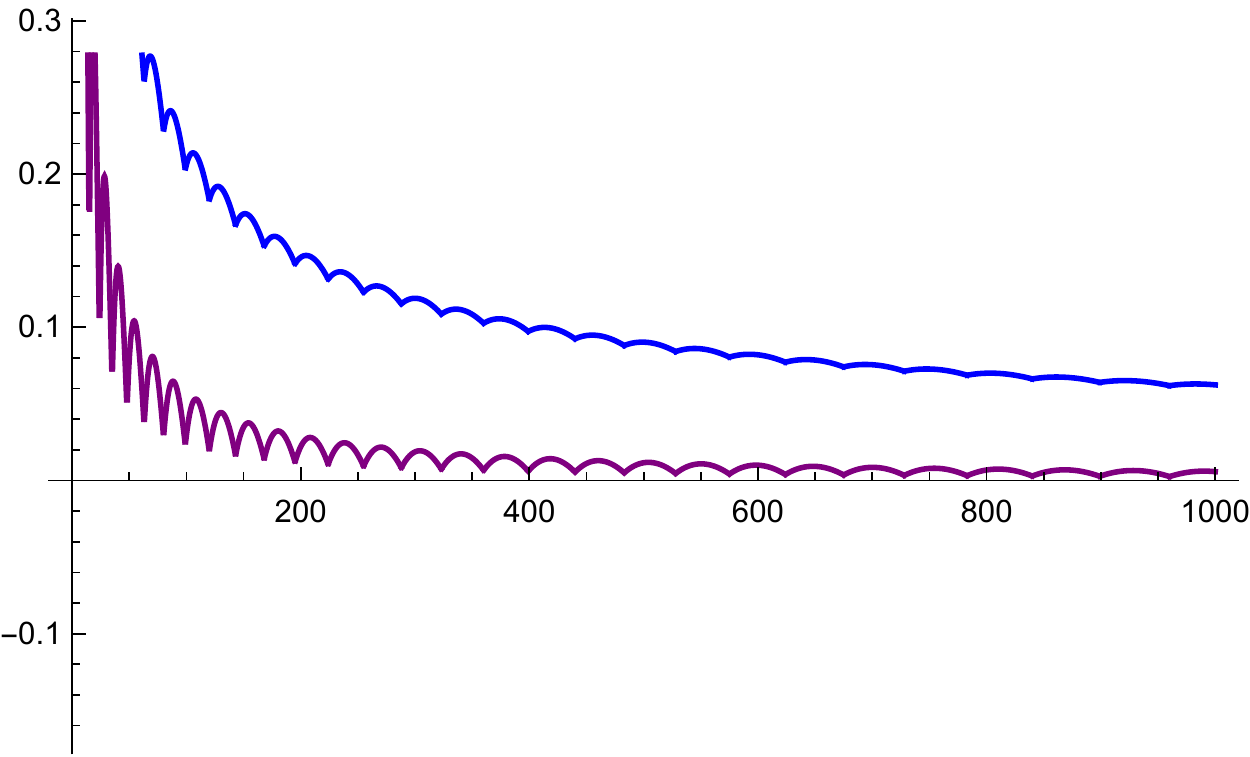}
     \end{subfigure}
     
        \caption{On the left, the ratio (minus $1$) of $N^N$ and the three-term expansion of Theorem \ref{NN}. On the right, in blue the ratio (minus $1$) of $R_1^N$ and the leading term in Weyl's law, and in purple the ratio (minus $1$) of $R_1^N$ and the three-term expansion of Theorem \ref{R1-d-hemi-NNeu}. Here $d=3$.}\label{f8}
       
\end{figure}

\subsubsection{P\'olya's conjecture}
It is well-known that P\'olya's conjecture in general fails for the Dirichlet eigenvalues of $\mathbb S^d_+$ when $d\geq 3$, while it is satisfied for $d=2$. As proved in \cite{FrMaSa22}, for the Dirichlet eigenvalues of the hemisphere, one can find subsequences of eigenvalues (corresponding to the last eigenvalue in a chain of multiple eigenvalues) which don't satisfy P\'olya's conjecture, as well as subsequences of eigenvalues which satisfy it (corresponding to the first eigenvalues in a chain of multiple eigenvalues, but starting from an energy level in general higher than $2$). We have already discussed the relation of our results, especially three-term asymptotic expansions, with those presented in \cite{FrMaSa22}.


For the sake of completeness, we briefly show here that P\'olya's conjecture does not hold in general for Dirichlet eigenvalues on  $\mathbb{S}_{+}^d$ when $d\geq 3$.


By P\'olya's conjecture we understand that the counting function $N^D$ is bounded above by
the leading term in Weyl's law, that is
\begin{equation*}
  N^D(z)\leq L_{0,d}^{class}|\mathbb{S}_{+}^d|\,z^{d/2}=\frac{1}{\Gamma(d+1)}\,z^{d/2}.
\end{equation*}
This inequality is equivalent to the eigenvalue bound
\begin{equation*}
  \lambda_j\geq \left(L_{0,d}^{class}|\mathbb{S}_{+}^d|\right)^{-2/d}\,j^{2/d}=\Gamma(d+1)^{2/d}\,j^{2/d}.
\end{equation*}
Here $\lambda_j$ are the Dirichlet eigenvalues on $\mathbb S^d_+$. We have shown in Subsection \ref{2hemi}  that this bounds hold when $d=2$. Let $d\geq 3$. We have $\lambda_1=d$ and
\begin{equation*}
  \frac{\Gamma(d+1)^{2}}{\lambda_1^d}= \prod_{j=1}^{d}\frac{j(d+1-j)}{d}=\prod_{j=1}^{d}\left(1+\frac{(j-1)(d-j)}{d}\right)>1.
\end{equation*}
On the other hand,  the counting function $N^N(z)$ for the Neumann eigenvalues on the hemisphere satisfies
\begin{equation*}
  N^N(z)\geq L_{0,d}^{class}|\mathbb{S}_{+}^d|\,z^{d/2}=\frac{1}{\Gamma(d+1)}\,z^{d/2}
\end{equation*}
as one easily sees from the identity
\begin{equation*}
  N^N(z)=L_{0,d}^{class}|\mathbb{S}_{+}^d|\,\frac{\Gamma(L+d+1)}{\Gamma(L+1)},
\end{equation*}
where, as usual, $L=\lfloor w\rfloor$ with $w(w+d-1)=z$.
However, the stronger version of P\'olya's inequality for the Neumann eigenvalues (i.e., taking into account the $0$ eigenvalue, see \cite[Corollary 1.4]{HaSt18} where it is proved for certain Euclidean domains), which reads
\begin{equation*}
  N^N(z)\geq L_{0,d}^{class}|\mathbb{S}_{+}^d|\,z^{d/2}+1=\frac{1}{\Gamma(d+1)}\,z^{d/2}+1.
\end{equation*}
does not hold.

\subsubsection{Li-Yau estimates}\label{rem-li-yau}

Usually, averaging the eigenvalues leads to a more regular behavior, as we have already seen in the previous sections.


The Weyl-sharp upper bound $R^D_1(z)\leq L_{1,d}^{class}|\mathbb{S}_{+}^d|z^{1+d/2}$ for all $z\geq 0$ for the first Riesz-mean of Dirichlet Laplacian eigenvalues on $\mathbb{S}^d_+$
is equivalent to the following estimate for averages of eigenvalues:
\begin{equation}\label{Li-Yau-estimate-d-hemisphere}
  \frac{1}{k}\sum_{j=1}^{k}\lambda_j\geq  \frac{d}{d+2}\Gamma(d+1)^{2/d}k^{2/d}
\end{equation}
for all positive integers $k$. Note that, in our notation, $\lambda_j$ denotes the $j-$th eigenvalue (and not the numbering of the energy level).
Since $\lambda_1=d$ the Li-Yau estimate \eqref{Li-Yau-estimate-d-hemisphere} for $k=1$ is equivalent to
\begin{equation*}
  (d+2)^d\geq \Gamma(d+1)^2
\end{equation*}
which only holds provided $d\leq 5$.

Therefore, an estimate on averages as \eqref{Li-Yau-estimate-d-hemisphere} cannot hold if $d\geq 6$. Clearly it holds for $d=2$ (as a consequence of the validity of P\'olya's conjecture). We actually are able to prove that  \eqref{Li-Yau-estimate-d-hemisphere} holds for $d=3,4,5$.

\begin{thm}\label{BLY-345}
For all $z\geq 0$ the following inequality for the first Riesz-mean $R_1^D$ of the eigenvalues of the Dirichlet Laplacian on $\mathbb S^d_+$, $d=3,4,5$, holds:
\begin{equation*}
    R_1^D(z)\leq L^{class}_{1,d}|\mathbb S^d_+|z^{1+\frac{d}{2}}.
\end{equation*}
\end{thm}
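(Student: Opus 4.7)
My plan is to reduce the claimed continuous inequality to a family of discrete polynomial inequalities indexed by the energy level $L \geq 1$ and then verify these by a pairing arithmetic-geometric mean argument whose natural threshold is $d \leq 5$.

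On each interval $[\lambda_{(L)},\lambda_{(L+1)}]$ with $\lambda_{(L)} = L(L+d-1)$, the Riesz-mean is linear: $R_1^D(z)=a_L z-b_L$, where $a_L=\binom{L+d-1}{d}$ by the hockey-stick identity, and the auxiliary identity $l(l+d-1)\binom{l+d-2}{d-1}=ld\binom{l+d-1}{d}$ combined with two further hockey-stick sums yields the closed form
\begin{equation*}
b_L=\frac{d(L+d)(L(d+1)+1)}{(d+1)(d+2)}\binom{L+d-1}{d}.
\end{equation*}
Set $g(z):=L_{1,d}^{class}|\mathbb{S}^d_+|\,z^{1+d/2}-R_1^D(z)$; this vanishes at $z=0$ and is convex on every $[\lambda_{(L)},\lambda_{(L+1)}]$, being the sum of a convex and an affine function. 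Its minimum on such an interval is attained either at an endpoint or at the unique interior critical point $z_L^*$ determined by $g'(z_L^*)=0$. Using the identity $L_{1,d}^{class}|\mathbb{S}^d_+|(1+d/2)=1/d!$, I find $(z_L^*)^{d/2}=a_L\,d!=\prod_{j=0}^{d-1}(L+j)$, and for $d\in\{3,4,5\}$ and $L\geq 1$ this critical point lies inside the interval: the lower bound $z_L^*\geq \lambda_{(L)}$ follows from the pairing $(L+j)(L+d-1-j)\geq L(L+d-1)$, while the upper bound $z_L^*\leq \lambda_{(L+1)}$ follows from $\prod_{j=0}^{d-1}(L+j)\leq (L+\tfrac{d-1}{2})^d$ (AM-GM) together with the elementary $(L+\tfrac{d-1}{2})^2\leq (L+1)(L+d)$.

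Substituting $z_L^*$ into $g$ and using the closed form for $b_L$, the condition $g(z_L^*)\geq 0$ reduces to the polynomial inequality
\begin{equation*}
\frac{(L+d)(L(d+1)+1)}{d+1}\geq \Bigl(\prod_{j=0}^{d-1}(L+j)\Bigr)^{2/d}. \qquad (\star)
\end{equation*}
The key observation to prove $(\star)$ is the algebraic identity $(L+j)(L+d-1-j)=\lambda_{(L)}+j(d-1-j)$; after squaring and pairing the product, AM-GM together with $\sum_{j=0}^{d-1}j(d-1-j)=\frac{d(d-1)(d-2)}{6}$ gives
\begin{equation*}
\Bigl(\prod_{j=0}^{d-1}(L+j)\Bigr)^{2/d}=\Bigl(\prod_{j=0}^{d-1}\bigl(\lambda_{(L)}+j(d-1-j)\bigr)\Bigr)^{1/d}\leq \lambda_{(L)}+\frac{(d-1)(d-2)}{6}.
\end{equation*}
Since the LHS of $(\star)$ rewrites as $\lambda_{(L)}+\frac{d+2}{d+1}L+\frac{d}{d+1}\geq \lambda_{(L)}+2$ for $L\geq 1$, the inequality $(\star)$ follows from $(d-1)(d-2)\leq 12$, i.e., $d\leq 5$.

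The main obstacle I expect is the closed-form evaluation of $b_L$ in Step~1; once $b_L$ is identified the remaining computations are short. Notably, the dimensional threshold $d\leq 5$ emerges naturally from the AM-GM comparison at $(\star)$, and it is sharp: for $d\geq 6$ the inequality $(\star)$ already fails at $L=1$, consistent with the failure of the Li-Yau estimate at $k=1$ noted just before the statement.
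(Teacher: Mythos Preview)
Your proof is correct and follows essentially the same approach as the paper's. The only cosmetic difference is that you minimize the convex difference $g(z)=L_{1,d}^{class}|\mathbb{S}^d_+|\,z^{1+d/2}-R_1^D(z)$ while the paper maximizes the ratio $R_1^D(z)/\bigl(L_{1,d}^{class}|\mathbb{S}^d_+|\,z^{1+d/2}\bigr)$; both reductions lead to the identical key inequality $\prod_{j=0}^{d-1}(L+j)\leq\bigl((L+d)(L+\tfrac{1}{d+1})\bigr)^{d/2}$, which is then proved by the same AM--GM pairing $(L+j)(L+d-1-j)=\lambda_{(L)}+j(d-1-j)$ with the threshold $(d-1)(d-2)\leq 12$.
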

\begin{proof}
As usual, we write $z=w(w+d-1)$, $L=\lfloor w\rfloor$ and $w=L+x$ with $x\in[0,1[$ the fractional part of $w$. As in the proof of Theorem \ref{R1-d-hemi-N}, we write explicitly the quotient $\frac{R_1^D(z)}{L^{class}_{1,d}|\mathbb S^d_+|z^{1+\frac{d}{2}}}$ as a function of $x$. Here, however, we shall not expand in power series with respect to $x$.


When $L=0$ the claimed bound is clearly satisfied in any dimension. Hence let us consider $L\geq 1$. For any fixed $L$, the function $f_L(x)=\frac{R_1^D(z(x))}{L^{class}_{1,d}|\mathbb S^d_+|z(x)^{1+\frac{d}{2}}}$, with $z(x)=(L+x)(L+x+d-1)$, is smooth in $x\in[0,1]$. Computing its derivative, it vanishes in $(0,1)$ only at the point
$$
x=x_L=\frac{1-d-2L}{2}+\sqrt{\frac{(1-d-2L)}{4}+\frac{d+(d+2)L}{d+1}},
$$
(it is easily proven that $x_L\in(0,1)$ for any $L\geq 1$). Therefore it is sufficient to prove that $f_L(0)\leq 1$ and $f_L(x_L)\leq 1$ (since $f_L(1)=f_{L+1}(0)$). A standard computation shows that
$$
f_L(0)=\frac{(L-1)(L+1)\cdots(L+d-2)\left(L+\frac{d^2}{2(d+1)}\right)}{(L(L+d-1))^{\frac{d}{2}}}
$$
and
$$
f_L(x_L)=\frac{L(L+1)\cdots(L+d-1)}{\left((L+d)\left(L+\frac{1}{d+1}\right)\right)^{\frac{d}{2}}}.
$$
Let us prove that for $d=3,4,5$, $f_L(x_L)\leq 1$. The same proof allows to show that $f_L(0)\leq 1$ for all $d\geq 0$ (actually, $x_L$ is a local - and global -  maximum of $f_L(x)$ for $x\in[0,1]$ and all $L\geq 1$).

We write
$$
L(L+1)\cdots(L+d-1)=\left(\prod_{j=1}^d(L+j-1)(L+d-j)\right)^{\frac{1}{2}}.
$$
Applying the arithmetic-geometric inequality we get
$$
L(L+1)\cdots(L+d-1)\leq\left(\frac{1}{d}\sum_{j=1}^d(L+j-1)(L+d-j)\right)^{\frac{d}{2}}=\left(L^2+(d-1)L+\frac{(d-1)(d-2)}{6}\right)^{\frac{d}{2}}.
$$
Therefore $f_L(x_L)\leq 1$ if and only if
$$
L^2+(d-1)L+\frac{(d-1)(d-2)}{6}\leq(L+d)\left(L+\frac{1}{d+1}\right).
$$
An explicit computation shows that
$$
L^2+(d-1)L+\frac{(d-1)(d-2)}{6}-(L+d)\left(L+\frac{1}{d+1}\right)=-\frac{(d+2)(L-1)}{d+1}+\frac{(d-2)(d-5)}{6}
$$
and the right-hand side is negative for all $L\geq 1$ provided $d\leq 5$. This concludes the proof.

\end{proof}

Concerning the proof of Theorem \ref{BLY-345}, we remark that for any $d\geq 2$ there exists always $L_0>1$ such that $f_L(x_L)\leq 1$ for all $L\geq L_0$, so that Berezin-Li-Yau holds for all $z\geq z_0$, where $z_0$ depends on $d$. It doesn't hold for all $z\geq 0$, as already mentioned. In fact, for $d\geq 6$, we always have $f_1(x_1)>1$.

\section{The circle $\mathbb{S}^1$}\label{S1}
For the sake of completeness, in this brief section we consider the case of the one dimensional sphere $\mathbb{S}^1$, that is the circle.
We recall that the energy levels of the Laplacian on $\mathbb{S}^1$  are:
\[
 \lambda_{(l)}= l^2, \quad l\in\mathbb{N},
\]
with corresponding multiplicities $m_{0,1}=1$, $m_{l,1}=2$ for all $l \in \mathbb{N} \setminus\{0\}$. 
We also recall that $L_{1,1}^{class}=\frac{2}{3}\pi$ and then  $L_{1,1}^{class}|\mathbb S^1|=\frac{4}{3}$.

 As a first observation, we  show that the leading term in Weyl's law $\frac{4}{3}\,z^{3/2}$ cannot be a either lower or upper bound for the Riesz-mean $R_1(z)$. As already done several times in the previous sections, we use an auxiliary variable to simplify the computations. Namely we set $z=w^2$, $w \geq 0$. 
Clearly, for $0\leq w\leq 1$ we have $R_1(w^2)=w^2$ and then
$\frac{R_1(w^2)}{( L_{1,1}^{class}|S^1|w^3)}=\frac{3}{4} w$ which is strictly less than $1$. For $w>1$ we have
\begin{equation*}
  R_1(w^2)=w^2+\sum_{l=1}^{\lfloor w\rfloor}2(w^2-l^2)=\frac{4 w^3}{3}+\frac{w}{6}-2w\psi^2(w)-\frac{\psi(w)}{6}+\frac{2\psi^3(w)}{3}.
\end{equation*}
Clearly, in any interval between two integers there exist two $w_{\pm}$ such that $\psi(w_{\pm})=\pm \frac{\sqrt{3}}{6}$. Then
\begin{equation*}
  R_1(w_{\pm}^2)=\frac{4}{3}\, w_{\pm}^3\pm\frac{\sqrt{3}}{54}
\end{equation*}
proving that $\frac{4}{3}\,w^{3}$ is neither a lower bound nor an upper bound for For $R_1(w^2)$. 
However, if we introduce  a shift we are able to get the following Weyl sharp upper bound.
\begin{prop}\label{R1-S1}
  For all $z\geq 0$ the first Riesz-mean $R_1$ of the Laplacian eigenvalue on $\mathbb{S}^1$ satisfies the following inequality: 
  \begin{equation*}
    R_1(z)\leq \frac{4}{3}\,\left(z+\frac{1}{12}\right)^\frac{3}{2}.
  \end{equation*}
 Moreover, in  each interval $]l^2,(l+1)^2[$ with $l \in \mathbb{N}$  there exists a $z_l$ such that equality holds.
\end{prop}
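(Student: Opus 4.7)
The plan is to reduce the inequality to a clean one-variable optimization through a judicious substitution. First, I would compute $R_1(w^2)$ in closed form on each interval $w\in[L,L+1)$, $L=\lfloor w\rfloor\in\mathbb{N}$. Using $m_{0,1}=1$ and $m_{l,1}=2$ for $l\geq 1$, a direct summation gives
\begin{equation*}
R_1(w^2)=w^2+\sum_{l=1}^{L}2(w^2-l^2)=(2L+1)w^2-\frac{L(L+1)(2L+1)}{3}.
\end{equation*}
The shift $\frac{1}{12}$ suggests introducing the auxiliary variables
\begin{equation*}
u:=L+\tfrac{1}{2},\qquad t:=w^2+\tfrac{1}{12},
\end{equation*}
since then $2L+1=2u$ and $L(L+1)=u^2-\tfrac{1}{4}$, so that
\begin{equation*}
R_1(w^2)=2uw^2-\frac{2u^3}{3}+\frac{u}{6}=2u\!\left(t-\tfrac{1}{12}\right)-\frac{2u^3}{3}+\frac{u}{6}=2ut-\frac{2u^3}{3}.
\end{equation*}
The miracle is that the linear-in-$u$ correction cancels, leaving a strikingly simple expression.

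Second, I would reduce the claim $R_1(z)\leq\tfrac{4}{3}(z+\tfrac{1}{12})^{3/2}$ to the one-variable inequality
\begin{equation*}
F(u):=\frac{4}{3}t^{3/2}-2ut+\frac{2u^3}{3}\geq 0\qquad\text{for }u>0,\ t\geq 0,
\end{equation*}
with $t$ treated as a parameter. Since $F'(u)=2u^2-2t$ vanishes only at $u=\sqrt{t}$ on $u>0$, and $F''(u)=4u>0$, this point is a global minimum, with value
\begin{equation*}
F(\sqrt{t})=\frac{4}{3}t^{3/2}-2t^{3/2}+\frac{2}{3}t^{3/2}=0.
\end{equation*}
Hence $F(u)\geq 0$ with equality iff $u=\sqrt{t}$. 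Applied with $u=L+\tfrac{1}{2}$ and $t=w^2+\tfrac{1}{12}$, this yields the announced upper bound on each interval $[L,L+1)$, and hence for all $z\geq 0$ (the case $w\in[0,1)$ is covered by $L=0$).

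Third, for the equality statement, the condition $u=\sqrt{t}$ becomes $\left(L+\tfrac{1}{2}\right)^2=w^2+\tfrac{1}{12}$, so
\begin{equation*}
z_L=w^2=\left(L+\tfrac{1}{2}\right)^2-\tfrac{1}{12}=L^2+L+\tfrac{1}{6}.
\end{equation*}
Since $0<L+\tfrac{1}{6}<2L+1$ for every $L\in\mathbb{N}$, one has $z_L\in\left(L^2,(L+1)^2\right)$, which is the required claim. The only real obstacle is spotting the substitution $(u,t)=(L+\tfrac{1}{2},w^2+\tfrac{1}{12})$ that makes both the linear-in-$u$ term and the piecewise structure disappear; once this is in hand, the proof is essentially a single calculus exercise.
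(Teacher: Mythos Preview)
Your proof is correct and takes a genuinely different route from the paper. The paper compares the squares $R_1(w^2)^2-\tfrac{16}{9}(w^2+\tfrac{1}{12})^3$ and, using the fluctuation function $\psi(w)=w-\lfloor w\rfloor-\tfrac12$, obtains an explicit algebraic factorization showing this difference is nonpositive (with a separate computation for $0\le w<1$); the equality condition then reads $\psi(w)=w-\sqrt{w^2+\tfrac{1}{12}}$. Your substitution $(u,t)=(L+\tfrac12,\,w^2+\tfrac{1}{12})$ instead collapses $R_1$ to $2ut-\tfrac{2}{3}u^3$, after which the claim becomes the single inequality $\tfrac{4}{3}t^{3/2}-2ut+\tfrac{2}{3}u^3\ge 0$, handled by one derivative (equivalently, by the factorization $\tfrac{2}{3}(u-\sqrt{t})^2(u+2\sqrt{t})\ge 0$). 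Your approach is more elementary, avoids the case split, makes transparent why the shift $\tfrac{1}{12}$ is the right one (it kills the linear-in-$u$ term), and yields the explicit equality points $z_L=L^2+L+\tfrac16$; the paper's approach, on the other hand, stays within the $\psi$-function framework used elsewhere in the article and reveals the underlying algebraic structure of the difference of squares. Both equality conditions of course agree, since $\psi(w)=w-\sqrt{w^2+\tfrac{1}{12}}$ is equivalent to $(L+\tfrac12)^2=w^2+\tfrac{1}{12}$.
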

\begin{proof}
We prove the inequality for $z= w^2$, $w \geq 0$.
  We start considering the difference of the squares of both sides of the claimed inequality. We have
  \begin{equation*}
    R_1(w^2)^2-\frac{16}{9}\,\left(w^2+\frac{1}{12}\right)^{3}=\frac{(12\psi^2(w)-24\psi(w) w-1)^2(3\psi^2(w)-6\psi(w) w-9w^2-1)}{972}\,.
  \end{equation*}
  We note that for $w \geq 1$ the right hand side of the above inequality is always   negative   since $-\frac{1}{2}\leq \psi\leq \frac{1}{2}$. Instead, for $0\leq w<1$ we have
  \begin{equation*}
     R_1(w^2)^2-\frac{16}{9}\,\left(w^2+\frac{1}{12}\right)^{3}=w^4-\frac{16}{9}\,\left(w^2+\frac{1}{12}\right)^{3}
     =- \frac{(48w^2+1)(6w^2-1)^2}{972} \leq 0.
  \end{equation*}
  Equality is attained when 
  $\psi(w)=w-\sqrt{w^2+\frac{1}{12}}=-\,\frac{1}{12\left(w+\sqrt{w+\frac{1}{12}}\right)}$
  which has a solution in each interval $]l,l+1[$ with $l \in \mathbb{N}$.
\end{proof}
Note that Proposition \ref{R1-S1} has been already proved (Theorem \ref{thm:ubR1dshift}). The new information of Proposition \ref{R1-S1} is that the bound is saturated. Note also that the shift $\frac{1}{12}$ corresponds exactly to $z_d$ with $d=1$ for the general Theorem \ref{thm:ubR1dshift}.


\begin{rem}
In Theorem \ref{thm:blyS2+imp} we have shown that Berezin-Li-Yau inequality holds for domains of $\mathbb S^2_+$, and that it cannot hold in general for domains invading the whole $\mathbb S^2$, since there is spectral convergence and on $\mathbb S^2$ Berezin-Li-Yau inequality does not hold. On the other hand, Kr\"oger inequality is proved for domains in $\mathbb S^d$ and this is a consequence of the fact that it holds on the whole sphere. In the case of $\mathbb S^1$ the leading term in Weyl's law is neither a lower nor an upper bound for $R_1$, but clearly, the Dirichlet and Neumann eigenvalues of any domain (i.e., each arc of length smaller than $2\pi$) satisfy Berezin-Li-Yau and Kr\"oger inequalities. This is not a contradiction, since in this case we do not have convergence of the spectrum of an arc with Dirichlet/Neumann conditions to the spectrum of $\mathbb S^1$ when the arc invades $\mathbb S^1$.
\end{rem}

\section{Higher order operators}\label{higher_order}

\subsection{Basic estimates for $\mathbb S^d$}

Let now $p\in\mathbb N$, $p\geq 2$. In this section we consider the spectrum of the polyharmonic operator $(-\Delta)^p$ on $\mathbb S^d$. It is well-known that its eigenvalues coincide exactly with the $p$-th powers of the Laplacian eigenvalues on $\mathbb S^d$, therefore the eigenvalues are given as energy levels as $\lambda_{(l)}^p$, with multiplicity $m_{l,d}$. If we want to enumerate them in increasing order, they are just given by
$$
0=\lambda_1^p<\lambda_2^p\leq\cdots\lambda_j^p\leq\cdots\nearrow+\infty\,.
$$


We first consider upper and lower bounds for $R_1^p(z)$ defined by
$$
R_1^p(z):=\sum_j(z-\lambda_j^p)_+\,.
$$
We introduce the corresponding semiclassical constant:
$$
L^{class}_{\gamma,d,p}:=(4\pi)^{-d/2}\frac{\Gamma(\gamma+1)\Gamma(1+\frac{d}{2p})}{\Gamma(1+\frac{d}{2})\Gamma(1+\gamma+\frac{d}{2p})}.
$$
We recall that, from the asymptotic expression for the counting functions (see e.g., \cite{SV}), it is immediate to deduce that, as $z$ tends to infinity 
$$
R_1^p(z)=L^{class}_{1,d,p}|\mathbb S^d|z^{1+\frac{d}{2p}}+o\left(z^{1+\frac{d}{2p}}\right).
$$
We will need the following integral transformation, which can be easily verified by a direct computation

\begin{equation}\label{Riesz-means-polyharmonic-integral-transform-1}
  \sum_{j}(z^p-\lambda_j^p)_{+}=-p(p-1)\int_{0}^{\infty}(z-t)^{p-2} \sum_{j}(z-\lambda_j-t)_{+}dt+pz^{p-1}\sum_{j}(z-\lambda_j)_{+}\,.
\end{equation}

We are ready to state upper and lower bounds for $R_1^p(z)$.

\begin{thm}\label{R1p-bounds-general}
For all $z\geq 0$ we have the following bounds for the first Riesz-mean $R_1^p$, $p\geq 2$, of the eigenvalues of $(-\Delta)^p$ on $\mathbb{S}^d$:
\begin{multline*}
  L^{class}_{1,d,p}|\mathbb S^d|z^{1+\frac{d}{2p}}-\frac{2(p-1)}{d+2}L^{class}_{1,d,p}|\mathbb S^d|\left(\left(z^{\frac{1}{p}}+z_d\right)^{\frac{d}{2}+p}-z^{1+\frac{d}{2p}}\right)\\
  \leq R_1^p(z)\\
  \leq L^{class}_{1,d,p}|\mathbb S^d|\left(z^{\frac{1}{p}}+z_d\right)^{1+\frac{d}{2p}}+\frac{2(p-1)}{d+2}L^{class}_{1,d,p}|\mathbb S^d|\left(\left(z^{\frac{1}{p}}+z_d\right)^{\frac{d}{2}+p}-z^{1+\frac{d}{2p}}\right),
\end{multline*}
where $z_d=\frac{d(2d-1)}{12}$.
\end{thm}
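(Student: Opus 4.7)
The approach is to apply the integral transformation \eqref{Riesz-means-polyharmonic-integral-transform-1} with argument $z^p$ to reduce the polyharmonic Riesz-mean to the Laplacian Riesz-mean on $\mathbb{S}^d$, for which the sharp upper bound of Theorem \ref{thm:ubR1dshift} and the sharp lower bound of Theorem \ref{R-1-lo-thm-sphere} are at our disposal. Specifically, \eqref{Riesz-means-polyharmonic-integral-transform-1} with $z$ replaced by $z$ and $z^p$ playing the role of the spectral variable reads
\[
R_1^p(z^p) = p z^{p-1} R_1(z) - p(p-1) \int_0^z (z-t)^{p-2} R_1(z-t)\, dt,
\]
so each of the two bounds in the statement will follow by estimating the first (positive) term and the second (negative) integral term in opposite directions, and then performing the substitution $z = w^{1/p}$. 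Throughout the computation, one uses the elementary identities
\[
p\,L^{class}_{1,d}|\mathbb S^d| = \tfrac{d+2p}{d+2}L^{class}_{1,d,p}|\mathbb S^d|, \qquad \tfrac{2p(p-1)}{2p+d}L^{class}_{1,d}|\mathbb S^d| = \tfrac{2(p-1)}{d+2}L^{class}_{1,d,p}|\mathbb S^d|,
\]
which come from the ratio $L^{class}_{1,d,p}/L^{class}_{1,d} = p(d+2)/(d+2p)$, as well as the decomposition $\tfrac{d+2p}{d+2} = 1 + \tfrac{2(p-1)}{d+2}$.

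For the upper bound, I would plug $R_1(z) \leq L^{class}_{1,d}|\mathbb S^d|(z+z_d)^{1+d/2}$ into the first term and $R_1(z-t) \geq L^{class}_{1,d}|\mathbb S^d|(z-t)^{1+d/2}$ into the integral (the minus sign reverses the direction). The integral becomes $\int_0^z (z-t)^{p-1+d/2}\,dt = z^{p+d/2}/(p+d/2)$, and the elementary estimate $z^{p-1}(z+z_d)^{1+d/2} \leq (z+z_d)^{p+d/2}$ (which holds since $z_d \geq 0$) allows one to bound the leading term cleanly. Assembling the pieces and setting $z = w^{1/p}$, so that $z^{p+d/2} = w^{1+d/(2p)}$ and $(z+z_d)^{p+d/2} = (w^{1/p}+z_d)^{p+d/2}$, yields exactly the stated upper bound.

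For the lower bound, I would reverse the two estimates, using $R_1(z) \geq L^{class}_{1,d}|\mathbb S^d|\,z^{1+d/2}$ in the first term and $R_1(z-t) \leq L^{class}_{1,d}|\mathbb S^d|(z-t+z_d)^{1+d/2}$ inside the integral. The crucial step is then the bound
\[
\int_0^z (z-t)^{p-2}(z-t+z_d)^{1+d/2}\,dt \leq \int_0^{z+z_d} v^{p-1+d/2}\,dv = \frac{(z+z_d)^{p+d/2}}{p+d/2},
\]
which I would obtain by using $(z-t)^{p-2} \leq (z-t+z_d)^{p-2}$ on the integrand (valid precisely because $p\geq 2$ and $z_d \geq 0$), changing variables to $v = z-t+z_d$, and enlarging the domain of integration from $[z_d, z+z_d]$ to $[0,z+z_d]$. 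The same algebraic rewriting as in the upper-bound case, followed by $z \mapsto w^{1/p}$, gives the claimed inequality. The proof is essentially mechanical; the only real subtlety is that, in the lower-bound integral estimate, I sacrifice a positive constant contribution of order $z_d^{p+d/2}$ (from enlarging the integration domain) in order to recover the clean closed form of the statement, and the hypothesis $p\geq 2$ is used precisely to guarantee the nonnegativity of the exponent $p-2$ in the key comparison.
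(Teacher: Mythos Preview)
Your proposal is correct and follows essentially the same approach as the paper: both apply the integral transformation \eqref{Riesz-means-polyharmonic-integral-transform-1}, estimate $R_1$ via Theorems \ref{R-1-lo-thm-sphere} and \ref{thm:ubR1dshift} in the appropriate directions, use $(z-t)^{p-2}\leq (z-t+z_d)^{p-2}$ and enlarge the integration interval for the lower bound, and bound $z^{p-1}\leq (z+z_d)^{p-1}$ for the upper bound before substituting $z\mapsto z^{1/p}$. Your exposition is in fact more detailed than the paper's sketch, making the constant identities and the role of the hypothesis $p\geq 2$ explicit.
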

\begin{proof}
It is sufficient to exploit the integral transformation \eqref{Riesz-means-polyharmonic-integral-transform-1} and the bounds \eqref{S-d-R-1-lower-bound} and \eqref{S-d-R-1-bounds} in order to find upper and lower bounds for $R_1^p(z^p)$. Then, it is sufficient to replace $z$ by $z^{1/p}$ to obtain the bounds on $R_1^p(z)$. For the lower bound, use $(z-t)^{p-2}\leq (z-t+z_d)^{p-2}$ inside the integral in \eqref{Riesz-means-polyharmonic-integral-transform-1} and compute it  over $t\in(0,z+z_d)$. For the upper bound, estimate $z^{p-1}$ by $(z+z_d)^{p-1}$ in the second summand of \eqref{Riesz-means-polyharmonic-integral-transform-1}.

\end{proof}

In some specific cases we can find better lower bounds, for example, when $d=2$. In this case the integral in \eqref{Riesz-means-polyharmonic-integral-transform-1} can be computed without the estimate $(z-t)^{p-2}\leq(z-t+z_d)^{p-2}$.

\begin{cor}
Let $d=2$. For all $z\geq 0$
    \begin{equation*}
  \frac{2}{3}\,z^{3/2}-\frac{1}{2}\,z-\frac{1}{4}\,z^{1/2}\leq  R_{1}^2(z)\leq  \frac{2}{3}\,z^{3/2}+z+\frac{1}{4}\,z^{1/2},
\end{equation*}
and for all $p\geq 1$
\begin{equation*}
  \frac{p}{p+1}\,z^{1+1/p}-\frac{p-1}{2}\,z-\frac{p}{8}\,z^{1-1/p}\leq  R_{1}^p(z)\leq  \frac{p}{p+1}\,z^{1+1/p}+\frac{p}{2}\,z+\frac{p}{8}\,z^{1-1/p}.
\end{equation*}
\end{cor}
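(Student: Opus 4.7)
The plan is to apply the integral transformation \eqref{Riesz-means-polyharmonic-integral-transform-1} directly, substituting the sharp two-sided bounds for $R_1$ on $\mathbb{S}^2$ from Proposition \ref{prop:R1S2bounds} (namely $\tfrac{1}{2}s^2 \leq R_1(s) \leq \tfrac{1}{2}(s+\tfrac{1}{2})^2$), and then evaluating the resulting polynomial integrals in closed form. Since both bounds are quadratic polynomials in $s$, after multiplication by the weight $s^{p-2}$ the integrals become elementary combinations of $\int_0^z s^p\,ds$, $\int_0^z s^{p-1}\,ds$, $\int_0^z s^{p-2}\,ds$; this is precisely why in two dimensions we can do better than the general argument of Theorem \ref{R1p-bounds-general}, where the crude estimate $(z-t)^{p-2}\leq (z-t+z_d)^{p-2}$ was introduced to bypass the fact that the bounds on $R_1$ in arbitrary dimension are not explicit polynomials.

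Concretely, since $(z-\lambda_j-t)_+$ vanishes for $t>z$, after the change of variables $s=z-t$ identity \eqref{Riesz-means-polyharmonic-integral-transform-1} reads
\begin{equation*}
R_1^p(z^p)=-p(p-1)\int_0^z s^{p-2}R_1(s)\,ds+pz^{p-1}R_1(z).
\end{equation*}
The integral carries the negative coefficient $-p(p-1)\leq 0$, while the boundary term has positive coefficient. To obtain the upper bound for $R_1^p(z^p)$, I would use $R_1(s)\geq \tfrac{1}{2}s^2$ inside the integral and $R_1(z)\leq \tfrac{1}{2}(z+\tfrac{1}{2})^2$ in the boundary term; the lower bound follows by swapping the two choices.

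The remainder is bookkeeping: expanding $(s+\tfrac{1}{2})^2=s^2+s+\tfrac{1}{4}$, using the three elementary primitives, and combining terms, the leading contribution reduces to
\begin{equation*}
\frac{pz^{p+1}}{2}\left(1-\frac{p-1}{p+1}\right)=\frac{p}{p+1}\,z^{p+1},
\end{equation*}
which matches the expected semiclassical prefactor once one rescales $z\mapsto z^{1/p}$ at the very end (recalling that what has been estimated is $R_1^p(z^p)$, not $R_1^p(z)$). The subleading $z^p$ and $z^{p-1}$ terms rescale to $z$ and $z^{1-1/p}$ respectively, producing the claimed inequalities; the $p=2$ case displayed first in the statement is just the cleanest instance of the general formula.

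The main (minor) obstacle is ensuring the algebraic simplification $1-\tfrac{p-1}{p+1}=\tfrac{2}{p+1}$ and the correct sign-matching of the three $R_1$-substitutions in the integral versus the boundary term; no further analytic input is required, and in particular no expansions, fluctuation terms, or Gamma-function asymptotics are needed, contrary to the proofs earlier in the paper.
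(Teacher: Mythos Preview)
Your proposal is correct and follows exactly the approach indicated in the paper: use the integral transformation \eqref{Riesz-means-polyharmonic-integral-transform-1} together with the two-sided polynomial bounds $\tfrac{1}{2}s^2\le R_1(s)\le \tfrac{1}{2}(s+\tfrac{1}{2})^2$ from Proposition \ref{prop:R1S2bounds}, so that the integral can be evaluated in closed form without the crude estimate $(z-t)^{p-2}\le (z-t+z_d)^{p-2}$ used in Theorem \ref{R1p-bounds-general}. Your change of variables $s=z-t$, the sign-matching (lower bound for $R_1$ inside the negatively weighted integral, upper bound in the boundary term, and vice versa), and the final substitution $z\mapsto z^{1/p}$ reproduce precisely the stated inequalities; the case $p=1$ does not go through the integral (the coefficient $p(p-1)$ vanishes) and follows directly from Proposition \ref{prop:R1S2bounds}.
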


In some sense, Theorem \ref{R1p-bounds-general} is the analogous of Proposition \ref{R1-S1} for the unit circle. In general, the leading term in Weyl's law is not a lower bound for $R_1^p$, as discussed in the next remark. We refer to Figure \ref{f9} where this behavior is clearly depicted in the case $p=2,d=2$.

\begin{rem}\label{rem_pd}
As in the case of $\mathbb S^1$ (Section \ref{S1}), we note that when $p=2$, $d=2$, the leading term in Weyl's law, $\frac{2}{3}z^{3/2}$ is neither a lower nor an upper bound for $R_1^2(z)$. In fact, for all $l\in\mathbb N\setminus\{0\}$, one can verify that $R_1^2(l^2(l+1)^2)<\frac{2}{3}l^3(l+1)^3$. On the other hand, for all $l\in\mathbb N\setminus\{0\}$, one can verify that $R_1^2((1 + l)^2 (2 + l (2 + l)))>\frac{2}{3}(1 + l)^3 (2 + l (2 + l))^{3/2}$.


However, taking $p=2,d=3$ one verifies that, for $z=l^2(l+2)^2$, one has $R_1^2(l^2(l+2)^2)>\frac{4}{21}l^{7/2}(l+2)^{7/2}=L^{class}_{1,3,2}|\mathbb S^3|(l^2(l+2)^2)^{7/4}$, and such points are  the local minima of the function $R_1^2(z)-\frac{4}{21}z^{7/4}$ in each interval $[l^2(l+2)^2,(l+1)^2(l+2)^2]$. Therefore the basic asymptotically Weyl-sharp bounds of Theorem \ref{R1p-bounds-general} are not always optimal in the further terms.

\end{rem}


In view of this remark, it is natural to conjecture that the leading term in Weyl's law is a lower bound for $d>p$. For $p=1$ this is in fact true. We are able to prove this conjecture for $p=2$, i.e., for the biharmonic operator $\Delta^2$.

\begin{thm}\label{lower-R-1-bil-sphere}
Let $d\geq 3$. For all $z\geq 0$ we have the following 
inequality for the first Riesz-mean $R_1^2$ of the biharmonic eigenvalues on $\mathbb{S}^d$:
\begin{equation*}
R_1^2(z)\geq L^{class}_{1,d,2}|\mathbb S^d|z^{1+\frac{d}{4}}=\frac{8}{(d+4)\Gamma(d+1)}z^{1+\frac{d}{4}}.
\end{equation*}
\end{thm}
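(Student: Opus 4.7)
The plan is to adapt the strategy used in Theorems \ref{R-1-lo-thm-sphere} and \ref{lo-shift} for the Laplacian. First I would observe that in each interval between consecutive biharmonic energy levels $[\lambda_{(l)}^2,\lambda_{(l+1)}^2]$ the Riesz-mean $R_1^2(z)$ is affine in $z$, with the explicit expression
\begin{equation*}
R_1^2(z) = A_l z - B_l, \qquad A_l:=\sum_{k=0}^l m_{k,d},\quad B_l:=\sum_{k=0}^l m_{k,d}\,\lambda_{(k)}^2.
\end{equation*}
An elementary calculation shows that on such an interval the ratio $(A_l z-B_l)/z^{1+d/4}$ has a unique critical point $z^*=\frac{(d+4)B_l}{d\,A_l}$, which is a strict local maximum, since its derivative is positive to the left of $z^*$ and negative to its right. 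Consequently the minimum over each interval of $R_1^2(z)/(L^{class}_{1,d,2}|\mathbb S^d|z^{1+d/4})$ is attained at one of the endpoints, so by continuity it suffices to prove the bound at each biharmonic energy level $z=\lambda_{(l+1)}^2$ for $l\geq 0$.

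At an energy level the goal becomes an explicit inequality. Since $A_l=\frac{(d+2l)\Gamma(d+l)}{\Gamma(d+1)\Gamma(l+1)}$ is already known from the computation of the Laplacian Riesz-mean in \eqref{S-d-R-1}, it only remains to evaluate $B_l$ in closed form. Using $m_{l,d}=H_{l,d}-H_{l-2,d}$ with $H_{l,d}=\binom{d+l}{l}$, expanding $\lambda_{(k)}^2=k^2(k+d-1)^2$ as a linear combination of falling factorials in $k$, and telescoping against binomial coefficients via \eqref{appendix-sum-binomial-coefficients} (in exactly the same spirit as the derivation of \eqref{S-d-R-1}) yields a closed formula for $B_l$ of the shape $\frac{\Gamma(l+d+1)}{\Gamma(l+1)\Gamma(d+1)}Q_d(l)$ for a rational function $Q_d(l)$ that is polynomial in $l$ with coefficients depending on $d$. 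Inserting this into the target inequality reduces it, after multiplying through by $((l+1)(l+d))^{-2-d/2}$, to a one-variable comparison between a product of gamma ratios and a pure power, parametrized by $d\geq 3$.

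To finish, I would mimic the arithmetic--geometric mean trick from the proof of Corollary \ref{cor_lower}: writing
\begin{equation*}
\frac{\Gamma(l+d+1)}{\Gamma(l+1)}=\prod_{j=1}^d (l+j)=\left(\prod_{j=1}^d (l+j)(l+d+1-j)\right)^{1/2}=\left(\prod_{j=1}^d\big((l+1)(l+d)+(j-1)(d-j)\big)\right)^{1/2},
\end{equation*}
each factor of the last product is at least $(l+1)(l+d)=\lambda_{(l+1)}$, which supplies a clean lower bound for $\Gamma(l+d+1)/\Gamma(l+1)$ in terms of powers of $\lambda_{(l+1)}$. The hard part will be the balancing act at the end: one cannot afford to simply drop the positive corrections $(j-1)(d-j)$ without losing the dimensional threshold, so the inequality must be cast as the non-negativity of an auxiliary polynomial $f(x)$ in $x=((l+1)(l+d))^{-1}$ with $f(0)=f'(0)=0$, whose second derivative at $0$ is non-negative precisely when $d\geq 3$. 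This reflects the dichotomy with Remark \ref{rem_pd}, where the corresponding coefficient changes sign at $d=2$; tracking this sign condition explicitly is the main technical obstacle.
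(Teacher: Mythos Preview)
Your reduction to the energy-level endpoints and the gamma-product rewriting are exactly the paper's approach, and the paper indeed factors the endpoint ratio as
\[
\frac{R_1^2(\lambda_{(l+1)}^2)}{L_{1,d,2}^{class}|\mathbb S^d|\,\lambda_{(l+1)}^{2+d/2}}
=\frac{\Gamma(l+d+1)}{\Gamma(l+1)\,\lambda_{(l+1)}^{d/2}}\cdot\bigl(1+Ah+Bh^2\bigr),\qquad h=\lambda_{(l+1)}^{-1},
\]
with explicit $A,B$ depending only on $d$. Where your plan diverges is the final step. You propose, by analogy with Corollary~\ref{cor_lower}, to recast the inequality as $f(h)\geq 0$ for a polynomial with $f(0)=f'(0)=0$ and with the sign of $f''(0)$ detecting the threshold $d\geq 3$. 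That analogy does not carry over: in Corollary~\ref{cor_lower} the vanishing of $f'(0)$ was \emph{engineered} by choosing the shift constant, whereas here the comparison is against the bare Weyl term and there is no free parameter. Concretely, for $d=3$ the gamma factor squared is exactly $1+h$, and the paper shows $1+Ah+Bh^2\geq 1-\tfrac{3}{8}h$, so the linear term of $(\text{ratio})^2-1$ is $1+2\cdot(-\tfrac{3}{8})=\tfrac14>0$, not zero. Thus the dichotomy at $d=2$ versus $d\geq 3$ is not captured by the sign of a second derivative at $0$.

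The paper's endgame is instead a short case split. For $d\geq 4$ one uses $h\leq 1/d$ and the explicit $A,B$ to get $1+Ah+Bh^2\geq 1+\tfrac{d(d-4)}{8}h\geq 1$, so the quadratic factor alone suffices and the gamma factor (already $\geq 1$) is not needed. For $d=3$ the quadratic factor can fall below $1$, but the gamma factor equals $(1+h)^{1/2}$ exactly, and one verifies the elementary inequality $(1+h)^{1/2}(1-\tfrac{3}{8}h)\geq 1$ on $0\leq h\leq \tfrac13$. If you want to salvage a unified argument in your style, note that $f'(0)>0$ for all $d\geq 3$, so the task becomes controlling the higher-order terms on the finite range $h\in(0,1/d]$; but this effectively reproduces the same two checks the paper performs.
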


\begin{proof}
The proof follows the same lines as that of Theorem \ref{R-1-lo-thm-sphere}, and uses the analogous of Lemma \ref{prop-R1-l-b} which holds when we replace $R_1$ by $R_1^2$. As in the proof of Theorem \ref{R-1-lo-thm-sphere}, one has to show that $\frac{R_1^2(\lambda_{(l+1)}^2)}{L_{1,d,2}|\mathbb S^d|\lambda_{(l+1)}^{2+\frac{d}{2}}}$ is lower bounded by $1$ for all $l\in\mathbb N$. This expression equals
$$
\frac{\Gamma(l+d+1)}{\Gamma(l+1)(l+1)^{d/2}(l+d)^{d/2}}\cdot \left(1+Ah+Bh^2\right),
$$
where $A,B$ can be computed explicitly and depend only on $d$, and $h=\frac{1}{(l+1)(l+d)}$. The first factor is lower bounded by $1$ as shown in the proof of Theorem \ref{R-1-lo-thm-sphere}. We sketch how to prove that the whole expression is lower bounded by $1$. When $d\geq 4$, using $h\leq\frac{1}{d}$ and the explicit expression of $A,B$, one immediately sees that $(1+Ah+Bh^2)\geq 1+\frac{d(d-4)}{8} h\geq 1$. The proof for $d=3$ requires a more refined lower bound for the factor $\frac{\Gamma(l+3+1)}{\Gamma(l+1)(l+1)^{3/2}(l+3)^{3/2}}$ which is given by $(1+h)^{1/2}$. Since $(1+h)^{1/2}\left(1-\frac{3}{8} h\right)\geq 1$ in the range $h\leq\frac{1}{3}$, we deduce the bound also for $d=3$.
\end{proof}

\begin{figure}[ht]
    \centering
    \includegraphics[width=0.7\textwidth]{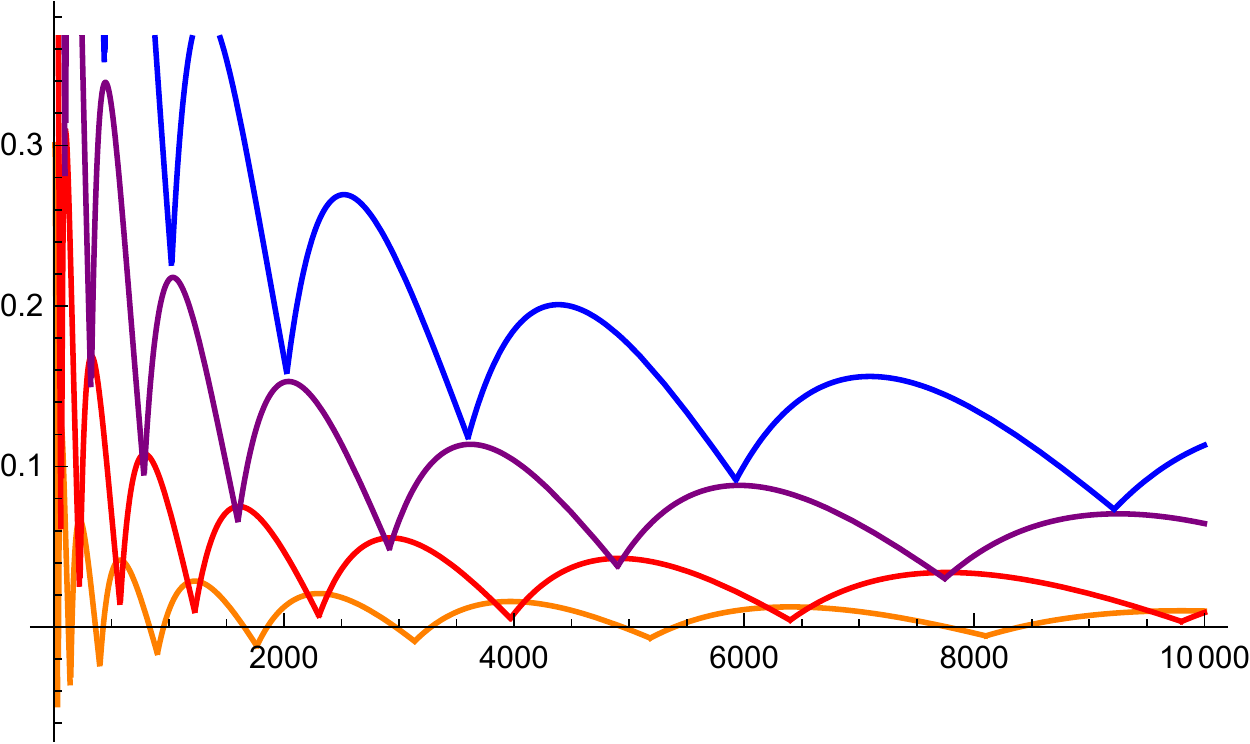}
    \caption{The ratio (minus $1$) of $R_1^2$ and the leading term in Weyl's law for $d=2$ (orange), $d=3$ (red), $d=4$ (purple) and $d=5$ (blue).}\label{f9}
\end{figure}

\subsection{The Dirichlet problem on domains of $\mathbb S^2_+$}

In this subsection we consider the Dirichlet problem for the polyharmonic operator $(-\Delta)^p$ on domains  $\Omega \subseteq \mathbb S^2_+$, and we will provide Berezin-Li-Yau-type bounds when $p=2,3$ (i.e., for the biharmonic and the triharmonic operator), extending the results of \cite{Lap97}. The Dirichlet problem for $(-\Delta)^p$ reads 
\begin{equation}\label{poly-c}
    \begin{cases}
    (-\Delta)^pu=\Lambda u\,, & {\rm in\ }\Omega\\
    u=\partial_{\nu}u=\cdots=\partial^{p-1}_{\nu^{p-1}}u=0\,, & {\rm on\ }\partial\Omega,
    \end{cases}
\end{equation}
where $\partial^m_{\nu^m}$ denotes the $m$-th partial derivative in the direction of the outer unit normal. As customary, we understand problem \eqref{poly-c} in its weak formulation:
\begin{equation}\label{poly-w-even}
\int_{\Omega}\Delta^{p/2}u\Delta^{p/2}\phi= \Lambda\int_{\Omega}u\phi\,,\ \ \ \forall\phi\in H^p_0(\Omega),   
\end{equation}
if $p$ is even, and 
\begin{equation}\label{poly-w-odd}
\int_{\Omega}\nabla\Delta^{(p-1)/2}u\cdot\nabla \Delta^{(p-1)/2}\phi= \Lambda\int_{\Omega}u\phi\,,\ \ \ \forall\phi\in H^p_0(\Omega),   
\end{equation}
if $p$ is odd.


It is standard to prove that problems \eqref{poly-w-even}-\eqref{poly-w-odd} admit an increasing sequence of non-negative eigenvalues
$$
0<\Lambda_1(\Omega)\leq\Lambda_2(\Omega)\leq\cdots\leq\Lambda_j(\Omega)\leq\cdots\nearrow+\infty.
$$

First, we note that Weyl-sharp upper bounds for the first Riesz-mean hold for all $p$ when $\Omega=\mathbb S^2_+$. For this purpose, we recall the following integral transform

\begin{equation}\label{Riesz-means-polyharmonic-integral-transform-2}
  \sum_{j}(z^p-\lambda_j^p(\Omega))_{+}=p\int_{0}^{\infty}(z-t)^{p-1} N^D(z-t)\,dt.
\end{equation}
Here $N^D(z)$ denotes the counting function for the eigenvalues $\lambda_j(\Omega)$ of the Dirichlet Laplacian on a domain. We denote by $R_1^{p,D}(z)$ the first Riesz-mean for problem \eqref{poly-c}:
$$
R_1^{p,D}(z):=\sum_{j}(z-\Lambda_j(\Omega))_+.
$$
Noting that, for any domain $\Omega$, $\Lambda_j(\Omega)\geq\lambda_j^p(\Omega)$ (it follows just by the min-max formulation of the two sequences of eigenvalues, see e.g., \cite{bps} for the case $p=2$), we deduce that
$$
R_1^{p,D}(z)\leq\sum_j(z-\lambda_j^p(\Omega))_+.
$$
Therefore, any upper bound for $N^D(z)$ translates into an upper bound for $\sum_j(z^p-\lambda_j^p(\Omega))_+$ and consequently for $R_1^{p,D}(z)$. A Weyl-sharp upper bound for $N^D(z)$ when $\Omega=\mathbb S^2_+$ is given by \eqref{HS-2-N-bound}, which, combined with  \eqref{Riesz-means-polyharmonic-integral-transform-2},  allows to prove the following

\begin{thm}\label{b-l-y-p-hemi}
Let $\Omega=\mathbb S^2_+$. For all $z\geq 0$ we have
the following inequality for the first Riesz-mean of  Dirichlet eigenvalues of $(-\Delta)^p$ on $\mathbb{S}^2_+$:
\begin{equation*}
R_1^{p,D}(z)\leq\frac{p}{2(p+1)}z^{1+\frac{1}{p}}=L^{class}_{1,2,p}|\mathbb S^2_+|z^{1+\frac{1}{p}}.    
\end{equation*}
\end{thm}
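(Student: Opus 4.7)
My plan is to combine the three ingredients that the paragraph immediately before the theorem has lined up: the eigenvalue comparison $\Lambda_j(\Omega)\ge\lambda_j^p(\Omega)$ (coming from min-max), the integral transform \eqref{Riesz-means-polyharmonic-integral-transform-2}, and the Pólya-type bound $N^D(z)\le z/2$ for the Dirichlet Laplacian on $\mathbb S^2_+$ established in Proposition \ref{prop:PoS+}.

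First I would use the variational comparison $\Lambda_j(\mathbb S^2_+)\ge\lambda_j(\mathbb S^2_+)^p$ (which follows, for $p$ even, by taking test functions in $H^p_0\subset H^1_0$ and using $\|\Delta^{p/2}u\|_2^2\ge \lambda_1(\Omega)^{p}\|u\|_2^2$ applied in each spectral subspace, and analogously for odd $p$) to conclude
\[
R_1^{p,D}(z)=\sum_j(z-\Lambda_j(\mathbb S^2_+))_+\le \sum_j(z-\lambda_j(\mathbb S^2_+)^p)_+.
\]
Next, to apply the integral transform \eqref{Riesz-means-polyharmonic-integral-transform-2}, which is stated for arguments of the form $z^p$, I would set $s:=z^{1/p}$ so that
\[
\sum_j(z-\lambda_j^p)_+=\sum_j(s^p-\lambda_j^p)_+=p\int_0^{s}(s-t)^{p-1}N^D(s-t)\,dt,
\]
where the upper limit collapses to $s$ because $N^D(s-t)=0$ for $t>s$.

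Then I would insert the hemispheric Pólya bound $N^D(s-t)\le (s-t)/2$ from Proposition \ref{prop:PoS+}, obtaining
\[
\sum_j(z-\lambda_j^p)_+\le \frac{p}{2}\int_0^s(s-t)^p\,dt=\frac{p}{2(p+1)}\,s^{p+1}=\frac{p}{2(p+1)}\,z^{1+1/p}.
\]
Finally, a short bookkeeping check that $L^{class}_{1,2,p}|\mathbb S^2_+|=\frac{p}{2(p+1)}$ closes the identification of the constants: using the definition of $L^{class}_{1,d,p}$ with $d=2$ one gets $L^{class}_{1,2,p}=\frac{1}{4\pi}\cdot\frac{p}{p+1}$, and multiplying by $|\mathbb S^2_+|=2\pi$ yields exactly $\frac{p}{2(p+1)}$.

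There is no serious obstacle here: once the variational inequality $\Lambda_j\ge\lambda_j^p$ is in hand, the proof is essentially a one-line application of \eqref{Riesz-means-polyharmonic-integral-transform-2} together with Proposition \ref{prop:PoS+}. The only point that deserves a sentence of justification is the eigenvalue comparison itself (especially for odd $p$, where the quadratic form involves $\nabla\Delta^{(p-1)/2}$), but this is standard and follows by iterating the Rayleigh quotient characterization $\Lambda_1(\Omega)\ge\lambda_1(\Omega)^p$ on successive orthogonal complements.
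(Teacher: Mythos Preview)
Your proof is correct and follows exactly the approach sketched in the paper: use the min--max comparison $\Lambda_j(\mathbb S^2_+)\ge\lambda_j(\mathbb S^2_+)^p$, then the integral transform \eqref{Riesz-means-polyharmonic-integral-transform-2} together with the P\'olya bound $N^D(z)\le z/2$ from Proposition~\ref{prop:PoS+}, and finally compute the resulting Beta integral. The only minor comment is that your informal justification of $\Lambda_j\ge\lambda_j^p$ via ``iterating on successive orthogonal complements'' is better phrased as: expand $u\in H^p_0$ in the Dirichlet Laplacian eigenbasis $\{\phi_k\}$, observe that the polyharmonic quadratic form equals $\sum_k\lambda_k^p c_k^2$, and apply min--max over the smaller family of subspaces $V\subset H^p_0\subset H^1_0$.
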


We point out \cite[Example 1.7.13]{SV} where the authors derive a two-term formula for the Dirichlet eigenvalues of $\Delta^2$ on $\mathbb S^2_+$, where the second term contains the surface measure term, and an additional oscillating part. As for Laplacian eigenvalues, this is different from the Euclidean case, where only the surface term contributes \cite[Section 6.2]{SV}, which is also the case for the biharmonic operator with other boundary conditions, see \cite[Formulas (3.14) - (3.18) ]{bps}.

We can prove Theorem \ref{b-l-y-p-hemi} for any domain contained in $\mathbb S^2_+$ when $p=2,3$. To do so, we prove the following result (cf.  \cite{EHIS,Ga,ilyin_laptev,stri}).

\begin{prop}
For any $\Omega\subseteq\mathbb S^2_+$ and any $z\geq 0$ we have
\begin{equation*}
R_1^{p,D}\leq\frac{|\Omega|}{4\pi}\sum_{l\geq 1}(2l+1)(z-l^p(l+1)^p)_+.
\end{equation*}
\end{prop}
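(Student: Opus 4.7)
The plan is to parallel the proof of Theorem \ref{thm:blyS2+}, replacing the Dirichlet Laplacian on $\mathbb{S}^2_+$ by an auxiliary realization of $(-\Delta)^p$ whose spectrum is precisely $\{l^p(l+1)^p : l\geq 1\}$. Letting $\{u_j\}_{j\geq 1}$ denote the $L^2(\Omega)$-orthonormal sequence of Dirichlet eigenfunctions of $(-\Delta)^p$ on $\Omega$, with eigenvalues $\Lambda_j(\Omega)$, I would extend each $u_j\in H^p_0(\Omega)$ by zero so that it lies in $H^p_0(\mathbb{S}^2_+)$.

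First, I would identify an operator on $\mathbb{S}^2_+$ diagonalized by the spherical harmonics $Y_l^{-l-1+2h}$ (those vanishing on the equator) with eigenvalues $l^p(l+1)^p$. The natural candidate is $(-\Delta)^p$ with \emph{Navier-type} boundary conditions $u = \Delta u = \cdots = \Delta^{p-1}u = 0$ on the equator: since $-\Delta Y_l^m = l(l+1)Y_l^m$, these boundary conditions collapse to the single requirement $u|_{\text{equator}} = 0$, so each $Y_l^{-l-1+2h}$ is an eigenfunction with eigenvalue $l^p(l+1)^p$. The associated quadratic form $Q(u,v) = \int_{\mathbb{S}^2_+}\Delta^{p/2}u\,\Delta^{p/2}v$ (for $p$ even, with the analogous expression for $p$ odd) has form domain containing $H^p_0(\mathbb{S}^2_+)$, so the extended $u_j$ are admissible test vectors.

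Next, I would apply the averaged variational principle (Theorem \ref{thm:AVP}) with $\mathcal{H} = L^2(\mathbb{S}^2_+)$, $H$ the Navier operator above, $f_j = u_j$, $\mathfrak{M} = \mathbb{N}\setminus\{0\}$ with counting measure, and $\mathfrak{M}_0 = \{j : \Lambda_j(\Omega) \leq z\}$. Since $\|u_j\|^2 = 1$ and $Q(u_j,u_j) = \int_\Omega |\Delta^{p/2} u_j|^2 = \Lambda_j(\Omega)$ (by integration by parts, using that $u_j$ vanishes to order $p-1$ on $\partial\Omega$), the right-hand side of the AVP would be exactly $R_1^{p,D}(z)$. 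On the left-hand side, each overlap reduces to $\int_\Omega Y_l^{-l-1+2h} u_j\,dS$ since $u_j$ is supported in $\Omega$, and Parseval applied to the $L^2(\Omega)$-ONB $\{u_j\}$ gives $\sum_j |\int_\Omega Y_l^{-l-1+2h} u_j|^2 = \int_\Omega |Y_l^{-l-1+2h}|^2\,dS$. I would close with the addition formula
\[
\sum_{h=1}^l |Y_l^{-l-1+2h}|^2 \leq \sum_{m=-l}^l |Y_l^m|^2 = \frac{2l+1}{4\pi},
\]
exactly as in the proof of Theorem \ref{thm:blyS2+}, integrating over $\Omega$ to obtain the claimed bound.

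The main difficulty is conceptual: one must recognize that the Navier realization (not the Dirichlet one) is the correct auxiliary operator on $\mathbb{S}^2_+$, since under full Dirichlet boundary conditions the $Y_l^{-l-1+2h}$ are no longer eigenfunctions (their normal derivatives do not vanish on the equator). Equivalently, one may bypass the introduction of the Navier problem altogether by the ``reflection'' viewpoint of the Remark after Theorem \ref{thm:blyS2+}: reflect $\Omega$ through the equator and restrict $(-\Delta)^p$ on the full sphere to equator-antisymmetric functions, a subspace in which the spectrum of the operator is still $\{l^p(l+1)^p\}_{l\geq 1}$ with eigenfunctions exactly the antisymmetric $Y_l^{-l-1+2h}$.
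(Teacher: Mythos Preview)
Your proposal is correct and follows essentially the same approach as the paper: the paper introduces precisely your Navier-type auxiliary problem on $\mathbb{S}^2_+$ (calling it the ``intermediate problem'' with form domain $V(\mathbb{S}^2_+)\supset H^p_0(\mathbb{S}^2_+)$), observes that its eigenpairs are $(l^p(l+1)^p,\, Y_l^{-l-1+2h})$, and then applies the averaged variational principle with the zero-extended Dirichlet eigenfunctions $u_j$ as test vectors, finishing with Parseval and the addition formula exactly as you outline.
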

\begin{proof}
The proof is in the spirit of Theorem \ref{thm:blyS2+}. Namely, We apply the averaged variational principle to the  eigenvalues $\gamma_j$ of the following intermediate problem:
\begin{equation}\label{intermediate}
\begin{cases}
(-\Delta)^pu=\gamma u\,, &  {\rm in\ }\mathbb S^2_+,\\
u=\Delta u=\cdots=\Delta^{p-1}u=0\,, &  {\rm on\ }\partial\mathbb S^2_+\,.
\end{cases}
\end{equation}
The weak formulation of this problem reads exactly as \eqref{poly-w-even}-\eqref{poly-w-odd}, except that the energy space $V(\mathbb S^2_+)$ is given by
$$
V(\mathbb S^2_+):=\{f\in H^p(\mathbb S^2_+):f=\Delta f=\cdots=\Delta^{(p-2)/2}f=0\},
$$
if $p$ is even, and 
$$
V(\mathbb S^2_+):=\{f\in H^p(\mathbb S^2_+):f=\Delta f=\cdots=\Delta^{(p-1)/2}f=0\},
$$
if $p$ is odd. The equalities are intended in the sense of traces. Problem \eqref{intermediate} admits a sequence of positive eigenvalues
$$
0<\gamma_1\leq\gamma_2\leq\cdots\leq\gamma_j\leq\cdots\nearrow+\infty.
$$
We don't highlight for $\gamma_j$ the dependence on the domain, being it fixed and equal $\mathbb S^2_+$.

It is not difficult to show (see e.g., \cite{bps} for the case $p=2$) that in the case of a smooth domain the eigenvalues of \eqref{intermediate} are exactly the $p$-th powers of the Dirichlet eigenvalues of the Laplacian. This is clearly the case of $\mathbb S^2_+$. 
Moreover, the eigenfunctions are the same. In particular, the eigenvalues $\gamma_j$ are given as energy levels by $\lambda_{(l)}^p=l^p(l+1)^p$, $l\in\mathbb N\setminus\{0\}$, with associated eigenfunctions $Y_l^{-l-1+2h}$, $h=1,...,l$.


As test functions for the averaged variational principle for $\gamma_j$ we use the Dirichlet eigenfunctions $u_j$ associated with the eigenvalues $\Lambda_j(\Omega)$ of \eqref{poly-c} on a domain $\Omega\subseteq\mathbb S^2_+$, extended by zero to $\mathbb S^2_+$. Clearly these extensions belong to $V(\mathbb S^2_+)$.
 We have then
$$
\sum_{l\geq1}\sum_{h=1}^{l}(z-\lambda_{(l)}^p)_+\sum_{j\geq 1}\left|\int_{\Omega} u_j Y_l^{-l-1+2h}\right|^2\geq\sum_{j\in J}\int_{\Omega}z|u_j|^2-(\Delta^{p/2} u_j)^2=\sum_{j\in J}(z-\Lambda_j(\Omega)),
$$
for any $J\subset\mathbb N$, if $p$ is even. If $p$ is odd, just replace $(\Delta^{p/2} u_j)^2$ by $|\nabla \Delta^{(p-1)/2}u|^2$ in the right-hand side. Then we can replace the right-hand side by $\sum_j(z-\Lambda_j(\Omega))_+$. As for the left-hand side, note that

\begin{multline*}
\sum_{l\geq 1}\sum_{h=1}^{l}(z-\lambda_{(l)}^p)_+\sum_{j\geq 1}\left|\int_{\Omega}u_j Y_l^{-l-1+2h}\right|^2=\sum_{l\geq 1}(z-\lambda_{(l)}^p)_+\sum_{h=1}^{l}\sum_{j\geq 1}\left|\int_{\Omega}u_jY_l^{-l-1+2h}\right|^2\\
\leq \sum_{l\geq 1}(z-\lambda_{(l)}^p)_+\int_{\Omega}\sum_{m=-l}^{l}|Y_l^m|^2=\frac{|\Omega|}{|\mathbb S^2|}\sum_{l\geq 1}(2l+1)(z-\lambda_{(l)}^p)_+.
\end{multline*}
This concludes the proof.
\end{proof}

In order to find an upper bound on $R_1^{p,D}(z)$ we need to find an upper bound for $\sum_{l\geq 1}(2l+1)(z-l^p(l+1)^p)_+$.


We prove a Weyl-sharp upper bound when $p=2,3$, i.e., for the biharmonic and the triharmonic problems. 

\begin{lemma}
For all $z\geq 0$ we have, when $p=2$
$$
\sum_{l\geq 1}(2l+1)(z-l^2(l+1)^2)_+\leq L^{class}_{1,2,2}|\mathbb S^2|z^{\frac{3}{2}}=\frac{2}{3}z^{\frac{3}{2}},
$$
and, for $p=3$
$$
\sum_{l\geq 1}(2l+1)(z-l^3(l+1)^3)_+\leq L^{class}_{1,2,3}|\mathbb S^2|z^{\frac{4}{3}}=\frac{3}{4}z^{\frac{4}{3}}.
$$
\end{lemma}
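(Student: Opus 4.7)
The plan is to parametrize the inequality by $z = v^p$ with $v \geq 0$, and on each interval between consecutive energy levels reduce the claim to a combinatorial identity that can be verified by an explicit closed-form evaluation of a power-sum.

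Set $u_l := l(l+1)$, and for $v \geq u_1 = 2$ let $L := \max\{l \geq 1 : u_l \leq v\}$, so that $v \in [u_L, u_{L+1}]$. Then the sum on the left-hand side becomes
\[
g_p(v^p) := \sum_{l=1}^{L}(2l+1)\left(v^p - u_l^p\right) = v^p L(L+2) - S_p(L), \qquad S_p(L) := \sum_{l=1}^{L}(2l+1)u_l^p,
\]
while the right-hand side is $\frac{p}{p+1}v^{p+1}$. Setting $h(v) := \frac{p}{p+1}v^{p+1} - g_p(v^p)$, a direct computation gives $h'(v) = p v^{p-1}\left(v - L(L+2)\right)$, so $h$ attains its minimum on $[u_L, u_{L+1}]$ at $v^* := L(L+2)$ (which lies in the interval since $L(L+1) \leq L(L+2) \leq (L+1)(L+2)$). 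The task thus reduces to showing
\[
S_p(L) \geq \frac{\left(L(L+2)\right)^{p+1}}{p+1} \qquad \text{for every } L \geq 1;
\]
the sub-case $0 \leq v < 2$ is trivial as the sum is empty and $h(v) = \frac{p}{p+1}v^{p+1} \geq 0$.

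For $p=2$, a straightforward application of the classical formulas for $\sum_{l=1}^L l^k$ ($k \leq 5$) yields the clean closed form $S_2(L) = L^2(L+1)^2(L+2)^2/3$, and the required inequality collapses to $(L+1)^2 \geq L(L+2)$, i.e.\ $1 \geq 0$. For $p=3$, I would establish the analogous identity
\[
S_3(L) = \frac{L^2(L+1)^2(L+2)^2\left(3L^2+6L-1\right)}{12}
\]
using the ansatz $S_3(L) = L^2(L+1)^2(L+2)^2 Q(L)/12$ with $Q$ quadratic (suggested by degree counting and the $p=2$ pattern), fitting $Q$ to three values of $L$, and confirming by induction via $S_3(L) - S_3(L-1) = (2L+1)u_L^3$. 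Substituting, the inequality becomes $(L+1)^2(3L^2+6L-1) \geq 3L^2(L+2)^2$, whose difference simplifies to $2L^2+4L-1$, nonnegative for every $L \geq 1$.

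The main obstacle is guessing and verifying the closed form for $S_3(L)$; once in hand, the remaining steps are elementary single-variable calculus and polynomial algebra. The approach also makes it plausible that analogous Weyl-sharp bounds hold for every integer $p$, contingent on finding (and verifying) the corresponding closed form for $S_p(L)$.
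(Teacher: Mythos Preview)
Your proof is correct and follows essentially the same approach the paper has in mind: the paper omits the proof entirely, saying only that it is ``very similar to that of Lemma~\ref{lem:blys1}'' (direct computation), and your argument is precisely a clean fleshed-out version of such a computation---parametrize, locate the minimum of the difference on each interval between consecutive energy levels, and verify the resulting polynomial inequality via a closed form for $S_p(L)$. The telescoping identity $S_2(L)=L^2(L+1)^2(L+2)^2/3$ and the formula for $S_3(L)$ are both correct, and the reductions to $(L+1)^2\geq L(L+2)$ and $2L^2+4L-1\geq 0$ are valid.

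One caution on your closing remark: the extrapolation to general $p$ is misleading. The paper explicitly notes (immediately after the lemma) that the analogous inequality \emph{fails} for $p\geq 4$, giving the counterexample $z=81$ at $p=4$. In your framework this corresponds to $S_4(1)=48 < 3^5/5 = 48.6$, so the key inequality $S_p(L)\geq (L(L+2))^{p+1}/(p+1)$ already breaks at $L=1$, $p=4$. So the obstruction for higher $p$ is not merely finding a closed form for $S_p(L)$; the inequality itself is false.
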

The proof is very similar to that of Lemma \ref{lem:blys1} and is accordingly omitted. We remark that the inequality $\sum_{l\geq 1}(2l+1)(z-l^p(l+1)^p)_+\leq L^{class}_{1,2,p}|\mathbb S^2|z^{1+\frac{1}{p}}$ is no longer true for $p\geq 4$. For example, take $z=81$. It is easily shown that $R_1^{4,D}(81)=195$. On the other hand $L^{class}_{1,2,4}|\mathbb S^2|81^{\frac{4}{3}}=972/5=194.4$. This is clearly visible in Figure \ref{f10}.

We are ready to state a Berezin-Li-Yau bound for the biharmonic and triharmonic operator for domains in $\mathbb S^2_+$. 

\begin{thm}\label{b-l-y-poly-hemi}
Let $\Omega$ be a domain in $\mathbb S^2_+$. Then for all $z\geq 0$ the following inequalities for the first Riesz-mean $R_1^{p,D}(z)$, $p=2,3$, of the  Dirichlet eigenvalues on $\Omega$ of the biharmonic and triharmonic operators hold:
\begin{equation*}
R_1^{2,D}(z)\leq\frac{|\Omega|}{6\pi}z^{\frac{3}{2}}
\end{equation*}
\begin{equation*}
R_1^{3,D}(z)\leq\frac{3|\Omega|}{16\pi}z^{\frac{4}{3}}
\end{equation*}

\end{thm}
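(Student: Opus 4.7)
The proof is essentially immediate from the two ingredients developed just before the statement, so the plan is simply to compose them.

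First I would invoke the proposition that establishes, for any domain $\Omega \subseteq \mathbb{S}^2_+$ and any $z \geq 0$, the averaged inequality
\[
R_1^{p,D}(z) \leq \frac{|\Omega|}{4\pi}\sum_{l\geq 1}(2l+1)(z-l^p(l+1)^p)_+.
\]
This reduces the problem on a general domain of the hemisphere to bounding a single universal sum in $z$, and already encodes the use of the averaged variational principle together with the addition formula for spherical harmonics.

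Next I would apply the preceding lemma, specialized to $p=2$ and $p=3$ respectively, which provides the Weyl-sharp pointwise upper bounds
\[
\sum_{l\geq 1}(2l+1)(z-l^2(l+1)^2)_+ \leq \tfrac{2}{3}z^{3/2}, \qquad \sum_{l\geq 1}(2l+1)(z-l^3(l+1)^3)_+ \leq \tfrac{3}{4}z^{4/3}.
\]
Multiplying these by $|\Omega|/(4\pi)$ yields
\[
R_1^{2,D}(z) \leq \frac{|\Omega|}{4\pi}\cdot\frac{2}{3}z^{3/2} = \frac{|\Omega|}{6\pi}z^{3/2}, \qquad R_1^{3,D}(z) \leq \frac{|\Omega|}{4\pi}\cdot\frac{3}{4}z^{4/3} = \frac{3|\Omega|}{16\pi}z^{4/3},
\]
which are precisely the two claimed inequalities. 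No further argument is needed, since both ingredients apply without restriction to the domain inside $\mathbb{S}^2_+$ and to $z\geq 0$.

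There is no real obstacle in this final step: all the technical work sits in the two preceding results. The genuine content of the theorem is the combination of the averaged variational principle applied to the intermediate problem \eqref{intermediate} (which legitimizes comparing $\Lambda_j(\Omega)$ to the $p$-th powers of the eigenvalues $l(l+1)$ on $\mathbb{S}^2_+$) with the explicit Weyl-sharp bounds on the spectral sum on the full hemisphere. The only substantive caveat, already flagged by the authors' counterexample at $z=81$, is that the spectral-sum lemma fails for $p\geq 4$, which is exactly why the conclusion is limited to $p=2,3$.
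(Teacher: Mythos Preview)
Your proposal is correct and matches the paper's approach exactly: the theorem is stated immediately after the proposition (giving $R_1^{p,D}(z)\leq \frac{|\Omega|}{4\pi}\sum_{l\geq 1}(2l+1)(z-l^p(l+1)^p)_+$) and the lemma (bounding that sum by $\frac{2}{3}z^{3/2}$ and $\frac{3}{4}z^{4/3}$ for $p=2,3$), and the paper does not even write out a separate proof since the conclusion is just the product of the two.
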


We remark that our argument does not allow to prove Berezin-Li-Yau bounds for $p\geq 4$, however we conjecture that the bounds hold for any $p$ (this is the case when $\Omega=\mathbb S^2_+$).

\begin{figure}[ht]
    \centering
    \includegraphics[width=0.7\textwidth]{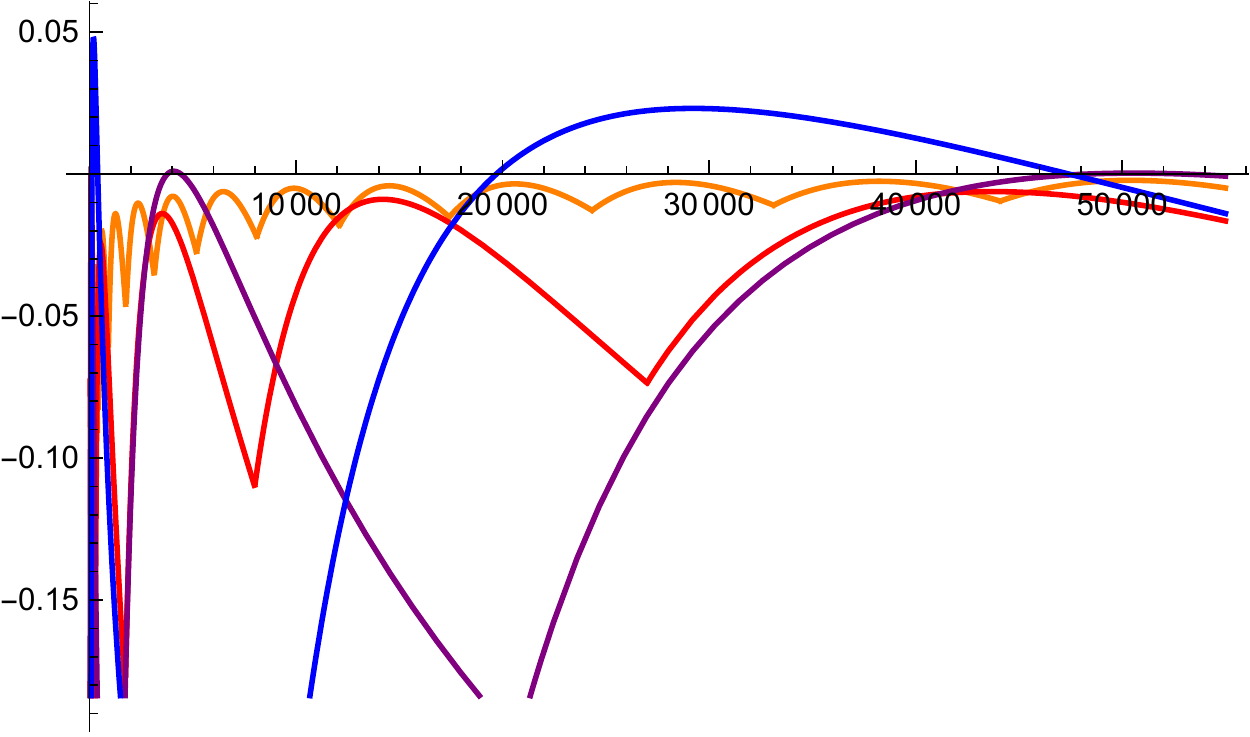}
    \caption{The ratio (minus $1$) of $R_1^p-z$ (that is, neglecting the zero eigenvalue) and the leading term in Weyl's law for $p=2$ (orange) $p=3$ (red), $p=4$ (purple) and $p=5$ (blue). Here $d=2$.}\label{f10}
\end{figure}

\subsection{The buckling problem on domains of $\mathbb S^2$}

In this subsection we consider the buckling problem on domains of $\mathbb S^2$, and we provide Berezin-Li-Yau-type upper bounds for the first Riesz-mean. The buckling problem reads:

\begin{equation}\label{buckling}
\begin{cases}
-\Delta^2u=\sigma \Delta u \,, & {\rm in\ }\Omega,\\
u=\partial_{\nu}u=0\,, & {\rm on\ }\partial\Omega,
\end{cases}
\end{equation}
in the unknowns $u$ (the eigenfunction) and $\sigma$ (the eigenvalue). Clearly this problem makes sense on any domain of $\mathbb S^d$, $d\geq 2$. However, in this subsection we will mainly concentrate on $d=2$.


As customary, we understand problem \eqref{buckling} in its weak formulation:

\begin{equation}\label{buckling_weak}
\int_{\Omega}  \Delta u\Delta \phi=\sigma\int_{\Omega} \nabla u\cdot \nabla \phi \,,\ \ \ \forall\phi\in H^2_0(\Omega).
\end{equation}
It is standard to prove that problem \eqref{buckling_weak} admits an increasing sequence of non-negative eigenvalues given by
$$
0<\sigma_1(\Omega)\leq\sigma_2(\Omega)\leq\cdots\leq \sigma_j(\Omega)\leq\cdots\nearrow+\infty.
$$
We refer to \cite{bpls_buckling} for more information on spectral asymptotics for the buckling problem.


An immediate consequence of the variational characterization of buckling eigenvalues is that
$$
\Lambda_j(\Omega)\leq\sigma_j^2(\Omega),
$$
where $\Lambda_j(\Omega)$ are the Dirichlet eigenvalues of the biharmonic operator (problem \eqref{poly-c} with $p=2$). Therefore we can deduce Weyl-sharp upper bounds for the first Riesz-mean of $\sigma_j(\Omega)$ from Theorem \ref{b-l-y-poly-hemi}. However we show here that a Berezin-Li-Yau-type upper bound for buckling eigenvalues holds for any domain of $\mathbb S^2$. For the Euclidean case Berezin-Li-Yau-type upper bounds for the first Riesz-mean are proved in \cite{levine_protter} (see also \cite{bpls_buckling}).

A preliminary observation is that, when $\Omega=\mathbb S^2$, we can relate  the eigenvalues of \eqref{buckling} with the eigenvalues of the Laplacian on $\mathbb S^2$. In fact, for all dimensions $d$, we have the following

\begin{lemma}
 The eigenvalues of \eqref{buckling} on $\mathbb S^d$ are exactly the eigenvalues $\lambda_{(l)}$ of the Laplacian on $\mathbb S^d$, except the zero eigenvalue, with the same multiplicities, namely $\lambda_{(l)}=l(l+d-1)$, $l\in\mathbb N\setminus\{0\}$, with multiplicity $m_{l,d}$.
\end{lemma}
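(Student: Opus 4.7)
The plan is to reduce the buckling problem on $\mathbb S^d$ to a diagonal problem in the orthonormal basis of spherical harmonics. Since $\mathbb S^d$ has empty boundary, the conditions $u = \partial_\nu u = 0$ in \eqref{buckling} are vacuous and the weak formulation \eqref{buckling_weak} should be read with $H^2_0$ replaced by $H^2(\mathbb S^d)$. Moreover, every constant function $c$ satisfies $\nabla c = 0$ and $\Delta c = 0$, so it lies in the kernel of both bilinear forms appearing in \eqref{buckling_weak}; to obtain a well-posed eigenvalue problem one must therefore quotient out by constants (equivalently, restrict to the orthogonal complement of the constants in $L^2$). This quotient is precisely what will be responsible for the removal of $\lambda_{(0)} = 0$ from the buckling spectrum.

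The next step is to recall that the spherical harmonics $\{Y_l^m : l \in \mathbb N,\ m = 1, \dots, m_{l,d}\}$ form an orthonormal basis of $L^2(\mathbb S^d)$ and satisfy $-\Delta Y_l^m = \lambda_{(l)} Y_l^m$ with $\lambda_{(l)} = l(l+d-1)$. Expanding an arbitrary $u \in H^2(\mathbb S^d)$ as $u = \sum_{l,m} a_{lm} Y_l^m$, integration by parts combined with Parseval's identity yields
\[
\int_{\mathbb S^d} |\nabla u|^2 = \sum_{l \geq 1}\sum_{m=1}^{m_{l,d}} \lambda_{(l)}\,|a_{lm}|^2, \qquad \int_{\mathbb S^d} |\Delta u|^2 = \sum_{l \geq 1}\sum_{m=1}^{m_{l,d}} \lambda_{(l)}^2\,|a_{lm}|^2,
\]
so that testing \eqref{buckling_weak} against each basis element $\phi = Y_l^m$ gives the pointwise scalar identity $\lambda_{(l)}^2\,a_{lm} = \sigma\,\lambda_{(l)}\,a_{lm}$ for every $(l,m)$ with $l \geq 1$. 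Hence either $a_{lm} = 0$, or $\sigma = \lambda_{(l)}$.

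From this I would directly read off the claim: the set of admissible $\sigma$ is exactly $\{\lambda_{(l)} : l \geq 1\}$, and the eigenspace attached to $\sigma = \lambda_{(l)}$ is precisely the space of degree-$l$ spherical harmonics, whose dimension is $m_{l,d}$. I do not foresee any serious obstacle: the argument is a straightforward simultaneous diagonalization of the two bilinear forms. The only subtle ingredient, which should be stated explicitly for clarity, is the justification of the quotient by constants; this is forced by the degeneracy of $\int_{\mathbb S^d} \nabla u \cdot \nabla \phi$ on constants, and it is the structural reason for the disappearance of the Laplacian eigenvalue $\lambda_{(0)} = 0$ in the buckling spectrum.
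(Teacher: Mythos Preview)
Your proof is correct and takes a somewhat different route from the paper's. Both arguments begin by noting that on $\mathbb S^d$ the boundary conditions are vacuous and one must quotient out the constants. From there the paper proceeds more structurally: it first establishes coercivity of the quadratic form via Bochner's formula (obtaining $\int_{\mathbb S^d}|\nabla u|^2\le \tfrac1d\int_{\mathbb S^d}(\Delta u)^2$), and then identifies the two spectra by observing that if $u$ is a buckling eigenfunction with eigenvalue $\sigma$ then $f=\Delta u$ is a Laplacian eigenfunction with the same eigenvalue, and conversely any nonconstant Laplacian eigenfunction solves the buckling weak form. Your approach instead diagonalizes both bilinear forms simultaneously in the spherical-harmonic basis, reducing everything to the scalar relation $\lambda_{(l)}^2 a_{lm}=\sigma\,\lambda_{(l)}\,a_{lm}$; coercivity on the quotient is then manifest from $\int|\nabla u|^2=\sum_{l\ge1}\lambda_{(l)}|a_{lm}|^2>0$, and the multiplicity statement is immediate. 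The paper's argument would transfer to any closed manifold without needing an explicit eigenbasis, while yours is more transparent and self-contained in this specific setting.
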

\begin{proof}
First of all, note that $H^2(\mathbb S^d)=H^2_0(\mathbb S^d)$. Problem \eqref{buckling_weak}  is well-defined once we quotient out the constant functions. Namely, we consider
\begin{equation}\label{buckling_M}
\int_{\mathbb S^d}\Delta u\Delta\phi=\sigma\int_{\mathbb S^d}\nabla u\cdot\nabla \phi\,,\ \ \ \forall\phi\in H^2(\mathbb S^d):\int_{\mathbb S^d}\phi=0
\end{equation}
in the unknown $u\in H^2(\mathbb S^d)$ with $\int_{\mathbb S^d}u=0$. In fact, identity \eqref{buckling_weak} holds true if we replace $u,\phi$ by $u+a$, $\phi+b$, with $a,b\in\mathbb R$. Hence we rather consider problem \eqref{buckling_weak} in the subspace $\{u\in H^2(\mathbb S^d):\int_{\mathbb S^d}u=0\}$. From Bochner's formula
$$
\int_{\mathbb S^d}(\Delta u)^2=\int_{\mathbb S^d}|D^2u|^2+(d-1)|\nabla u|^2\geq\int_{\mathbb S^d}\frac{(\Delta u)^2}{d}+(d-1)|\nabla u|^2
$$
we deduce $\int_{\mathbb S^d}|\nabla u|^2\leq \frac{1}{d}\int_{\mathbb S^d}|\Delta u|^2$.
Therefore the eigenvalues are strictly positive and the quadratic form is coercive.


Let then $\sigma_j$ be an eigenvalue associated with some eigenfunction $u_j$ of \eqref{buckling_M} on $\mathbb S^d$. Then, taking $f_j=\Delta u_j$, we see that $-\Delta u_j=\sigma_j f_j$ and $\int_{\mathbb S^d}f_j=0$, hence $f_j$ is an eigenfunction of the Laplacian on $\mathbb S^d$ with eigenvalue $\sigma_j\ne 0$. On the other hand, if $\lambda_j\ne 0$ is an eigenvalue of the Laplacian on $\mathbb S^d$ with eigenfunction $u_j$, then $\int_{\mathbb S^d}u_j=0$ and clearly satisfies \eqref{buckling_M} with $\sigma=\lambda_j$.
\end{proof}

We have then that the eigenvalues of the buckling problem on the whole sphere coincide with the eigenvalues of the Laplacian, zero excluded. In $d=2$ the possibility to get rid of the constant eigenfunction was an essential point in order to prove Berezin-Li-Yau bounds for domains on $\mathbb S^2_+$. In this sense, for the buckling problem we got rid in a natural way of the constant eigenfunction on the whole space, so we can prove Berezin-Li-Yau-type bounds for domains on the whole $\mathbb S^2$. 


In this section we denote by $R^B_1(z)$ the Riesz-mean for buckling eigenvalues on a domain $\Omega$
$$
R_1^B(z):=\sum_j(z-\sigma_j(\Omega))_+.
$$
We prove  the following result, in the spirit of \cite{EHIS,Ga,ilyin_laptev,stri}.
\begin{prop}
For any $\Omega\subseteq\mathbb S^2$, and any $z\geq 0$ we have
\begin{equation*}
R_1^B(z)\leq \frac{|\Omega|}{|\mathbb S^2|}\sum_{l\geq 1}(2l+1)(z-l(l+1))_+.
\end{equation*}
\end{prop}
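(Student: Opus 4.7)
The plan is to apply the averaged variational principle (Theorem \ref{thm:AVP}) to the buckling problem on the full sphere $\mathbb S^2$, using the buckling eigenfunctions of $\Omega$ (extended by zero) as test vectors. By the lemma just established, the buckling eigenvalues on $\mathbb S^2$ coincide with the nonzero Laplacian eigenvalues, that is, $l(l+1)$ with multiplicity $2l+1$ for $l\geq 1$, with eigenfunctions the spherical harmonics $Y_l^m$. The proof is thus structurally analogous to that of Theorem \ref{thm:blyS2+}, except that the natural Hilbert space for the buckling problem uses the Dirichlet rather than the $L^2$ inner product.

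Concretely, I take $\mathcal H=\{u\in H^1(\mathbb S^2):\int_{\mathbb S^2}u=0\}$ with the inner product $\langle u,v\rangle_{\mathcal H}=\int_{\mathbb S^2}\nabla u\cdot\nabla v$, form domain $\mathcal Q(H)=\{u\in H^2(\mathbb S^2):\int u=0\}$, and quadratic form $Q(u,u)=\int_{\mathbb S^2}(\Delta u)^2$. The orthonormal eigenvectors of $H$ are $g_{l,m}=Y_l^m/\sqrt{l(l+1)}$ for $l\geq 1$ and $m=-l,\ldots,l$. I normalize the buckling eigenfunctions of $\Omega$ by $\int_\Omega|\nabla u_j|^2=1$, extend them by zero, and subtract the constant $|\mathbb S^2|^{-1}\int_\Omega u_j$ to obtain admissible test vectors $\tilde u_j\in\mathcal Q(H)$. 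Since this shift does not affect gradients and since $u_j$ together with $\partial_\nu u_j$ vanish on $\partial\Omega$, one has $\|\tilde u_j\|_{\mathcal H}^2=1$ and $Q(\tilde u_j,\tilde u_j)=\int_\Omega(\Delta u_j)^2=\sigma_j(\Omega)$. Choosing $\mathfrak M_0=\{j:\sigma_j(\Omega)\leq z\}$, the right-hand side of Theorem \ref{thm:AVP} equals $R_1^B(z)$.

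For the left-hand side, restricting the integral to $\Omega$ (where the gradient of $\tilde u_j$ is supported) gives
\[
\langle g_{l,m},\tilde u_j\rangle_{\mathcal H}=\frac{1}{\sqrt{l(l+1)}}\int_\Omega\nabla Y_l^m\cdot\nabla u_j.
\]
Since $\{u_j\}$ is a Dirichlet-orthonormal basis of $H^1_0(\Omega)$, Bessel's inequality yields $\sum_j\bigl|\int_\Omega\nabla Y_l^m\cdot\nabla u_j\bigr|^2\leq\int_\Omega|\nabla Y_l^m|^2$. By rotational invariance of $\mathbb S^2$, the pointwise sum $\sum_{m=-l}^l|\nabla Y_l^m|^2$ is constant, and hence equals its spherical average $l(l+1)(2l+1)/|\mathbb S^2|$. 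Plugging back in, the factor $l(l+1)$ cancels against the normalization of $g_{l,m}$, producing exactly $\frac{|\Omega|}{|\mathbb S^2|}\sum_{l\geq 1}(2l+1)(z-l(l+1))_+$ on the right.

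The main subtlety is to choose the correct Hilbert space. The buckling problem is naturally posed in a Dirichlet inner product; a direct imitation of the argument for Theorem \ref{thm:blyS2+}, which relies on $L^2$-completeness of $\{u_j\}$, would introduce unwanted factors of $l(l+1)$ when integrating by parts to convert $\int_\Omega Y_l^m u_j$ into gradient form. Working instead with gradient inner products throughout, and replacing the $L^2$ addition formula for spherical harmonics by the analogous rotational invariance identity for $\sum_m|\nabla Y_l^m|^2$, restores the clean cancellation of $l(l+1)$ factors and yields the stated bound.
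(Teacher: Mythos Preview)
Your proposal is correct and follows essentially the same approach as the paper: both apply the averaged variational principle to the buckling problem on $\mathbb S^2$ with the gradient inner product, use the normalized spherical harmonics $Y_l^m/\sqrt{l(l+1)}$ as the orthonormal eigenbasis, take as test vectors the buckling eigenfunctions of $\Omega$ extended by zero with their mean subtracted, and close with the identity $\sum_{m=-l}^l|\nabla Y_l^m|^2=l(l+1)(2l+1)/|\mathbb S^2|$. One small remark: you only need Bessel's inequality (orthonormality of $\{u_j\}$ in the Dirichlet inner product), not the stronger completeness claim in $H^1_0(\Omega)$, though the latter happens to be true since $H^2_0(\Omega)$ is dense in $H^1_0(\Omega)$.
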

\begin{proof}
We start as in the proof of Theorem \ref{thm:blyS2+}. Namely, We apply the averaged variational principle to the positive eigenvalues $\lambda_j$ of the Laplacian on $\mathbb S^2$. We interpret them as the eigenvalues of \eqref{buckling_M} on $\mathbb S^d$.

 Note that, if $Y_l^m$, $m=-l,...,l$, are the $L^2(\mathbb S^2)$-normalized eigenfunctions corresponding to the energy level $\lambda_{(l)}$ of the  Laplacian on $\mathbb S^2$, then $\left\{\frac{Y_l^m}{\sqrt{\lambda_{(l)}}}\right\}$ form  a orthonormal family with respect to the product of the gradients, which is what we need for the averaged variational principle for buckling eigenvalues on $\mathbb S^d$. As test functions for the averaged variational principle we use the following functions:
 $$
 \tilde u_j:=u_j-\frac{1}{|\mathbb S^d|}\int_{\Omega}u_j,
 $$
 where $u_j$ are buckling eigenfunctions on $\Omega$, extended by zero to $\mathbb S^d$. Namely, we extend by zero the buckling eigenfunctions $u_j$, and add a constant in such a way that $\int_{\mathbb S^d}\tilde u_j=0$. However, $\nabla\tilde u_j=\nabla u_j$ and $\Delta\tilde u_j=\Delta u_j$.
 We have then
$$
\sum_{l\geq1}\sum_{m=-l}^{l}(z-\lambda_{(l)})_+\sum_{j\geq 1}\left|\int_{\Omega} \nabla u_j\cdot\frac{1}{\sqrt{\lambda_l}}\nabla Y_l^m\right|^2\geq\sum_{j\in J}\int_{\Omega}z|\nabla u_j|^2-(\Delta u_j)^2=\sum_{j\in J}(z-\sigma_j(\Omega)),
$$
for any $J\subset\mathbb N$. Then we can replace the right-hand side by $\sum_j(z-\sigma_j(\Omega))_+$. As for the left-hand side, note that

\begin{multline*}
\sum_{l\geq 1}\sum_{m=-l}^l(z-\lambda_{(l)})_+\sum_{j\geq 1}\left|\int_{\Omega} \nabla u_j\cdot\frac{1}{\sqrt{\lambda_{(l)}}}\nabla Y_l^m\right|^2=\sum_{l\geq 1}(z-\lambda_{(l)})_+\sum_{m=-l}^l\sum_{j\geq 1}\left|\int_{\Omega} \nabla u_j\cdot\frac{1}{\sqrt{\lambda_{(l)}}}\nabla Y_l^m\right|^2\\
\leq \sum_{l\geq 1}(z-\lambda_{(l)})_+\int_{\Omega}\sum_{m=-l}^l\frac{1}{\lambda_{(l)}}|\nabla Y_l^m|^2=\frac{|\Omega|}{|\mathbb S^2|}\sum_{l\geq 1}(2l+1)(z-\lambda_{(l)})_+,
\end{multline*}
where in the last passage we used the identity
$$
\sum_{m=-l}^l\frac{1}{\lambda_{(l)}}|\nabla Y_l^m|^2=\frac{2l+1}{|\mathbb S^2|}
$$
This relation is well-known. It follows, for example, from the addition formula (see \cite{folland,gine}) 
$$
\sum_{m=-l}^l| Y_l^m|^2=\frac{2l+1}{|\mathbb S^2|}
$$
by taking the Laplacian at both sides and using $-\Delta Y_l^m=\lambda_{(l)}Y_l^m$ (see also Lemma \ref{add} below).
\end{proof}

We have an explicit upper bound on $\sum_{l\geq 1}(2l+1)(z-l(l+1))_+$,  from Lemma \ref{lem:blys1}, or an improved upper bound from Lemma \ref{lem:blys2}. This yields the following

\begin{thm}\label{BLY-buckling}
Let $\Omega\subseteq\mathbb S^2$. Then for any $z\geq 0$
the following bounds for the first Riesz-mean $R_1^B$ of buckling eigenvalues on $\Omega$ holds: 
\begin{equation*}
R_1^B(z)\leq\frac{|\Omega|}{8\pi}\left(z-\frac{1}{2}\right)^2
\end{equation*}
and
\begin{equation*}
R_1^B(z)\leq\frac{|\Omega|}{8\pi}z^2.
\end{equation*}
\end{thm}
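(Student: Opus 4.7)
The plan is to directly combine the proposition immediately preceding the theorem, which provides
\[
R_1^B(z) \leq \frac{|\Omega|}{|\mathbb{S}^2|}\sum_{l\geq 1}(2l+1)(z-l(l+1))_+,
\]
with the two explicit bounds on the sphere-side sum already established in Lemmas \ref{lem:blys1} and \ref{lem:blys2}. Since $|\mathbb{S}^2|=4\pi$, the prefactor $|\Omega|/|\mathbb{S}^2|$ multiplied by the $\tfrac{1}{2}$ appearing in both lemmas yields exactly $|\Omega|/(8\pi)$, matching the right-hand sides in the statement.

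Concretely, I would first recall the proposition and observe that the only remaining task is to bound the quantity $\sum_{l\geq 1}(2l+1)(z-l(l+1))_+$, which is independent of $\Omega$. Plugging in Lemma \ref{lem:blys2} gives the sharper estimate $R_1^B(z) \leq \tfrac{|\Omega|}{8\pi}\left(z-\tfrac{1}{2}\right)^2$, while Lemma \ref{lem:blys1} gives the cleaner estimate $R_1^B(z) \leq \tfrac{|\Omega|}{8\pi}\,z^2$. Both inequalities hold for every $z\geq 0$: for $z<\tfrac{1}{2}$ the sharper bound is understood as automatic because $R_1^B(z)=0$ in that range (the smallest buckling eigenvalue of any $\Omega \subseteq \mathbb{S}^2$ exceeds $\tfrac{1}{2}$, since $\sigma_1\geq \lambda_1^{\rm buck}(\mathbb{S}^2)=2$ by domain monotonicity in the variational characterization).

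There is essentially no obstacle: the hard work has been carried out in the preceding proposition (the application of the averaged variational principle together with the addition formula for spherical harmonics in gradient form) and in the two combinatorial lemmas on Laplacian energy levels. The proof of Theorem \ref{BLY-buckling} is therefore a one-line chaining of these ingredients, which I would present as such.
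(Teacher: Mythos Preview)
Your proposal is correct and matches the paper's approach exactly: the paper simply states that combining the preceding proposition with Lemmas \ref{lem:blys1} and \ref{lem:blys2} yields the theorem, without further argument. Your extra discussion about the range $z<\tfrac{1}{2}$ is unnecessary, since Lemma \ref{lem:blys2} is already stated for all $z\geq 0$ and $(z-\tfrac12)^2\geq 0$ trivially dominates the vanishing sum there.
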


Therefore, for any domain of the sphere $\mathbb S^2$, Berezin-Li-Yau upper bound holds for buckling eigenvalues, and even for the sphere itself.


\subsection{The Neumann problem for the biharmonic operator on domains of $\mathbb S^d$}

In this subsection we consider the biharmonic operator on domains of $\mathbb S^d$, $d\geq 3$ and establish Kr\"oger-type lower bounds for the first Riesz-mean, thus extending the results of \cite{ilyin_laptev,Lap97}.


Let $\Omega$ be a bounded and smooth domain in $ \mathbb S^d$. The Neumann problem for the biharmonic operator $\Delta^2$ reads
\begin{equation}\label{Neumann-bi}
\begin{cases}
\Delta^2u=Mu \,, & {\rm in\ }\Omega,\\
\partial^2_{\nu^2}u=0\,, & {\rm on\ }\partial\Omega,\\
{\rm div}_{\partial\Omega}(\nabla_{\nu}\nabla u)_{\partial\Omega}+\partial_{\nu}\Delta u=0\,, & {\rm on\ }\partial\Omega.
\end{cases}
\end{equation}
in the unknowns $u$ (the eigenfunction) and $M$ (the eigenvalue). Here ${\rm div}_{\partial\Omega}$ is the divergence on $\partial\Omega$ with respect to the induced metric, and $F_{\partial\Omega}$ denotes the projection of $F\in TM$ on $T{\partial\Omega}$. We understand problem \eqref{Neumann-bi} in its weak formulation:

\begin{equation}\label{Neumann_bi_weak}
\int_{\Omega} \langle D^2u,D^2\phi\rangle+(d-1)\nabla u\cdot\nabla\phi=M\int_{\Omega} u\phi \,,\ \ \ \forall\phi\in H^2(\Omega).
\end{equation}
Here $D^2u$ denotes the Hessian of $u$. Note that here, differently from the Euclidean case, the gradient term in the left-hand side of \eqref{Neumann_bi_weak} is not associated with a tension term, but is rather an intrinsic part of the operator due to the non-flat geometry of the ambient space (see e.g., \cite{buoso16,bcp} for the Euclidean case, \cite{colbois_provenzano} for the general Riemannian case). In general, the quadratic form associated with the Neumann problem is given by $\int_{\Omega}|D^2u|^2+{\rm Ric}(\nabla u,\nabla u)$; in the case of the sphere, ${\rm Ric}(X,Y)=(d-1)  X\cdot Y$ for all $X,Y\in T\mathbb S^d$.

It is standard to prove that problem \eqref{Neumann_bi_weak} admits an increasing sequence of non-negative eigenvalues given by
$$
0=M_1(\Omega)<M_2(\Omega)\leq\cdots\leq M_j\leq\cdots\nearrow+\infty.
$$
The first eigenvalue is $M_1(\Omega)=0$ with corresponding constant eigenfunctions. We refer to \cite{colbois_provenzano} for more details on the biharmonic Neumann problem on manifolds.

In particular, on the whole sphere $\mathbb S^d$, the eigenvalues are exactly the squares of the Laplacian eigenvalues.


We recall a preliminary lemma (an addition formula for higher derivative), which follows from the addition formula for the eigenfunctions of the Laplacian on $\mathbb S^d$ (see \cite{folland,gine}).

\begin{lemma}\label{add}
Let $\lambda_{(l)}$, $l\in\mathbb N$, denote an energy level of the Laplacian on $\mathbb S^d$. Let $\left\{Y_l^m\right\}_{m=1}^{m_{l,d}}$ be a $L^2(\mathbb S^d)$-orthonormal basis of the corresponding eigenspace. Then
\begin{equation}\label{sum_grad_hess}
\sum_{m=1}^{m_{l,d}}|\nabla Y_l^m|^2=\frac{m_{l,d}\lambda_{(l)}}{|\mathbb S^d|}\ \ \ {\rm and\ \ \ }\sum_{m=1}^{m_{l,d}}|D^2Y_l^m|^2+(d-1)|\nabla Y^m_l|^2=\frac{m_{l,d}\lambda_{(l)}^2}{|\mathbb S^d|}.
\end{equation}
\end{lemma}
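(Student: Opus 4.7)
The plan is to bootstrap from the classical addition formula $\sum_{m=1}^{m_{l,d}}|Y_l^m|^2 = \frac{m_{l,d}}{|\mathbb S^d|}$ that was already invoked in the proof of Theorem \ref{thm:blyS2+}, by differentiating it twice and using the eigenvalue relation $-\Delta Y_l^m = \lambda_{(l)} Y_l^m$ each time. Since the right-hand side is a constant, every iterated Laplacian of the left-hand side must vanish, and this pins down successive quadratic expressions in the $Y_l^m$.

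For the first identity in \eqref{sum_grad_hess}, I would apply $\Delta$ to both sides of the addition formula. Using the product rule $\Delta(u^2) = 2u\Delta u + 2|\nabla u|^2$ together with the eigenvalue equation gives
\begin{equation*}
0 \;=\; \Delta\Bigl(\sum_m |Y_l^m|^2\Bigr) \;=\; -\,2\lambda_{(l)} \sum_m |Y_l^m|^2 + 2\sum_m |\nabla Y_l^m|^2,
\end{equation*}
and substituting the addition formula into the first sum yields the stated value $\frac{m_{l,d}\lambda_{(l)}}{|\mathbb S^d|}$.

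For the second identity, I would apply Bochner's formula on the round sphere, where the Ricci tensor satisfies $\mathrm{Ric}(X,Y) = (d-1)\,X\cdot Y$. For $u=Y_l^m$ one has $\nabla u \cdot \nabla(\Delta u) = -\lambda_{(l)}|\nabla u|^2$, so Bochner's identity becomes
\begin{equation*}
\tfrac12\,\Delta\bigl(|\nabla Y_l^m|^2\bigr) \;=\; |D^2 Y_l^m|^2 \;-\; \lambda_{(l)}|\nabla Y_l^m|^2 \;+\; (d-1)|\nabla Y_l^m|^2.
\end{equation*}
Summing in $m$, the left-hand side is $\tfrac12\Delta$ of the quantity $\sum_m|\nabla Y_l^m|^2$, which the first identity just proved is constant in $x\in\mathbb S^d$; so the left-hand side vanishes. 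Rearranging and using the first identity gives
\begin{equation*}
\sum_m |D^2 Y_l^m|^2 + (d-1)\sum_m |\nabla Y_l^m|^2 \;=\; \lambda_{(l)}\sum_m |\nabla Y_l^m|^2 \;=\; \frac{m_{l,d}\lambda_{(l)}^2}{|\mathbb S^d|}.
\end{equation*}

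No step is substantive; the whole argument is two differentiations of a reproducing-kernel identity. The only point that needs care is the Bochner bookkeeping, namely the signs and the correct value $(d-1)$ of the Ricci curvature on $\mathbb S^d$. As an alternative route, one could first argue that each of the two sums is pointwise constant on $\mathbb S^d$ directly from the invariance of the eigenspace under the transitive $SO(d+1)$-action (a change of orthonormal basis is a unitary transformation, which leaves Hilbert--Schmidt-type quantities of the basis invariant), and then compute the constants by integrating and using $\int_{\mathbb S^d}|\nabla Y_l^m|^2 = \lambda_{(l)}$ and $\int_{\mathbb S^d}|D^2 Y_l^m|^2+(d-1)|\nabla Y_l^m|^2 = \lambda_{(l)}^2$; but the Laplacian-of-the-addition-formula route above is shorter and self-contained.
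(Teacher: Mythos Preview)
Your proof is correct and follows essentially the same approach as the paper: apply the Laplacian to the addition formula to get the gradient identity, then use Bochner's formula (with the sphere's Ricci curvature $(d-1)$) together with the fact that $\sum_m|\nabla Y_l^m|^2$ is constant to obtain the Hessian identity. The only cosmetic difference is that the paper writes Bochner as $|D^2u|^2+(d-1)|\nabla u|^2=\tfrac12\Delta(|\nabla u|^2)-\nabla\Delta u\cdot\nabla u$ and substitutes the eigenvalue relation afterward, whereas you substitute it first.
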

\begin{proof}
Let us consider the addition formula for spherical harmonics in $d$ dimensions \cite{folland,gine}
$$
\sum_{m=1}^{m_{l,d}}|Y_l^m|^2=\frac{m_{l,d}}{|\mathbb S^d|}
$$
and take the Laplacian at both sides. We obtain
$2\sum_{m=1}^{m_{l,d}}|\nabla Y_l^m|^2+Y_l^m\Delta Y_l^m=0$, 
and since $-\Delta Y_l^m=\lambda_{(l)} Y_l^m$, the first identity follows (this is a well-known identity, see e.g., \cite{ilyin2}). As for the second formula, recall that from Bochner's formula we have
$$
|D^2u|^2+(d-1)|\nabla u|^2=|D^2u|^2+{\rm Ric}(\nabla u,\nabla u)=\frac{1}{2}\Delta(|\nabla u|^2)-\nabla\Delta u\cdot\nabla u.
$$
Then 
\begin{multline*}
\sum_{m=1}^{m_{l,d}}|D^2Y_l^m|^2+(d-1)|\nabla Y_l^m|^2=\frac{1}{2}\Delta\left(\sum_{m=1}^{m_{l,d}}|\nabla Y_l^m|^2\right)-\sum_{m=1}^{m_{l,d}}\nabla\Delta Y_l^m\cdot\nabla Y_l^m\\
=\lambda_{(l)}\sum_{m=1}^{m_{l,d}}|\nabla Y_l^m|^2=\frac{m_{l,d}\lambda_{(l)}^2}{|\mathbb S^d|}.
\end{multline*}
\end{proof}

We prove now a lower bound for the eigenvalues $M_j(\Omega)$ of \eqref{Neumann-bi} on $\Omega$ by means of the averaged variational principle, in the spirit of \cite{EHIS} (see also \cite{ilyin_laptev,stri}).

\begin{prop}\label{prop_EIHS_bil}
For all $z\geq 0$
$$
\sum_j(z-M_j(\Omega))_+\geq\frac{|\Omega|}{|\mathbb S^d|}\sum_l m_{l,d}(z-\lambda_{(l)}^2)_+.
$$
\end{prop}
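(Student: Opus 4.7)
The plan is to apply the averaged variational principle (Theorem \ref{thm:AVP}) directly, with the Neumann biharmonic operator on $\Omega$ as the self-adjoint operator $H$ and the spherical harmonics on $\mathbb{S}^d$ (restricted to $\Omega$) as the family of test vectors. Specifically, I take $\mathcal{H} = L^2(\Omega)$, $\mathcal{Q}(H) = H^2(\Omega)$, and the quadratic form
\[
Q(\varphi,\varphi) = \int_\Omega |D^2\varphi|^2 + (d-1)|\nabla\varphi|^2,
\]
whose eigenvalues are exactly $M_j(\Omega)$ with orthonormal eigenfunctions $\{u_j\}_j$. The index space is $\mathfrak{M} = \{(l,m) : l\in\mathbb{N},\, 1\le m \le m_{l,d}\}$ with counting measure, and $f_{(l,m)} = Y_l^m|_\Omega \in H^2(\Omega)$. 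The crucial choice is
\[
\mathfrak{M}_0 = \{(l,m)\in\mathfrak{M} : \lambda_{(l)}^2 \le z\},
\]
which makes the integrand on the right-hand side non-negative.

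For the left-hand side of the AVP inequality, I would extend each $u_j$ by zero to $\widetilde{u}_j \in L^2(\mathbb{S}^d)$, so that $\langle u_j, Y_l^m|_\Omega\rangle_{L^2(\Omega)} = \langle\widetilde{u}_j, Y_l^m\rangle_{L^2(\mathbb{S}^d)}$; then Parseval applied to the complete orthonormal family $\{Y_l^m\}$ in $L^2(\mathbb{S}^d)$ yields
\[
\sum_{l,m}|\langle u_j, Y_l^m|_\Omega\rangle_{L^2(\Omega)}|^2 = \|\widetilde{u}_j\|_{L^2(\mathbb{S}^d)}^2 = \|u_j\|_{L^2(\Omega)}^2 = 1,
\]
so the full left-hand side collapses to $\sum_j(z - M_j(\Omega))_+$.

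For the right-hand side, I would use the two addition-type identities of Lemma \ref{add} restricted to $\Omega$: summing over $m = 1,\dots,m_{l,d}$ gives
\[
\sum_{m=1}^{m_{l,d}} \|Y_l^m\|_{L^2(\Omega)}^2 = \frac{|\Omega| m_{l,d}}{|\mathbb{S}^d|}, \qquad \sum_{m=1}^{m_{l,d}} Q(Y_l^m,Y_l^m) = \frac{|\Omega| m_{l,d}\lambda_{(l)}^2}{|\mathbb{S}^d|}.
\]
Therefore
\[
\int_{\mathfrak{M}_0}\!\bigl(z\|f_p\|^2 - Q(f_p,f_p)\bigr)\,d\sigma_p = \frac{|\Omega|}{|\mathbb{S}^d|}\sum_{l\,:\,\lambda_{(l)}^2\le z} m_{l,d}\bigl(z-\lambda_{(l)}^2\bigr) = \frac{|\Omega|}{|\mathbb{S}^d|}\sum_l m_{l,d}(z-\lambda_{(l)}^2)_+,
\]
and combining with the reduction of the left-hand side gives the claim.

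The only conceptual obstacle is the left-hand side reduction, which relies on summing the AVP over the \emph{full} index set $\mathfrak{M}$ (rather than $\mathfrak{M}_0$) and exploiting the completeness of spherical harmonics on $\mathbb{S}^d$ together with the zero-extension of $u_j$; everything else is a bookkeeping calculation using Lemma \ref{add}. One should verify that the relevant integrals converge, which follows from the exponential (in $l$) decay of $\|Y_l^m|_\Omega\|_{L^2(\Omega)}^2 \le m_{l,d}|\Omega|/|\mathbb{S}^d|$ balanced against the cutoff imposed by $\mathfrak{M}_0$ on the right-hand side (the left-hand side is a finite sum since only finitely many $M_j(\Omega)$ lie below $z$).
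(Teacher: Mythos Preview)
Your proposal is correct and follows essentially the same approach as the paper's proof: apply the averaged variational principle with $H$ the Neumann biharmonic operator on $\Omega$, test functions the restricted spherical harmonics $Y_l^m|_\Omega$, then use Parseval (via zero-extension of the $u_j$) to collapse the left-hand side and Lemma~\ref{add} to evaluate the right-hand side. The only quibble is your convergence remark at the end: there is no ``exponential decay'' in $l$ here---the point is simply that $\mathfrak{M}_0$ is finite (only finitely many energy levels satisfy $\lambda_{(l)}^2\le z$) and only finitely many $M_j(\Omega)$ lie below $z$, so both sides are finite sums.
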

\begin{proof}
As done several times through the paper, but somehow ``in a reversed way'', we apply the averaged variational principle to the eigenvalues $M_j(\Omega)$, with associated eigenfunctions $u_j$, using as test function the (restrictions to $\Omega$ of the) Laplacian eigenfunctions $Y_l^m$ on $\mathbb S^d$, which trivially belong to $H^2(\Omega)$.   From \eqref{sum_grad_hess} we deduce
\begin{multline*}
\sum_j(z-M_j(\Omega))_+\sum_l\sum_{m=1}^{m_{l,d}}\left|\int_{\Omega}Y_l^m u_j\right|^2\geq\sum_{l\in L}\sum_{m=1}^{m_{l,d}}\int_{\Omega}z|Y_l^m|^2-|D^2Y_l^m|^2-(d-1)|\nabla Y_l^m|^2\\
=\sum_{l\in J}\frac{m_{l,d}|\Omega|}{|\mathbb S^d|}\left(z-\lambda_{(l)}^2\right),
\end{multline*}
where $J$ is an arbitrary subset of $\mathbb N$. We can use Parseval's identity at the left-hand side of the inequality, while we can take the sum over $l\in\mathbb N$ such that $z-\lambda_{(l)}^2\geq 0$ on the right-hand side. This concludes the proof.
\end{proof}

We denote by $R_1^{2,N}$ the first Riesz-mean for problem \eqref{Neumann-bi}:
$$
R_1^{2,N}(z):=\sum_j (z-M_j(\Omega))_+.
$$

Combining Proposition \ref{prop_EIHS_bil} with the lower bounds on the first Riesz-mean $R_1^2$ for $\lambda_{(l)}^2$ obtained in Theorem \ref{lower-R-1-bil-sphere} we deduce the following

\begin{thm}\label{kr-gen-poly}
Let $\Omega$ be a domain in $\mathbb S^d$, $d\geq 3$. Then for all $z\geq 0$ we have the following inequality for the first Riesz-mean $R^{2,N}_1$ of Neumann biharmonic eigenvalues on $\Omega$:
$$
R_1^{2,N}(z)\geq L^{class}_{1,d,2}|\Omega|z^{1+  \frac{d}{4}}.
$$
\end{thm}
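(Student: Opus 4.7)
The plan is to simply chain together the two previously established results: Proposition \ref{prop_EIHS_bil} and Theorem \ref{lower-R-1-bil-sphere}. First I would observe that the right-hand side of the inequality in Proposition \ref{prop_EIHS_bil} is exactly $\frac{|\Omega|}{|\mathbb S^d|}R_1^2(z)$, since summing $(z-\lambda_{(l)}^2)_+$ over the energy levels weighted by the multiplicities $m_{l,d}$ reproduces the sum $\sum_j(z-\lambda_j^2)_+$ over all biharmonic eigenvalues of $\mathbb S^d$ counted with multiplicity.

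With this identification, Proposition \ref{prop_EIHS_bil} yields
\[
R_1^{2,N}(z) \;\geq\; \frac{|\Omega|}{|\mathbb S^d|}\,R_1^2(z).
\]
Then, since $d\geq 3$, Theorem \ref{lower-R-1-bil-sphere} applies and gives the Weyl-sharp lower bound
\[
R_1^2(z) \;\geq\; L_{1,d,2}^{class}\,|\mathbb S^d|\, z^{1+\frac{d}{4}}.
\]
Substituting, the factors of $|\mathbb S^d|$ cancel and we obtain
\[
R_1^{2,N}(z) \;\geq\; L_{1,d,2}^{class}\,|\Omega|\, z^{1+\frac{d}{4}},
\]
which is the desired inequality.

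There is essentially no obstacle remaining at this stage, since the two preceding results have already done all the work: the averaged variational principle with the Riemannian Bochner correction $(d-1)|\nabla u|^2$ is handled in Proposition \ref{prop_EIHS_bil}, and the delicate estimate on $R_1^2$ over the whole sphere is handled in Theorem \ref{lower-R-1-bil-sphere}. The only point worth flagging is the role of the hypothesis $d\geq 3$: it is needed precisely because, as explained in Remark \ref{rem_pd}, the leading Weyl term is not a lower bound for $R_1^2$ when $d=2$, so Theorem \ref{lower-R-1-bil-sphere} (and hence this Kr\"oger-type bound) fails in that case.
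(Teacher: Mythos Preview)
Your proposal is correct and matches the paper's own proof exactly: the paper simply states that the result follows by combining Proposition~\ref{prop_EIHS_bil} with the lower bound of Theorem~\ref{lower-R-1-bil-sphere}. Your additional remark on why $d\geq 3$ is needed is also accurate.
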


\section{Sum rules for Laplacian eigenvalues on compact symmetric spaces of rank one}\label{compact-two-points}

In this section we present an approach, which is not variational, to obtain bounds on Riesz-means, based on identities for spectral quantities, which we call sum rules referring to early quantum mechanics (see e.g., \cite{BJ,HaSt0}). In \cite{HaSt0} such identities were first derived for Dirichlet eigenvalues and Schr\"{o}dinger operators in the Euclidean setting, leading to asymptotically sharp universal eigenvalue inequalities and, by the discovery made in \cite{HaHe0}, to sharp Weyl-type bounds for Riesz-means and to sharp Lieb-Thirring inequalities for Schr\"{o}dinger operators \cite{St0}. Sum rules for manifolds immersed in the Euclidean space were first derived in \cite{EHI,Ha0}  and later in \cite{HaSt0}, where also an algebraic identity for the spectrum of an abstract selfadjoint operator $H$ defined on a Hilbert space $\mathcal{H}$ with scalar product $\langle ,\rangle$ was shown. For the sake of completeness, we state it for the situation when the spectrum of $H$  consists of eigenvalues $\lambda_j$, with an associated
orthonormal basis of eigenfunctions
$\left\{\phi_j\right\}$. If $J$ is a subset of the spectrum, $G$ is a linear operator (satifying suitable domain hypotheses, see \cite{HaSt1}) and 
$[H,G]=HG-GH$ is the commutator of $H$ and $G$, then
\begin{multline*}
    \frac1{2}\sum_{\lambda_j\in J}  (z - \lambda_j)^2\,\left(\langle[G^*,[H,G]]\phi_j,\phi_j\rangle+\langle[G, [H,G^*]]\phi_j,\phi_j\rangle\right)\\
    -\sum_{\lambda_j\in
    J}(z-\lambda_j)\,\left(\langle[H,G]\phi_j,[H,G]\phi_j\rangle+\langle[H,G^*]\phi_j,[H,G^*]\phi_j\rangle\right)\\
    =
    \sum_{\lambda_j\in J}\sum_{\lambda_k\notin J}
    (z-\lambda_j)(z-\lambda_k)(\lambda_k-\lambda_j)\left(|\langle G\phi_j,\phi_k\rangle|^2+|\langle G^*\phi_j,\phi_k\rangle|^2\right),
\end{multline*}
If $J=\{\lambda_1,\ldots,\lambda_N\}$ then the right-hand side of the above identity has a sign for all $z\in [\lambda_N,\lambda_{N+1}]$. More precisely, an upper bound of the right-hand side for $z$ in this interval is given by
\begin{equation*}
  \frac1{2}\,(z - \lambda_N)(z - \lambda_{N+1})\sum_{\lambda_j\in J} \,\left(\langle[G^*,[H,G]]\phi_j,\phi_j\rangle+\langle[G, [H,G^*]]\phi_j,\phi_j\rangle\right).
\end{equation*}

Therefore we arrive at an inequality between two quadratic polynomials in $z$ involving spectral quantities. This type of inequalities, for suitable choices of $G$, turns out to be equivalent to certain  properties for  Riesz-means. We shall present a few instances in the following subsections, where we will deal with the sphere, and with the other compact two-point homogeneous spaces, for which results in the spirit of the previous sections are much more difficult to get by variational techniques.

\subsection{The sphere $\mathbb S^d$}
Let us consider the eigenvalues $\lambda_j(\Omega)$ of the Dirichlet Laplacian on domains $\Omega \subset \mathbb{S}^d$ (hence in particular for $\Omega = \mathbb{S}^d$). The following result was shown in \cite{EHI,HaSt0}. Let 
Let $P_N(z)$ and $Q_N(z)$ be the quadratic polynomials defined by
\begin{equation}\label{P-N-of-z-sphere}
\begin{split}
    P_N(z) & =\sum_{j=1}^{N}(z-\lambda_j(\Omega))\left(z-d-\frac{d+4}{d}\,\lambda_j(\Omega)\right) \\
     & =Nz^2-2\,\frac{d+2}{d}\left(\sum_{j=1}^{N}\lambda_j(\Omega)\right)z-dNz+ \frac{d+4}{d}\,\sum_{j=1}^{N}\lambda^2_j(\Omega) +d\sum_{j=1}^{N}\lambda_j(\Omega),\\
\end{split}
\end{equation}
and
\begin{equation}\label{Q-N-of-z-sphere}
  Q_N(z)=N(z-\lambda_N(\Omega))(z-\lambda_{N+1}(\Omega)).
\end{equation}
Then, for all $z\in [\lambda_N,\lambda_{N+1}]$ the inequality
\begin{equation}\label{Polynomial-inequality-sphere}
  P_N(z)\leq Q_N(z)
\end{equation}
holds. When $\Omega=\mathbb S^d$,  in \cite{EHI} it is conjectured on the basis of computations using a computer algebra system that
\begin{equation}\label{Polynomial-identity-sphere}
  P_N(z)= Q_N(z)
\end{equation}
for all $N\geq 1$ such that $\lambda_{N}<\lambda_{N+1}$ (when $\Omega=\mathbb S^d$ we omit the dependence of the eigenvalues on the domain). In the following, we discuss some consequences of  \eqref{Polynomial-inequality-sphere}-\eqref{Polynomial-identity-sphere} and give a proof of  \eqref{Polynomial-identity-sphere}. Note that \eqref{Polynomial-identity-sphere} is already proved in \cite{R}. However, our proof of \eqref{Polynomial-identity-sphere} can be extended  to cover the case of compact two-point homogeneous spaces, and therefore we get also the consequences of the resulting identity. We will do this in the next subsection.

We start by discussing the consequences of \eqref{Polynomial-inequality-sphere}-\eqref{Polynomial-identity-sphere}. It is proved in \cite{HaSt1} that inequality \eqref{Polynomial-inequality-sphere} is equivalent to the following property of the Riesz-mean $R_2(z)=\sum_{j\geq 1}(z-\lambda_j(\Omega))^2_+$.

\begin{prop}
  Let $R_1,R_2$ be the first and second Riesz-means of the Dirichlet Laplacian on $\Omega\subset \mathbb{S}^d$. Then
  \begin{equation}\label{R-2-R-1-inequality-sphere}
  \frac{d+4}{4}\,R_2(z)-\left(z+\frac{d^2}{4}   \right)\,R_1(z)\leq 0
\end{equation}
or, equivalently
\begin{equation}\label{R-2-diff-inequality-sphere}
  \frac{d}{dz}\frac{R_2(z)}{(z+\frac{d^2}{4})^{2+d/2}}\geq 0.
\end{equation}
In view of Weyl's law, this implies the upper bound
\begin{equation}\label{R-2-Weyl-shifted-upper-bound-sphere}
  R_2(z)\leq L_{2,d}^{class}|\Omega|\left(z+\frac{d^2}{4}\right)^{2+\frac{d}{2}}.
\end{equation}
\end{prop}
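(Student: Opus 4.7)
The plan is to derive the Riesz-mean inequality \eqref{R-2-R-1-inequality-sphere} directly from \eqref{Polynomial-inequality-sphere} by recognizing that $P_N(z)$ is, up to a constant multiplicative factor, precisely the quantity $\frac{d+4}{4}R_2(z)-(z+\frac{d^2}{4})R_1(z)$ on each interval $[\lambda_N,\lambda_{N+1}]$. First, for $z\in[\lambda_N,\lambda_{N+1}]$, one has the piecewise-polynomial expressions
\[
R_1(z)=\sum_{j=1}^N(z-\lambda_j),\qquad R_2(z)=\sum_{j=1}^N(z-\lambda_j)^2,
\]
and a direct manipulation yields the key identity
\[
\sum_{j=1}^N\lambda_j(z-\lambda_j)=zR_1(z)-R_2(z).
\]
Using the factorization $(z-\lambda_j)(z-d-\tfrac{d+4}{d}\lambda_j)=(z-\lambda_j)^2-(d+\tfrac{4}{d}\lambda_j)(z-\lambda_j)$ and summing over $j=1,\dots,N$, I would obtain
\[
P_N(z)=R_2(z)-dR_1(z)-\tfrac{4}{d}\bigl(zR_1(z)-R_2(z)\bigr)=\tfrac{4}{d}\Bigl[\tfrac{d+4}{4}R_2(z)-\bigl(z+\tfrac{d^2}{4}\bigr)R_1(z)\Bigr].
\]

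Next, I would exploit the fact that $Q_N(z)=N(z-\lambda_N)(z-\lambda_{N+1})\le 0$ on $[\lambda_N,\lambda_{N+1}]$, so combining with \eqref{Polynomial-inequality-sphere} gives $P_N(z)\le 0$ on this interval, which is exactly \eqref{R-2-R-1-inequality-sphere}. Since $[0,\infty)=\bigcup_{N\geq 0}[\lambda_N,\lambda_{N+1}]$ (with $\lambda_0:=0$ and both $R_1,R_2$ vanishing on $[0,\lambda_1]$), the inequality extends to all $z\ge 0$.

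For the equivalence between \eqref{R-2-R-1-inequality-sphere} and \eqref{R-2-diff-inequality-sphere}, I would use that $R_2$ is $C^1$ with $R_2'(z)=2R_1(z)$ (because $R_1$ is continuous and piecewise linear). A direct computation then gives
\[
\frac{d}{dz}\frac{R_2(z)}{\bigl(z+\tfrac{d^2}{4}\bigr)^{2+d/2}}=\frac{2}{\bigl(z+\tfrac{d^2}{4}\bigr)^{3+d/2}}\Bigl[\bigl(z+\tfrac{d^2}{4}\bigr)R_1(z)-\tfrac{d+4}{4}R_2(z)\Bigr],
\]
so the two conditions are manifestly equivalent. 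Finally, \eqref{R-2-Weyl-shifted-upper-bound-sphere} follows by integrating \eqref{R-2-diff-inequality-sphere}: the function $z\mapsto R_2(z)/(z+d^2/4)^{2+d/2}$ is non-decreasing and, by Weyl's law, tends to $L_{2,d}^{class}|\Omega|$ as $z\to\infty$, hence stays below this limit for all finite $z$.

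The main obstacle is essentially the algebraic one of recognizing $P_N(z)$ as a combination of $R_1$ and $R_2$; once the key identity $\sum_{j=1}^N\lambda_j(z-\lambda_j)=zR_1(z)-R_2(z)$ is spotted, everything else is a routine rearrangement. A minor point to be careful about is the regularity of $R_2$ at the eigenvalues, which is needed both to make sense of the monotonicity and to integrate in the last step; this is handled by noting that $R_2$ is $C^1$ on $[0,\infty)$.
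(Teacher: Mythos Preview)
Your proof is correct. The paper does not supply its own argument for this proposition; it simply attributes the equivalence between \eqref{Polynomial-inequality-sphere} and \eqref{R-2-R-1-inequality-sphere}--\eqref{R-2-diff-inequality-sphere} to \cite{HaSt1}. The identity you establish,
\[
P_N(z)=\tfrac{4}{d}\Bigl[\tfrac{d+4}{4}R_2(z)-\bigl(z+\tfrac{d^2}{4}\bigr)R_1(z)\Bigr]=-\tfrac{2}{d}\bigl(z+\tfrac{d^2}{4}\bigr)^{3+d/2}\,\frac{d}{dz}\frac{R_2(z)}{(z+\tfrac{d^2}{4})^{2+d/2}},
\]
is exactly the relation the paper later invokes (without derivation) in the proof of Theorem~\ref{invariant_S}, so your approach is fully in line with the paper's framework. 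Your treatment of the boundary case $z\in[0,\lambda_1]$ and of the $C^1$ regularity of $R_2$ (needed to pass from the piecewise inequality to global monotonicity) is also in order.
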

\begin{rem}
  The shift $\frac{d^2}{4}$ was interpreted as one quarter of the mean curvature squared since this geometrical quantity arises naturally from the commutators of the Laplacian with an appropriate multiplication operator for immersed manifolds with nonconstant curvature. Below we give another interpretation of this quantity.
\end{rem}
\begin{rem}
The fact that $R_2(z)z^{-2-d/2}$ is increasing is a much stronger statement than just that the leading term in Weyl's law is an upper bound. We put this property in relation to the bounds obtained in this paper for $R_1(z)$. First we note that for $R_1(z)$ the quantity $R_1(z)(z+b)^{-1-d/2}$, which converges to $L_{1,d}^{class}|\Omega|$ as $z$ tends to infinity, is not increasing, independently of the choice of $b$. For the shifts $b=0$ and $b=1/2$ (from Proposition \ref{prop:R1S2bounds} for $\Omega=\mathbb{S}^2$, or more generally the shift $b=z_d<d^2/4$ from Theorem \ref{thm:ubR1dshift} when $\Omega=\mathbb{S}^d$) the quantity $R_1(z)(z+b)^{-1-d/2}$ is oscillating and has a local maximum between two energy levels. On the other hand, bounds on $R_1(z)$ imply bounds on $R_2(z)$ by integrating $R_1(z)$ so that for $\Omega=\mathbb{S}^d$ we also have
\begin{equation*}
  R_2(z)\leq L_{2,d}^{class}|\Omega|(z+z_d)^{2+\frac{d}{2}},\quad z_d=\frac{(2d-1)d}{12}
\end{equation*}
which is better than  the bound \eqref{R-2-Weyl-shifted-upper-bound-sphere}. However, $R_2(z)(z+z_d)^{-2-\frac{d}{2}}$ is not increasing anymore as it is easily verified. More generally it will not be increasing for any shift $b<d^2/4$. On the other hand, for $\mathbb{S}^2$ it is easy to verify that $R_2(z)z^{-3}$ is decreasing (to the leading term in Weyl's law). This behavior may change when passing from the entire sphere to subdomains. Indeed, for the Dirichlet Laplacian on the hemisphere the quantity $R_2(z)z^{-3}$ is increasing.
\end{rem}
\begin{rem}
  Inequality \eqref{R-2-R-1-inequality-sphere} for $R_2(z)$ can be transformed in various ways. For example, for $\mathbb{S}^d$ the following lower bound is useful
\begin{equation*}
  \sum_{j}(z^2-\lambda_j^2)_{+}\geq \frac{2d+4}{d+4}\,zR_1(z)-\frac{d^2}{d+4}\,R_1(z)
\end{equation*}
which shows that the first Riesz-mean for the biharmonic operator on $\mathbb{S}^d$ satisfies a Weyl-type lower bound up to a term of lower order.
\end{rem}

Another consequence of \eqref{Polynomial-identity-sphere} is the following lower bound on $R_2(z)$.

\begin{prop}

  The quantity $\displaystyle R_2(z)z^{-2-d/2}$ is decreasing to the leading term in Weyl's law and therefore
  \begin{equation*}
    R_2(z)\geq L_{2,d}^{class}|\mathbb{S}^d|z^{2+d/2}
  \end{equation*}
\end{prop}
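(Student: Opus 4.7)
The plan is to prove the stronger statement that $R_2(z)\,z^{-2-d/2}$ is non-increasing on $(0,\infty)$; combined with Weyl's law, which gives $\lim_{z\to\infty}R_2(z)\,z^{-2-d/2}=L_{2,d}^{class}|\mathbb S^d|$, this immediately yields the desired lower bound. Since $R_2'(z)=2R_1(z)$, a direct differentiation reduces monotonicity to the pointwise inequality $\tfrac{d+4}{4}R_2(z)\geq zR_1(z)$ for every $z\geq 0$.

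To connect this with the polynomial identity \eqref{Polynomial-identity-sphere}, I would rewrite $P_N$ from \eqref{P-N-of-z-sphere} in terms of Riesz-means: on each interval $[\lambda_N,\lambda_{N+1}]$ the substitutions $\sum_{j\leq N}\lambda_j=Nz-R_1(z)$ and $\sum_{j\leq N}\lambda_j^2=R_2(z)+Nz^2-2zR_1(z)$ give $P_N(z)=\tfrac{4}{d}\bigl[\tfrac{d+4}{4}R_2(z)-(z+\tfrac{d^2}{4})R_1(z)\bigr]$. Combined with the identity $P_N(z)=Q_N(z)$ this produces
\[
\tfrac{d+4}{4}R_2(z)-zR_1(z)=\tfrac{d}{4}\bigl[dR_1(z)+N(z-\lambda_N)(z-\lambda_{N+1})\bigr].
\]
The remaining task is therefore to verify that the convex quadratic $G(z):=dR_1(z)+N(z-\lambda_N)(z-\lambda_{N+1})$ is non-negative on $[\lambda_N,\lambda_{N+1}]$.

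Since $G$ has positive leading coefficient $N$, its global minimum lies at the vertex $z^\star=(\lambda_N+\lambda_{N+1}-d)/2$. With $\lambda_N=l(l+d-1)$ and $\lambda_{N+1}=(l+1)(l+d)$, the next step would be to check that $z^\star=l(l+d)\in[\lambda_N,\lambda_{N+1}]$ and that $G(z^\star)=(d-1)\,l(l+d)\,N-d\sum_{j=1}^N\lambda_j$. Both $N$ and $\sum_{j=1}^N\lambda_j$ can then be read off directly from the explicit formula \eqref{S-d-R-1} for $R_1(z)=Nz-\sum_{j\leq N}\lambda_j$ obtained in the proof of Theorem~\ref{R1-d-sphere}, namely
\[
N=\frac{(d+2l)\,\Gamma(d+l)}{\Gamma(l+1)\,\Gamma(d+1)},\qquad \sum_{j=1}^N\lambda_j=\frac{dl(l+d)}{d+2}\,N.
\]
Substituting makes the minimum collapse to $G(z^\star)=N\,l(l+d)\,\tfrac{d-2}{d+2}\geq 0$, with equality at $d=2$.

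With $G\geq 0$, the inequality $\tfrac{d+4}{4}R_2(z)\geq zR_1(z)$ will hold on every interval $[\lambda_N,\lambda_{N+1}]$, and hence globally by continuity of $R_2,R_1$; consequently $R_2(z)\,z^{-2-d/2}$ is non-increasing and passing to the limit delivers the claimed lower bound. The main step of the argument is therefore the algebraic translation of the sum rule $P_N=Q_N$ into a Riesz-mean identity; the positivity of $G(z^\star)$ is not an obstacle, since the closed form of $\sum_{j\leq N}\lambda_j$ at an energy level is already implicit in the explicit expression for $R_1$ computed in Theorem~\ref{R1-d-sphere}, and the final inequality reduces to the elementary relation $(d-1)(d+2)\geq d^2$.
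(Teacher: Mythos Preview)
Your argument is correct and follows essentially the same route as the paper: both reduce the monotonicity of $R_2(z)z^{-2-d/2}$ to showing that the quadratic $G(z)=dR_1(z)+Q_N(z)$ is non-negative on $[\lambda_N,\lambda_{N+1}]$, locate its vertex at $z^\star=(\lambda_N+\lambda_{N+1}-d)/2=l(l+d)$, and verify $G(z^\star)=Nl(l+d)\tfrac{d-2}{d+2}\geq 0$. The only cosmetic difference is that you read off $\sum_{j\le N}\lambda_j=\tfrac{d}{d+2}Nl(l+d)$ directly from formula \eqref{S-d-R-1}, whereas the paper obtains the same relation by comparing the linear coefficients of $P_N$ and $Q_N$ (which is equivalent, since $P_N=Q_N$ is itself established via \eqref{S-d-R-1}).
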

\begin{proof}
  Since
  \begin{equation*}
    \frac{d}{dz}R_2(z)z^{-2-d/2}=2z^{-3-d/2}\left(zR_1(z)-\left(1+\frac{d}{4}\right)R_2(z)\right)
  \end{equation*}
  and for all $z\in[\lambda_N,\lambda_{N+1}]$
  \begin{equation*}
    Q_N(z)=P_N(z)=\left(1+\frac{4}{d}\right)R_2(z)-\frac{d}{4}\,z R_1(z)-d R_1(z)
  \end{equation*}
  we have, for all  $z\in[\lambda_N,\lambda_{N+1}]$,
  \begin{equation*}
     \frac{d}{dz}R_2(z)z^{-2-d/2}=-\,\frac{d}{2} z^{-3-d/2}\left( Q_N(z) +d R_1(z)\right).
  \end{equation*}
  We show that the quadratic polynomial $Q_N(z) +d R_1(z)$ is always positive which proves the claim. In fact, it has a critical point, which is a local minimum, 
at $\displaystyle z_0=\frac{\lambda_N+\lambda_{N+1}-d}{2}$.
Comparing the coefficients of $Q_N(z)$ and $P_N(z)$ we note that
$$
\frac{d}{d+2}\sum_{j=1}^N\lambda_j=Nz_0.
$$	
Therefore
$$
\frac{1}{N}\left(Q_N(z_0)+dR_1(z_0)\right)=-z_0^2+\lambda_N\lambda_{N+1}-\frac{d^2}{d+2}z_0.
$$
When there is an eigenvalue gap, that is, $\lambda_N<\lambda_{N+1}$, then $\lambda_N=L(L+d-1)$ and $\lambda_{N+1}=(L+1)(L+d)$ for some natural number $L$. Hence
$$
\frac{1}{N}\left(Q_N(z_0)+dR_1(z_0)\right)=\frac{d-2}{d+2}L(L+d)\geq 0.
$$
\end{proof}

Now we turn again to the Laplacian eigenvalues on $\mathbb{S}^d$ and prove the identity \eqref{Polynomial-identity-sphere}. 
\begin{thm}\label{P=Q}
  Let $P_N,Q_N$ be defined by \eqref{P-N-of-z-sphere} and \eqref{Q-N-of-z-sphere}, respectively, with $\lambda_j(\Omega)=\lambda_j$, the eigenvalues of the  Laplacian  on $\mathbb{S}^d$. Let $N\geq 1$ such that $\lambda_{N}<\lambda_{N+1}$. Then
  \begin{equation*}
  P_N(z)= Q_N(z).
\end{equation*}

\end{thm}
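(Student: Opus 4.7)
The plan is to derive the identity from the algebraic sum-rule stated just above Theorem \ref{P=Q}, applied to $H = -\Delta_{\mathbb S^d}$, $J = \{1,\ldots,N\}$, and test operators $G_a$ equal to multiplication by the ambient coordinate $x_a$ restricted to $\mathbb S^d$ (for $a = 1,\ldots,d+1$), with the resulting identities summed over $a$. Since each $x_a$ is itself a spherical harmonic of degree one, satisfying $-\Delta_{\mathbb S^d} x_a = dx_a$, I would compute directly that $[H,x_a]\phi = d x_a \phi - 2\nabla x_a\cdot\nabla\phi$ and $[x_a,[H,x_a]] = 2|\nabla x_a|^2$ as a multiplication operator. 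Using the pointwise identities $\sum_a|\nabla x_a|^2 = d$, $\sum_a x_a\nabla x_a = 0$, and $\sum_a(\nabla x_a\cdot V)^2 = |V|^2$ for every tangent vector $V$ on $\mathbb S^d$, one then obtains $\sum_a[x_a,[H,x_a]] = 2d$ (a scalar) and $\sum_a\|[H,x_a]\phi_j\|^2 = d^2 + 4\lambda_j$ for every $L^2$-normalized eigenfunction $\phi_j$.

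Plugging these into the left-hand side of the summed sum-rule and expanding with $T_k := \sum_{j=1}^N\lambda_j^k$, a short algebraic manipulation shows that the left-hand side equals exactly $d\,P_N(z)$, with $P_N$ as in \eqref{P-N-of-z-sphere}. For the right-hand side, the key ingredient is the classical selection rule: the product $x_a\phi_j$ of $x_a$ with a spherical harmonic of degree $l$ lies in the sum of the degree-$(l-1)$ and degree-$(l+1)$ eigenspaces. Since $\lambda_N < \lambda_{N+1}$ forces $J$ to consist of all spherical harmonics of degrees $\leq L$ with $\lambda_N = L(L+d-1)$ and $\lambda_{N+1} = (L+1)(L+d)$, the only surviving contributions to $\sum_{j\in J,\,k\notin J}$ come from $\deg\phi_j = L$ and $\deg\phi_k = L+1$, so that $\lambda_j = \lambda_N$ and $\lambda_k = \lambda_{N+1}$ in every surviving term. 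The right-hand side therefore collapses to $(z-\lambda_N)(z-\lambda_{N+1})(\lambda_{N+1}-\lambda_N)\,A_L$, where
\[
A_L := \sum_{\deg\phi_j = L}\sum_{\deg\phi_k = L+1}\sum_{a=1}^{d+1}\left|\langle x_a\phi_j,\phi_k\rangle\right|^2.
\]

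The final step is to evaluate $A_L$ in closed form. Applying Parseval to $\sum_a\|x_a\phi_j\|^2 = \int|\phi_j|^2 = 1$, and then summing over all $\phi_j$ of degree $L$, yields the two-term recursion $A_L + A_{L-1} = m_{L,d}$ with base cases $A_{-1} = 0$ and $A_0 = 1$. A direct induction using the explicit Gamma-function expressions $N = (2L+d)\Gamma(L+d)/[\Gamma(L+1)\Gamma(d+1)]$ and $m_{L,d} = d(2L+d-1)\Gamma(L+d-1)/[\Gamma(L+1)\Gamma(d+1)]$ then yields $A_L = dN/(2L+d)$. Since $\lambda_{N+1}-\lambda_N = 2L+d$, the right-hand side simplifies to $dN(z-\lambda_N)(z-\lambda_{N+1}) = d\,Q_N(z)$; equating with the left-hand side and dividing by $d$ gives $P_N(z) = Q_N(z)$ as polynomials. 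The main technical obstacle is the clean evaluation of $A_L$: the Parseval recursion reduces it to a Gamma-function identity whose verification, while elementary, is the only non-trivial calculation in the argument.
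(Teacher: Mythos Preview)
Your argument is correct and takes a genuinely different route from the paper. The paper proves $P_N=Q_N$ by induction on $N$: it uses the one-step difference $P_{N+1}(z)-P_N(z)=(z-\lambda_{N+1})(z-d-\tfrac{d+4}{d}\lambda_{N+1})$ and reduces the inductive step to the combinatorial identity $\sum_{l=0}^{L}\bigl((d+2)l(l+d-1)-dL(L+d)\bigr)m_{l,d}=0$, which is read off from the closed form \eqref{S-d-R-1} for $R_1$. No commutators, no selection rule---just a direct verification of a polynomial identity at the energy levels.

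Your approach instead computes \emph{both} sides of the abstract Harrell--Stubbe sum rule exactly, using the multiplication operators $G_a=x_a$. The key conceptual input is the degree-one selection rule $x_a\,\mathcal H_l\subset\mathcal H_{l-1}\oplus\mathcal H_{l+1}$, which forces every surviving term on the right to have $\lambda_j=\lambda_N$ and $\lambda_k=\lambda_{N+1}$; the quantity $A_L$ is then pinned down by the Parseval recursion $A_L+A_{L-1}=m_{L,d}$, and your closed form $A_L=\binom{L+d-1}{d-1}=dN/(2L+d)$ is correct (the recursion reduces to Pascal's rule applied twice to $m_{L,d}=\binom{d+L}{d}-\binom{d+L-2}{d}$). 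One minor slip: with $G=G^*$ real, both the first- and second-commutator terms in the sum rule double, so the two sides equal $2d\,P_N(z)$ and $2d\,Q_N(z)$ rather than $d\,P_N(z)$ and $d\,Q_N(z)$; this of course does not affect the conclusion. What your route buys is an explanation of \emph{why} equality holds---it is precisely the selection rule saturating the sum-rule inequality---whereas the paper's inductive proof is more elementary and self-contained, avoiding the commutator machinery entirely and relying only on the explicit multiplicities.
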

\begin{proof}
First we note that
 \begin{equation}\label{P-N-difference-sphere}
  P_{N+1}(z)-P_N(z)=(z-\lambda_{N+1})\left(z-d-\frac{d+4}{d}\,\lambda_{N+1}\right)
\end{equation}
We prove the claim by induction. Since $\lambda_1=0$ and $\lambda_2=d$ and from the definition $P_1(z)= z(z-d)$ the assertion is true for $N=1$.
It is now sufficient to prove that $P_N(\lambda_N)=0$ implies $P_N(\lambda_{N+1})=0$ since then by  \eqref{P-N-difference-sphere}
we also have $P_{N+1}(\lambda_{N+1})=0$ and we conclude. Let $N\geq 1$ and suppose $P_N(\lambda_N)=0$. Then
\begin{equation*}
  P_{N}(\lambda_{N+1})=P_{N}(\lambda_{N+1})-P_{N}(\lambda_{N})
  =(\lambda_{N+1}-\lambda_{N})
  \left(N(\lambda_{N+1}+\lambda_{N}-d)-2\,\frac{d+2}{d}\sum_{j=1}^{N}\lambda_j\right).
\end{equation*}
If $\lambda_{N+1}=\lambda_{N}$ the claim holds. If $\lambda_{N+1}>\lambda_{N}$ then
\begin{equation*}
  N=\sum_{l=0}^{L}m_{l,d}
\end{equation*}
for some positive integer $L$ and $\lambda_{N}=L(L+d-1)$, $\lambda_{N+1}=(L+1)(L+d)$ (here $m_{l,d}$ denotes the multiplicity of the eigenvalue corresponding to the angular momentum $l$ on $\mathbb{S}^d$). We now prove that in this case
\begin{equation*}
  N(\lambda_{N+1}+\lambda_{N}-d)-2\,\frac{d+2}{d}\sum_{j=1}^{N}\lambda_j=0,
\end{equation*}
that is
\begin{equation*}
  NL(L+d)=\frac{d+2}{d}\sum_{l=0}^{L}l(l+d-1)m_{l,d}
\end{equation*}
or equivalently
\begin{equation}\label{sphere-polynomial-reccurence-formula}
 \sum_{l=0}^{L}\bigg((d+2)l(l+d-1)-dL(L+d)\bigg)m_{l,d}=0.
\end{equation}
It is immediate to check that \eqref{sphere-polynomial-reccurence-formula} follows from \eqref{S-d-R-1}.
\end{proof}
\begin{rem}
  Note that the identity $P_N(z)=Q_N(z)$ implies the Riesz-mean inequalities \eqref{R-2-R-1-inequality-sphere} and \eqref{R-2-diff-inequality-sphere}. We will prove this fact for other homogeneous spaces and derive a Riesz-mean inequality by means of a sum rule,  and therefore get sharp Weyl type bounds without applying a variational principle.
\end{rem}

Another application of $P_N(z)=Q_N(z)$ is the existence of a trace identity for the spectrum.

\begin{thm}\label{invariant_S}
 Let $\displaystyle \tilde{\lambda}_{(l)}=l(l+d-1)+\frac{d^2}{4}$ be the shifted energy levels of the Laplacian on $\mathbb{S}^d$. Then
 \begin{equation}\label{Sum-rule-S-d-2}
      \,L_{0,d}^{class}|\mathbb{S}^d| =
       \,\sum_{l\geq 0}\frac{2l+d}{d}\,\binom{d+l-1}{d-1}\left(\tilde{\lambda}_{(l+1)}^{-d/2}-\tilde{\lambda}_{(l)}^{-d/2}
+\frac{d}{4}(\tilde{\lambda}_{(l+1)}^{-1-d/2}+\tilde{\lambda}_{(l)}^{-1-d/2})(\lambda_{(l+1)}-{\lambda}_{(l)})\right).
 \end{equation}
\end{thm}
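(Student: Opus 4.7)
\textbf{Proof plan for Theorem \ref{invariant_S}.} The plan is to integrate the monotonicity identity for the ratio $R_2(z)/(z+d^2/4)^{2+d/2}$ over $[0,\infty)$, and to evaluate the left-hand side via Weyl's asymptotics and the right-hand side by explicit integration on each spectral gap $[\lambda_{(l)},\lambda_{(l+1)}]$.

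First I will translate the polynomial identity $P_N(z) = Q_N(z)$ of Theorem \ref{P=Q} into a closed-form expression for the derivative of the shifted Riesz ratio. A direct computation, using $R_1(z) = Nz - \sum_{j\leq N}\lambda_j$ and $R_2(z) = Nz^2 - 2(\sum_{j\leq N}\lambda_j)z + \sum_{j\leq N}\lambda_j^2$ on $[\lambda_N,\lambda_{N+1}]$, gives the identity
\begin{equation*}
\tfrac{d+4}{4}R_2(z) - (z+\tfrac{d^2}{4})R_1(z) = \tfrac{d}{4}P_N(z),
\end{equation*}
which is precisely the relation that yields the equivalence in \eqref{R-2-R-1-inequality-sphere}--\eqref{R-2-diff-inequality-sphere}. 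Differentiating $R_2(z)(z+d^2/4)^{-(2+d/2)}$ then produces, on each open interval $(\lambda_{(l)},\lambda_{(l+1)})$,
\begin{equation*}
\frac{d}{dz}\frac{R_2(z)}{(z+d^2/4)^{2+d/2}} = -\frac{d}{2}\,\frac{P_N(z)}{(z+d^2/4)^{3+d/2}} = -\frac{d}{2}\,\frac{N(l)(z-\lambda_{(l)})(z-\lambda_{(l+1)})}{(z+d^2/4)^{3+d/2}},
\end{equation*}
where in the second equality we invoked Theorem \ref{P=Q} and wrote $N(l) = \sum_{k=0}^{l}m_{k,d} = \frac{2l+d}{d}\binom{d+l-1}{d-1}$, the latter formula coming from \eqref{S-d-R-1}.

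Next I would integrate this relation from $0$ to $\infty$. The left-hand side telescopes: $R_2(0)=0$ and Weyl's law gives $\lim_{z\to\infty} R_2(z)(z+d^2/4)^{-(2+d/2)} = L_{2,d}^{class}|\mathbb{S}^d|$. On the right-hand side, a change of variable $u=z+d^2/4$ on each interval $[\lambda_{(l)},\lambda_{(l+1)}]$, with $a = \tilde\lambda_{(l)}$, $b = \tilde\lambda_{(l+1)}$ and $b-a = \lambda_{(l+1)}-\lambda_{(l)}$, reduces the problem to evaluating
\begin{equation*}
\int_a^b \frac{(u-a)(u-b)}{u^{3+d/2}}\,du.
\end{equation*}
Two successive integrations by parts (each boundary term of the first pass vanishing because $(u-a)(u-b)$ vanishes at the endpoints) yield
\begin{equation*}
\int_a^b \frac{(u-a)(u-b)}{u^{3+d/2}}du = -\frac{4}{(d+2)(d+4)}\left[(b-a)(a^{-1-d/2}+b^{-1-d/2}) + \tfrac{4}{d}(b^{-d/2}-a^{-d/2})\right].
\end{equation*}

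Finally, putting everything together and using $L_{2,d}^{class}/L_{0,d}^{class} = 8/((d+2)(d+4))$ (a direct consequence of \eqref{semiclassical-constants-Laplacian}), the prefactors of $\frac{8}{(d+2)(d+4)}$ cancel between the two sides, and the identity \eqref{Sum-rule-S-d-2} emerges with the correct multiplicities $N(l) = \frac{2l+d}{d}\binom{d+l-1}{d-1}$. The only real difficulty is bookkeeping: ensuring that the telescoping of the left-hand side is justified (absolute convergence of the resulting series on the right, which follows from the leading behavior $\tilde\lambda_{(l+1)}^{-d/2} - \tilde\lambda_{(l)}^{-d/2} = O(l^{-d-1})$ multiplied by $N(l) = O(l^{d-1})$, giving terms of order $O(l^{-2})$), and keeping the signs and factors of $d/4$ straight through the integration by parts. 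Beyond this, the argument is essentially a mechanical consequence of the algebraic identity $P_N(z) = Q_N(z)$, which is the main input.
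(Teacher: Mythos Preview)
Your proposal is correct and follows essentially the same route as the paper: rewrite $P_N=Q_N$ as the differential identity \eqref{diff-identity-sphere-1}, integrate over $[0,\infty)$ using Weyl's law for $R_2$, evaluate the integral on each gap $[\lambda_{(l)},\lambda_{(l+1)}]$ (the paper simply states the result \eqref{identity}, while you spell out the two integrations by parts), and then cancel the factor $8/((d+2)(d+4))$ via $L_{2,d}^{class}/L_{0,d}^{class}$. Your explicit convergence check is a small addition the paper omits, but otherwise the arguments coincide.
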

\begin{proof}
For all $z\in [\lambda_N,\lambda_{N+1}]$ from Theorem \ref{P=Q}, and from the identity 
$$
P(z)=-\frac{2}{d}(z+b)^{3+d/2}\frac{d}{dz}\frac{R_2(z)}{(z+b)^{2+d/2}},
$$
we get
\begin{equation}\label{diff-identity-sphere-1}
  \frac{d}{dz}\frac{R_2(z)}{(z+b)^{2+d/2}}=-\frac{d}{2}\frac{Q_N(z)}{(z+b)^{3+d/2}},
\end{equation}
where $b=\frac{d^2}{4}$. Integrating the right-hand side for $z\in[\lambda_N,\lambda_{N+1}]$ we get
\begin{equation}\label{identity}
      -\frac{d}{2}\int_{\lambda_{N}}^{\lambda_{N+1}}\frac{Q_N(z)}{(z+b)^{3+d/2}}\,dz 
      =\frac{8N}{(d+2)(d+4)}\left(\tilde{\lambda}_{N+1}^{-d/2}-\tilde{\lambda}_{N}^{-d/2}
+\frac{d}{4}(\tilde{\lambda}_{N+1}^{-1-d/2}+\tilde{\lambda}_{N}^{-1-d/2})({\lambda}_{N+1}-{\lambda}_{N})\right).
\end{equation}
where
$$
\tilde{\lambda}_N=\lambda_N+b=\lambda_N+\frac{d^2}{4}.
$$
Summing identity \eqref{identity} over all $N\in\mathbb N$, and using the fact the the integral of the left-hand side of \eqref{diff-identity-sphere-1} between $0$ and infinity equals $L_{2,d}^{class}|\mathbb{S}^d|$, we get
\begin{equation*}
  L_{2,d}^{class}|\mathbb{S}^d|=\sum_{N=1}^{\infty }\frac{8N}{(d+2)(d+4)}\bigg(\tilde{\lambda}_{N+1}^{-d/2}-\tilde{\lambda}_{N}^{-d/2}
+\frac{d}{4}(\tilde{\lambda}_{N+1}^{-1-d/2}+\tilde{\lambda}_{N}^{-1-d/2})({\lambda}_{N+1}-{\lambda}_{N})\bigg).
\end{equation*}
Since non-zero contribution only appear when $\displaystyle N=\sum_{j=0}^{l}m_{j,d}=\frac{2l+d}{d}\binom{d+l-1}{d-1}$ for all $l\geq 0$, and since $\frac{(d+2)(d+4)}{8} \,L_{2,d}^{class}=L_{0,d}^{class}$, the result follows.
\end{proof}
\begin{rem}
When the dimension $d$ is even the expression in \eqref{Sum-rule-S-d-2} is easily simplified. For example when $d=2$,  \eqref{Sum-rule-S-d-2} yields
\begin{equation*}
  1=\sum_{l\geq 0}\frac{(l+1)^2}{2}\,\frac{(\lambda_{(l+1)}-\lambda_{(l)})^3}{\tilde{\lambda}_{(l)}^2\tilde{\lambda}_{(l+1)}^2}=
  \sum_{l\geq 0}\frac{4(l+1)^5}{(l^2+l+1)^2(l^2+3l+3)^2}.
\end{equation*}
For $d=1$ we get the identity
\begin{equation*}
  1=\sum_{l\geq 0}\frac{2l+1}{8}\,\frac{(\lambda_{(l+1)}^{1/2}-\lambda_{(l)}^{1/2})^3
  (\lambda_{(l)}+\lambda_{(l+1)}+3\lambda_{(l)}^{1/2}\lambda_{(l+1)}^{1/2})}{\tilde{\lambda}_{(l)}^{3/2}\tilde{\lambda}_{(l+1)}^{3/2}},
\end{equation*}
which can be made more explicit being $\lambda_{(l)}=l^2$.
\end{rem}

\subsection{The other compact symmetric spaces of rank one}

We consider now sum rules for the Laplacian eigenvalues on  compact two-point homogeneous spaces. This will allow us to prove Weyl sharp bounds on Riesz-means without applying any variational principle.


The classification of compact two-point homogeneous spaces is well-known. In particular, by the classical result \cite{wang_two_points_52} these spaces coincide with the compact symmetric spaces of rank one, and can be listed as follows,
\begin{enumerate}[i)]
\item $M^d=\mathbb S^d$, $d=1,2,3,...$ (i.e., the unit sphere);
\item $M^d=\mathbb P^d(\mathbb R)$, $d=2,3,4,...$ (i.e., the real projective space);
\item $M^d=\mathbb P^d(\mathbb C)$, $d=4,6,8,...$ (i.e., the complex projective space);
\item $M^d=\mathbb P^d(\mathbb H)$, $d=8,12,16,...$ (i.e., the quaternion projective space);
\item $M^d=\mathbb P^d(Cay)$, $d=16$ (i.e., the Cayley projective space).
\end{enumerate}

We resume here a few properties of the spectrum of the Laplacian on $M^d$. For the proofs and for more details we refer to \cite{he2,he1}.

\begin{prop}\label{prop:EigHS}
Let $M^d$ a compact two-point homogeneous space of dimension $d$, that is, one of the spaces {\rm i)-v)}. The energy levels of the Laplacian on $M^d$ are given by the numbers
\begin{equation*}
\lambda_{(l)}=
\begin{cases}
l(l+d-1) & {\rm for\ } M^d=\mathbb S^d\\
2l(2l+d-1) & {\rm for\ } M^d=\mathbb P^d(\mathbb R)\\
\frac{l(2l+d)}{2} & {\rm for\ } M^d=\mathbb P^d(\mathbb C)\\
\frac{l(2l+d+2)}{2} & {\rm for\ } M^d=\mathbb P^d(\mathbb H)\\
\frac{l(2l+d+6)}{2} & {\rm for\ } M^d=\mathbb P^d(Cay)
\end{cases}
\end{equation*}
for $l\in\mathbb N$. Each eigenvalue corresponding to an energy level $\lambda_{(l)}$ has multiplicity $m_{l,d}$ given by
\begin{equation*}
m_{l,d}=
\begin{cases}
\frac{2l+d-1}{l}\binom{d+l-2}{d-1} & {\rm for\ } M^d=\mathbb S^d\\
\frac{4l+d-1}{2l}\binom{d+2l-2}{d-1} & {\rm for\ }  M^d=\mathbb P^d(\mathbb R)\\
\frac{d+4l}{d}\binom{\frac{d}{2}+l-1}{\frac{d}{2}-1}^2 & {\rm for\ } M^d=\mathbb P^d(\mathbb C)\\
\frac{4l+d+2}{2l(l+1)}\binom{\frac{d}{2}+l-1}{\frac{d}{2}-1}\binom{\frac{d}{2}+l}{\frac{d}{2}+1} & {\rm for\ } M^d=\mathbb P^d(\mathbb H)\\
\frac{3(4l+d+6)}{l(l+1)(l+2)(l+3)}\binom{\frac{d}{2}+l-1}{\frac{d}{2}-1}\binom{\frac{d}{2}+l+2}{\frac{d}{2}+3} & {\rm for\ }  M^d=\mathbb P^d(Cay)
\end{cases}
\end{equation*}
\end{prop}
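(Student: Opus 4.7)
The plan is to invoke the classification of compact symmetric spaces of rank one due to Wang \cite{wang_two_points_52}, which realizes each $M^d$ in the list as a compact homogeneous space $G/K$: $\mathbb{S}^d=SO(d+1)/SO(d)$, $\mathbb{P}^d(\mathbb R)=SO(d+1)/O(d)$, $\mathbb{P}^d(\mathbb C)=SU(d/2+1)/S(U(d/2)\times U(1))$, $\mathbb{P}^d(\mathbb H)=Sp(d/4+1)/(Sp(d/4)\times Sp(1))$, and $\mathbb{P}^{16}(\mathrm{Cay})=F_4/\mathrm{Spin}(9)$. With a $G$-invariant metric in the canonical normalization, the Laplace-Beltrami operator on $M^d$ agrees, up to a constant, with the image of the Casimir element of $\mathfrak g$ acting on $L^2(M^d)$. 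The Peter-Weyl theorem together with Frobenius reciprocity then yields a Hilbert-space decomposition of $L^2(M^d)$ indexed by the set of equivalence classes of irreducible unitary representations of $G$ that admit a nonzero $K$-invariant vector (the class-one, or \emph{spherical}, representations); each such $\pi$ contributes a single eigenspace of $-\Delta$ with eigenvalue the Casimir scalar $c(\pi)$ and multiplicity $\dim V_\pi$.

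For $\mathbb{S}^d$ the spherical representations are the spaces $\mathcal H_l$ of harmonic polynomials of degree $l$ restricted to the sphere, and one recovers $c(\mathcal H_l)=l(l+d-1)$ together with the multiplicity $\binom{d+l}{l}-\binom{d+l-2}{l-2}$, which reorganizes into the announced expression. For $\mathbb{P}^d(\mathbb R)=\mathbb S^d/\{\pm\mathrm{id}\}$, functions on the projective space are exactly the even functions on $\mathbb S^d$, so only spherical harmonics of even degree survive; substituting $l\mapsto 2l$ in the sphere formulas produces both the energy levels $2l(2l+d-1)$ and the multiplicities.

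For $\mathbb{P}^d(\mathbb C)$, $\mathbb{P}^d(\mathbb H)$ and $\mathbb{P}^{16}(\mathrm{Cay})$ the rank-one structure of the restricted root system forces the class-one representations to be parametrized by a single non-negative integer $l$ via the Cartan-Helgason theorem. One identifies the highest weight $\lambda_l$ in each case, computes the Casimir eigenvalue by the formula $c(\pi_l)=\langle\lambda_l+2\rho,\lambda_l\rangle$ using the Killing-form-induced inner product on the weight lattice, and applies Weyl's dimension formula to extract $\dim V_{\pi_l}$. Matching each output against the desired closed form yields the stated energy levels and multiplicities.

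The main obstacle is the combinatorial reorganization of the Weyl dimension product into the compact binomial expressions that appear in the statement. The complex and quaternionic cases reduce to manageable factorial identities since the underlying root systems are of classical type $A$ and $C$ respectively, but the Cayley plane $F_4/\mathrm{Spin}(9)$ is the most delicate: one must track which of the twenty-four positive roots of $F_4$ contribute nontrivially to the weighted product over the $\lambda_l$-direction, and verify that the resulting expression indeed collapses to the product of two binomial coefficients divided by $l(l+1)(l+2)(l+3)$. All outcomes can be cross-checked against the classical tables compiled in Helgason \cite{he2,he1}.
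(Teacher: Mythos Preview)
The paper does not actually prove this proposition: immediately before the statement the authors write ``For the proofs and for more details we refer to \cite{he2,he1}'', and no argument is given afterward. The proposition is quoted as a known fact from Helgason's work on the Radon transform and groups and geometric analysis.

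Your outline is essentially the standard derivation one finds in those references: realize each $M^d$ as $G/K$, decompose $L^2(G/K)$ via Peter--Weyl and Frobenius reciprocity into class-one representations, parametrize the latter by a single integer $l$ via Cartan--Helgason (rank one), read off the eigenvalue from the Casimir scalar $\langle\lambda_l+2\rho,\lambda_l\rangle$, and compute the multiplicity from Weyl's dimension formula. This is correct in spirit and matches what the cited sources do; your remark that $\mathbb P^d(\mathbb R)$ reduces to the even-degree sector of $\mathbb S^d$ is also the quickest way to handle that case. The only caveat is that your proposal remains a plan rather than a proof: the actual verification that the Weyl dimension products collapse to the stated binomial expressions (especially for $F_4/\mathrm{Spin}(9)$) is a genuine computation you have not carried out, and the paper's authors chose to delegate exactly that computation to the literature rather than reproduce it. So your approach is the right one and aligns with the references the paper cites, but if the goal is a self-contained proof you would still need to execute the root-system bookkeeping explicitly.
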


The identity $P_N(z)=Q_N(z)$
holds for all compact homogeneous spaces listed above in the following sense. Let $\lambda_j$ denote the eigenvalues of the Laplacian on $M^d$ (enumerated in increasing order and counted with their multiplicity). Let
\begin{equation*}
    P_N(z)  =\sum_{j=1}^{N}(z-\lambda_j)\left(z-\lambda-\frac{d+4}{d}\,\lambda_j\right) 
      =Nz^2-2\,\frac{d+2}{d}\left(\sum_{j=1}^{N}\lambda_j\right)z-\lambda z+ \frac{d+4}{d}\,\sum_{j=1}^{N}\lambda^2_j +\lambda\sum_{j=1}^{N}\lambda_j,.
\end{equation*}
Here $\lambda$ denotes the first non-trivial Laplacian eigenvalue of the homogeneous space (that is $\lambda_{(1)}=\lambda_2$), and, as before
\begin{equation*}
  Q_N(z)=N(z-\lambda_N)(z-\lambda_{N+1}).
\end{equation*}
Then one can prove $P_N(z)=Q_N(z)$ for all $\lambda_N\leq z \leq \lambda_{N+1}$ by induction exactly as it has been done in the previous subsection, and all the consequences, such as Weyl sharp bounds on $R_2$, hold for any of the spaces i)-v).

\begin{thm}
Let $R_2$ be the second Riesz-mean of the Laplacian on $M^d$, where $M^d$ is one of the spaces i)-v). Then, for all $z\geq 0$
$$
L_{2,d}^{class}|M^d|z^{2+\frac{d}{2}}\leq R_2(z)\leq L_{2,d}^{class}|M^d|\left(z+\frac{d}{4}\lambda\right)^{2+\frac{d}{2}},
$$
where $\lambda=\lambda_{(1)}$ is the first positive eigenvalue of the Laplacian on $M^d$. 
\end{thm}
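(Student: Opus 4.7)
My plan is to extract the differential content of the sum rule $P_N(z)=Q_N(z)$ and combine it with Weyl's law. On each interval $[\lambda_N,\lambda_{N+1}]$, $R_1$ and $R_2$ are explicit polynomials in $z$ with coefficients determined by the partial spectral sums; substituting $R_1(z)=Nz-\sum_{j=1}^N\lambda_j$ and $R_2(z)=Nz^2-2z\sum_{j=1}^N\lambda_j+\sum_{j=1}^N\lambda_j^2$ into the definition of $P_N$ gives the key identity
\[
\frac{d+4}{d}R_2(z)-\left(\frac{4z}{d}+\lambda\right)R_1(z)=Q_N(z)=N(z-\lambda_N)(z-\lambda_{N+1}).
\]
Since $R_2$ is $C^1$ with $R_2'=2R_1$, this controls the derivatives of the ratios of interest: setting $b=d\lambda/4$,
\[
\frac{d}{dz}\frac{R_2(z)}{(z+d\lambda/4)^{2+d/2}}=-\frac{d}{2}\,\frac{Q_N(z)}{(z+d\lambda/4)^{3+d/2}},\qquad \frac{d}{dz}\frac{R_2(z)}{z^{2+d/2}}=-\frac{d}{2}\,\frac{Q_N(z)+\lambda R_1(z)}{z^{3+d/2}}.
\]

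The upper bound is now immediate: $Q_N(z)\leq 0$ on $[\lambda_N,\lambda_{N+1}]$, so the first derivative displayed is $\geq 0$. Patching across intervals using the $C^1$ regularity of $R_2$, the ratio $R_2(z)/(z+d\lambda/4)^{2+d/2}$ is non-decreasing on $[0,\infty)$, and its limit at infinity equals $L_{2,d}^{class}|M^d|$ by Weyl's law, whence $R_2(z)\leq L_{2,d}^{class}|M^d|(z+d\lambda/4)^{2+d/2}$ for every $z\geq 0$. For the lower bound I would show that $R_2(z)/z^{2+d/2}$ is non-increasing, which by the second derivative formula amounts to proving $F_N(z):=Q_N(z)+\lambda R_1(z)\geq 0$ on each $[\lambda_N,\lambda_{N+1}]$. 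The polynomial $F_N$ is a convex quadratic in $z$; at the endpoints $Q_N=0$ and $\lambda R_1\geq 0$, so the only nontrivial case is when the vertex $z_0=(\lambda_N+\lambda_{N+1}-\lambda)/2$ falls inside the interval. Comparing the $z$-coefficients in $P_N=Q_N$ gives the trace identity $\sum_{j=1}^N\lambda_j=dNz_0/(d+2)$, whence $R_1(z_0)=2Nz_0/(d+2)$ and
\[
\frac{F_N(z_0)}{N}=\frac{\lambda^2-(\lambda_{N+1}-\lambda_N)^2}{4}+\frac{2\lambda z_0}{d+2}.
\]

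The hard part is verifying $F_N(z_0)\geq 0$ uniformly in $N$ for each of the five families in Proposition \ref{prop:EigHS}. The non-degenerate intervals correspond to consecutive energy levels $\lambda_N=\lambda_{(L)}$, $\lambda_{N+1}=\lambda_{(L+1)}$, and in all five families $\lambda_{(l)}$ is a rational quadratic in $l$, so plugging in the explicit formulas reduces $F_N(z_0)\geq 0$ to an algebraic inequality in the energy-level index $L$ with non-negative dominant term, to be verified space by space. For $\mathbb S^d$ one recovers the identity $F_N(z_0)/N=\frac{d-2}{d+2}L(L+d)\geq 0$ (for $d\geq 2$) already exhibited in the sphere subsection, and entirely analogous computations go through for the four projective families using their respective $\lambda_{(l)}$ and gap formulas. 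Once this positivity is established, $R_2(z)/z^{2+d/2}$ is non-increasing on $[0,\infty)$, and matching its limit at infinity to $L_{2,d}^{class}|M^d|$ via Weyl's law yields the required lower bound $R_2(z)\geq L_{2,d}^{class}|M^d|\,z^{2+d/2}$.
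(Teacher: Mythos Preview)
Your strategy is exactly the one the paper has in mind: rewrite $P_N=Q_N$ as
\[
\frac{d+4}{d}R_2(z)-\Bigl(\frac{4z}{d}+\lambda\Bigr)R_1(z)=Q_N(z),
\]
differentiate the two natural ratios, and let Weyl's law finish the job. Your argument for the upper bound is complete and correct, and your reduction of the lower bound to the positivity of $F_N(z_0)$ (together with the trace identity $\sum_{j\le N}\lambda_j=\frac{d}{d+2}Nz_0$) is precisely what the paper does for the sphere.

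The genuine gap is the sentence ``entirely analogous computations go through for the four projective families.'' They do not all go through. Writing the energy levels uniformly as $\lambda_{(l)}=al^2+bl$ and carrying out your own formula gives
\[
\frac{F_N(z_0)}{N}=\frac{2b-ad}{d+2}\,z_0 .
\]
This is $\ge 0$ for $\mathbb S^d$ with $d\ge 2$ ($a=1$, $b=d-1$), for $\mathbb P^d(\mathbb C)$ (equality, $a=1$, $b=d/2$), for $\mathbb P^d(\mathbb H)$ ($a=1$, $b=d/2+1$), and for $\mathbb P^{16}(\mathrm{Cay})$ ($a=1$, $b=11$, $d=16$); in those cases your monotonicity argument is valid. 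But for $\mathbb P^d(\mathbb R)$ one has $a=4$, $b=2(d-1)$, hence $2b-ad=-4<0$, so $F_N(z_0)<0$ for every nontrivial gap and $R_2(z)/z^{2+d/2}$ is \emph{not} monotone. This is not merely a failure of the method: the stated lower bound is actually false there. For $\mathbb P^2(\mathbb R)$ (energy levels $0,6,20,\dots$ with multiplicities $1,5,9,\dots$ and $L_{2,2}^{class}|\mathbb P^2(\mathbb R)|=\tfrac16$) one has $R_2(7)=49+5\cdot 1=54<\tfrac{343}{6}$. The same obstruction arises for $\mathbb S^1$, where $F_N(z_0)/N=-\tfrac13 L(L+1)<0$. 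So you should not claim the lower bound for the full list i)--v); it holds (by your argument) for $\mathbb S^d$ with $d\ge 2$ and for the complex, quaternionic, and Cayley projective spaces, but fails for $\mathbb S^1$ and $\mathbb P^d(\mathbb R)$.
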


Since in the proof of Theorem \ref{invariant_S} only the relation $P_N(z)=Q_N(z)$ is used, an analogous result holds for any of the space i)-v).

\begin{thm}
 Let $\displaystyle \tilde{\lambda}_{(l)}=\lambda_{(l)}+\frac{d\lambda}{4}$ be the shifted energy levels of the Laplacian on $M^d$, with multiplicities $m_{l,d}$, where $M^d$ is one of the spaces i)-v). Then
 \begin{equation*}
      L_{0,d}^{class}|M^d| =
       \sum_{l\geq 0}\sum_{j=0}^lm_{j,d}\left(\tilde{\lambda}_{(l+1)}^{-d/2}-\tilde{\lambda}_{(l)}^{-d/2}
+\frac{d}{4}(\tilde{\lambda}_{(l+1)}^{-1-d/2}+\tilde{\lambda}_{(l)}^{-1-d/2})(\lambda_{(l+1)}-{\lambda}_{(l)})\right).
 \end{equation*}
\end{thm}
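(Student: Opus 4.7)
The plan is to mimic the proof of Theorem~\ref{invariant_S} verbatim, with the sphere-specific shift $d^2/4$ replaced by $b:=d\lambda/4$, where $\lambda=\lambda_{(1)}$ is the smallest positive Laplacian eigenvalue on $M^d$. Note that for $\mathbb S^d$ one has $\lambda=d$ and this choice recovers the shift $d^2/4$ used in Theorem~\ref{invariant_S}.

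The first step is to promote the polynomial identity $P_N(z)=Q_N(z)$ (which, as observed in the paragraph preceding the statement, holds on every spectral gap $[\lambda_N,\lambda_{N+1}]$ for each space in i)--v)) to a differential identity for the Riesz-mean $R_2$. Expanding $P_N$ in terms of $R_1(z)=Nz-\sum_{j=1}^N\lambda_j$ and $R_2(z)=Nz^2-2z\sum_{j=1}^N\lambda_j+\sum_{j=1}^N\lambda_j^2$ on each gap, and matching coefficients, one finds
\[
P_N(z)=\frac{d+4}{d}\,R_2(z)-\frac{4}{d}\,(z+b)\,R_1(z),\qquad b=\frac{d\lambda}{4}.
\]
This is precisely the place where the choice $b=d\lambda/4$ is forced. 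Combined with $R_2'(z)=2R_1(z)$ and $P_N=Q_N$, it yields the gap-by-gap differential identity
\[
\frac{d}{dz}\frac{R_2(z)}{(z+b)^{2+d/2}}=-\frac{d}{2}\,\frac{Q_N(z)}{(z+b)^{3+d/2}}.
\]

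The second step is to integrate this identity on $(0,+\infty)$. The left-hand side telescopes to
\[
\lim_{z\to\infty}\frac{R_2(z)}{(z+b)^{2+d/2}}-\lim_{z\to 0^+}\frac{R_2(z)}{(z+b)^{2+d/2}}=L_{2,d}^{class}|M^d|,
\]
thanks to the semiclassical asymptotics $R_2(z)\sim L_{2,d}^{class}|M^d|z^{2+d/2}$ and the trivial value $R_2(0)=0$. On the right-hand side, the integrand is only nontrivial on the energy-level gaps $[\lambda_{(l)},\lambda_{(l+1)}]$, on which $N=\sum_{j=0}^l m_{j,d}$ is constant and $Q_N(z)=N(z-\lambda_{(l)})(z-\lambda_{(l+1)})$. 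A direct elementary antiderivative on each such interval produces
\[
-\frac{d}{2}\int_{\lambda_{(l)}}^{\lambda_{(l+1)}}\frac{Q_N(z)}{(z+b)^{3+d/2}}\,dz=\frac{8N}{(d+2)(d+4)}\Bigl[\tilde\lambda_{(l+1)}^{-d/2}-\tilde\lambda_{(l)}^{-d/2}+\frac{d}{4}\bigl(\tilde\lambda_{(l+1)}^{-1-d/2}+\tilde\lambda_{(l)}^{-1-d/2}\bigr)(\lambda_{(l+1)}-\lambda_{(l)})\Bigr],
\]
exactly as in the derivation of \eqref{identity}. Using the relation $\frac{(d+2)(d+4)}{8}L_{2,d}^{class}=L_{0,d}^{class}$ and summing over $l\ge 0$ gives the claimed identity.

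The main obstacle is purely computational: verifying the closed-form antiderivative for $Q_N/(z+b)^{3+d/2}$ on an arbitrary gap, where the combinatorial coincidence between the factorisation $(z-\lambda_{(l)})(z-\lambda_{(l+1)})$ and the rational weight $(z+b)^{-3-d/2}$ (tuned via $b=d\lambda/4$) is what produces the symmetric two-point expression in the statement. A secondary issue is that the identity $P_N=Q_N$, merely asserted before the theorem, must in fact be established for each of the spaces i)--v); this is handled by the same induction as in Theorem~\ref{P=Q}, replacing the sphere-specific recurrence \eqref{sphere-polynomial-reccurence-formula} by its analogue for $M^d$, which reduces to a polynomial identity on the eigenvalues and multiplicities of Proposition~\ref{prop:EigHS}. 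This last step is the only point where the structure of each space genuinely intervenes, and it can be verified case by case using the explicit formulas listed in Proposition~\ref{prop:EigHS}.
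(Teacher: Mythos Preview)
Your proof is correct and follows essentially the same route as the paper, which simply remarks that the proof of Theorem~\ref{invariant_S} uses only the identity $P_N(z)=Q_N(z)$ and therefore carries over verbatim with the shift $b=d^2/4$ replaced by $b=d\lambda/4$. Your explicit derivation of $P_N(z)=\frac{d+4}{d}R_2(z)-\frac{4}{d}(z+b)R_1(z)$ and of the differential identity $\frac{d}{dz}\bigl(R_2(z)(z+b)^{-2-d/2}\bigr)=-\frac{d}{2}\,Q_N(z)(z+b)^{-3-d/2}$ makes transparent why the choice $b=d\lambda/4$ is forced, and your observation that the gap integral \eqref{identity} depends on $b$ only through the shifted endpoints $\tilde\lambda_{(l)}$ is exactly what makes the argument transfer unchanged.
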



\appendix
\section{Taylor and asymptotic expansions}\label{app_A}

We collect in this appendix a few Taylor expansions for real-valued functions and also asymptotic expansions for Gamma functions, which are used in the proofs of Theorems \ref{R1-d-sphere}, \ref{ND}, and \ref{R1-d-hemi-N}. We will also recall a formula for sums of binomial coefficients and an example of a related computation.


For real $a,b,x$ let $P_{a,b}$ the quadratic polynomial in $x$ defined by
\begin{equation*}
  P_{a,b}(x)=1+ax+bx^2
\end{equation*}
We have the following
\begin{lemma}\label{lem_1}
As $x\to 0$ we have
\begin{equation}\label{1-over-P-a-b-of-x}
  \frac{1}{ P_{a,b}(x)}= P_{-a,a^2-b}(x)+O(x^3).
\end{equation}
For $c\in\mathbb R$, as $x\to 0$ we have
\begin{equation}\label{P-a-b-of-x-combined}
   P_{a,b}\left(\frac{x}{1+cx}\right)= P_{a,b-ac}(x)+O(x^3).
\end{equation}

\end{lemma}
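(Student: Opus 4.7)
Both identities are routine truncated Taylor expansions, so the plan is simply to perform the expansions and compare coefficients up to order $x^2$, with all higher contributions absorbed into the $O(x^3)$ remainder.

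For \eqref{1-over-P-a-b-of-x}, I would set $u = ax + bx^2$ and use the geometric-series expansion $\frac{1}{1+u} = 1 - u + u^2 + O(u^3)$, which is valid for $x$ small. Since $u = O(x)$, we have $u^2 = a^2 x^2 + O(x^3)$ and $u^3 = O(x^3)$. Collecting terms of order $\leq 2$ gives
\[
\frac{1}{P_{a,b}(x)} = 1 - (ax + bx^2) + a^2 x^2 + O(x^3) = 1 - a x + (a^2 - b)\,x^2 + O(x^3),
\]
which is exactly $P_{-a,\,a^2-b}(x) + O(x^3)$.

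For \eqref{P-a-b-of-x-combined}, I would first expand the inner argument $y := x/(1+cx) = x - c x^2 + O(x^3)$, using the same geometric-series trick. Squaring and keeping only terms of order $\leq 2$ gives $y^2 = x^2 + O(x^3)$. Substituting into $P_{a,b}(y) = 1 + ay + by^2$ then yields
\[
P_{a,b}\!\left(\frac{x}{1+cx}\right) = 1 + a(x - cx^2) + b x^2 + O(x^3) = 1 + a x + (b - ac)\,x^2 + O(x^3),
\]
which coincides with $P_{a,\,b-ac}(x) + O(x^3)$.

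There is no real obstacle here: both identities follow from the two basic expansions $(1+u)^{-1} = 1 - u + u^2 + O(u^3)$ and $(1+cx)^{-1} = 1 - cx + O(x^2)$, together with a careful bookkeeping of which monomials contribute below order $x^3$. The only mild care needed is to verify that the neglected cross-terms (such as $abx^3$ in the first expansion, or $ac^2 x^3$ and the $y^2$ cross-term in the second) are all genuinely $O(x^3)$, which is immediate from the explicit forms above.
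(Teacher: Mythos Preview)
Your proof is correct and complete. The paper does not actually supply a proof of this lemma; it is stated in Appendix~\ref{app_A} as a routine Taylor expansion and used without further justification, so your argument via the geometric-series expansion and explicit bookkeeping of terms up to order $x^2$ is exactly the intended verification.
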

Note that the above expansions remain valid if we add an $O(x^3)$-term to $P_{a,b}(x)$. Moreover, we have also the following
\begin{lemma}\label{lem_2}
For any positive integer $n$ and for $a_j,b_j\in\mathbb R$, $j=1,\ldots,n$, let $A=\sum_{j=1}^{n} a_j$, $B=\sum_{j=1}^{n} b_j$ and $C=\sum_{j=1}^{n}\sum_{i=1}^{j-1} a_i a_j=\frac{1}{2}\left(A^2-\sum_{j=1}^{n}a_j^2\right)$.
Then
\begin{equation}\label{products-of-P-a-b-of-x}
  \prod_{j=1}^{n}P_{a_j,b_j}(x)= P_{A,B+C}(x)+O(x^3)
\end{equation}
\end{lemma}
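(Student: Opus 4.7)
The plan is to proceed by induction on the number of factors $n$. For the base case $n = 1$, the claim reduces to $P_{a_1, b_1}(x) = P_{A, B+C}(x)$ with $A = a_1$, $B = b_1$, and $C = 0$ by the empty-sum convention, which is trivially true (in fact an equality without the $O(x^3)$ error).

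For the inductive step, setting $A_{n-1} = \sum_{j=1}^{n-1} a_j$, $B_{n-1} = \sum_{j=1}^{n-1} b_j$, and $C_{n-1} = \sum_{1 \leq i < j \leq n-1} a_i a_j$, the inductive hypothesis gives
\begin{equation*}
\prod_{j=1}^{n-1} P_{a_j, b_j}(x) = 1 + A_{n-1} x + (B_{n-1} + C_{n-1}) x^2 + O(x^3).
\end{equation*}
Multiplying by $P_{a_n, b_n}(x) = 1 + a_n x + b_n x^2$ and collecting powers of $x$ up to order two, one obtains
\begin{equation*}
1 + (A_{n-1} + a_n) x + (B_{n-1} + b_n + C_{n-1} + a_n A_{n-1}) x^2 + O(x^3).
\end{equation*}
Matching this with $P_{A, B+C}(x) + O(x^3)$ reduces to verifying the recursions $A = A_{n-1} + a_n$ and $B = B_{n-1} + b_n$, which are immediate, together with $C = C_{n-1} + a_n A_{n-1}$, which follows by isolating the $j = n$ summand in the double sum defining $C$.

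The argument is essentially bookkeeping and I do not anticipate any genuine obstacle; the only point worth verifying carefully is the recursion for $C$. Alternatively, one could bypass the induction and expand the product directly: the coefficient of $x^2$ splits into a \emph{diagonal} contribution, where one factor supplies its $b_j x^2$ term and all others supply $1$, yielding $B$, and an \emph{off-diagonal} contribution, where two distinct factors each supply their linear term, yielding $C$ via the identity $\sum_{i<j} a_i a_j = \frac{1}{2}(A^2 - \sum_j a_j^2)$ already recorded in the statement.
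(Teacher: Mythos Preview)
Your proof is correct. The paper states this lemma without proof, treating it as an elementary Taylor expansion fact; your inductive argument (and the alternative direct expansion you sketch) fills in the routine verification cleanly.
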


We shall need the following expansions of Gamma, power-type and exponential functions (see e.g., \cite[Chapter 5]{nist}).
\begin{lemma}\label{lem_3}
The following asymptotic expansions hold:
\begin{enumerate}[i)]
\item
\begin{equation*}
  \Gamma(x)=\sqrt{2\pi}\,x^{x-1/2}e^{-x}\left(1+\frac{1}{12x}+\frac{1}{288x^2}+O(x^{-3})\right)
  =\sqrt{2\pi}\,x^{x-1/2}e^{-x}\left(P_{\frac{1}{12},\frac{1}{288}}(x^{-1})+O(x^{-3})\right)
\end{equation*}
as $x\to\infty$.
\item 
\begin{equation*}
  e^{-a}(1+ax)^{1/x}=1-\frac{a^2}{2}\,x+a^3(\frac{1}{3}+\frac{a}{8})\,x^2+O(x^3).
\end{equation*}
as $x\to 0$.
\item
For any $p>0$ 
\begin{equation*}
  (\sqrt{x+a^2}-b)^{p}=x^{p/2}\left(1-pbx^{-1/2}+p\,\frac{(p-1)b^2+a^2}{2}\,x^{-1}+O(x^{-3/2})\right)
\end{equation*}
as $x\to\infty$.
\end{enumerate}
\end{lemma}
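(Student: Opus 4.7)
The plan is to establish each of the three expansions by elementary Taylor and asymptotic series manipulations. None of the three presents a serious conceptual obstacle; the only challenge is bookkeeping the error terms at the precision required by the statements, and checking that the remainder survives the nonlinear operations (exponentiation and noninteger power) that convert an expansion of a logarithm/base into an expansion of the quantity itself.

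For part i), I would invoke the classical Stirling--Binet asymptotic series
\[
\log\Gamma(x) = \bigl(x-\tfrac{1}{2}\bigr)\log x - x + \tfrac{1}{2}\log(2\pi) + \sum_{k=1}^{K}\frac{B_{2k}}{2k(2k-1)}\,x^{1-2k} + O(x^{-2K-1}),
\]
obtainable from the Euler--Maclaurin formula applied to $\sum\log n$ or from the Abel--Plana integral representation of $\log\Gamma$. Using $B_{2}=1/6$, the $k=1$ term contributes $1/(12x)$, and crucially the log-expansion contains no $x^{-2}$ term. Exponentiating $1/(12x)+O(x^{-3})$ via $e^{u}=1+u+u^{2}/2+O(u^{3})$ produces the factor $1+1/(12x)+1/(288x^{2})+O(x^{-3})$, which combines with $\sqrt{2\pi}\,x^{x-1/2}e^{-x}$ to yield the stated form.

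For part ii), I would take the logarithm of the left-hand side and expand:
\[
\log\bigl(e^{-a}(1+ax)^{1/x}\bigr) = -a + \frac{1}{x}\log(1+ax) = -\frac{a^{2}}{2}\,x + \frac{a^{3}}{3}\,x^{2} - \frac{a^{4}}{4}\,x^{3} + O(x^{4}),
\]
the constant term cancelling by design. Exponentiating and retaining terms through order $x^{2}$ gives
\[
1 - \frac{a^{2}}{2}\,x + \left(\frac{a^{3}}{3} + \frac{a^{4}}{8}\right)x^{2} + O(x^{3}) = 1 - \frac{a^{2}}{2}\,x + a^{3}\!\left(\frac{1}{3}+\frac{a}{8}\right)x^{2} + O(x^{3}),
\]
where the $a^{4}/8\cdot x^{2}$ contribution comes from $\tfrac{1}{2}(-a^{2}x/2)^{2}$ in the expansion of the exponential.

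For part iii), I would factor $x^{p/2}$ out of $(\sqrt{x+a^{2}}-b)^{p}$, rewriting it as $x^{p/2}\bigl(\sqrt{1+a^{2}/x}-bx^{-1/2}\bigr)^{p}$. Expanding the inner square root yields $\sqrt{1+a^{2}/x}=1+a^{2}/(2x)+O(x^{-2})$, so the bracket becomes $1+u$ with $u=-bx^{-1/2}+a^{2}/(2x)+O(x^{-3/2})$. Applying the binomial series $(1+u)^{p}=1+pu+p(p-1)u^{2}/2+O(u^{3})$, the $x^{-1/2}$ coefficient is $-pb$ while the $x^{-1}$ coefficient assembles as
\[
p\cdot\frac{a^{2}}{2} + \frac{p(p-1)}{2}\,b^{2} = p\,\frac{(p-1)b^{2}+a^{2}}{2},
\]
and all remaining contributions are $O(x^{-3/2})$. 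The only subtlety, negligible in practice, is justifying the binomial expansion for noninteger $p$, which follows since $|u|\to 0$ as $x\to\infty$; the uniform error bound then propagates to $x^{p/2}$ times the stated expression.
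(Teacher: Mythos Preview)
Your proposal is correct; each of the three derivations is sound and the bookkeeping of error terms is accurate. The paper itself does not prove this lemma at all: it simply states the three expansions and refers the reader to \cite[Chapter 5]{nist} (the NIST handbook). So your explicit derivations---Stirling--Binet plus exponentiation for i), log-then-exponentiate for ii), factor out $x^{p/2}$ and apply the binomial series for iii)---go well beyond the paper's treatment, which regards these as standard reference material rather than something to be proved.
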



\section{Duality in the averaged variational principle for estimating averages of increasing sequences}\label{app_C}

In this appendix we discuss a duality aspect in the averaged variational principle, which reflects in a duality principle for Berezin-Li-Yau and Kr\"oger bounds on sums.


Let  $(a_j)_j,(b_j)_j$ be two sequences of non-negative increasing numbers. When applying the averaged variational principle (see Theorem \ref{thm:AVP}) we show typically an inequality of the following form: for all $z\in[a_N,a_{N+1}]$
\begin{equation}\label{AVP-inequality-1}
\sum_{k=1}^N(z-a_k)\geq p \sum_{j\in J}(z-b_j)
\end{equation}
where $N\in\mathbb N\setminus\{0\}$, $J\subset\mathbb N\setminus\{0\}$ are arbitrary, and $p>0$ is some positive constant. Let
\begin{equation*}
  R_1^{(a)}(z)=\sum_{k}(z-a_k)_{+},\quad R_1^{(b)}(z)=\sum_{k}(z-b_k)_{+}
\end{equation*}
be the Riesz-means of the sequences $(a_j)_j,(b_j)_j$. Choosing $J$ such that the sum on the right-hand side of  \eqref{AVP-inequality-1} equals $R_1^{(b)}(z)$ one has, for all $z\geq 0$, the Riesz-mean inequality
\begin{equation*}
  R_1^{(a)}(z)\geq p\,R_1^{(b)}(z).
\end{equation*}
Moreover, for all positive integers $N$ and $z\in [a_N,a_{N+1}]$ one also has
\begin{equation*}
  \sum_{k=1}^{N}a_k\leq Nz-p\,R_1^{(b)}(z)
\end{equation*}
which trivially implies
\begin{equation*}
  \sum_{k=1}^{N}a_k\leq \underset{z\geq 0}{\max}\left(Nz-p\,R_1^{(b)}(z)\right)
\end{equation*}
On the other hand, choosing $J=\{1,\ldots, N\}$ and isolating  $\sum_{j\in J}b_j$ in \eqref{AVP-inequality-1} we get for all $z\geq 0$ the inequality
\begin{equation*}
  \sum_{j=1}^{N}b_j\geq Nz-p^{-1}\,R_1^{(a)}(z).
\end{equation*}
In particular, the above inequality holds at the maximum of the r.h.s. is attained, that is
\begin{equation*}
  \sum_{j=1}^{N}b_j\geq \underset{z\geq 0}{\max}\left(Nz-p^{-1}\,R_1^{(a)}(z)\right).
\end{equation*}
In the applications, we typically have simple lower bounds on $R_1^{(b)}(z)$ and upper bounds on $R_1^{(a)}(z)$ so that the maxima can be computed explicitly.

\section*{Acknowledgements}

The authors are grateful to the anonymous Referees for valuable comments helping to improve the paper. The first and the second authors are members of the Gruppo Nazionale per l'Analisi Ma\-te\-ma\-ti\-ca, la Probabilit\`a e le loro Applicazioni (GNAMPA) of the I\-sti\-tuto Naziona\-le di Alta Matematica (INdAM). The third author is member of the Gruppo Nazionale per le Strutture Algebriche, Geometriche e le loro Applicazioni (GNSAGA) of the I\-sti\-tuto Naziona\-le di Alta Matematica (INdAM).

\section*{Funding and competing interests} The fourth author acknowledges support of the SNSF project ``Bounds for the Neumann and Steklov eigenvalues of the biharmonic operator'', grant number 200021\_178736. The authors have no competing interests to declare that are relevant to the content of this article. 
 
 \bibliography{bibliography}{}
\bibliographystyle{abbrv}

\bigskip

$^\dagger$ Dipartimento per lo Sviluppo Sostenibile e la Transizione Ecologica, Università degli Studi del Piemonte Orientale ``A. Avogadro'', Piazza Sant'Eusebio 5, 13100 Vercelli (ITALY). \\E-mail: {\tt davide.buoso@uniupo.it}\\

$^\star$Dipartimento di Matematica, Università degli Studi di Padova, Via Trieste 63, 35121 Padova, (ITALY). \\E-mail: {\tt paolo.luzzini@unipd.it} \\

$^\ddag$Dipartimento di Scienze di Base e Applicate per l'Ingegneria, Sapienza Universit\`a di Roma, Via Antonio Scarpa 16, 00161 Roma (ITALY). \\E-mail: {\tt luigi.provenzano@uniroma1.it} \\

$^{\star\star}$Institute of Mathematics, EPFL, SB MATH SCI-SB-JS, Station 8, CH-1015 Lausanne (SWITZERLAND). \\E-mail: {\tt joachim.stubbe@epfl.ch}

\end{document}